\newtheorem{theorem}{Theorem}
\newtheorem*{theorem*}{Theorem}
\newtheorem{prop}[theorem]{Proposition}
\newtheorem{conj}[theorem]{Conjecture}
\newtheorem{coro}{Corollary}[theorem]
\newtheorem{lemm}[theorem]{Lemma}
\theoremstyle{definition}
\newtheorem{defi}[theorem]{Definition}
\newtheorem*{defi*}{Definition}
\newtheorem{ques}[theorem]{Question}
\newtheorem{cor-definition}[theorem]{Corollary-Definition}
\newtheorem{rema}[theorem]{Remark}
\newtheorem{conv}[theorem]{Convention}
\numberwithin{theorem}{section}
\renewcommand{\int}[1]{\overset{\circ}{#1}}
\newcommand{\clos}[1]{\overline{#1}}
\newcommand{\quotient}[2]{{\raisebox{.2em}{$#1$}\left/\raisebox{-.2em}{$#2$}\right.}}
\numberwithin{equation}{section}
\begin{document}
\title{A new combinatorial invariant caracterizing Anosov flows on 3-manifolds} 
\author[I. Iakovoglou]{Ioannis Iakovoglou}
\address{Institut de Math\'{e}matiques de Bourgogne, Universit\'{e} de
Bourgogne, \\ \vspace{0.1cm} UMR 5584 du
CNRS, BP 47 870, 21078, Dijon Cedex, France.}
\email{e.mail: ioannis.iakovoglou@u-bourgogne.fr}

\begin{abstract}
In this paper, we describe a new approach to the problem of classification of transitive Anosov flows on $3$-manifolds up to orbital equivalence. More specifically, generalizing the notion of Markov partition, we introduce the notion of Markovian family of rectangles in the bifoliated plane of an Anosov flow. We show that any transitive Anosov flow admits infinitely many Markovian families, each one of which can be canonically associated to a finite collection of combinatorial objects, called geometric types. We prove that any such geometric type describes completely the flow up to Dehn-Goodman-Fried surgeries on a finite set of periodic orbits of the flow. As a corollary of the previous result, we show that any Markovian family can be canonically associated to a finite collection of combinatorial objects, called geometric types with cycles, each describing the flow up to orbital equivalence.
\end{abstract}
\maketitle
\vspace{-0.5cm}
\footnotesize{\textbf{Keywords:} Anosov flows on $3-$manifolds, classification, Markov partitions}
\vspace{0.15cm}
\footnotesize{\par{\textbf{2020 Mathematics Subject Classification} Primary: 37C15-37D20-37D45; Secondary: 37C85}}
\normalsize
\section{Introduction}
\subsection{General setting}
Thanks to the flourishing construction of Anosov flows in dimension 3 (for recent examples see \cite{Bowdenmann} and \cite{ClPi}) the problem of classification of transitive Anosov flows in dimension 3 is progressively gaining in interest and importance. 

To this day different approaches have been used in order to address the question of classification of Anosov flows in dimension 3: 
\begin{enumerate}
    \item By examining the action of the fundamental group on the bifoliated plane of the flow and by using the topology of the ambient manifold: 
        \begin{itemize}
            
            \item the authors of \cite{BarbotFenley} show among others that the only Anosov flow in a manifold with virtually solvable fundamental group is a suspension Anosov flow and that the only Anosov flow in a Seifert manifold is up to a finite cover a geodesic flow in the unit tangent bundle of a hyperbolic surface
            \item the authors of \cite{flowsingraphmanifolds} classify the totally periodic Anosov flows in graph manifolds
        \end{itemize}

    \item By cutting the manifold along transverse tori we can decompose the manifold and the flow into smaller pieces called plugs. When this is possible it can be done in an essentially unique way  (see \cite{plugsdecomp}). By understanding the simpler dynamics inside the plugs (see for instance \cite{plugsclassification}) and then classifying all possible plug glueings giving rise to Anosov flows (see \cite{plugs} and \cite{uniqueglueing} for more details) it would be possible to classify all Anosov flows in dimension 3. An application of this method can be found in \cite{plugsclassification}. 
\item By examining the action of the fundamental group on the line at infinity $\mathbb{R}_{\infty}$ of the bifoliated plane of an $\mathbb{R}$-covered Anosov flow, the authors of \cite{bartmann} prove that an $\mathbb{R}$-covered Anosov flow in $M^3$ is completely determined by the elements in the fundamental group of $M$ acting on  $\mathbb{R}_{\infty}$ with fixed points.  
\end{enumerate}

Despite the variety of approaches towards a classification of transitive Anosov flows in dimension 3, the question of classification remains still open. In fact, till this moment little is known about Anosov flows in toro\"{i}dal manifolds containing simultaneously Seifert and atoro\"idal pieces. It is for this reason that we would like in this paper to introduce a classification method that has not yet been thoroughly studied and  that is a priori independent from the ambient topology of the manifold, a classification \emph{by geometric types}. This classification method has already been successful in the case of structurally stable diffeomorphisms on surfaces (see \cite{Asterisque} and \cite{Beguin}), we therefore hope that it will shed some light and open new avenues towards the classification of Anosov flows.

Following the work of M.Ratner in \cite{Ratner}, we can associate to any (transitive) Anosov flow on a closed 3-manifold a Markov partition (see Definition \ref{d.markovpartition}). According to a folklore result, proven in this article, any such Markov partition describes its associated Anosov flow up to Dehn-Goodman-Fried surgeries on a finite number of periodic orbits. The previous set of periodic orbits, called the set of \emph{boundary periodic orbits} of the partition, can be explicitly described as the finite (and always non-empty) set of periodic orbits of the flow that never intersect the interior of some rectangle of the Markov partition. It is therefore possible to ``classify" Anosov flows in 3-manifolds up to specific  surgeries by their Markov partitions. 

Even better, to any Markov partition we can associate a finite number of canonical combinatorial objects, called geometric types (see Definition \ref{d.geometrictype}), each one encoding the action of the first return map on the set of rectangles of the partition.  Once again, any geometric type associated to a Markov partition describes the Anosov flow up to Dehn-Goodman-Fried surgeries on the boundary periodic orbits. It is therefore possible ``classify" Anosov flows in 3-manifolds up to specific surgeries by geometric types, which are finite combinatorial objects.

However, to every Anosov flow we can associate infinitely many Markov partitions (thus also geometric types) and Ratner's construction is unfortunately far from being canonical. Therefore, the two following questions, that constitute central parts for any classification, remain open:

\begin{ques}\label{q.combincanonical}
Can we associate any transitive Anosov flow on a closed 3-manifold to a canonical (finite) family of geometric types?
\end{ques}

\begin{ques}\label{q.combcompare}
Given two geometric types describing two families of Anosov flows. Can we decide algorithmically whether those two families contain two flows that are orbitally equivalent?
\end{ques}

The previous questions having already been answered in the case of stable diffeomorphisms on surfaces, our aim in this article will be to introduce a method for classifying Anosov flows, seeing an Anosov flow not as a diffeomorphism on a surface, but as a group action on a plane. More specifically, 
\begin{enumerate}
    \item we will define a notion of Markov partition, that we will call \emph{Markovian family} (see Definition \ref{d.markovfamily}), for a group action in the bifoliated plane of an Anosov flow 
    \item we will show that it is possible to associate to every Markovian family a finite number of geometric types 
    \item we will show that a geometric type associated to a Markovian family describes the Anosov flow up to specific surgeries 
    \item we will complete the geometric type into a combinatorial invariant, called the \emph{geometric type with cycles} describing the Anosov flow up to orbital equivalence
\end{enumerate}

\subsection{Markovian families and geometric types}
Fix $M$ a closed, orientable, connected manifold of dimension $3$, $\Phi$ a transitive Anosov flow on $M$ and $\mathcal{F}^s$, $\mathcal{F}^u$ its (weak) stable and unstable foliations.  

In \cite{Palmeira} Palmeira shows that that the universal cover of $M$ is homeomorphic to $\mathbb{R}^3$. Furthermore, in \cite{Fe1} and \cite{Ba1}, Fenley and Barbot show independently  that the lift  $\widetilde{\Phi}$ of $\Phi$ on the universal cover of $M$ is orbitally equivalent to the constant vector field $\frac{\partial}{\partial{x}}$ on $\mathbb{R}^3$. The space of orbits of $\widetilde{\Phi}$ is therefore a plane, endowed with the natural quotient of the lift of the weak stable and unstable manifolds of $\Phi$ on $\mathbb{R}^3$. In other words, the flow $\Phi$ is naturally associated to a plane $\mathcal{P}$ endowed with pair of transverse foliations $F^s$ and $F^u$. We call $(\mathcal{P},F^s,F^u)$ the \emph{bifoliated plane of $\Phi$}. 

The fundamental group of $M$ acts on $\widetilde{M}=\mathbb{R}^3$ by preserving the orbits of the lifted flow, therefore the action of $\pi_1(M)$ descends to an action of on $\mathcal{P}$. Barbot in \cite{Barbotthese} shows that the bifoliated plane $\mathcal{P}$ together with the action of $\pi_1(M)$ describes completely the flow $\Phi$ up to orbital equivalence. Therefore, when trying to understand an Anosov flow in dimension 3, we may choose to work with a group action on a 2 dimensional plane. 

It is for this reason that we were driven to define a notion of Markov partition (for a group action) on the bifoliated plane using as a starting point the projection on $\mathcal{P}$ of the lift on $\widetilde{M}$ of any Markov partition in $M$. By identifying the core properties of such a projection we defined a Markovian family as follows: 

\begin{defi*}
A \emph{Markovian family of rectangles} in $\mathcal{P}$ is a set of mutually distinct rectangles (see Definition \ref{d.rectanglesinplane}) $(R_i)_{i \in I}$ covering $\mathcal{P}$ such that 
\begin{enumerate}
\item $(R_i)_{i \in I}$ is the union of a finite number of orbits of rectangles of the action by $\pi_1(M)$ 
\item For every two rectangles $R_i,R_j$ in $(R_i)_{i \in I}$, if $\overset{\circ}{R_i } \cap \overset{\circ}{R_j} \neq \emptyset$, then $R_i \cap R_j$ is a non-trivial horizontal subrectangle of $R_i$ (or $R_j$ resp.) and a non-trivial vertical subrectangle of $R_j$ (or $R_i$ resp.) 
\item Take any point $x\in \mathcal{P}$ and any of its four quadrants defined by $\mathcal{F}^s(x)$ and $\mathcal{F}^u(x)$. For any sufficiently small neighbourhood $G$ of $x$ in this quadrant, there exists $R\in (R_i)_{i \in I}$ such that $G\subset R$
\end{enumerate} 
\end{defi*}
For a more detailed definition see Definition \ref{d.markovfamily}.

A Markovian family has many properties in common with a Markov partition -conjecturally we believe that all Markovian families can be obtained as projections of Markov partitions on the bifoliated plane-. In particular we show among others that it is possible to associate canonically any Markovian family to a family of combinatorial objects, called a geometric types:

\begin{defi*}
Take $R_1,...,R_n$ a finite number of copies of $[0,1]^2$, endowed with the horizontal and vertical foliations. For every $i\in \llbracket 1,n \rrbracket$ choose $h_i,v_i \in \mathbb{N}^*$ such that $$\sum_i h_i =\sum_i v_i$$ 

Consider now for every $i\in \llbracket 1,n \rrbracket$ a collection of $h_i$ (resp. $v_i$) mutually disjoint horizontal (resp. vertical) subrectangles of $R_i$: $H_i^1,...,H_i^{h_i}$ (resp. $V_i^1,...,V_i^{v_i}$). Finally, take any bijection $\phi$ of the two sets $$\mathcal{H}=\lbrace H^j_i| i\in\llbracket 1,n \rrbracket, j \in \llbracket 1,h_i \rrbracket  \rbrace  \text{ and } \mathcal{V}=\lbrace V^j_i| i\in\llbracket 1,n \rrbracket, j \in \llbracket 1,v_i \rrbracket  \rbrace$$ and $u$ a function from the set of rectangles $\mathcal{H}$ to $\lbrace -1,+1\rbrace$.

The data $(R_1,...,R_n,(h_i)_{i \in \llbracket 1,n \rrbracket}, (v_i)_{i\in \llbracket 1,n \rrbracket}, \mathcal{H}, \mathcal{V},\phi, u)$ will be called a \emph{geometric type}. Two geometric types will be called \emph{equivalent} if there exists a homeomorphism respecting $\phi$ and $u$ between the two (see Definitions \ref{d.geometrictype} and \ref{d.equivalentgeomtypes} for more details). 
\end{defi*}

\textbf{Theorem A}\textit{
Let $M$ be an orientable, closed and connected 3-manifold and $\Phi$ a transitive
Anosov flow on $M$. To any Markovian family $\mathcal{R}$ of $\Phi$ we can associate canonically a unique and finite class of equivalent geometric types.}  
\vspace{0.22cm}

The notion of Markovian family generalizes the notion of Markov partition. The advantage of working with Markovian families instead of Markov partitions is that Markov partitions are intimitely connected with dimension 3, whereas Markovian families with dimension 2. Furthermore, we believe that by adapting the methods of \cite{Asterisque} and \cite{Beguin} (applied here for a group action on the plane and not a diffeomorphism on a closed surface) it will be possible to produce and compare a finite number of canonical Markovian families and thus a finite number of geometric types, which will in turn provide a classification. 

Going in this direction, in \cite{Iabola} we have shown that we can canonically associate any generalized Bonatti-Langevin flow (see \cite{Barbotbola}) to a canonical Markovian family and thus to a finite number of canonical geometric types. Furthermore, in \cite{realisablegeomtypes} we give a necessary and sufficient condition for a geometric type to be associated to a transitive Anosov flow in dimension 3. 

The main goal of this article is to prove that a geometric type associated to a Markovian family $\mathcal{R}$ describes (as in the case of Markov partitions) completely the flow up to specific  surgeries. In order to do that, we first establish the existence of boundary periodic orbits (i.e. points in $\mathcal{P}$ corresponding to periodic orbits in $M$ and that never intersect the interior of some rectangle of $\mathcal{R}$) for Markovian families in Proposition \ref{p.boundaryperiodicpoints}. As in the case of  Markov partitions, we show that those correspond to a finite (and always non-zero) number of periodic orbits of the flow.  

Then we engage the proof of the main result of this paper: 

\vspace{0.22cm}
\textbf{Theorem B}
\textit{Let $\Phi_1,\Phi_2$ be two transitive Anosov flows in dimension 3, $\mathcal{R}_1,\mathcal{R}_2$ two Markovian families in their bifoliated planes $\mathcal{P}_1,\mathcal{P}_2$ and $\Gamma_1,\Gamma_2$ their boundary periodic orbits. If $\mathcal{R}_1$ and $\mathcal{R}_2$ are associated to the same class of geometric types, then $\Phi_1$ (up to orbital equivalence) can be obtained from $\Phi_2$ by performing a finite number of Dehn-Goodman-Fried surgeries on $\Gamma_2$. }
\vspace{0.22cm}

The proof of the above result is rather technical. During the proof of above theorem we introduce an interesting object generalizing the bifoliated plane: given a finite number of periodic orbits of $\Phi$, say $\Gamma$, and their lifts on $\mathcal{P}$, say $\widetilde{\Gamma}$, we define the \emph{bifoliated plane of $\Phi$ up to surgeries on $\Gamma$} as  
 the universal cover of $\mathcal{P}-\widetilde{\Gamma}$ together with some points at infinity. We then prove the following generalization of Barbot's theorem (Theorem 3.4 of \cite{Ba1}): 

\vspace{0.22cm}
\textbf{Theorem C}
\textit{The bifoliated plane $\clos{\mathcal{P}}$ of $\Phi$ up to surgeries on $\Gamma$ can be endowed with two transverse singular foliations and an action by $\pi_1(M-\Gamma)$. Together with this action and those foliations, $\clos{\mathcal{P}}$ describes completely the flow $\Phi$ up to orbital equivalence and up to Dehn-Goodman-Fried surgeries on $\Gamma$.}
\vspace{0.22cm}

See Theorem \ref{t.generalbarbot} for more details. 

Finally, in Theorem B we obtain a combinatorial object describing the flow up to orbital equivalence and up to some surgeries. At the end of this article, by adding (in a canonical way) some supplemental  combinatorial data to a geometric type, we define the notion of \emph{geometric type with cycles} and we show the following refined versions of Theorems A and B:

\textbf{Theorem D}\textit{
Let $M$ be an orientable, closed and connected 3-manifold and $\Phi$ a transitive
Anosov flow on $M$. To any Markovian family $\mathcal{R}$ of $\Phi$ we can associate canonically a unique and finite class of equivalent geometric types with cycles.}  
\vspace{0.22cm}

\textbf{Theorem E}\textit{
Let $\Phi_1,\Phi_2$ be two transitive  Anosov flows, $\mathcal{R}_1,\mathcal{R}_2$ two Markovian families in $\mathcal{P}_1,\mathcal{P}_2$ associated to the same equivalence class of geometric types with cycles. Then $\Phi_1$ is orbitally equivalent to $\Phi_2$.}

\subsection{Structure of the paper} 
In Section \ref{s.preliminaries}, we will remind some basic definitions and properties of Markov partitions, geometric types and Dehn-Goodman-Fried surgeries on Anosov flows.

In Section \ref{s.markovianfamilies}, we will  introduce the notion of Markovian families of rectangles in the bifoliated plane of an Anosov flow. We will then analyse and compare the properties of Markovian families with those of Markov partitions, which will lead us to a proof of Theorem A. 

In Section \ref{s.boundarypoints}, given a Markovian family $\mathcal{R}$, we show that there are finitely many points in the bifoliated plane up to the action of the fundamental group that are not contained in the interior of any rectangle in $\mathcal{R}$. The previous points will play a crucial role in the proof of Theorem B, since they constitute the only points of the bifoliated plane, whose small  neighbourhoods are not described by a unique, but multiple rectangles in $\mathcal{R}$. 

In Section \ref{s.rectpaths}, given a Markovian family $\mathcal{R}$ associated to a family of geometric types $G$, we describe a way for defining ``coordinates" in the bifoliated plane using rectangles in $\mathcal{R}$. Those coordinates will be called rectangle paths and will allow us to navigate  simultaneously in the bifoliated planes of all Anosov flows associated to the same family of geometric types $G$. 

In view of Theorem B, in Section \ref{s.constructionPbar} we will develop tools that will allow us to compare Anosov flows up to surgeries on a finite number of periodic points. We will thus define the bifoliated plane up to surgeries, compare its properties with those of the regular bifoliated plane and finally prove Theorem C. 

In Section \ref{s.homotopiesofpaths}, given two Anosov flows associated to the same family of geometric types, we prove the following technical result: rectangle paths in the bifoliated planes up to sugeries of the two flows define compatible systems of coordinates. This will allow us to extend those coordinate systems to an equivalence between the bifoliated planes up to surgeries in Section \ref{s.mainresult} and deduce by Theorem C a proof of Theorem B. 

Finally, in Section \ref{s.theoremsDE} by adding some combinatorial data in the geometric type, we will define a geometric type with cycles and prove Theorems D and E.

\vspace{0.25cm}
\textit{Acknowledgements}. We would like to address a special thanks to Christian Bonatti for his interest and useful intuition in this work, and to Fran\c{c}ois B\'eguin for organizing the study group on Anosov flows that allowed us to present early versions of our results. 

\section{Preliminaries}\label{s.preliminaries}
Throughout this paper we will assume that the reader has basic familiarity with Anosov flows and the bifoliated plane of an Anosov flow. For an introduction to Anosov flows see \cite{Anosov} and for an introduction to the bifoliated plane of an Anosov flow see \cite{Barbotthese},\cite{Fe} and  \cite{Fe1}. In this small section, we will remind some elementary definitions and results concerning Markov partitions, geometric types and Dehn-Goodman-Fried surgeries. 
\subsection{Markov partitions and geometric types} 
Let $M$ be an orientable, closed, connected 3-manifold carrying a transitive Anosov flow $\Phi$. Let $\mathcal{F}^s$ and $\mathcal{F}^u$ be the stable and unstable foliations of $\Phi$. 
\begin{defi}
Consider $[0,1]^2$ endowed with the vertical and horizontal foliations, the trivially bifoliated rectangle. A \emph{rectangle} $R$ in $M$ is the image of a trivially bifoliated rectangle by a continuous embedding $\phi: [0,1]^2 \rightarrow M$ sending every  vertical segment to an unstable segment (i.e. a segment contained in a weak unstable leaf) and every  horizontal segment to a stable segment.   

We will call $\partial^{u}R:= \phi(\lbrace0,1\rbrace \times [0,1])$ the \emph{unstable boundary} of $R$ and $\partial^{s}R:=\phi([0,1] \times \lbrace0,1\rbrace )$ its \emph{stable boundary}. 

Furthermore, any rectangle of the form $\phi([s,t]\times [0,1])$, where $s,t\in [0,1]$ and $s<t$, will be called a \emph{vertical subrectangle} of $R$. Similarly, rectangles of the form $\phi([0,1]\times [s,t])$ will be called \emph{horizontal subrectangles} of $R$. 
\end{defi}

\begin{defi}\label{d.markovpartition}
A \emph{Markov partition} of $\Phi$ is a finite family of rectangles $R_1,...,R_n$ in $M$ transverse to $\Phi$ such that: 
\begin{enumerate}
\item The rectangles are pairwise disjoint
\item Pushing positively by the flow, the first return on $\underset{i=1}{\overset{n}{\cup}}R_i$ of any point $x\in  \underset{i=1}{\overset{n}{\cup}}R_i$ is well defined and will be denoted by $f(x)$. Furthermore, there exists $T>0$ such that for all $x\in M$ there exists $t\in [0,T]$ for which $\Phi^t(x) \in \underset{i=1}{\overset{n}{\cup}}R_i$. 
\item For any two $i,j$ the closure of each connected component of  $f(\text{Int}(R_i))\cap \text{Int}(R_j)$  (the previous set can be empty) is a vertical subrectangle of $R_j$. 
\item For any two $i,j$ the closure of each connected component of  $f^{-1}(\text{Int}(R_i))\cap \text{Int}(R_j)$ (the previous set can be empty) is a horizontal subrectangle of $R_j$. 
\end{enumerate}
Furthermore, we will call the family $R_1,...,R_n$ a \emph{reduced} Markov partition if for every $i\neq j$, there doesn't exist a continuous function $\tau: R_i \rightarrow \mathbb{R}$ such that $\Phi^{\tau}(R_i)\subseteq R_j$ or $\Phi^{\tau}(R_i)\supseteq R_j$. 
\end{defi}
\begin{rema}
\begin{itemize}
    \item In the above definition, $(2)$ implies that the number of connected components of $f(\text{Int}(R_i))\cap \text{Int}(R_j)$  or $f^{-1}(\text{Int}(R_i))\cap \text{Int}(R_j)$ is finite for any $i,j$.
    \item The reason why we consider in (3) the set $f(\text{Int}(R_i))\cap \text{Int}(R_j)$ instead of $f(R_i)\cap R_j$ is because the latter one can in general be a segment.
\end{itemize} 
\end{rema}
The following two results ensure the existence of Markov partitions and reduced Markov partitions for all transitive Anosov flows.  
\begin{theorem}[Ratner, \cite{Ratner}] \label{t.existenceofmarkovpartitions}
For any periodic orbit $\gamma$ of $\Phi$, there exists a Markov partition of $\Phi$ formed by rectangles, whose stable and unstable boundaries are contained respectively in $\mathcal{F}^s(\gamma)$ and $\mathcal{F}^u(\gamma)$.
\end{theorem}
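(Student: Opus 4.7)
The plan is to follow the classical Bowen--Ratner construction, adapted so that all stable and unstable boundaries of the resulting rectangles lie on $\mathcal{F}^s(\gamma)$ and $\mathcal{F}^u(\gamma)$ respectively. The strategy has two phases: first build a finite ``pre-Markov'' covering of $M$ with the prescribed boundary condition, then refine it to obtain the Markov compatibility properties of Definition~\ref{d.markovpartition}.

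For phase one, the key observation is that because $\Phi$ is transitive, both leaves $\mathcal{F}^s(\gamma)$ and $\mathcal{F}^u(\gamma)$ are dense in $M$. Hence for any $x\in M$ and any sufficiently small flow box around $x$, I can find four transverse arcs, two lying in $\mathcal{F}^s(\gamma)$ and two in $\mathcal{F}^u(\gamma)$, bounding a small rectangle $R_x$ transverse to $\Phi$ that contains $x$ in its interior. By compactness of $M$, finitely many such rectangles $R_1^0,\dots,R_N^0$ suffice so that every orbit of $\Phi$ meets $\bigcup_i R_i^0$ in uniformly bounded time.

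For phase two, let $f$ denote the first-return map to $\bigcup_i R_i^0$. I refine each $R_i^0$ by cutting it along the finitely many arcs of $\mathcal{F}^s(\gamma)$ and $\mathcal{F}^u(\gamma)$ which arise as boundaries of $f^{\pm 1}(R_j^0) \cap R_i^0$ crossing it in the stable/unstable direction, and closing up under further iteration. Because all initial boundaries already lie on $\mathcal{F}^s(\gamma)\cup \mathcal{F}^u(\gamma)$ and these two leaves are invariant under $f$, the cuts stay in the prescribed leaves. Taking closures of connected components yields a finite family $R_1,\dots,R_n$ whose interiors are pairwise disjoint, whose union still covers $M$ in uniformly bounded return time (inherited from the pre-Markov cover), and such that every component of $f(\text{Int}(R_i))\cap \text{Int}(R_j)$ stretches fully across $R_j$ in the stable direction, and dually for $f^{-1}$. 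This gives exactly conditions $(1)$--$(4)$ of Definition~\ref{d.markovpartition}.

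The main obstacle is establishing that the refinement in phase two terminates in a finite family, rather than producing an ever-finer subdivision of Cantor type. This is the technical heart of Ratner's argument: one uses the exponential expansion/contraction of $\Phi$ along $\mathcal{F}^u(\gamma)$ and $\mathcal{F}^s(\gamma)$ to show that long iterations produce arcs that either coincide with existing boundary arcs or fit inside previously cut pieces. The fact that we have restricted boundaries to the two distinguished leaves $\mathcal{F}^s(\gamma)$ and $\mathcal{F}^u(\gamma)$, rather than to arbitrary stable/unstable leaves, is precisely what makes this finiteness argument work, and is what distinguishes this statement from more general Markov partition constructions.
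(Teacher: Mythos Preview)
The paper does not provide its own proof of this statement: Theorem~\ref{t.existenceofmarkovpartitions} is stated with attribution to Ratner~\cite{Ratner} and used as a black box, so there is no argument in the paper to compare your proposal against. Your sketch is a reasonable outline of the classical Bowen--Ratner construction, and the idea of choosing the initial rectangles with boundaries already on $\mathcal{F}^s(\gamma)$ and $\mathcal{F}^u(\gamma)$ (using density of those leaves) is the right way to ensure the boundary condition persists through the refinement.

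One comment on the content of your sketch: your claim that restricting boundaries to the two distinguished leaves is ``precisely what makes this finiteness argument work'' is somewhat misleading. The finiteness of the refinement in the Bowen--Ratner construction does not depend on the boundaries lying on a single pair of leaves; it follows from uniform hyperbolicity and the fact that the initial family is finite (so iterated boundary images are controlled by the expansion/contraction rates and the Lebesgue number of the cover). The restriction to $\mathcal{F}^s(\gamma)$ and $\mathcal{F}^u(\gamma)$ is an \emph{additional} feature one can arrange, not the mechanism behind finiteness. Also, as written, your phase-two description (``closing up under further iteration'') is vague about exactly which iterates are used and why the process stabilizes; a complete proof would need to specify this (e.g.\ via a symbolic coding or an explicit bound on the number of refinement steps), which is indeed the technical core you acknowledge but do not carry out.
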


\begin{theorem}
For any periodic orbit $\gamma$ of $\Phi$, there exists a reduced Markov partition of $\Phi$ formed by rectangles, whose stable and unstable boundaries are contained respectively in $\mathcal{F}^s(\gamma)$ and $\mathcal{F}^u(\gamma)$.
\end{theorem}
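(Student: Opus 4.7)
The plan is to start with a Markov partition $\mathcal{R}_0$ provided by Theorem \ref{t.existenceofmarkovpartitions} and to reduce it in steps, each removing one rectangle that witnesses the failure of reducedness. Since the procedure only removes rectangles from the family, the boundary condition $\partial^sR\subset\mathcal{F}^s(\gamma)$ and $\partial^uR\subset\mathcal{F}^u(\gamma)$ is automatically preserved throughout. Termination is immediate, as each step strictly decreases the cardinality of the family, so after at most $|\mathcal{R}_0|-1$ iterations the process must stop on a reduced Markov partition.

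For the reduction step, I would start with a Markov partition $\mathcal{R}$ satisfying the boundary condition that is not reduced; then there exist $i\neq j$ and a continuous $\tau:R_i\to\mathbb{R}$ with $\Phi^\tau(R_i)\subseteq R_j$ or $\Phi^\tau(R_i)\supseteq R_j$. Since the rectangles are disjoint, $\tau$ never vanishes and has constant sign on $R_i$. The second case translates, by inverting the flow along $R_i$, into an inclusion $\Phi^\sigma(R_j)\subseteq R_i$ for some continuous $\sigma$, so up to swapping the roles of $i$ and $j$ I may assume $\Phi^\tau(R_i)\subseteq R_j$ with $\tau>0$. The claim is then that $\mathcal{R}':=\mathcal{R}\setminus\{R_i\}$ is still a Markov partition of $\Phi$. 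If $f$ denotes the first-return map of $\mathcal{R}$, then the new first-return map $f'$ on $\bigcup_{k\neq i}R_k$ satisfies $f'(x)=f^{m(x)}(x)$, where $m(x)\geq 1$ is the smallest integer with $f^{m(x)}(x)\notin R_i$. This integer is finite because for any $y\in R_i$ one has $\Phi^{\tau(y)}(y)\in R_j\in\mathcal{R}'$, so the orbit of $y$ cannot remain in $R_i$ through infinitely many returns. The bounded return time of $\mathcal{R}'$ is inherited from that of $\mathcal{R}$ by adding $\max_{R_i}\tau$. Properties (3)--(4) of Definition \ref{d.markovpartition} for $f'$ then follow by induction on $m$: denoting $R_k^{(m)}:=\{x\in R_k:m(x)=m\}$, one shows that each $R_k^{(m)}$ is a finite union of horizontal subrectangles of $R_k$ and that $f^m(R_k^{(m)})\cap R_l$ is a finite union of vertical subrectangles of each $R_l\in\mathcal{R}'$, using at each step the Markov properties of $f$ applied to $f|_{R_i}$.

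The hardest part will be the inductive verification of (3)--(4) for $f'$. The subtlety is that $m$ is in general non-constant on $R_k$, so different horizontal strips of $R_k$ are identified by $f'$ with different iterates of $f$. Concretely, one needs to establish that for every horizontal subrectangle $H\subset R_k$ and every intersection $V\cap H'$ of a vertical subrectangle $V$ and a horizontal subrectangle $H'$ of $R_i$, the preimage $(f|_H)^{-1}(V\cap H')$ is a disjoint union of horizontal subrectangles of $H$ (hence of $R_k$). This is intuitively clear because $f|_H$ is a homeomorphism preserving the stable and unstable directions, but it requires careful combinatorial bookkeeping to confirm that the new ``horizontal'' stripes keep spanning the full stable extent of $R_k$. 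Once this combinatorial lemma is in place, iterating it yields the desired decomposition of $R_k^{(m)}$ and the Markov conditions for $f'$ follow routinely.
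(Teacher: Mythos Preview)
Your proposal is correct and follows essentially the same route as the paper: start from Ratner's partition, and whenever reducedness fails remove the redundant rectangle, checking that the remaining family is still a Markov partition. Both you and the paper identify the inductive verification of axioms (3)--(4) as the core of the argument, and both carry it out by slicing each $R_k$ into horizontal strips according to how many returns pass through the removed rectangle before landing back in $\mathcal{R}'$; the paper writes this out as an explicit recursive subdivision $H_s \mapsto H_{sk} \mapsto \cdots$, which is exactly your induction on $m(x)$.

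One small point: your reduction to $\tau>0$ is not quite justified. From $\Phi^\tau(R_i)\subseteq R_j$ with $\tau<0$ you cannot in general produce a continuous $\sigma$ on all of $R_j$ with $\Phi^\sigma(R_j)\subseteq R_i$, since $R_j$ may be strictly larger than $\Phi^\tau(R_i)$ and there is no canonical extension. The paper sidesteps this entirely by never reducing to a sign: it only uses $|\tau|<m$ and argues that if a forward orbit keeps returning to the removed rectangle, then at some return time $t>m$ the nearby hit of $R_j$ occurs at time in $[t-m,t+m]\subset(0,\infty)$. Your bounded-return and finiteness-of-$m(x)$ arguments are easily repaired along the same lines, so this does not affect the overall correctness of your plan.
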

\begin{proof}
Take $\gamma$ a periodic orbit of $\Phi$ and $R_1,...R_n$ a Markov partition in $M$, given by Ratner's theorem, formed by rectangles whose stable and unstable boundaries are contained respectively in $\mathcal{F}^s(\gamma)$ and $\mathcal{F}^u(\gamma)$. Suppose now, without any loss of generality, that there exists a continuous function $\tau: R_2 \rightarrow \mathbb{R}$ such that $R_1\subseteq \Phi^{\tau}(R_2)$. It suffices to prove that the family of rectangles $R_2,...,R_n$ is also a Markov partition of $\Phi$. 

Indeed, the rectangles $R_2,...,R_n$ are pairwise disjoint by construction. Let us now show that for any point $x\in  \underset{i=2}{\overset{n}{\cup}}R_i$ its first return time on $\underset{i=2}{\overset{n}{\cup}}R_i$ is well defined and uniformly bounded. 

Suppose that there exists $x\in  \underset{i=2}{\overset{n}{\cup}}R_i$, whose positive orbit doesn't intersect $\underset{i=2}{\overset{n}{\cup}}R_i$. Since $R_1,...R_n$ is a Markov partition of $\Phi$, this implies that the positive orbit of $x$ only intersects $R_1$. But since $\tau$ is continuous, there exists $m>0$ such that $|\tau|<m$. Take $t>m$ such that $\Phi^t(x)\in R_1$ (such a $t$ exists, since $R_1,...R_n$ is a Markov partition). By definition of $m$, there exists $s\in [t-m,t+m]\subset \mathbb{R}^+$ such that $\Phi^s(x)\in R_2$, which contradicts the fact that the positive orbit of $x$ doesn't intersect $\underset{i=2}{\overset{n}{\cup}}R_i$. Therefore, the first return time is defined for all points in $\underset{i=2}{\overset{n}{\cup}}R_i$.

Since $R_1,...R_n$ is a Markov partition, there exists $T>0$ such that for any $x\in M$ there exists $t\in [0,T]$ such that $\Phi^t(x)\in \underset{i=1}{\overset{n}{\cup}}R_i$. The same thing is true for $R_2,...R_n$. Indeed, if the positive orbit of $x\in M $ visits first any of the rectangles in $R_2,...R_n$, then it does so in a time bounded by $T$. Similarly, if the positive orbit of $x\in M $ visits first the rectangle $R_1$ $k$ times and then any rectangle of the rectangles in $R_2,...R_n$, then it does so in a time bounded by $(k+1)T$. 

On the other hand, since the rectangles $R_1,...R_n$ are pairwise disjoint, there exists $\epsilon>0$ such that for any $x\in M$ if $\Phi^{[0,t]}(x)$ intersects $N$ times $\underset{i=1}{\overset{n}{\cup}}R_i$, then $t>(N-1)\epsilon$. Take $N\in \mathbb{N}$ such that $N \epsilon>m$. If the positive orbit of a point $x\in M$ visits more than $N+1$ times $R_1$ before visiting any of the rectangles in $R_2,...R_n$, then it does so in a time bigger than $N\epsilon>m$. By our arguments in the previous paragraph this implies that there exists $s\in [N\epsilon-m,N\epsilon+m]$ such that $\Phi^s(x)\in R_2$, which leads to a contradiction. Therefore, the positive orbit of any point in $M$ intersects $R_2,...R_n$ in a time bounded by $\max(T,2T,...,(N+1)T, N\epsilon+m)=\max((N+1)T,N\epsilon+m)$. 

Finally, let us show that $R_2,...R_n$ satisfies the third axiom of the Definition \ref{d.markovpartition}. The fact that $R_2,...R_n$ satisfies the fourth axiom of the Definition \ref{d.markovpartition} can be proved in the exact same way. 

Let us denote by $f: \underset{i=1}{\overset{n}{\cup}}R_i \rightarrow \underset{i=1}{\overset{n}{\cup}}R_i$ and $f^{red}: \underset{i=2}{\overset{n}{\cup}}R_i \rightarrow \underset{i=2}{\overset{n}{\cup}}R_i$ the first return map on $\underset{i=1}{\overset{n}{\cup}}R_i$ and $\underset{i=2}{\overset{n}{\cup}}R_i$ respectfully. Fix $i\in \llbracket 2, n \rrbracket$. We would like to show that for any $j\in \llbracket 2, n \rrbracket$, the closure of every connected component of $f^{red}(\text{Int}(R_i))\cap \text{Int}(R_j)$ is a vertical subrectangle of $R_j$. In order to do so, we are going to break $R_i$ into smaller rectangles and we are going to show the desired result for each of those smaller rectangles.

Consider $H_1,...,H_l$ the closures of the connected components of $\underset{k=1}{\overset{n}{\cup}}\text{Int}(R_i)\cap f^{-1}(\text{Int}(R_k)) $ (see Figure \ref{f.reduced}). Notice that since $R_1,...,R_n$ is a Markov partition, all the above are components are horizontal subrectangles of $R_i$ and that the closure of the image by $f$ of any $\text{Int}(H_s)$ is a vertical subrectangle of some $R_p$, where $p\in \llbracket 1,n \rrbracket$. It suffices to show that for every $s$ and $j$ the closure of every connected component of $f^{red}(\text{Int}(H_s))\cap \text{Int}(R_j)$ is a vertical subrectangle of $R_j$. 
\begin{figure}[h!]
\includegraphics[scale=0.35]{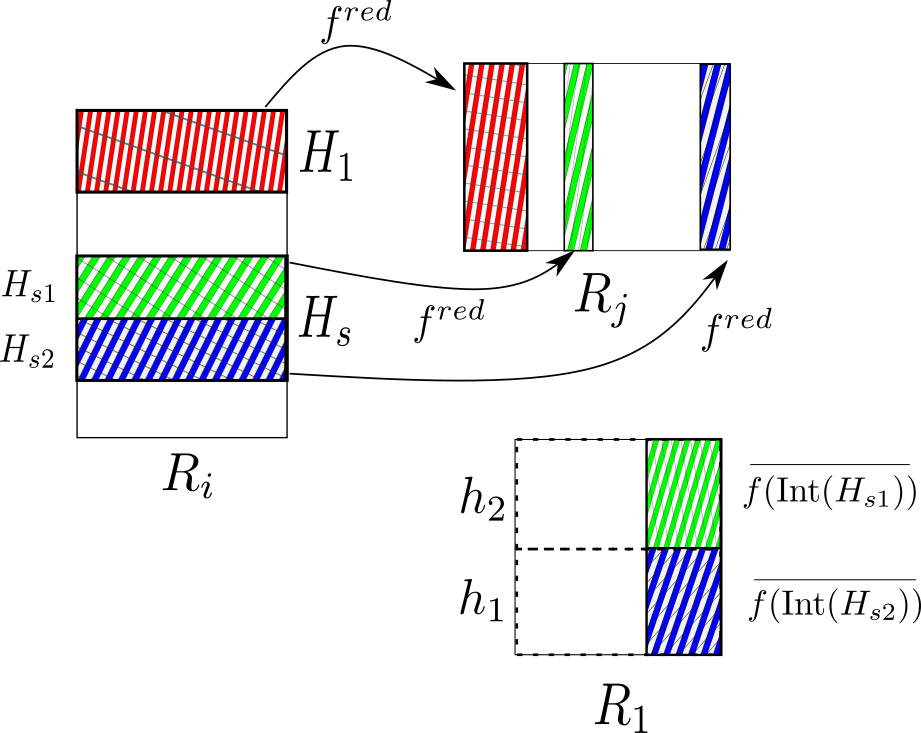}
\caption{}
\label{f.reduced}
\end{figure}

Fix an $s\in \llbracket 1,l \rrbracket$. If the closure of $f(\text{Int}(H_s))$ is a vertical subrectangle of $R_p$ with $p\in \llbracket 2,n \rrbracket$ (this is the case for the rectangle $H_1$ in Figure \ref{f.reduced}), then $f(\text{Int}(H_s))=f^{red}(\text{Int}(H_s))$, which gives us the desired result.

Suppose now that the closure of $f(\text{Int}(H_s))$ is a vertical subrectangle of $R_1$ (see Figure \ref{f.reduced})). In this case, $f^{red}(\text{Int}(H_s)) \neq f(\text{Int}(H_s))$. Recall that we would like to show that for every $j$ the closure of every connected component of $f^{red}(\text{Int}(H_s))\cap \text{Int}(R_j)$ is a vertical subrectangle of $R_j$. In order to do so, once again, we are going to divide $H_s$ into smaller rectangles and we are going to show the desired result for those rectangles.

Let $h_1,...,h_m$ be the closures of the connected components of $\underset{k=1}{\overset{n}{\cup}}\text{Int}(R_1)\cap f^{-1}(\text{Int}(R_k)) $. Notice that the $h_i$ have disjoint interiors, that they are all horizontal subrectangles of $R_1$ and that the closure of the image by $f$ of any $\text{Int}(h_i)$ is a vertical subrectangle of some $R_p$ with $p\in \llbracket 1,n \rrbracket$.

We will denote by $H_{s1},...,H_{sm}$ the rectangles $\overline{H_s\cap f^{-1}(\text{Int}(h_1))},...,\overline{H_s\cap f^{-1}(\text{Int}(h_m))}$. Each of the above rectangles is a horizontal subrectangle of $H_s$ and also for every $k$ the closure of $f^2(\text{Int}(H_{sk}))$ is a vertical subrectangle of some $R_p$ with $p\in \llbracket 1,n \rrbracket$. It therefore suffices to prove that for any $k\in \llbracket1, m \rrbracket$ and every $j\in \llbracket 2, n \rrbracket$, the closure every connected component of $f^{red}(\text{Int}(H_{sk}))\cap \int{R_j}$ is a vertical subrectangle of $R_j$.  

Fix $k$. As in the previous case, if $\overline{f^2(\text{Int}(H_{sk}))}$ is a vertical subrectangle of any of the rectangles $R_2,...,R_n$, then $\overline{f^{red}(\text{Int}(H_{sk}))}=\overline{f^2(\text{Int}(H_{sk}))}$ and we get the desired result. If not, following our above arguments we will need to partition once again $H_{sk}$ into a finite number of horizontal subrectangles, $\overline{H_{sk}\cap f^{-2}(\text{Int}(h_1))},...,\overline{H_{sk}\cap f^{-2}(\text{Int}(h_m))}$, and show the desired result for each of those subrectangles.

Since every positive orbit of any point in $\underset{i=2}{\overset{n}{\cup}}R_i$ meets in (uniformly) bounded time $\underset{i=2}{\overset{n}{\cup}}R_i$, by a finite number of repetitions of the previous argument, we obtain the desired result. 
\end{proof}
To every Markov partition $R_1,...R_n$ we can associate (this association will be explained in a more general and detailed way in the proof of Theorem \ref{t.associatemarkovfamiliestogeometrictype}) a family of combinatorial objects describing abstractly the way that the first return map acts on $R_1,...,R_n$.
\begin{defi}\label{d.geometrictype}
Take $n\in \mathbb{N}^*$ -the reader may consider that this integer  corresponds to  a finite number of copies of $[0,1]^2$, say $R_1,...,R_n$, endowed with the horizontal and vertical (oriented) foliations-. For every $i\in \llbracket 1,n \rrbracket$ choose $h_i,v_i \in \mathbb{N}^*$ such that $$\sum_i h_i =\sum_i v_i$$ 

Consider now for every $i\in \llbracket 1,n \rrbracket$ two finite sets of the form $\{H_i^j, j\in \llbracket 1, h_i\rrbracket\}$ and $\{V_i^j, j\in \llbracket 1, v_i\rrbracket\} $ -the reader may think of the first (resp. second) set as a collection of $h_i$ (resp. $v_i$) mutually disjoint horizontal (resp. vertical) subrectangles of $R_i$ ordered from bottom to top (resp. from left to right)-. Consider also a bijection $\phi$ between  $$\mathcal{H}=\lbrace H^j_i| i\in\llbracket 1,n \rrbracket, j \in \llbracket 1,h_i \rrbracket  \rbrace  \text{ and } \mathcal{V}=\lbrace V^j_i| i\in\llbracket 1,n \rrbracket, j \in \llbracket 1,v_i \rrbracket  \rbrace$$ and $u$ a function from the set of rectangles $\mathcal{H}$ to $\lbrace -1,+1\rbrace$.

The data $(n,(h_i)_{i \in \llbracket 1,n \rrbracket}, (v_i)_{i\in \llbracket 1,n \rrbracket}, \mathcal{H}, \mathcal{V},\phi, u)$ will be called a \emph{geometric type}. 
\end{defi}
In the following pages, even though a geometric type can be thought as an abstract combinatorial object, we will often think of it as a set of rectangles, each containing a finite number of horizontal and vertical subrectangles on which the map $\phi$ acts. A geometric type is a ``finite information" version of a Markov partition, where the function $\phi$  plays the role of the first return map and the function $u$ indicates whether the first return map preserves or changes the orientation of the vertical foliation (see Figure \ref{f.examplegeometrictype}). 

\begin{figure}[h!]
\includegraphics[scale=0.75]{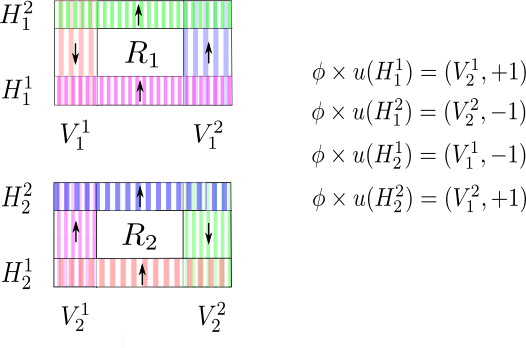}
\caption{In the above example the ``first return map" sends a horizontal rectangle to a vertical rectangle of the same color. The arrows represent how the ``first return map" acts on the orientation of the vertical foliation.}
\label{f.examplegeometrictype}
\end{figure}
 
\begin{defi}\label{d.equivalentgeomtypes}
Take $(n,(h_i)_{i \in \llbracket 1,n \rrbracket}, (v_i)_{i\in \llbracket 1,n \rrbracket}, \mathcal{H}, \mathcal{V},\phi,u)$ and $(n',(h'_i)_{i \in \llbracket 1,n' \rrbracket}, (v'_i)_{i\in \llbracket 1,n' \rrbracket}, \mathcal{H}', \mathcal{V}',\phi', u')$ two geometric types. Suppose that
\begin{align*}
    &\mathcal{H}=\{{H_i}^j, i\in \llbracket 1,n \rrbracket, j\in \llbracket 1, h_i \rrbracket\} \quad \text{and} \quad  \mathcal{V}=\{{V_i}^j, i\in \llbracket 1,n \rrbracket, j\in \llbracket 1, v_i \rrbracket\} \\ & \mathcal{H'}=\{{H'_i}^j, i\in \llbracket 1,n' \rrbracket, j\in \llbracket 1, h'_i \rrbracket\} \quad \text{and} \quad  \mathcal{V'}=\{{V'_i}^j, i\in \llbracket 1,n' \rrbracket, j\in \llbracket 1, v'_i \rrbracket\}
\end{align*} We will say that the two previous geometric types are \emph{equivalent} if 
\begin{enumerate}
    \item $n=n'$ 
    \item up to reindexing $h_i=h_i'$ and $v_i=v_i'$ 
    \item there exists a bijection $H: \mathcal{H}\cup \mathcal{V} \rightarrow \mathcal{H'}\cup \mathcal{V'}$ such that 
    \begin{itemize}
        \item for every $i\in \llbracket 1, n\rrbracket$, $H$ defines a bijection between $\{{H_i}^j, j\in \llbracket 1,h_i \rrbracket \}$ and $\{{H'_i}^{j'}, j'\in \llbracket 1,h'_i \rrbracket\}$ that is monotonous with respect to $j$. We define $\epsilon_i=+1$ if the previous map is increasing and $\epsilon_i=-1$ if not.
        \item for every $i\in \llbracket 1, n\rrbracket$, $H$ defines a bijection between $\{{V_i}^j, j\in \llbracket 1,v_i \rrbracket \}$ and $\{{V'_i}^{j'}, j'\in \llbracket 1,v'_i \rrbracket\}$ that is monotonous with respect to $j$. We define $\epsilon'_i=+1$ if the previous map is increasing and $\epsilon'_i=-1$ if not.
        \item either  $\epsilon_i\cdot \epsilon_i'=-1$ for all $i$ or  $\epsilon_i\cdot \epsilon_i'=+1$ for all $i$
        \item for every $h\in \mathcal{H}$ $\phi'(H(h))= H(\phi(h))$ 
    \end{itemize}
   \item for every $h\in \mathcal{H}$ we have $u'(H(h))=\epsilon_i \cdot \epsilon_j \cdot u(h)=\epsilon'_i \cdot \epsilon'_j \cdot u(h)$, where $i,j$ are such that $h$ and $\phi(h)$ are respectively of the form $H_i^{ \bullet}$ and $V_j^{ \bullet}$
    
\end{enumerate} 
If futhermore all the above $\epsilon_i$ and $\epsilon_i'$ are equal to $+1$, then we will say that the two geometric types are \emph{equal}.  
\end{defi}
Thinking of a geometric type as a collection of rectangles and subrectangles, an equivalence between two geometric types can be described as a homeomorphism between rectangles sending horizontal/vertical subrectangles to horizontal/vertical subrectangles and respecting the maps $\phi$ and $u$ -taking into account that $H$ might change the orientation of the verticals-. For instance, changing the orientation of the stable or unstable foliations in the example in Figure \ref{f.examplegeometrictype} yields an equivalent geometric type -recall that the $H_i^1,...,H_i^{h_i}$ and $V_i^1,...,V_i^{v_i}$ are respectively ordered from bottom to top and from left to right; therefore the previous operation reindexes the $H_i^j$ and leads to a different geometric type-. In fact, it is possible to show that given a geometric type $G=(n,(h_i)_{i \in \llbracket 1,n \rrbracket}, (v_i)_{i\in \llbracket 1,n \rrbracket}, \mathcal{H}, \mathcal{V},\phi, u)$, by reindexing the $(h_i,v_i)$ and changing the orientation of the foliations of the rectangles, we can obtain all geometric types equivalent to $G$. 

\begin{rema}\label{r.finitenumbergeometrictypes}
It is easy to see that the  relation defined in Definition \ref{d.equivalentgeomtypes} is an equivalence relation for which every equivalence class of geometric types is finite.
\end{rema}

\subsection{Dehn-Goodman-Fried surgery}\label{s.surgeries}

Let $X$ be a transitive Anosov flow on a oriented, closed, $3$-manifold $M$ and let $\gamma$ be a periodic orbit with positive eigenvalues. 

By blowing up $M$ along $\gamma$ we obtain a manifold $M_{\gamma}$ with one torus boundary endowed with a continuous map $\pi_\gamma\colon M_\gamma\to M$ of $M$. The map $\pi_{\gamma}$ induces a diffeomorphism from the interior of $M_{\gamma}$ to $M\setminus\gamma$. 

Furthermore, for every $x\in\gamma$ the fiber $\pi_\gamma^{-1}(x)$ is a circle which is canonically identified with the unit normal bundle $N^1(x)$ of $\gamma$ in $M$ at the point $x$. More specifically, consider two small segments $\sigma_1,\sigma_2$
in $M_\gamma$ transverse to the boundary $\partial M_\gamma$ at $\sigma_1(0), \sigma_2(0)\in \partial M_\gamma$.  Then we have  $\sigma_1(0)=\sigma_2(0)$ if and only if
\begin{itemize}
\item $\pi_{\gamma}(\sigma_1(0))=\pi_{\gamma}(\sigma_2(0))=c$ and 
\item there exists $\lambda\in \mathbb{R}$ such that  $$\frac{\partial (\pi_\gamma\circ \sigma_1)}{\partial t}(0)= \lambda X(c)+ \frac{\partial (\pi_\gamma\circ \sigma_2)}{\partial t}(0)$$ 
\end{itemize}

The vector field $\pi_\gamma^{-1}(X)$ is  well defined  on the interior of $M_\gamma$ and extends by continuity on its boundary $T_\gamma$ by the natural action of the derivative $DX^t$ on the normal bundle over $\gamma$. We denote by $X_\gamma$ this (smooth) vector field on $M_\gamma$.

The flow on $T_\gamma$ is a Morse-Smale flow with $4$ periodic orbits, which correspond to the normal vectors to $\gamma$ tangent to the stable and unstable manifolds of $\gamma$.  These $4$ periodic orbits are freely homotopic one to 	another and are non trivial in $\pi_1(T_\gamma)$.  The homotopy (or homology) class $p\in\mathbb{Z}^2=\pi_1(T_\gamma)$ of these periodic orbits is called \emph{the parallel}. 
 
On the other hand the fibers of $\pi_\gamma\colon T_\gamma\to \gamma$ inherit an orientation from the orientation of $M$ and the corresponding homotopy class $m\in\mathbb{Z}^2=\pi_1(T_\gamma)$ is called \emph{the meridian}. 

Given any integer $n\in\mathbb{Z}$, one easily checks the existence of foliations $\mathcal{G}_n$  on $T_\gamma$, transverse to the flow $X_\gamma$, whose leaves are simple closed curves of homotopy class $m+np$. By reparametrizing  the flow $X_\gamma$, one gets a new smooth vector field  $Y_\gamma$ on  $M_\gamma$  that leaves invariant the foliation $\mathcal{G}_n$.

Let $M_{\gamma,n}$ be the manifold obtained from $M_\gamma$ by collapsing the leaves of $\mathcal{G}_n$. The flow $Y_\gamma$ passes to the quotient and becomes a topological Anosov flow $X_{\gamma,n}$ on $M_{\gamma,n}$. 

Shannon in \cite{Mariothese} proves that $X_{\gamma,n}$ is orbitally equivalent (by a homeomorphism isotopic to identity) to an Anosov flow and that its orbit equivalence class depends only on $n$ and not our choice of foliation $\mathcal{G}_n$. We call $X_{\gamma,n}$ \emph{the Anosov flow obtained from $X$ by a (Dehn-Goodman-Fried) surgery along $\gamma$ with characteristic number $n$}. 

\subsubsection{Dehn-Goodman-Fried surgeries on orbits with negative eigenvalues} \label{s.vpnegative}
On an orientated $3$-manifold, it is also possible to perform Dehn-Goodman-Fried surgeries on  periodic orbits $\gamma$ with  negative eigenvalues. Once again, the intersection of the weak stable (or unstable) manifold of $\gamma$ with $T_\gamma$ defines a homotopy (or homology) class $P\in \mathbb{Z}^2= \pi_1(T_\gamma)$ and the fibers of $\pi_\gamma$ define a homotopy (or homology) class $m\in \mathbb{Z}^2= \pi_1(T_\gamma)$. In this case however, $P$ and $m$ intersect twice and therefore they do not form a basis of $\pi_1(T_\gamma)$. In order to complete $m$ into a basis of $\pi_1(T_\gamma)$ we consider the element $p:=\frac{1}{2}(P+m)\in \pi_1(T_\gamma)$. The homotopy classes $p$ and $m$ define respectively a canonical parallel and a canonical meridian on $T_{\gamma}$. 

Following our construction in the case of positive eigenvalues, we now need to define a new meridian $m_{new}$ along which we are going to collapse the torus $T_\gamma$. This new meridian must correspond to an indivisible element of $\pi_1(T_\gamma)$ and must verify $i(P,m)=i(P,m_{new})=\pm 2$, where $i$ is the intersection number of two elements in $\pi_1(T_\gamma)$. The last property assures that $\gamma$ (in the flow after surgery) remains a regular saddle and does not transform into a $k$-prong singularity. We can easily deduce from the two above properties that $m_{new}$ is of the form $$(2-2l)p+lm$$
where $l$ is an odd integer. By using the fact that $p:=\frac{1}{2}(P+m)$, we get that $$m_{new}= m+ (1-l)P$$
Finally, since $(1-l)$ is even we can write $m_{new}$ in the form $m+2nP$. As in the case of positive eigenvalues, by eventually reparametrizing $X_\gamma$, given any integer $n\in\mathbb{Z}$, there exists a foliation $\mathcal{G}_n$  on $T_\gamma$, transverse and invariant by the flow $X_\gamma$, whose leaves are simple closed curves of homotopy class $m+2nP$. 

By collapsing $T_\gamma$ along the leaves of $\mathcal{G}_n$, the flow $X_\gamma$ passes to the quotient and we thus obtain a (transitive) topological Anosov flow on a closed 3-manifold, $M_{\gamma,n}$. Once again by \cite{Mariothese} the previous flow is orbitally equivalent (by a homeomorphism isotopic to identity) to a (transitive) Anosov flow $X_{\gamma,n}$ called \emph{the Anosov flow obtained from $X$ by a (Dehn-Goodman-Fried) surgery along $\gamma$ with characteristic number $n$}.

\section{Markovian families of rectangles}\label{s.markovianfamilies}
In this section, we will define the notion of Markovian  families as a generalization of the notion of Markov partition in the bifoliated plane of an Anosov flow. In Section \ref{ss.existencemarkovfamilies}, we will prove that the projection on the bifoliated plane of the lift on $\mathbb{R}^3$ of a reduced Markov partition of the flow is a Markovian  family and therefore there exist infinitely many Markovian families associated to a transitive Anosov flow. In Section \ref{markovianfamilytogeometrictype} we will prove Theorem A, using the fact that Markovian families and Markov partitions share an abundance of properties.

Let $M$ be an orientable, closed,  3-manifold carrying a transitive Anosov flow $\Phi$. We will denote by $F^s,F^u$ the (weak) stable and unstable manifolds of $\Phi$. Let also $(\mathcal{P}, \mathcal{F}^s, \mathcal{F}^u)$ be the bifoliated plane of $\Phi$ endowed with an orientation.

\subsection{Reduced Markov partitions correspond to Markovian  families}\label{ss.existencemarkovfamilies}
\begin{defi}\label{d.rectanglesinplane}
Consider $[0,1]^2$ endowed with the vertical and horizontal foliations, the trivially bifoliated rectangle. A \emph{rectangle} $R$ in $\mathcal{P}$ is the image of a trivially bifoliated rectangle by a continuous embedding $\phi: [0,1]^2\rightarrow \mathcal{P}$ sending vertical segments to unstable segments and horizontal segments to stable segments. 

We will call $\partial^sR:= \phi([0,1]\times\lbrace0,1\rbrace  )$ the stable boundary of $R$ and $\partial^uR:=\phi(  \lbrace0,1\rbrace \times[0,1])$ its unstable boundary. 

Furthermore, any rectangle $R'$ of the form $\phi([s,t]\times [0,1])$ where $s,t\in (0,1]$ and $s<t$ will be called a vertical subrectangle of $R$. If furthermore $R' \neq R$, then we will call $R'$ a non-trivial vertical subretangle of $R$. We define similarly horizontal subrectangles and non-trivial horizontal subrectangles.

\end{defi}

\begin{defi}\label{d.markovfamily}
A \emph{Markovian family} (of rectangles) in $\mathcal{P}$ is a set of mutually distinct rectangles $(R_i)_{i \in I}$ covering $\mathcal{P}$ (i.e. $\cup_{i \in I} R_i = \mathcal{P}$) such that 
\begin{enumerate}
\item (Finiteness axiom) $(R_i)_{i \in I}$ is the union of a finite number of orbits of rectangles of the action by $\pi_1(M)$ (i.e. there exists $i_1,...,i_n \in I$ such that $\pi_1(M).(\overset{n}{\underset{k=1}{\cup}} R_{i_k}) = (R_i)_{i \in I}$) 
\item (Markovian intersection axiom) For every two rectangles $R_i,R_j$ in $(R_i)_{i \in I}$, if $\overset{\circ}{R_i } \cap \overset{\circ}{R_j} \neq \emptyset$, then $R_i \cap R_j$ is a non-trivial horizontal subrectangle of $R_i$ (or $R_j$ resp.) and a non-trivial vertical subrectangle of $R_j$ (or $R_i$ resp.) 
\item (Finite return time axiom) Take any point $x\in \mathcal{P}$ and any of its four quadrants defined by $\mathcal{F}^s(x)$ and $\mathcal{F}^u(x)$. For any sufficiently small neighbourhood $G$ of $x$ inside this quadrant, there exists $R\in (R_i)_{i \in I}$ such that $G\subset R$
\end{enumerate} 
\end{defi}

\begin{prop}\label{p.projectionmarkovpartition}
Take $R_1,R_2,...,R_n$ a reduced Markov partition of $\Phi$. Consider $(\widetilde{R_i})_{i\in I}$ the lift of the $R_i$ on $\widetilde{M}=\mathbb{R}^3$ and $(\overline{R_i})$ the projection of the $\widetilde{R_i}$ on $\mathcal{P}$ for all $i \in I$. The set $(\overline{R_i})_{i\in I}$ is a Markovian family of rectangles. 
\end{prop}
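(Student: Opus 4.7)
The plan is to verify the covering condition $\bigcup_{i\in I}\overline{R_i}=\mathcal{P}$ together with the three axioms of Definition~\ref{d.markovfamily} in turn. Write $\pi_{\widetilde M}:\widetilde M\to M$ for the universal covering, $\pi_{\mathcal P}:\widetilde M\to\mathcal P$ for the orbit projection, $f$ for the first return map of $\Phi$ on $\bigcup_k R_k$, and $a:I\to\{1,\dots,n\}$ for the map recording which downstairs rectangle $R_{a(i)}$ each lift $\widetilde R_i$ projects to.

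Covering and finiteness are the easy steps. The uniformly bounded return time provided by Definition~\ref{d.markovpartition}(2) forces every orbit of $\widetilde\Phi$ to meet some $\widetilde R_i$, hence every point of $\mathcal P$ lies in some $\overline R_i$. For finiteness, the $\widetilde R_i$ constitute $n$ orbits under the $\pi_1(M)$-action on $\widetilde M$, and equivariance of $\pi_{\mathcal P}$ passes this structure to (at most) $n$ orbits of the induced action on $\mathcal P$.

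For the Markovian intersection axiom, fix distinct rectangles $\overline R_i, \overline R_j$ whose interiors meet, and pick specific lifts $\widetilde R_i, \widetilde R_j$. The nonempty open interior intersection corresponds to an open set of $\widetilde\Phi$-orbits passing through both lifts; by continuity these orbits follow a fixed combinatorial path of intermediate Markov rectangles, so there is a unique $k\in\mathbb Z$ such that $\overline R_i\cap\overline R_j$ is identified via the flow with the closure of one connected component of $f^k(\overset{\circ}{R}_{a(i)})\cap\overset{\circ}{R}_{a(j)}$. An induction on $|k|$ from axioms (3)--(4) of Definition~\ref{d.markovpartition} recovers the classical iterated Markov property and identifies this closure as a vertical subrectangle of $R_{a(j)}$ and (via $f^{-k}$) a horizontal subrectangle of $R_{a(i)}$; this structure transfers through $\pi_{\mathcal P}$ to the required description of $\overline R_i\cap\overline R_j$ in $\mathcal P$. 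For non-triviality, an equality such as $\overline R_i\cap\overline R_j=\overline R_j$ produces downstairs a continuous $\tau:R_{a(j)}\to\mathbb R$ with $\Phi^\tau(R_{a(j)})\subseteq R_{a(i)}$, which contradicts reducedness if $a(i)\neq a(j)$, and contradicts the hyperbolic expansion of the unstable direction by any nonzero iterate of $f$ (so no $f^k$ can send a rectangle into itself) if $a(i)=a(j)$; the symmetric equality is handled identically.

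The main obstacle is the finite return time axiom. Given $x\in\mathcal P$, a quadrant $Q$ at $x$, and a lift $\widetilde p$ of $x$, my plan is to take a sequence $y_n\in Q$ converging to $x$, note that each $y_n$ lies in some $\overline R_{i_n}$ by coverage, and observe that since rectangles are closed $x\in\overline R_{i_n}$ for all $n$. A case analysis on the position of $x$ in $\overline R_{i_n}$ (interior, stable edge, unstable edge, or corner), combined with the fact that $\overline R_{i_n}$ contains points of $Q$ arbitrarily near $x$, forces $\overline R_{i_n}$ to contain a full neighborhood of $x$ in $Q$ for at least one $n$. The delicate point is that when $x$ lies on the stable or unstable manifold of a boundary periodic orbit, the orbit of $\widetilde p$ may systematically hit Markov rectangles only along their stable/unstable boundaries, so no single crossing places $\widetilde p$ in a rectangle's interior; the argument must then exploit both forward and backward crossings and the uniform covering property of Definition~\ref{d.markovpartition}(2) to ensure that the collection of rectangles containing $x$ exhausts each of the four quadrants in the required sense.
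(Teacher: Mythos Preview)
There are two genuine gaps.

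First, you never check that the $\overline{R_i}$ are rectangles in $\mathcal{P}$ in the sense of Definition~\ref{d.rectanglesinplane}. This amounts to showing that $\pi_{\mathcal P}|_{\widetilde R_i}$ is injective, i.e.\ that no orbit of $\widetilde\Phi$ meets a transverse rectangle $\widetilde R_i$ twice. The paper proves this by a closed-transversal argument: if an orbit hit $\widetilde R_i$ at two points $x,y$ on distinct unstable segments, one concatenates the orbit arc from $x$ to $y$ with an arc in $\widetilde R_i$ transverse to $\widetilde{F^u}$, perturbs, and obtains a closed transversal to the foliation by planes $\widetilde{F^u}$ in $\mathbb{R}^3$, which is impossible. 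Without this, your repeated appeals to ``transferring structure through $\pi_{\mathcal P}$'' are unjustified; in particular, your claim that there is a \emph{unique} $k$ in the Markovian intersection step tacitly uses that a leaf of $\widetilde{F^{s,u}}$ meets $\widetilde R_j$ in at most one segment, which is exactly the content of this missing step (the paper records this as Remark~\ref{r.orbitsrectangles} and uses it to force $s=1$).

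Second, your finite return time argument contains a non sequitur and, as you yourself concede, is left incomplete. From $y_n\in\overline R_{i_n}$ and closedness of $\overline R_{i_n}$ you cannot conclude $x\in\overline R_{i_n}$: closedness would give that only if a sequence \emph{inside} $\overline R_{i_n}$ converged to $x$, but you have put a single point there. The paper avoids your boundary case analysis entirely. It argues by contradiction: if no rectangle contains a $Q$-neighbourhood of $x$, then one can choose $x_n\to x$ in $Q$ whose $\widetilde\Phi$-orbits meet pairwise \emph{distinct} lifted rectangles $\widetilde R_{i_n}$; the uniform bound on return times from Definition~\ref{d.markovpartition}(2) then traps infinitely many distinct $\widetilde R_{i_n}$ in a fixed compact subset of $\mathbb{R}^3$, contradicting proper discontinuity of the $\pi_1(M)$-action. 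The point you found ``delicate'' (boundary periodic leaves) simply does not arise in this approach.
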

\begin{proof}
We will check the axioms of the previous definition one by one.

\vspace{0.2cm}
\textit{The $\overline{R_i}$ are rectangles in $\mathcal{P}$}

Suppose that an orbit of $\widetilde{\Phi}$ intersects twice $\widetilde{R_j}$ along the points $x,y$ (see Figure \ref{f.orbitrect}a). Without any loss of generality, we will assume that $x$ and $y$ do not belong both to the same unstable segment of $\widetilde{R_j}$. Let $\gamma$ be an arc in $\widetilde{R_j}$ transverse to $\widetilde{F^u}$ and going from $y$ to $x$ (see Figure \ref{f.orbitrect}a).  The points $x,y$ belong to the same weak unstable leaf $\mathcal{L}\in \widetilde{F^u}$. Let us denote by $l$ the loop obtained by concatenating the orbit segment (by $\widetilde{\Phi}$) going from $x$ to $y$ and the arc $\gamma$. By a small perturbation (see Figure \ref{f.orbitrect}b) we can assume that $l$ in transverse to $\widetilde{F^u}$, which is impossible because a foliation by planes in a simply connected manifold can not admit a closed trasnversal (see for instance Corollary 1 of \cite{Palmeira}). Therefore, 
since all the rectangles $\widetilde{R_j}$ are transverse to the lifted flow $\widetilde{\Phi}$ and compact, their projection on the bifoliated plane defines a homeomorphism. We deduce that the $\overline{R_i}$ are rectangles in $\mathcal{P}$. 
\begin{figure}[h]
 
  \begin{minipage}[ht]{0.4\textwidth}
  \centering
    \includegraphics[width=0.8\textwidth]{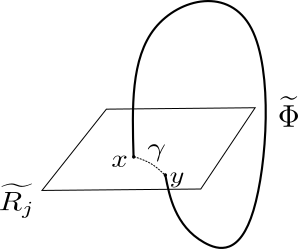}
    \caption*{(a)}
    
  \end{minipage}
  \begin{minipage}[ht]{0.4\textwidth}
  \centering
    \includegraphics[width=0.8\textwidth]{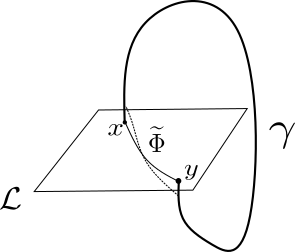}
    \caption*{(b)}
    
  \end{minipage}
  \caption{An orbit of $\widetilde{\Phi}$ can not intersect twice a rectangle}
  \label{f.orbitrect}
\end{figure}

\vspace{0.2cm}
\textit{Finiteness axiom} 

The  rectangles  $(\overline{R_i})_{i\in I}$ are mutually distinct and verify the finiteness axiom, since they correspond to projections on $\mathcal{P}$ of the lifts on $\widetilde{M}$ of a finite number of rectangles in $M$ that are not equivalent by $\Phi$ (i.e. for every $i,j\in \llbracket 1, n \rrbracket$, if $i\neq j $, then there doesn't exist a continuous function $\tau: R_i \rightarrow \mathbb{R}$ such that $\Phi^{\tau}(R_i)=R_j$) . 

\textit{Markovian intersection axiom} 

Take two distinct $i,j \in I$ such that $\overset{\circ}{\overline{R_i} } \cap \overset{\circ}{\overline{R_j}} \neq \emptyset$. Consider the lifts $\widetilde{R_i}$ and $\widetilde{R_j}$ of  $\overset{\circ}{\overline{R_i} }$ and $ \overset{\circ}{\overline{R_j}}$ on $\widetilde{M}$. Denote by $A$ (resp. $B$) the set of points of $\widetilde{R_i}$, whose positive (resp. negative) orbits by $\widetilde{\Phi}$ intersect $\widetilde{R_j}$. Without any loss of generality let us assume that $A\neq \emptyset$. It suffices to show that $A$ is a non-trivial horizontal subrectangle of $\widetilde{R_i}$ and that $B=\emptyset$. 

Consider the map $f: A \rightarrow \widetilde{R_j}$ associating every point $x\in A$ to the first point of intersection of the positive orbit of $x$ by $\widetilde{\Phi}$ with $\widetilde{R_j}$. Since $A$ and $\widetilde{R_j}$ are compact, the positive orbits of all points in $A$ intersect $\widetilde{R_j}$ in a uniformly bounded time. Hence, for a sufficiently big $N>0$, every orbit of $A$ intersects at most $N$ rectangles in $(\widetilde{R_i})_{i\in I}$ before intersecting $\widetilde{R_j}$. By induction on $N$ we can show that $A$ can be written as the union of finitely many horizontal subrectangles $h_1,...,h_s$ of $\widetilde{R_i}$ such that :
\begin{itemize}
    \item the sets  $\text{Int}_{\widetilde{R_i}}(h_i)$ and $f(\text{Int}_{\widetilde{R_i}}(h_i))$ are two by two disjoint
    \item the closure of every $f(\text{Int}_{\widetilde{R_i}}(h_i))$ is a vertical subrectangle of $\widetilde{R_j}$
    \item $f$ acts continuously on every  $\text{Int}_{\widetilde{R_i}}(h_i)$
\end{itemize}
Now, suppose that $s>1$. By pushing by $f$ any unstable segment in $\widetilde{R_i}$ going from one stable boundary to the other, we obtain $s$ unstable segments $u_1,...,u_s$ of $\widetilde{R_j}$ going from one stable boundary to the other. The segments $u_1,...,u_s$ belong to the same weak unstable leaf $\mathcal{L}$ in $\widetilde{F^u}$. Therefore, for any $x\in \widetilde{R_j}$ the leaf $\widetilde{F^s}(x)$ intersects $\mathcal{L}$ inside $\widetilde{R_j}$ more than once. Since any two weak stable and unstable manifolds on $\widetilde{M}$ intersect along one orbit at most (see Proposition 1.4.10 in \cite{Barbotthese}) this implies that there exists one orbit of $\widetilde{\Phi}$ intersecting multiple times $\widetilde{R_j}$. We have already established that this is impossible in the part \textit{The $\overline{R_i}$ are rectangles in $\mathcal{P}$}. Therefore, $s=1$ and $A$ consists of a unique horizontal subrectangle of $\widetilde{R_i}$ on the interior of which $f$ acts continuously. Notice that since $R_1,...,R_n$ is reduced $A\neq \widetilde{R_i}$, therefore the previous subrectangle is not trivial and $\overline{R_i} \cap \overline{R_j}$ contains a non-trivial horizontal subrectangle of $\overline{R_i}$. 

 If $B\neq \emptyset$, by the same argument, $\overline{R_i} \cap \overline{R_j}$ would also contain a non-trivial vertical subrectangle of $\overline{R_i}$. This is impossible since $\overline{R_i}$, $\overline{R_j}$ are embedded in $\mathcal{P}$. We thus showed that $\overline{R_i} \cap \overline{R_j}$ is a non-trivial horizontal subrectangle of $\overline{R_i}$ and by a symmetric argument we also obtain that it is also a non-trivial vertical subrectangle of $\overline{R_j}$. 

\vspace{0.2cm}
\textit{Finite return time axiom} 

Suppose that there exists a point $x\in \mathcal{P}$ and a quadrant $Q$ of $x$ such that for any small neighbourhood $U$ of $x$ in $Q$ there is no rectangle in $(\overline{R_i})_{i\in I}$ containing $U$. Since the orbit by $\Phi$ of any point in $M$ intersects a rectangle in $R_1,...,R_n$ in a uniformly bounded time, the orbit by $\widetilde{\Phi}$ of any point in $U$ intersects a rectangle in $(\widetilde{R_i})_{i\in I}$ in a uniformly bounded time. Therefore, we can find a sequence $x_n \in U$ accumulating to $x$ such that the orbit of $x_n$ by $\widetilde{\Phi}$ intersects $\widetilde{R_n}$ in a uniformly bounded time. Furthermore, by our initial hypothesis, we can assume that the $\widetilde{R_n}$ are mutually distinct. But, such a family of rectangles cannot be contained in a compact set in $\mathbb{R}^3$, which contradicts the fact that the orbits of the $x_n$ intersect them in a uniformly bounded time.

\end{proof}

A useful fact proven above (see \textit{the $\overline{R_i}$ are rectangles}) and that will later be used in other occasions is that 
\begin{rema}\label{r.orbitsrectangles}
Let $\Phi$ be a transitive Anosov flow on $M$. Every orbit of the lifted flow $\widetilde{\Phi}$ on $\widetilde{M}=\mathbb{R}^3$ intersects at most once any rectangle in $\mathbb{R}^3$ transverse to $\widetilde{\Phi}$. More generally, by the same exact proof, we can show that any leaf in $\widetilde{F^s}$ or $\widetilde{F^u}$ intersects any rectangle in $\mathbb{R}^3$ transverse to $\widetilde{\Phi}$ at most along one segment. 
\end{rema}

Using Proposition \ref{p.projectionmarkovpartition} and Theorem \ref{t.existenceofmarkovpartitions}, we conclude that 

\begin{coro}
There exist infinitely many Markovian families in the bifoliated plane of every transitive Anosov flow. 
\end{coro}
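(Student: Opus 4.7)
The plan is to combine Proposition \ref{p.projectionmarkovpartition} with the existence theorem for reduced Markov partitions, and then exhibit reduced Markov partitions with arbitrarily many rectangles in $M$.

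The key invariant I would use is the number of $\pi_1(M)$-orbits of rectangles. Specifically, if $(R_1,\dots,R_n)$ is a reduced Markov partition of $\Phi$, then the associated Markovian family $(\overline{R_i})_{i\in I}$ in $\mathcal{P}$ is the union of exactly $n$ orbits under the $\pi_1(M)$-action. Indeed, each $R_k\subset M$ lifts to a single $\pi_1(M)$-orbit of pairwise disjoint rectangles in $\widetilde{M}$; each such lift projects homeomorphically onto $\mathcal{P}$ by Remark \ref{r.orbitsrectangles}; and the reduced hypothesis prevents distinct $R_k,R_\ell$ from producing the same $\pi_1(M)$-orbit of projected rectangles, since otherwise some continuous $\tau\colon R_k\to \mathbb{R}$ would satisfy $\Phi^\tau(R_k)=R_\ell$. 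Consequently, two reduced Markov partitions with different numbers of rectangles yield Markovian families with different numbers of $\pi_1(M)$-orbits of rectangles, hence distinct Markovian families.

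It therefore suffices to construct reduced Markov partitions of $\Phi$ with arbitrarily large $n$. Starting from any reduced Markov partition $(R_1,\dots,R_n)$ with first return map $f$, I would form the $N$-th refinement whose rectangles are the closures of the connected components of $\bigcap_{j=0}^{N-1} f^{-j}(\operatorname{Int}(R_{i_j}))$ for $(i_0,\dots,i_{N-1})\in\{1,\dots,n\}^N$. The Markov axioms (3) and (4) pass to the refinement by iteration, and positivity of the topological entropy of $\Phi$ forces the number of nonempty pieces of the refinement to grow exponentially in $N$. Applying the reduction procedure from the proof of the reduced Markov partition existence theorem to this refined partition produces reduced Markov partitions with arbitrarily many rectangles. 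Combined with the previous invariant and Proposition \ref{p.projectionmarkovpartition}, this yields infinitely many Markovian families.

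The step I expect to be the main obstacle is controlling the reduction procedure, since one must ensure that it does not collapse the exponentially large refined partitions back to partitions of bounded cardinality. A cleaner alternative that bypasses this concern is to invoke the reduced Markov partition existence theorem directly for the countably many distinct periodic orbits $\gamma$ of $\Phi$ and to observe that, for distinct $\gamma\neq\gamma'$, the stable and unstable boundary leaves of the resulting rectangles are pinned to distinct $\pi_1(M)$-orbits of leaves in $(\mathcal{P},\mathcal{F}^s,\mathcal{F}^u)$, so that the associated Markovian families differ already at the level of boundary leaves.
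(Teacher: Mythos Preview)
Your second approach---invoking the reduced Markov partition existence theorem for the countably many periodic orbits $\gamma$ and distinguishing the resulting Markovian families by the $\pi_1(M)$-orbits of their boundary leaves---is precisely the paper's route: the paper simply cites Proposition~\ref{p.projectionmarkovpartition} together with the existence of (reduced) Markov partitions whose boundaries lie in $\mathcal{F}^s(\gamma)\cup\mathcal{F}^u(\gamma)$, leaving the distinctness implicit. Your added remark that distinct $\gamma$ pin the boundaries to distinct leaf orbits makes that implicit step explicit and is correct.

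Your first approach via $N$-th refinements is a detour with the gap you yourself identify: nothing prevents the reduction procedure from collapsing the refined partition drastically, and in fact the refined rectangles are horizontal subrectangles of the original ones, so flowing one by the appropriate iterate of the first return map can land it inside another---exactly the situation the reduction step is designed to eliminate. Since you already have the cleaner argument, there is no need to rescue this one.
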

 
\subsection{Associating a Markovian  family to a geometric type} \label{markovianfamilytogeometrictype}
Recall that we denote the bifoliated plane of $\Phi$ by $\mathcal{P}$ and its oriented stable and unstable line foliations by $\mathcal{F}^s$ and $\mathcal{F}^u$. We will fix from now on $\mathcal{R}$ a Markovian family of $\mathcal{P}$.

Our goal in this section is to prove Theorem A. In order to do that we will first need to prove a series of lemmas showing among others that a  general Markovian family has a lot of similarities with the Markovian families constructed in Section \ref{ss.existencemarkovfamilies}: the boundaries of all the rectangles belong to stable/unstable leaves with non-trivial stabilizer in $\pi_1(M)$, every point in $\mathcal{P}$ belongs to infinitely many rectangles of the family, etc. Because of the abundance of such similarities we conjecture that:  
\begin{conj}
Every Markovian family corresponds to the projection on $\mathcal{P}$ of the lift on $\mathbb{R}^3$ of a Markov partition of $\Phi$.
\end{conj}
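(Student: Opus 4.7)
The plan is to produce, from a given Markovian family $\mathcal{R}$, a reduced Markov partition of $\Phi$ in $M$ whose lift-and-projection (as in Proposition \ref{p.projectionmarkovpartition}) recovers $\mathcal{R}$. Denote by $p\colon\widetilde{M}\to M$ and $q\colon\widetilde{M}\to\mathcal{P}$ the covering and orbit-space projections. By the finiteness axiom, first pick representatives $\overline{R}_1,\ldots,\overline{R}_n\in\mathcal{R}$, one per $\pi_1(M)$-orbit. For each $k$, the preimage $q^{-1}(\overline{R}_k)\subset\widetilde{M}$ is flow-saturated and homeomorphic to $\overline{R}_k\times\mathbb{R}$, with the $\mathbb{R}$-factor given by $\widetilde{\Phi}$. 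I would then select continuous sections $\widetilde{R}_k\subset q^{-1}(\overline{R}_k)$ transverse to $\widetilde{\Phi}$, arranged so that their $\pi_1(M)$-translates form a pairwise disjoint collection $\widetilde{\mathcal{M}}$ in $\widetilde{M}$, and finally take $\mathcal{M}=\{p(\widetilde{R}_1),\ldots,p(\widetilde{R}_n)\}$ as the candidate partition in $M$.

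Once such compatible sections are in place, the axioms of Definition \ref{d.markovpartition} follow almost for free. Transversality to $\Phi$ and pairwise disjointness in $M$ are built in. The finite return time axiom of $\mathcal{R}$ combined with the compactness of $M$ yields the uniform upper bound on return times. For the Markov property itself, the closure of a component of $f(\mathrm{Int}(R_i))\cap\mathrm{Int}(R_j)$ lifts in $\widetilde{M}$ to the overlap of two elements of $\widetilde{\mathcal{M}}$, namely an appropriate flow-translate of $\widetilde{R}_i$ and $\widetilde{R}_j$; projecting via $q$ and invoking the Markovian intersection axiom of $\mathcal{R}$ converts this overlap into a non-trivial vertical subrectangle of $\overline{R}_j$ in $\mathcal{P}$, and by Remark \ref{r.orbitsrectangles} this information lifts back unambiguously to a vertical subrectangle of $R_j$ in $M$. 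Reducedness follows from the fact that the $\overline{R}_k$ were chosen from distinct $\pi_1(M)$-orbits, so no two $R_k$ can be related by a flow-translation.

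The main obstacle lies entirely in the choice-of-sections step. Whenever two rectangles $\overline{R}_i,\overline{R}_j\in\mathcal{R}$ have overlapping interiors in $\mathcal{P}$, the corresponding tubes in $\widetilde{M}$ share a flow-saturated sub-tube, and any two sections of them differ there by a flow-time function $\tau\colon\overline{R}_i\cap\overline{R}_j\to\mathbb{R}$; equivariant disjointness demands that $\tau$ nowhere vanish, \emph{and} that no non-trivial element of $\pi_1(M)$ identifies two of the chosen sections. I would attempt an inductive construction, adding one $\pi_1(M)$-orbit of sections at a time and perturbing along the flow to avoid the previously constructed orbits, exploiting the contractibility of each tube $\overline{R}_k\times\mathbb{R}$ and the local product structure of $\widetilde{\Phi}$. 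The hard part is global compatibility: perturbations required to resolve one overlap may create new collisions with another, so one really needs a cocycle-adjustment argument for a cocycle on the nerve of $\mathcal{R}$ valued in $\mathbb{R}$ (the flow direction), and to show that the resulting obstruction class vanishes in the appropriate $\pi_1(M)$-equivariant cohomology. This is exactly the step that leads the authors to leave the statement as a conjecture, and I expect it to be the genuine technical core of any proof.
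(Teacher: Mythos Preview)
The statement you are addressing is labeled a \emph{Conjecture} in the paper, and the paper provides no proof whatsoever; it is explicitly left open. So there is no ``paper's own proof'' to compare against. You are evidently aware of this, since your final paragraph correctly identifies the section-choice/cocycle step as the reason the authors do not claim a proof.

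That said, your sketch is a plausible outline of how one might attack the conjecture, and the reduction you describe (once compatible sections exist, the Markov-partition axioms follow from the Markovian-family axioms via Remark~\ref{r.orbitsrectangles} and the intersection axiom) is essentially the content of Proposition~\ref{p.projectionmarkovpartition} run in reverse. But you should be clear that what you have written is not a proof: the equivariant section-extension problem you isolate is genuinely unresolved, and framing the cohomological obstruction does not by itself show it vanishes. In particular, the ``inductive perturbation'' you propose is not obviously convergent, since the nerve of $\mathcal{R}$ is infinite and the overlaps accumulate; even the finiteness-up-to-$\pi_1(M)$ does not immediately reduce this to a finite problem, because the orbit of a single rectangle can meet a fixed compact region in infinitely many pieces. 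Your proposal is an honest roadmap with the gap correctly flagged, but it remains a gap.
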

All the results that we prove in this section are analogous to well known results on Markov partitions (see Remark \ref{r.comparison}, this is why the reader may choose to think of a Markovian family as a Markov partition inside the bifoliated plane of an Anosov flow.

In the following lines, we will call 
\begin{itemize}
    \item a point in $\mathcal{P}$ \emph{periodic} if its associated orbit in $M$ is periodic
    \item a stable or unstable leaf in $\mathcal{F}^{s,u}$ \emph{periodic} if it contains a periodic point
    \item the closure of a connected component of $\mathcal{F}^s(x)-\{x\}$ (resp. $\mathcal{F}^u(x)-\{x\}$) a stable (resp. unstable) \emph{separatrix} of $x\in \mathcal{P}$
    \item the closure of any connected component of $\mathcal{P}-(\mathcal{F}^s(x) \cup \mathcal{F}^u(x))$ a quadrant of $x\in \mathcal{P}$
\end{itemize}

Using the orientations on $\mathcal{F}^{s,u}$, we will denote by $\mathcal{F}^{s,u}_{+}(x)$ (resp. $\mathcal{F}^{s,u}_{-}(x)$) the positive (resp. negative) stable/unstable separatrix of $x\in \mathcal{P}$. Also, the quadrant of $x$ delimited by $\mathcal{F}^{s}_{\epsilon}(x)$ and $\mathcal{F}^{u}_{\epsilon'}(x)$, where $\epsilon,\epsilon'\in \{+,-\}$, will be refered as the $(\epsilon,\epsilon')$ quadrant of $x$. 

For the sake of simplicity, we will assume from now on and until explicitely said otherwise, that the action of $\pi_1(M)$ on $\mathcal{P}$ preserves the orientations of the foliations. In other words, we will assume that \underline{$\Phi$ has transversally oriented foliations}. In Remark \ref{ss.nontransorientcase} we will explain how to adapt the following proofs in the non-transversally orientable case.

\begin{lemm}\label{l.vertsubrectangleexists}
Take $R\in\mathcal{R}$, $x\in R$ and  $Q$ a quadrant of $x$. If $R$ intersects a germ of $x$ in $Q$ (i.e. there exists $\mathcal{G}$ a neighbourhood of $x$ inside $Q$ such that $\mathcal{G}\subset R$), then there exists $R_v\in \mathcal{R}$ such that $R_v$ also  intersects a germ of $x$ in $Q$ 
and $R_v\cap R$ is a non-trivial vertical subrectangle of $R$.

\end{lemm}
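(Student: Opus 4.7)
Using the embedding $\phi\colon [0,1]^2\to R$, identify $R$ with $[0,1]^2$ and write $x=\phi(a,b)$. After possibly relabeling the orientations of $\mathcal F^s$ and $\mathcal F^u$, assume $Q$ is the quadrant of $x$ corresponding to the $(+,+)$ direction in these coordinates. The hypothesis that $R$ contains a germ of $x$ in $Q$ then becomes $a<1$ and $b<1$. The goal is to produce $R_v\in\mathcal R$ with $R_v\cap R=[s,t]\times[0,1]$ (in the coordinates of $R$) for some $s\le a<t\le 1$ and $(s,t)\neq(0,1)$; such an $R_v$ automatically contains $x$ together with a germ of $x$ in $Q$.

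\textbf{Main construction.} I would apply the finite return time axiom (third axiom of Definition \ref{d.markovfamily}) at the auxiliary point $y:=\phi(a,1)$, lying on the top stable boundary of $R$ directly above $x$, using the quadrant of $y$ obtained from $Q$ by flipping the unstable sign — i.e.\ the quadrant that agrees with $Q$ on the stable side but points strictly away from $R$ across the stable boundary. This yields a rectangle $R_v\in\mathcal R$ containing a small germ of $y$ in that quadrant. Since this germ lies outside $R$, we have $R_v\neq R$.

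\textbf{Key step.} The next thing to verify is that $\mathring R_v\cap\mathring R\ne\emptyset$, so the Markovian intersection axiom will apply. Because $R_v$ is a rectangle containing $y$ and containing a small open neighbourhood of $y$ in the chosen quadrant, the unstable leaf $\mathcal F^u(y)=\mathcal F^u(x)$ meets $R_v$ along an unstable segment that extends to both sides of $y$ unless $y$ lies on an unstable boundary of $R_v$; the latter case is ruled out by the fact that the germ given by Axiom 3 extends strictly in the positive stable direction from $y$. Hence the unstable segment of $R_v$ through $y$ dips below $y$ into $R$, which yields interior points of $R_v$ inside $R$.

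\textbf{Conclusion.} By the Markovian intersection axiom, $R_v\cap R$ is either a non-trivial horizontal or a non-trivial vertical subrectangle of $R$. I would rule out the horizontal case as follows: a horizontal subrectangle of $R$ containing $y=(a,1)$ would be of the form $[0,1]\times[s',1]$ with $s'<1$, which combined with the fact that $R_v$ extends strictly above $R$ (the germ of $y$ lies strictly above the top stable boundary of $R$) forces the stable boundary of $R_v$ at unstable levels slightly above $1$ to coincide with the full span $[0,1]$; using the covering and finiteness axioms one then extracts another rectangle of narrower stable extent that does give a genuine vertical subrectangle. Once $R_v\cap R=[s,t]\times[0,1]$ is established, the fact that $a\in[s,t]$ and the germ at $y$ extends rightward in the stable direction give $s\le a<t$, which yields the required germ of $x$ in $Q$ inside $R_v$.

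\textbf{Main obstacle.} The principal difficulty is controlling the type of subrectangle produced: Axiom 3 only asserts existence of some rectangle covering a given germ, with no a priori control over whether that rectangle is ``wide'' (forcing a horizontal subrectangle) or ``narrow'' (giving the desired vertical one). Overcoming this will require combining Axiom 3 at several points along the top stable boundary of $R$ with the Markovian intersection axiom and the finiteness axiom, and will also demand separate care for the corner cases when $a=0$ or $b=0$, i.e.\ when $y$ itself is already a corner of $R$.
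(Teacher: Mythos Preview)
Your proposal has a genuine gap, and you correctly identify it yourself in the ``Main obstacle'' paragraph: applying the finite return time axiom at $y=\phi(a,1)$ gives \emph{some} rectangle $R_v$ containing a germ of $y$, but with absolutely no control over its shape. Nothing prevents $R_v$ from being much wider than $R$ in the stable direction, in which case $R_v\cap R$ would be a horizontal subrectangle. Your sentence ``using the covering and finiteness axioms one then extracts another rectangle of narrower stable extent'' is not an argument; the finiteness axiom only bounds the number of $\pi_1(M)$-orbits of rectangles, not their metric proportions at a given location. There is also a smaller issue in your ``Key step'': the germ of $y$ in the outward quadrant being contained in $R_v$ is perfectly consistent with $y$ lying on the \emph{bottom stable} boundary of $R_v$ (not the unstable one), in which case $R_v$ need not dip into $\mathring R$ at all.

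The paper's proof takes an entirely different route that directly addresses the shape problem. The point is that to force a \emph{vertical} subrectangle one needs a rectangle whose unstable extent strictly exceeds that of $R$; this is achieved dynamically. In the periodic case, one simply takes $g\in\mathrm{Stab}(x)$ expanding along $\mathcal F^u(x)$ and sets $R_v=g(R)$. In the non-periodic case, the argument lifts everything to $\widetilde M=\mathbb R^3$, uses the $\alpha$-limit of the projected orbit of $x$ together with Axiom~3 to produce infinitely many rectangles in $\widetilde{\mathcal R}$ hit by the \emph{negative} $\widetilde\Phi$-orbit of $\tilde x$ and intersecting both $\widetilde{\mathcal F^s_+}(x)$ and $\widetilde{\mathcal F^u_+}(x)$ nontrivially. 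Taking one such rectangle $\widetilde{R(T)}$ far in the past (proper discontinuity), the set of points of $\widetilde R$ whose orbits reach $\widetilde{R(T)}$ is pushed \emph{backward} by a long time, hence becomes arbitrarily thin in the stable direction and long in the unstable direction; since all rectangles in $\widetilde{\mathcal R}$ have uniformly bounded boundary lengths (finiteness axiom plus $\pi_1$ acting by isometries), this image cannot be a vertical subrectangle of $\widetilde{R(T)}$, so it is horizontal there, which by the Markovian intersection axiom means $R(T)\cap R$ is vertical in $R$. The hyperbolic contraction/expansion is the mechanism that produces the needed asymmetry; your approach, which stays purely in $\mathcal P$ and invokes Axiom~3 once, has no substitute for it.
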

\begin{proof}
Without any loss of generality, let us assume that $Q$ is the $(+,+)$ quadrant of $x$.

Recall that the action of $\pi_1(M)$ on $\mathcal{P}$ and the action of $\pi_1(M)$ on $\mathbb{R}^3$ are equivariant for the projection $\pi: \mathbb{R}^3 \rightarrow \mathcal{P}$. In order to avoid any confusion, we will use the following notations for the action of $\pi_1(M)$ on $\mathcal{P}$ and $\mathbb{R}^3$ : \newline{}
\begin{minipage}[b]{0.5\linewidth}
\begin{align*}
&(\pi_1(M),\mathcal{P})\rightarrow \mathcal{P}\\
& \quad (g,x)\rightarrow g(x)
\end{align*}
\end{minipage}
\begin{minipage}[b]{0.5\linewidth}
\begin{align*}
&(\pi_1(M),\mathbb{R}^3)\rightarrow \mathbb{R}^3\\
& \quad (g,x)\rightarrow g.x
\end{align*}
\end{minipage}
Let us first prove the lemma assuming that $x$ is periodic. Take $g\in \text{Stab}(x)$ that acts as an expansion on $\mathcal{F}^u(x)$ and as a contraction on $\mathcal{F}^s(x)$. The rectangle $g(R)$ is in $\mathcal{R}$, contains $x$ and since $g$ preserves the quadrants of $x$, $g(R)$ also contains a germ of the $(+,+)$ quadrant of $x$. Hence, $\int{g(R)}\cap \int{R}\neq \emptyset$. By the Markovian  intersection axiom and the fact that the unstable boundaries of $g(R)$ are ``longer" than the unstable boundaries of $R$, we have that $g(R)\cap R$ is a non-trivial vertical subrectangle of $R$. We thus obtain the desired result. 

Assume now that $x$ is not periodic. By the finiteness axiom, $\mathcal{R}$ is the union of a finite number of orbits of rectangles by the action of $\pi_1(M)$. Take a representative of each orbit. Let us name those rectangles $R_1,...,R_n$. Lift each of those rectangles of $\mathcal{P}$ to a smooth rectangle $\widetilde{R_i}$ in $\mathbb{R}^3$ transverse to the lifted flow $\widetilde{\Phi}$. Using the equivariance for the projection of the action of $\pi_1(M)$ on $\mathcal{P}$ and the action of $\pi_1(M)$ on $\mathbb{R}^3$, we can define in a unique way, using the $\widetilde{R_i}$, a lift $\tilde{r}$ for every rectangle $r$ in $\mathcal{R}$ and thus a lift $\widetilde{\mathcal{R}}$ of $\mathcal{R}$. 

Take $\tilde{x}$ to be the lift of $x$ in $\tilde{R}$. It suffices to show that there exist infinitely many rectangles $\tilde{r}$ in $\widetilde{\mathcal{R}}$ such that the negative orbit of $\tilde{x}$ by $\widetilde{\Phi}$, intersects $\tilde{r}$ at $\tilde{r}(x)$ and also  the separatrices $ \widetilde{\mathcal{F}^u_+}(x):=\pi^{-1}(\mathcal{F}^u_+(x))$ and $ \widetilde{\mathcal{F}^s_+}(x):=\pi^{-1}(\mathcal{F}^s_+(x))$ intersect $\tilde{r}$ along non-trivial segments containing $\tilde{r}(x)$. We will name the existence of infinitely many such rectangles, property  $(\star)$.

\vspace{0.2cm}
\textit{Proof that $(\star)$ suffices} 

Assume that $(\star)$ holds. Since the action of $\pi_1(M)$ is properly discontinuous on $\mathbb{R}^3$, this implies that  for every $T>0$ there exists $t<-T$ such that $\widetilde{\Phi}^t(\tilde{x})$ intersects a rectangle $\widetilde{R(T)}$ in $\widetilde{\mathcal{R}}$ that has a non-trivial intersection with $\widetilde{\mathcal{F}^s_+}(x)$ and $\widetilde{\mathcal{F}^u_+}(x)$. 

Take $U$ a compact neighbourhood of $\widetilde{R}$ in $\mathbb{R}^3$ and let  $\widetilde{A(T)}$ be the set of points of $\widetilde{R}$, whose orbits by $\widetilde{\Phi}$ intersect $\widetilde{R(T)}$. By taking $T$ sufficiently big, since the action of $\pi_1(M)$ on $\mathbb{R}^3$ is properly discontinuous, we can assume that $\widetilde{R(T)}\cap U =\emptyset$. 

Furthermore, $\widetilde{A(T)}$ intersects the interior of $\widetilde{R}$, since the rectangles $\widetilde{R}$ and $\widetilde{R(T)}$ intersect $ \widetilde{\mathcal{F}^s_+}(x)$ and $ \widetilde{\mathcal{F}^u_+}(x)$ along non-trivial segments containing $\tilde{x}$ and $\widetilde{R(T)}(x)$ respectively. By the Markovian intersection axiom applied for the projections on $\mathcal{P}$ of  $\widetilde{R}$ and $\widetilde{R(T)}$, the set $\widetilde{A(T)}$ is a non-trivial horizontal or vertical subrectangle of $\widetilde{R}$.    Therefore, using Remark \ref{r.orbitsrectangles} and the connectedness of $\widetilde{A(T)}$, the negative orbit of every point of $\widetilde{A(T)}$ intersects $\widetilde{R(T)}$ and the positive orbit of $\widetilde{A(T)}$ does not intersect $\widetilde{R(T)}$. Hence, there exist $\phi_T: \widetilde{A(T)}\rightarrow \mathbb{R}^{-}$ and $M(T)>0$ such that:
\begin{itemize}
    \item for every $z\in \widetilde{A(T)}$, $\widetilde{\Phi}^{\phi_T(z)}(z)\in \widetilde{R(T)}$ 
    \item $\phi_T<-M(T)$
\end{itemize}

Take any riemannian metric of $M$ and lift it to a metric on $\mathbb{R}^3$. $\pi_1(M)$ acts on $\mathbb{R}^3$ by isometries for the lifted metric. Also, since $\widetilde{\mathcal{R}}$ is the union of a finite number of orbits of smooth rectangles by the action of $\pi_1(M)$, the lengths of the boundaries of all the rectangles in $\widetilde{\mathcal{R}}$ are uniformly bounded from above and below. By eventually taking a bigger neighbourhood $U$ around $\widetilde{R}$ and increasing $T$, since the action of $\pi_1(M)$ is properly discontinuous, we can assume that $M(T)$ is big and therefore that $\widetilde{\Phi}^{\phi_T}(\widetilde{A(T)})$ has a very long stable boundary (with respect to the stable boundary of $\widetilde{A(T)}$) and a very small unstable boundary (with respect to the unstable boundary of $\widetilde{A(T)}$). By the same argument as in the previous paragraph, $\widetilde{\Phi}^{\phi_T}(\widetilde{A(T)})$ is a non-trivial vertical or horizontal subrectangle of $\widetilde{R(T)}$. But since the lengths of the boundaries of all the rectangles in $\widetilde{\mathcal{R}}$ are uniformly bounded from above and below, for $T$ sufficiently big,  $\widetilde{\Phi}^{\phi_T}(\widetilde{A(T)})$ cannot be a vertical  subrectangle of $\widetilde{R(T)}$. 

Take $R(T)$ and $A(T)$ to be respectfully the projections of $\widetilde{R(T)}$ and $\widetilde{A(T)}$ on $\mathcal{P}$. We deduce from the  previous paragraphs that:
\begin{itemize}
    \item both $R$ and $R(T)$ contain $x$ and a neighbourhood of $x$ inside its $(+,+)$ quadrant
    \item $R\cap R(T)=A(T)$ is a non-trivial horizontal subrectangle of $R(T)$ ; thus a non-trivial vertical subrectangle of $R$
\end{itemize}  
which gives us the desired result.

\vspace{0.2cm}
\textit{Proof of $(\star)$}

By Remark \ref{r.orbitsrectangles}, if a rectangle $\tilde{r}$ in $\widetilde{\mathcal{R}}$ intersects the orbit of $\tilde{x}$ along $\tilde{r}(x)$ and also intersects $ \widetilde{\mathcal{F}^s_+}(x)$ non-trivially (i.e. $ \widetilde{\mathcal{F}^s_+}(x)\cap \tilde{r}\neq \{\tilde{r}(x)\}$), then $ \widetilde{\mathcal{F}^s_+}(x)\cap \tilde{r}$ is a stable segment of the rectangle $\tilde{r}$ containing $\tilde{r}(x)$. Let us prove $(\star)$ by contradiction.

Suppose that the negative orbit of $\tilde{x}$ by $\widetilde{\Phi}$ intersects finitely many rectangles in $\widetilde{\mathcal{R}}$ that intersect non-trivially $ \widetilde{\mathcal{F}^s_+}(x)$ and $ \widetilde{\mathcal{F}^u_+}(x)$.  This implies that there exists $T>0$ such that for all $t<-T$ $\widetilde{\Phi}^t(\tilde{x})$ does not intersect any rectangle $\tilde{r}$ in $\widetilde{\mathcal{R}}$ intersecting $ \widetilde{\mathcal{F}^s_+}(x)$ and $ \widetilde{\mathcal{F}^u_+}(x)$ non-trivially. 

Take $x_M$ to be the projection of $\tilde{x}$ in $M$, $y\in M$ a point in the $\alpha$-limit of $x_M$ and $S$ a small rectangle in $M$ transverse to $\Phi$  containing $y$ in its interior. There exists $(t_n)_{n\in \mathbb{N}}$, an increasing sequence in $\mathbb{R}^+$ going to infinity such that $\Phi^{-t_n}(x_M)\in S$ and $\Phi^{-t_n}(x_M) \underset{n\rightarrow +\infty}{\rightarrow}y$. Since the orbit of $x_M$ is by hypothesis non closed, we can assume that the $\Phi^{-t_n}(x_M)$ are two by two distinct. 

Let us lift everything on $\mathbb{R}^3$. Take $\tilde{y}$ to be a lift of $y$ on $\mathbb{R}^3$ and $\tilde{S}$ the lift of $S$ containing $\tilde{y}$. By Remark \ref{r.orbitsrectangles}, the lifts of $\Phi^{-t_n}(x_M)$ contained in $\tilde{S}$ belong to different orbits of $\widetilde{\Phi}$. Therefore, there exists a sequence of $g_n\in \pi_1(M)$ such that $\widetilde{\Phi}^{-t_n}(g_n.\tilde{x})\in \tilde{S}$ and $\widetilde{\Phi}^{-t_n}(g_n.\tilde{x}) \underset{n\rightarrow +\infty}{\rightarrow}\tilde{y}$. 

By projecting everything on $\mathcal{P}$, we have that there exists a sequence of $g_n\in \pi_1(M)$, such that $g_n(x)\underset{n\rightarrow +\infty}{\rightarrow}Y$, where $Y$ is the projection of $\tilde{y}$ on $\mathcal{P}$. By eventually considering a subsequence, we can assume that all the $g_n(x)$ are contained in the same quadrant of $Y$. Therefore, by the finite return time axiom and again by eventually considering a subsequence, there exists $r\in \mathcal{R}$ containing the $g_n(x)$ and intersecting non-trivially all $\mathcal{F}^s_+(g_n(x))$ and $\mathcal{F}^u_+(g_n(x))$. 

This implies that the orbits of the $\widetilde{\Phi}^{-t_n}(g_n.\tilde{x})\in \tilde{S}$ cross a rectangle $\tilde{r}\in \widetilde{\mathcal{R}}$ that intersects  $\widetilde{\mathcal{F}^s_+}(g_n.\tilde{x})$ and $\widetilde{\mathcal{F}^u_+}(g_n.\tilde{x})$ non-trivially. Furthermore, since $\tilde{r}$ and $\tilde{S}$ are bounded in $\mathbb{R}^3$, the orbits of the $\widetilde{\Phi}^{-t_n}(g_n.\tilde{x})\in \tilde{S}$ intersect $\tilde{r}$ in a uniformly bounded time. But since the $g_n$ preserve the flow $\widetilde{\Phi}$, the set of rectangles $\widetilde{R}$ and the orientation of the stable and unstable foliations, this would imply that for every $n$ in a uniformly bounded time  the point $\widetilde{\Phi}^{-t_n}(\tilde{x})$ will cross a rectangle in $\widetilde{\mathcal{R}}$ intersecting $\widetilde{\mathcal{F}^s_+}(\tilde{x})$ and $\widetilde{\mathcal{F}^u_+}(\tilde{x})$ non-trivially. By hypothesis, the $t_n$ go to infinity, which contradicts the fact that the negative orbit of $\tilde{x}$ intersects finitely many such rectangles.  
\end{proof}
Naturally, by symmetry we can also prove that:  
\begin{lemm}\label{l.horizsubrectangleexists}
Take $R\in\mathcal{R}$, $x\in R$ and  $Q$ a quadrant of $x$. If $R$ intersects a germ of $x$ in $Q$, then there exists $R_h\in \mathcal{R}$ such that $R_h$ also intersects a germ of $x$ in $Q$ 
and $R_h\cap R$ is a non-trivial horizontal subrectangle of $R$.
\end{lemm}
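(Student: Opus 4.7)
The plan is to mirror the proof of Lemma \ref{l.vertsubrectangleexists} by reversing the time direction and swapping the roles of the stable and unstable foliations. Without loss of generality we again assume $Q$ is the $(+,+)$ quadrant of $x$.

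First I would treat the periodic case. If $x$ is periodic, pick $g\in \mathrm{Stab}(x)$ acting as a \emph{contraction} on $\mathcal{F}^u(x)$ and as an \emph{expansion} on $\mathcal{F}^s(x)$ (this is simply $g^{-1}$ of the element used before). Since $g$ preserves quadrants at $x$, the rectangle $g(R)$ also lies in $\mathcal{R}$, contains $x$, and contains a germ of the $(+,+)$ quadrant. Its stable boundaries are now ``longer'' than those of $R$, while its unstable boundaries are ``shorter'', so the Markovian intersection axiom forces $g(R)\cap R$ to be a non-trivial horizontal subrectangle of $R$.

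For the non-periodic case, I would copy verbatim the setup of the previous proof (choose representatives $R_1,\dots,R_n$, lift them to smooth rectangles $\widetilde{R_i}\subset \mathbb{R}^3$ transverse to $\widetilde{\Phi}$, and lift every rectangle of $\mathcal{R}$ coherently to obtain $\widetilde{\mathcal{R}}$). The only modification is to establish the analogue of property $(\star)$ for the \emph{positive} semi-orbit: namely, that infinitely many rectangles $\tilde r\in\widetilde{\mathcal{R}}$ meet the positive orbit of $\tilde x$ at some point $\tilde r(x)$ while intersecting each of $\widetilde{\mathcal{F}^s_+}(x)$ and $\widetilde{\mathcal{F}^u_+}(x)$ non-trivially.

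Assuming this positive-orbit version of $(\star)$, the ``proof that $(\star)$ suffices'' carries over unchanged, except that the rectangle $\widetilde{A(T)}$ captures the \emph{forward} travel time $\phi_T>M(T)>0$, so $\widetilde{\Phi}^{\phi_T}(\widetilde{A(T)})$ becomes very thin in the stable direction and very long in the unstable direction; the uniform boundedness of the boundaries of rectangles in $\widetilde{\mathcal{R}}$ then prevents it from being a horizontal subrectangle of $\widetilde{R(T)}$, forcing a non-trivial vertical subrectangle instead, and hence $R\cap R(T)$ is a non-trivial horizontal subrectangle of $R$. To prove the positive-orbit version of $(\star)$ itself, one argues by contradiction and replaces the $\alpha$-limit of $x_M$ by its $\omega$-limit: pick $y$ in the $\omega$-limit set, take a small transverse rectangle $S$ around $y$, extract an increasing sequence $t_n\to+\infty$ with $\Phi^{t_n}(x_M)\in S$ converging to $y$, lift to obtain $g_n\in\pi_1(M)$ with $\widetilde{\Phi}^{t_n}(g_n.\tilde x)\to \tilde y$, use the finite return time axiom on $\mathcal{P}$ to extract a single rectangle $r\in\mathcal{R}$ containing all $g_n(x)$ and intersecting $\mathcal{F}^{s}_+(g_n(x)),\mathcal{F}^{u}_+(g_n(x))$ non-trivially, then apply the $g_n^{-1}$ to transport this back and contradict the assumption that only finitely many such rectangles meet the positive orbit of $\tilde x$.

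The main subtlety, as in the previous lemma, is the step where one must rule out that $\widetilde{\Phi}^{\phi_T}(\widetilde{A(T)})$ is a horizontal subrectangle of $\widetilde{R(T)}$: here one has to be careful that under the forward flow it is the unstable boundaries that get expanded, so the uniform bounds on the geometry of rectangles in $\widetilde{\mathcal{R}}$ (coming from finiteness modulo $\pi_1(M)$ and from compactness in $M$) are used in the dual way. Everything else is a direct symmetrization of the earlier argument.
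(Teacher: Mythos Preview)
Your proposal is correct and is exactly the symmetry argument the paper intends: the paper's own proof consists of the single sentence ``Naturally, by symmetry we can also prove that'' preceding the statement. You have carefully spelled out that symmetry (swap positive/negative time, $\alpha$-limit for $\omega$-limit, stable for unstable in the length estimate), and your handling of the direction of expansion/contraction in the step ruling out the wrong type of subrectangle is the correct dualization.
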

Let us point out that during the proof of the Lemma \ref{l.vertsubrectangleexists}, we showed that thanks to the third axiom in the definition of a Markovian family (see Definition \ref{d.markovfamily}), the negative orbit by $\widetilde{\Phi}$ of any point $\tilde{x}\in \mathbb{R}^3$ will intersect in finite time a rectangle in $\widetilde{\mathcal{R}}$. This is  the reason why we called this axiom the finite return time axiom. 
\begin{lemm}\label{l.periodicboundary}
The boundary of any $R\in \mathcal{R}$ consists of stable/unstable segments belonging to stable/unstable periodic leaves in $\mathcal{P}$.  
\end{lemm}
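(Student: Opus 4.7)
By the symmetric roles of the two foliations $\mathcal{F}^s$ and $\mathcal{F}^u$, it suffices to prove that if $\alpha$ is a stable boundary segment of some rectangle $R \in \mathcal{R}$, then the stable leaf $L := \mathcal{F}^s(\alpha)$ is periodic, in the sense that $L$ is stabilized by a non-trivial element of $\pi_1(M)$. The argument for unstable boundaries will be identical, replacing Lemma \ref{l.horizsubrectangleexists} by Lemma \ref{l.vertsubrectangleexists}.

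The plan rests on a single \emph{boundary-extending step}. I would fix an interior point $x$ of $\alpha$; since $\alpha$ is a stable boundary component of $R$, the rectangle $R$ has germs of $x$ in exactly the two quadrants of $x$ lying on the same side of $L$ as the interior of $R$, and I would take $Q$ to be either of them. By Lemma \ref{l.horizsubrectangleexists} there then exists $R' \in \mathcal{R}$ containing a germ of $x$ in $Q$ and such that $R' \cap R$ is a non-trivial horizontal subrectangle of $R$. Because $x$ lies on $\alpha$, this horizontal subrectangle must be of the form $\phi([0,1] \times [s,1])$ with $s \in (0,1)$, so $\alpha$ itself is one of its two stable boundary components. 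The Markovian intersection axiom (Definition \ref{d.markovfamily}(2)) now additionally forces $R' \cap R$ to be a non-trivial vertical subrectangle of $R'$: in a parametrization $\psi$ of $R'$, it takes the form $\psi([s', t'] \times [0,1])$ with $[s', t'] \subsetneq [0,1]$. Consequently one of the two stable boundary components of $R'$ is a stable segment in $L$ that \emph{strictly} contains $\alpha$.

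Next I would iterate: setting $R_0 := R$ and $\alpha_0 := \alpha$, and at each stage applying the boundary-extending step at an arbitrary interior point of the stable boundary $\alpha_n$ of $R_n$ contained in $L$, one obtains a sequence of rectangles $R_n \in \mathcal{R}$ together with stable boundary components $\alpha_n \subsetneq \alpha_{n+1}$, all lying in $L$. The strict nesting $\alpha_0 \subsetneq \alpha_1 \subsetneq \cdots$ forces the $R_n$ to be pairwise distinct. By the finiteness axiom (Definition \ref{d.markovfamily}(1)), $\mathcal{R}$ is a union of finitely many $\pi_1(M)$-orbits, hence the infinite set $\{R_n\}_{n \ge 0}$ must meet some orbit in at least two elements $R_i \neq R_j$; writing $R_j = g \cdot R_i$ with $g \in \pi_1(M) \setminus \{1\}$ gives $g \cdot \alpha_i = \alpha_j$, a non-trivial stable segment contained in $L$. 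Since distinct leaves of $\mathcal{F}^s$ are disjoint, this forces $g \cdot L = L$, so $L$ is periodic as required.

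The most delicate point of this plan is guaranteeing the \emph{strict} extension $\alpha_n \subsetneq \alpha_{n+1}$ at each iteration step, without which one could not conclude that infinitely many of the $R_n$ are distinct and the pigeonhole argument would break down. This strictness is afforded precisely by the non-triviality clauses in the Markovian intersection axiom, which prevent the intersection $R' \cap R$ from occupying all of $R'$ in the stable direction and thereby force the stable boundary of $R'$ to strictly extend $\alpha$.
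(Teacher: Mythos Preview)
Your argument is essentially the paper's own: iterate Lemma~\ref{l.horizsubrectangleexists} to produce a sequence $(R_n)$ in $\mathcal{R}$ each having a stable boundary component $\alpha_n \subset L$, then pigeonhole against the finiteness axiom. You are in fact more careful than the paper about the strict nesting $\alpha_n \subsetneq \alpha_{n+1}$, which is exactly what makes the $R_n$ pairwise distinct and hence forces the element $g$ with $g\cdot R_i = R_j$ to be nontrivial.

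One step does need a word of justification, however: from $g\cdot R_i = R_j$ you assert $g\cdot\alpha_i = \alpha_j$, but a priori $g$ only sends $\alpha_i$ to \emph{some} stable boundary component of $R_j$, and $R_j$ has two, only one of which lies in $L$. The missing ingredient is the paper's standing hypothesis (declared just before Lemma~\ref{l.vertsubrectangleexists}) that every element of $\pi_1(M)$ preserves the orientations of $\mathcal{F}^{s,u}$. Your nesting construction forces all the $\alpha_n$ to sit on the \emph{same} side of their $R_n$ (say the upper side, relative to the orientation of $\mathcal{F}^u$), and orientation-preservation then guarantees that $g$ maps the upper boundary of $R_i$ to the upper boundary of $R_j$. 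The paper records precisely this point with the sentence ``Since $g$ preserves the orientation of the stable/unstable foliations, this implies that $g(S)=S$.'' Without that assumption you would instead extract three indices in the same orbit and compose two of the resulting group elements to obtain an orientation-preserving one.
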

\begin{proof}
Indeed, take $R\in \mathcal{R}$ and consider one of its stable boundary segments, say $s$. Let us denote $S$ the stable leaf in $\mathcal{F}^s$ containing $s$.

Take $x\in s$. By Lemma \ref{l.horizsubrectangleexists}, there exists $R'\in \mathcal{R}$ containing $x$ such that $R' \cap R$ is a non-trivial horizontal subrectangle of $R$. Therefore, $R'$ contains $s$. By repeatedly applying  this argument, we can construct $R=r_0,r_1,...,r_n,...$ a sequence of rectangles containing $s$ and such that $r_n \cap r_{n+1}$ is a non-trivial horizontal subrectangle of $r_n$. 

By the finiteness axiom, there exist $i,j$ two distinct integers and $g\in \pi_1(M)$ such that $g(r_i)=r_j$. Since $g$ preserves the orientation of the stable/unstable foliations, this implies that $g(S)=S$. We deduce that $S$ is a periodic stable leaf and we get the desired result.  
\end{proof}
\begin{lemm}\label{l.infiniteintersectionverticalrectangles}
Consider a sequence of rectangles $(r_n)_{n\in \mathbb{N}}$ in $\mathcal{R}$ such that for every $k\in \mathbb{N}$, $r_{k+1}\cap r_k$ is a non-trivial vertical subrectangle of $r_k$. We have that $\overset{+\infty}{\underset{k=0}{\cap}} r_{k}$ is an unstable segment of $r_0$.
\end{lemm}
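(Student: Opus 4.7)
The plan is to analyze the intersection via the leaf space of $\mathcal{F}^u$, exploiting the finiteness axiom and the proper discontinuity of the $\pi_1(M)$-action on $\widetilde{M}=\mathbb{R}^3$. For each $k$, let $J_k$ denote the set of unstable leaves of $\mathcal{F}^u$ meeting $r_k$; since $r_k$ is a rectangle, $J_k$ is naturally parametrized as a compact interval. The assumption that $r_{k+1}\cap r_k$ is a non-trivial vertical subrectangle of $r_k$, combined with the Markovian intersection axiom forcing $r_{k+1}\cap r_k$ to be a horizontal subrectangle of $r_{k+1}$, identifies $J_{k+1}$ with the parametric sub-interval of $J_k$ cut out by the vertical sub $r_{k+1}\cap r_k\subset r_k$; non-triviality then yields the strict inclusion $J_{k+1}\subsetneq J_k$.

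I would next show by induction on $k$ that for every $L\in J_k$ the full unstable segment $L\cap r_0$ is contained in $r_k$. Indeed, if $L\in J_{k+1}\subset J_k$, then $L$ traverses $r_{k+1}\cap r_k$ from one stable boundary of $r_k$ to the other (since $r_{k+1}\cap r_k$ is vertical in $r_k$), so $L\cap r_k\subset r_{k+1}$; combining with the induction hypothesis $L\cap r_0\subset r_k$ gives $L\cap r_0\subset r_{k+1}$. Together with the elementary converse inclusion (any point of $\bigcap_k r_k$ lies on a leaf that meets every $r_k$), this yields
$$\bigcap_{k=0}^{+\infty}r_k \;=\;\bigcup_{L\in J_\infty}L\cap r_0,\qquad J_\infty:=\bigcap_{k=0}^{+\infty}J_k.$$
Since $J_0$ is compact and the $J_k$ are nested closed intervals, $J_\infty$ is a non-empty closed sub-interval, so $\bigcap_k r_k$ is automatically a (possibly degenerate) vertical subrectangle of $r_0$. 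The statement reduces to showing that $J_\infty$ is a single leaf.

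The crux, which I expect to be the main obstacle, is ruling out that $J_\infty$ is a non-trivial interval. Suppose for contradiction that it is; then $V:=\bigcup_{L\in J_\infty}L\cap r_0$ is a non-trivial vertical subrectangle of $r_0$. Fix a lift $\tilde V$ of $V$ in $\widetilde{M}$. Since $V\subset r_k$ and the projection $\pi$ restricts to a homeomorphism on each lift of $r_k$, one can translate an arbitrary lift of $r_k$ by a suitable element of $\pi_1(M)$ to obtain a lift $\widetilde{r_k}$ of $r_k$ containing $\tilde V$. The finiteness axiom writes $\mathcal{R}$ as a finite union of $\pi_1(M)$-orbits of rectangles lifting to fixed $\widetilde{R_1},\ldots,\widetilde{R_n}$; proper discontinuity then makes each set $\{g\in\pi_1(M):g\widetilde{R_j}\supset\tilde V\}\subseteq\{g:g\widetilde{R_j}\cap\tilde V\neq\emptyset\}$ finite, so $\{\widetilde{r_k}\}_k$ is finite and hence so is $\{r_k\}_k$. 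But the strict inclusions $J_{k+1}\subsetneq J_k$ force the $r_k$ to be mutually distinct, a contradiction. Therefore $J_\infty$ reduces to a single leaf $L^*$, and $\bigcap_{k=0}^{+\infty} r_k = L^*\cap r_0$ is an unstable segment of $r_0$.
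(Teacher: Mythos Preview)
Your reduction is clean and correct: the nested intervals $J_k$ in the unstable leaf space, the induction showing $L\cap r_0\subset r_k$ for $L\in J_k$, and the identification $\bigcap_k r_k=\bigcup_{L\in J_\infty}L\cap r_0$ are all fine, and this packaging is arguably tidier than the paper's. The strict inclusions $J_{k+1}\subsetneq J_k$ indeed force the $r_k$ to be pairwise distinct.

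The gap is in your final paragraph, at the sentence ``one can translate an arbitrary lift of $r_k$ by a suitable element of $\pi_1(M)$ to obtain a lift $\widetilde{r_k}$ of $r_k$ containing $\tilde V$.'' The disks $\tilde V$ and $\widetilde{r_k}=g_k\widetilde{R_{j(k)}}$ are specific 2-dimensional transverse sections in $\widetilde{M}=\mathbb{R}^3$. The inclusion $V\subset r_k$ in the orbit space $\mathcal{P}$ only says that every $\widetilde{\Phi}$-orbit through $\tilde V$ eventually meets $\widetilde{r_k}$; it does \emph{not} say that $\widetilde{r_k}$ contains, or even intersects, $\tilde V$ as subsets of $\mathbb{R}^3$. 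The hitting time from $\tilde V$ to $\widetilde{r_k}$ is a priori unbounded, so proper discontinuity of the deck action on $\mathbb{R}^3$ gives no finiteness for $\{r_k\}$. (Equivalently: the $\pi_1(M)$-action on $\mathcal{P}$ is \emph{not} properly discontinuous---orbits are dense---so you cannot transport the compactness argument down to $\mathcal{P}$.)

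What is actually needed is the uniform hyperbolicity of $\Phi$, which is precisely the ingredient the paper invokes. Once one knows (by proper discontinuity) that the hitting times $t_k$ from $\tilde V$ to $\widetilde{r_k}$ tend to $\pm\infty$, hyperbolicity forces $\widetilde{\Phi}^{t_k}(\tilde V)$ to have unstable boundary of length $\to\infty$ (for $t_k\to+\infty$), which cannot fit inside $g_k\widetilde{R_{j(k)}}$ since $\pi_1(M)$ acts by isometries of a lifted metric and the $\widetilde{R_j}$ have uniformly bounded size. Your argument can be repaired by inserting exactly this step; without it the finiteness claim is unproved.
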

\begin{proof}
It suffices to show the lemma for any subsequence of $(r_n)_{n\in \mathbb{N}}$. Note that thanks to the Markovian intersection property, for any such subsequence $(r_{k(n)})_{n \in \mathbb{N}}$, $r_{k(n+1)}\cap r_{k(n)}$ remains a vertical subrectangle of $r_{k(n)}$. 

By the finiteness axiom and by eventually considering a subsequence, we can assume that all the rectangles in our sequence belong to the same orbit of rectangles in $\mathcal{P}$. In other words, for any $n,m \in \mathbb{N}$ there exists $g_{n,m} \in \pi_1(M)$ such that $g_{n,m}(r_n)=r_m$. It is not difficult to prove that $g_{n,m}$ is unique. 

Consider $\widetilde{r_0}$ a smooth disk in $\mathbb{R}^3$, transverse to the lifted flow $\widetilde{\Phi}$, whose projection on $\mathcal{P}$ is $r_0$. Once again, we will use the following notations for the action of $\pi_1(M)$ on $\mathcal{P}$ and $\mathbb{R}^3$ : \newline{}
\begin{minipage}[b]{0.5\linewidth}
\begin{align*}
&(\pi_1(M),\mathcal{P})\rightarrow \mathcal{P}\\
& \quad (g,x)\rightarrow g(x)
\end{align*}
\end{minipage}
\begin{minipage}[b]{0.5\linewidth}
\begin{align*}
&(\pi_1(M),\mathbb{R}^3)\rightarrow \mathbb{R}^3\\
& \quad (g,x)\rightarrow g.x
\end{align*}
\end{minipage}

By hypothesis, for every $n \in \mathbb{N}$ there are orbits of $\widetilde{\Phi}$ intersecting both $\widetilde{r_0}$ and $g_{0,n}.\widetilde{r_0}$. Let's name $A_n$ the intersection of all these orbits with $g_{0,n}.\widetilde{r_0}$. $A_n$ projects to $r_0 \cap r_n \subset \mathcal{P}$ and by Remark \ref{r.orbitsrectangles} this projection is a homeomorphism, therefore $A_n$ is a horizontal subrectangle of $g_{0,n}.\widetilde{r_0}$. 

Moreoever, there exists $t_n: A_n \rightarrow \mathbb{R}$ a function such that $\widetilde{\Phi^{t_n(x)}}(x) \in \widetilde{r_0}$ for every $x \in A_n$. Using Remark \ref{r.orbitsrectangles} we can show that $t_n$ is continuous. Furthermore, if $g_{0,n}.\widetilde{r_0} \cap \widetilde{r_0} = \emptyset$ then either $t_n(x)>0$ for all $x \in A_n$ or $t_n(x)<0$ for all $x \in A_n$. Since the action of $\pi_1(M)$ on $\mathbb{R}^3$ is properly discontinuous, by eventually removing some rectangles from the sequence $(r_n)_{n\in \mathbb{N}}$, we can assume that $g_{0,n}.\widetilde{r_0} \cap \widetilde{r_0} = \emptyset$ for all $n$

Let us show, that except maybe a finite number of $n$, we have $t_n>0$. Suppose the contrary. By choosing any Riemannian metric on $M$ and lifting it on  $\mathbb{R}^3$, we can assume that $\pi_1(M)$ acts by isometries on $\mathbb{R}^3$; thus all the rectangles $g_{0,n}.\widetilde{r_0}$ are isometric. Take $c\in \mathbb{R}^-$ and $K$ any compact neighbourhood of $\widetilde{r_0}$. We can find $n$ sufficiently big such that $g_{0,n}.\widetilde{r_0} \cap K = \emptyset$ and $t_n<0$. If $K$ is taken big enough, we also have $t_n<c$. Therefore, for any $c\in \mathbb{R}^-$, we can find $n$ such that $t_n<c$. If $t_n<0$ and $|c|$ is sufficiently big, since the size of the $A_n$ is uniformly bounded, $\widetilde{\Phi^{t_n}}(A_n) $ will be a bifoliated compact disk, thin along the unstable direction and large along the stable one. This implies that for $n$ sufficiently big $\widetilde{\Phi^{t_n}}(A_n) $ can not be a vertical subrectangle of $\widetilde{r_0}$; hence, by the Markovian intersection axiom $r_{n} \cap r_0$ is a horizontal subrectangle of $r_0$, which contradicts the initial hypothesis. 

Therefore, there is a finite number of $n$ such that $t_n<0$ and by removing a finite number of $r_n$ from our sequence, we can assume that $t_n>0$ for all $n$.

By a similar argument, there exists a sequence $n_i \in \mathbb{N}$ such that $\min_{x\in A_{n_i}} t_{n_i}(x)$ is increasing and goes to infinity. Also, by uniform hyperbolicity and the fact that the $g_{0,n}.\widetilde{r_0}$ are isometric, there exists $c_{n_i}>0$ a decreasing sequence going to $0$ such that any stable segment of the disk $\widetilde{\Phi^{t_{n_i}}}(A_{n_i})$ has at most length $c_{n_i}$. We conclude that $\widetilde{r_0} \cap \overset{+\infty}{\underset{i=0}{\cap}} \widetilde{\Phi^{t_{n_i}}}(A_{n_i})$ is an unstable segment of $\widetilde{r_0}$ and we get the result we wanted. 
\end{proof}

Similarly: 
\begin{lemm}\label{l.infiniteintersectionhorizontalrectangles}
Consider a sequence of rectangles $(r_n)_{n\in \mathbb{N}}$ in $\mathcal{R}$ such that for every $k\in \mathbb{N}$, $r_{k+1}\cap r_k$ is a non-trivial horizontal subrectangle of $r_k$. We have that $\overset{+\infty}{\underset{k=0}{\cap}} r_{k}$ is a stable segment of $r_0$.
\end{lemm}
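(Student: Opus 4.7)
The proof plan is to mirror verbatim the argument given for Lemma \ref{l.infiniteintersectionverticalrectangles}, exchanging the roles of the stable and unstable directions (equivalently, of horizontal and vertical subrectangles) and, correspondingly, reversing the direction of time. Concretely, I would proceed as follows.

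First, I reduce to the case where all the rectangles $r_n$ lie in a single $\pi_1(M)$-orbit by the finiteness axiom, passing to a subsequence if necessary; the chain hypothesis that $r_{k+1}\cap r_k$ be a non-trivial horizontal subrectangle of $r_k$ is preserved under subsequences by the Markovian intersection axiom. Pick $g_{0,n}\in \pi_1(M)$ with $g_{0,n}(r_0)=r_n$, lift $r_0$ to a smooth disk $\widetilde{r_0}\subset\mathbb{R}^3$ transverse to $\widetilde{\Phi}$, and consider the lifts $g_{0,n}.\widetilde{r_0}$. By eventual removal of finitely many terms, using that the action of $\pi_1(M)$ is properly discontinuous, I may further assume $g_{0,n}.\widetilde{r_0}\cap \widetilde{r_0}=\emptyset$ for every $n$.

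Let $A_n\subset g_{0,n}.\widetilde{r_0}$ denote the set of points whose $\widetilde{\Phi}$-orbit also meets $\widetilde{r_0}$. By Remark \ref{r.orbitsrectangles}, $A_n$ projects homeomorphically to $r_0\cap r_n$; by the chain condition together with the Markovian intersection axiom, $A_n$ is a non-trivial \emph{vertical} subrectangle of $g_{0,n}.\widetilde{r_0}$. Define $t_n\colon A_n\to\mathbb{R}$ by $\widetilde{\Phi}^{t_n(x)}(x)\in \widetilde{r_0}$; it is continuous and of constant sign on $A_n$ since $g_{0,n}.\widetilde{r_0}\cap \widetilde{r_0}=\emptyset$. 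The crucial step, dual to the original one, is to show that $t_n<0$ for all but finitely many $n$. The argument is by contradiction: if $t_n>0$, then, pulling back a $\pi_1(M)$-invariant Riemannian metric from $M$, the rectangles $g_{0,n}.\widetilde{r_0}$ are mutually isometric of uniformly bounded size, so for $n$ large proper discontinuity forces $t_n$ to be arbitrarily large; then uniform hyperbolicity in forward time contracts the stable direction of $A_n$ and expands its unstable direction, so $\widetilde{\Phi}^{t_n}(A_n)$ becomes a bifoliated compact disk that is long in the unstable direction and thin in the stable one. Such a disk inside $\widetilde{r_0}$ is a vertical subrectangle of $\widetilde{r_0}$, which projects to $r_n\cap r_0$ being a vertical subrectangle of $r_0$; this contradicts the fact that $r_n\cap r_0$, being vertical in $r_n$, must be horizontal in $r_0$ by the Markovian intersection axiom.

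Having established $t_n<0$ eventually, the same proper-discontinuity argument produces a subsequence $n_i$ along which $\max_{x\in A_{n_i}} t_{n_i}(x)\to -\infty$. Uniform hyperbolicity in backward time now contracts the unstable direction: there is a sequence $c_{n_i}>0$ with $c_{n_i}\to 0$ such that every unstable segment of $\widetilde{\Phi}^{t_{n_i}}(A_{n_i})\subset \widetilde{r_0}$ has length at most $c_{n_i}$, while the stable direction is expanded and therefore crosses $\widetilde{r_0}$ completely. Intersecting over $i$ yields that $\widetilde{r_0}\cap \bigcap_{i\ge 0}\widetilde{\Phi}^{t_{n_i}}(A_{n_i})$ is a stable segment of $\widetilde{r_0}$. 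Projecting back to $\mathcal{P}$ and using that this intersection coincides with $\bigcap_{k\ge 0}r_k$ (and that it suffices to work with any subsequence) finishes the proof.

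The main obstacle I expect is verifying cleanly that $A_n$ is indeed vertical in $g_{0,n}.\widetilde{r_0}$ (equivalently that $r_0\cap r_n$ has the same horizontal/vertical type as each $r_k\cap r_{k+1}$ along the chain), as this uses both the chain hypothesis and the Markovian intersection axiom applied inductively; but once this is in hand, the rest of the argument is a straightforward dualization of the proof of Lemma \ref{l.infiniteintersectionverticalrectangles}.
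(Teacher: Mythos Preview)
Your proposal is correct and is exactly the approach the paper takes: the paper simply writes ``Similarly:'' before stating this lemma, meaning its proof is the straightforward dualization of Lemma~\ref{l.infiniteintersectionverticalrectangles} that you have written out, swapping horizontal/vertical (equivalently stable/unstable) and reversing the sign of the flow time. The one point you flag as a potential obstacle---that $r_0\cap r_n$ is vertical in $r_n$ (horizontal in $r_0$)---does indeed follow from the chain hypothesis together with the Markovian intersection axiom, and the paper's proof of Lemma~\ref{l.infiniteintersectionverticalrectangles} asserts the dual fact with the same level of justification.
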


\begin{rema}\label{r.comparison}
In the previous lemmas, we establish a bridge between Markovian families and Markov partitions. Indeed, analogous statements are known to be true for Markov partitions. Fix $\mathcal{M}$ a Markov partition of $\Phi$ in $M$: 

\vspace{0.2cm}
\hspace{-0.8cm}
\begin{tabular}{ | p{4cm} | p{12cm}| } 
  \hline
  Markovian families & \hspace{4cm}   Markov partitions \vspace{0.1cm} \\
 
  \hline
  Lemmas \ref{l.vertsubrectangleexists} and \ref{l.horizsubrectangleexists}& Every positive or negative orbit (by $\Phi$) in $M$ intersects in bounded time a rectangle of $\mathcal{M}$ \\ 
  \hline
  Lemma \ref{l.periodicboundary} & The boundaries every rectangle in $\mathcal{M}$ belong to periodic stable and unstable leaves in $M$ \\ 
  \hline
 Lemmas \ref{l.infiniteintersectionverticalrectangles} and \ref{l.infiniteintersectionhorizontalrectangles}& Consider a rectangle $R$ of $\mathcal{M}$. A set of points in $R$ whose positive (resp.negative) orbits intersect the same infinite sequence of rectangles in $\mathcal{M}$ forms a stable (resp. unstable) segment of $R$ \\ 
  \hline
\end{tabular}

\end{rema}
The following lemma shows that as for any Markov partition, we can define a notion of first return map for any Markovian family. 
\begin{lemm}\label{l.existenceofpredecessors}
For any rectangle $R \in \mathcal{R}$ there exists a finite number of rectangles $R_1,...,R_n \in \mathcal{R}$ intersecting $R$ along non-trivial vertical subrectangles such that:
\begin{enumerate}
\item $R_1,...,R_n$ are maximal for this property: any $R' \in \mathcal{R}$ intersecting $R$ along a non-trivial vertical subrectangle verifies $R' \cap R \subseteq R_i \cap R$ for some $i \in \llbracket 1, n \rrbracket$ 
\item $R_1,...,R_n$ have disjoint interiors 
\item The $R_1,...,R_n$ cover $R$: $\overset{n}{\underset{i=1}{\cup}} R_i \cap R = R$ 
\end{enumerate}

\end{lemm}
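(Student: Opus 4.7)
The plan is to work with the family $\mathcal W$ of non-trivial vertical subrectangles of $R$ of the form $R'\cap R$ with $R'\in \mathcal R$: I will show that $\mathcal W$ covers $R$, that any two of its elements are either nested or have disjoint interiors, rule out infinite strictly ascending chains in $\mathcal W$ using Lemma \ref{l.infiniteintersectionhorizontalrectangles}, and conclude finiteness of maximal elements via compactness. The sought rectangles $R_1,\dots,R_n$ will be those in $\mathcal R$ whose trace on $R$ is a maximal element of $\mathcal W$.

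\emph{Step 1 (covering and nesting dichotomy).} For each $x\in R$ and each quadrant $Q$ of $x$ in which $R$ contains a germ, Lemma \ref{l.vertsubrectangleexists} yields $R_v\in \mathcal R$ with $x\in R_v$ and $R_v\cap R$ a non-trivial vertical subrectangle of $R$; hence $\mathcal W$ covers $R$. If $V_i=R'_i\cap R\in\mathcal W$ for $i=1,2$ have intersecting interiors, then $\int{R'_1}\cap \int{R'_2}\neq \emptyset$ and the Markovian intersection axiom applies; comparing stable extents in both parametrisations (each $V_i$ being a horizontal subrectangle of $R'_i$ with full stable extent) gives $V_1\subseteq V_2$ or $V_2\subseteq V_1$. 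Moreover, $V_1=V_2$ forces $R'_1=R'_2$: otherwise the same analysis yields $R'_2\subsetneq R'_1$ (or the symmetric conclusion) and then $R'_1\cap R'_2=R'_2$ is trivial as a vertical subrectangle of $R'_2$, contradicting the Markovian axiom.

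\emph{Step 2 (no infinite strictly ascending chain).} Suppose, for contradiction, that $V_0\subsetneq V_1\subsetneq \cdots$ is such a chain in $\mathcal W$ with associated rectangles $R'_k\in \mathcal R$. Step 1 implies that $R'_{k+1}\cap R'_k$ is a non-trivial horizontal subrectangle of $R'_k$ for every $k$, so Lemma \ref{l.infiniteintersectionhorizontalrectangles} yields that $\sigma:=\bigcap_k R'_k$ is a stable segment of $R'_0$, hence at most one-dimensional. However,
$$\sigma \cap R=\bigcap_k(R'_k\cap R)=\bigcap_k V_k=V_0,$$
a two-dimensional vertical subrectangle of $R$. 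This dimension clash rules out the chain, so every element of $\mathcal W$ is contained in a maximal element. This is the main obstacle and the crux of the proof.

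\emph{Step 3 (finiteness and conclusion).} Two distinct maximal elements of $\mathcal W$ cannot be nested, hence have disjoint interiors by Step 1; the corresponding rectangles in $\mathcal R$ also have disjoint interiors (an interior intersection would, via the Markovian axiom, force nesting of the traces, contradicting maximality). By Step 1 combined with Step 2, the maximal elements cover $R$. They correspond to pairwise disjoint open stable intervals in the parametrisation of $R$; if infinitely many, their endpoints would accumulate at some $y\in R$, and Lemma \ref{l.vertsubrectangleexists} applied at $y$ in a quadrant for which $R$ contains a germ would produce $V\in \mathcal W$ containing $y$ with positive stable width, contained in some maximal $V_j$ whose open stable interval would then meet all sufficiently close accumulating intervals, contradicting their disjointness. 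Hence only finitely many maximal elements exist, yielding the required $R_1,\dots,R_n$.
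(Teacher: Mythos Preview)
Your proof is correct and follows essentially the same strategy as the paper's: ruling out infinite ascending chains via Lemma \ref{l.infiniteintersectionhorizontalrectangles} to obtain maximal elements, then using the Markovian axiom for disjointness and a compactness argument with Lemma \ref{l.vertsubrectangleexists} for finiteness. One small imprecision worth tightening: in Step 3 the quadrant at $y$ should be chosen on the side from which the accumulating intervals approach (as the paper does explicitly), so that the resulting $V$, and hence the maximal $V_j$ containing it, has stable extent overlapping those nearby intervals.
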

The analogue of the previous lemma for horizontal subrectangles is also true. 
\begin{proof} 
Fix $R \in \mathcal{R}$. Let's call the property of intersecting $R$ along a non-trivial vertical subrectangle, property $(\star)$.  Lemma \ref{l.vertsubrectangleexists} assures the existence of rectangles satisfying $(\star)$. Let us begin by showing that for any point in $R$ there exists at least one rectangle maximal for $(\star)$ containing it. 

Indeed, let us fix $x\in R$. By Lemma \ref{l.vertsubrectangleexists}, there exists $r$ satisfying $(\star)$ containing $x$. Suppose there is no maximal rectangle for $(\star)$ containing $x$. Therefore, for any rectangle $r'$ intersecting $R$ along a vertical subrectangle,  there exists $r_1$ satisfying $(\star)$ such that $R \cap r' \subsetneq R\cap r_1$. We can thus construct by induction an infinite sequence $r=r_0,r_1,r_2,....,r_n,...$ such that $r_k\cap R \subsetneq r_{k+1}\cap R$ for all $k\in \mathbb{N}$. By the Markovian intersection axiom, $r_k \cap r_{k+1}$ is a horizontal subrectangle of $r_k$ and therefore by Lemma \ref{l.infiniteintersectionhorizontalrectangles}, we get that $\overset{+\infty}{\underset{k=0}{\cap}} r_{k}$ is a stable segment of $r_0$. But $r_k\cap R \subsetneq r_{k+1}\cap R$ for every $k$, so $r_0\cap R \subset \overset{+\infty}{\underset{k=0}{\cap}} r_{k}\cap R$, which is impossible, since $\overset{+\infty}{\underset{k=0}{\cap}} r_{k}$ is a segment. We deduce the existence of a maximal rectangle for $(\star)$ containing $x$.

Next, let us prove that maximal rectangles for $(\star)$ are either identical or they have disjoint interiors. Indeed, by the Markovian intersection axiom, two rectangles $R_i,R_j$ in $\mathcal{R}$ satisfying $(\star)$ have either disjoint interiors or they satisfy one of the following: $R \cap R_i \subseteq R\cap R_j$ or $R \cap R_i \subseteq R\cap R_j$. 

Finally, let us prove that the set of maximal rectangles for $(\star)$ is finite. Suppose the contrary.  Under this hypothesis, there exists a sequence $(r_i)_{i\in \mathbb{N}}$ of maximal rectangles for $(\star)$ such that the $\overset{\circ}{r_i}$ are pairwise disjoint. By compactness, we can assume that the rectangles $r_i \cap R$ converge to an unstable segment of $R$, say $s$. Without any loss of generality, assume that $r_i \cap R$ accumulate to $s$ from the right. Take $x\in s$ and suppose that $\mathcal{F}^s_-(x)$ is the stable separatrix on the right of $x$. By Lemma \ref{l.vertsubrectangleexists}, there exists $R' \neq R$ containing $x$ and a germ of the negative stable separatrix of $x$ such that $R'\cap R$ is a non-trivial vertical subrectangle of $R$ (hence $R'$ contains $s$). Since $R'$ intersects $\mathcal{F}^s_-(x)$ and contains $s$, it also contains all points of $R$ on the right of $s$ and sufficiently close to $s$. Therefore, for $i$ sufficiently big, $r_i \cap R \subset R'\cap R$, which contradicts the fact that the $r_i$ are maximal. Therefore, the set of maximal rectangles for $(\star)$ is finite.

\end{proof}

\begin{defi}\label{d.successor}
We will say that $R'$ is a \emph{predecessor} (resp. \emph{successor}) of $R$ if $R'\cap R$ is a non-trivial vertical (resp. horizontal) subrectangle of $R$ and $R'$ is maximal for this property in the sense of the previous lemma. 

We will say that $R'$ is a predecessor of \emph{$2$-nd generation} of $R$, if $R'$ is a predecessor of a predecessor of $R$. We define similarly a predecessor (resp. successor) of \emph{$n$-th generation} for any $n \in \mathbb{N}^*$.

\end{defi}

\begin{rema}\label{r.precedentsuivant}
If $R\in \mathcal{R}$ is a predecessor of $R'\in \mathcal{R}$ and $g\in \pi_1(M)$, then 

\begin{itemize}
    \item $g(R)$ is a predecessor of $g(R')$ and 
    \item $R'$ is a successor of $R$ 
\end{itemize}
\end{rema}
The first statement is an easy consequence of Definition \ref{d.successor} and the fact that $\mathcal{R}$ is preserved by the action of $\pi_1(M)$. Concerning the second statement, by the Markovian intersection property, since $R$ is a predecessor of $R'$,  $R\cap R'$ is a non-trivial horizontal subrectangle of $R$ (see Figure \ref{f.precedentsuivant}). If there exists $R''\in \mathcal{R}$ such that $R\cap R''$ is a non-trivial horizontal subrectangle of $R$ and $R'\cap R \subsetneq R''\cap R$, then  by the Markovian intersection axiom $R\cap R''$ is a non-trivial vertical subrectangle of $R''$ and  $R'\cap R''$ is a non-trivial vertical subrectangle of $R'$. Therefore, $R\cap R' \subsetneq R''\cap R'$, which contradicts the fact that $R$ is a predecessor of $R'$.
\begin{figure}[h!]
\includegraphics[scale=0.4]{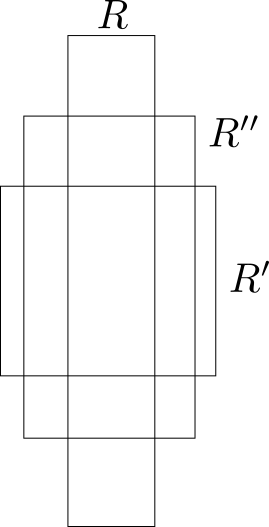}
\caption{}
\label{f.precedentsuivant}
\end{figure}
\begin{lemm}\label{l.npredecessor}
Take any $R \in \mathcal{R}$ and $R'$ intersecting $R$ along a non-trivial vertical subrectangle. Then $R'$ is a predecessor of $n$-generation of some $n\in \mathbb{N}^*$. 

\end{lemm}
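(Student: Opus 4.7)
The plan is to construct inductively a chain of rectangles $R = R_0, R_1, R_2, \ldots$ in which each $R_{k+1}$ is a predecessor of $R_k$, so that the process terminates at some $R_n = R'$, exhibiting $R'$ as an $n$-th generation predecessor of $R$. Setting $R_0 := R$, the hypothesis says that $R' \cap R_0$ is a non-trivial vertical subrectangle of $R_0$, so Lemma \ref{l.existenceofpredecessors} provides a (unique, by the disjoint-interiors clause) predecessor $R_1$ of $R_0$ with $R' \cap R_0 \subseteq R_1 \cap R_0$. If $R' = R_1$ we stop with $n = 1$; otherwise I would iterate the construction with $R_1$ in place of $R_0$. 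For this iteration to make sense, I first need to verify that $R' \cap R_1$ is itself a non-trivial vertical subrectangle of $R_1$, which is the key technical step.

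To establish this at stage $k$, note that since $R' \cap R_k \subseteq R' \cap R_{k+1}$ has non-empty interior, the Markovian intersection axiom leaves only two possibilities for $R' \cap R_{k+1}$: either (a) a non-trivial vertical subrectangle of $R_{k+1}$, which is what I want, or (b) a non-trivial horizontal subrectangle of $R_{k+1}$. I would rule out (b) as follows. In case (b), $R' \cap R_{k+1}$ spans the full stable extent of $R_{k+1}$; by our choice of $R_{k+1}$ we have $R' \cap R_k \subseteq R_{k+1}$, hence $R' \cap R_{k+1} \cap R_k = R' \cap R_k$. Intersecting the full-stable-extent set $R' \cap R_{k+1}$ with the horizontal subrectangle $R_{k+1} \cap R_k$ of $R_{k+1}$ (which itself has full stable extent of $R_{k+1}$) shows that $R' \cap R_k$ fills the full stable direction of $R_{k+1} \cap R_k$, and transferring back to $R_k$'s coordinates this forces $R' \cap R_k = R_{k+1} \cap R_k$. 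But then $R'$ would be a maximal rectangle intersecting $R_k$ in the same vertical strip as $R_{k+1}$, and the disjoint-interiors part of Lemma \ref{l.existenceofpredecessors} would give $R' = R_{k+1}$, a case already excluded.

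Termination will follow from Lemma \ref{l.infiniteintersectionverticalrectangles}. Suppose for contradiction that the construction produces an infinite sequence $(R_k)_{k \in \mathbb{N}}$; then each $R_{k+1} \cap R_k$ is a non-trivial vertical subrectangle of $R_k$, so Lemma \ref{l.infiniteintersectionverticalrectangles} gives that $\bigcap_{k \geq 0} R_k$ is an unstable segment of $R$. A straightforward induction shows $R' \cap R \subseteq R_k$ for every $k$: indeed $R' \cap R \subseteq R' \cap R_k \subseteq R_{k+1} \cap R_k \subseteq R_{k+1}$ at each stage. Therefore $R' \cap R \subseteq \bigcap_{k \geq 0} R_k$, contradicting the non-trivial stable extent of $R' \cap R$. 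I expect the most delicate step to be the case-analysis ruling out alternative (b) in the inductive construction, since it is the only point where one must carefully track the geometry of the Markovian intersection through two changes of coordinates in the chain.
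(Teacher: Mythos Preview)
Your proof is correct and follows essentially the same strategy as the paper: build a chain of predecessors $R=R_0,R_1,\ldots$ with $R'\cap R_k\subseteq R_{k+1}\cap R_k$ at each stage, and invoke Lemma~\ref{l.infiniteintersectionverticalrectangles} to force termination. The paper's argument is terser and does not explicitly justify that $R'\cap R_{k+1}$ is again a non-trivial vertical subrectangle of $R_{k+1}$; you correctly identified this as the point requiring care and supplied a valid argument.

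One remark on efficiency: your case-(b) analysis can be shortened. If $R'\cap R_{k+1}$ were a non-trivial horizontal subrectangle of $R_{k+1}$, the Markovian intersection axiom forces it to be a non-trivial \emph{vertical} subrectangle of $R'$. But $R'\cap R_k$ is a non-trivial horizontal subrectangle of $R'$ (again by the axiom, since it is vertical in $R_k$), and you have $R'\cap R_k\subseteq R'\cap R_{k+1}$. A horizontal subrectangle of $R'$ (full stable extent) cannot sit inside a non-trivial vertical subrectangle of $R'$ (proper stable extent), giving the contradiction directly—no need to pass through $R'\cap R_k=R_{k+1}\cap R_k$ and the disjoint-interiors clause. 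Your longer route is still valid, since once $R'\cap R_k=R_{k+1}\cap R_k$, the rectangle $R'$ is maximal in the partial order of Lemma~\ref{l.existenceofpredecessors} and hence a predecessor, forcing $R'=R_{k+1}$ as you claim.
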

\begin{proof}
By maximality, if $R'$ is not one of the predecessors of $R$ then it is contained in one of them, say $R_1$. Again, if it is not one of the predecessors of $R_1$, it is contained in one of them, say $R_2$. We construct in this way a sequence $R_0=R,R_1,...,R_n,...$ such that for every $n$, the rectangle  $R_{n+1}$ is the predecessor of $R_n$ containing $R'$. If there exists no $n$ such that $R_n=R'$, the previous sequence is infinite and by Lemma \ref{l.infiniteintersectionverticalrectangles}, $\overset{+\infty}{\underset{k=0}{\cap}} R_{k}$
is an unstable segment of $R$ containing $R'$, which is impossible. 
\end{proof}
\subsubsection*{The case of non-transversally orientable foliations} 

\begin{rema}\label{ss.nontransorientcase}
The Lemmas \ref{l.vertsubrectangleexists},  \ref{l.horizsubrectangleexists}, \ref{l.periodicboundary},  \ref{l.infiniteintersectionverticalrectangles},   \ref{l.infiniteintersectionhorizontalrectangles},   \ref{l.existenceofpredecessors} and \ref{l.npredecessor} remain true for transitive Anosov flows with non-transversally orientable foliations. 
\end{rema}

Indeed, let $\Phi$ be a transitive Anosov flow with non-transversally orientable foliations $F^s,F^u$. Let $\mathcal{P}$ be the bifoliated plane of $\Phi$. Consider now $M'$ the $2$-fold cover of $F^s$ and $F^u$. By lifiting $\Phi$ on $M'$, we obtain $\Phi'$ an Anosov flow with transversally orientable foliations. Notice that the bifoliated plane of $\Phi'$ coincides with $\mathcal{P}$ and that the action of $\pi_1(M)$ on $\mathcal{P}$ extends the action of $\pi_1(M')\leq \pi_1(M)$ on $\mathcal{P}$. 

Let $\mathcal{R}$ be a Markovian family of $\Phi$. It is easy to see that since $\pi_1(M')$ is subgroup of index $2$ of $\pi_1(M)$, $\mathcal{R}$ is a Markovian family of $\Phi'$ if and only if it is a Markovian  family for $\Phi$.

Lemma \ref{l.periodicboundary} remains true for $\Phi$, since the periodic points of $\mathcal{P}$ coincide for $\Phi$ and $\Phi'$. All the other lemmas concern only the properties of the intersections of rectangles of $\mathcal{R}$. By our previous arguments, $\mathcal{R}$ is a Markovian family for $\Phi$ and $\Phi'$. Therefore, all of the previously proven Lemmas remain true in the non-transversally orientable case.

We are now ready to proceed to the main theorem of this section. We will no longer assume that $\Phi$ has transversally orientable foliations. 
\begin{theorem}[Theorem A] \label{t.associatemarkovfamiliestogeometrictype}
Let $M$ be an orientable and closed 3-manifold, $\Phi$  a transitive Anosov flow on $M$ and $\mathcal{P}$ its bifoliated plane.  To any Markovian family $\mathcal{R}$ of $\Phi$ we can associate canonically a unique class of equivalent  geometric types. 
\end{theorem}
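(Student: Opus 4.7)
\emph{Proof plan.} The plan is to construct the geometric type as the combinatorial shadow of the decomposition of each orbit-representative rectangle into horizontal and vertical strips given by Lemma~\ref{l.existenceofpredecessors}, using the $\pi_1(M)$-action to glue the ``exit'' side of one rectangle to the ``entry'' side of another. Concretely, I would proceed as follows. By the finiteness axiom, fix $R_1,\ldots,R_n \in \mathcal{R}$ in pairwise distinct $\pi_1(M)$-orbits with $\mathcal{R}=\pi_1(M)\cdot\{R_1,\ldots,R_n\}$. Orient $\mathcal{F}^s$ and $\mathcal{F}^u$ on $\mathcal{P}$ (possible because $\mathcal{P}$ is a plane); these orientations induce an ordering on each rectangle. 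For each $i$, apply Lemma~\ref{l.existenceofpredecessors} and its horizontal analog to obtain the finite lists of predecessors $R''_{i,1},\ldots,R''_{i,v_i}$ (ordered left-to-right) and successors $R'_{i,1},\ldots,R'_{i,h_i}$ (ordered bottom-to-top) of $R_i$. Set
\[
V_i^l := R_i\cap R''_{i,l},\qquad H_i^j := R_i\cap R'_{i,j},\qquad \mathcal{H}=\{H_i^j\},\ \mathcal{V}=\{V_i^l\}.
\]
Lemmas~\ref{l.vertsubrectangleexists}--\ref{l.horizsubrectangleexists} guarantee $h_i,v_i\geq 1$.

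Next I would define $\phi$ and $u$ using the action. Given $H_i^j$ with associated successor $R'_{i,j}$, there is a unique $k\in\{1,\ldots,n\}$ with $R'_{i,j}\in\pi_1(M)\cdot R_k$, and some $g\in\pi_1(M)$ with $R'_{i,j}=g(R_k)$. By Remark~\ref{r.precedentsuivant}, $R_i$ is a successor of $R'_{i,j}$, so $g^{-1}(R_i)$ is a predecessor of $R_k$, hence equals $R''_{k,l}$ for a unique $l$, and $g^{-1}(H_i^j)=V_k^l$. Set
\[
\phi(H_i^j):=V_k^l,\qquad u(H_i^j):=\begin{cases}+1 & \text{if }g^{-1}\text{ preserves the orientation of }\mathcal{F}^u,\\ -1 & \text{otherwise.}\end{cases}
\]
For well-definedness when $\mathrm{Stab}(R_k)$ is non-trivial, I would verify that any stabilizing element $h$ restricts to $R_k$ as a bifoliation-preserving homeomorphism, and so permutes the ordered lists $(R''_{k,l})_l$ and $(R'_{k,j})_j$ in a way that is either trivial on subrectangles or is absorbed in the equivalence of Definition~\ref{d.equivalentgeomtypes} via the signs $\epsilon_k,\epsilon'_k$.

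For bijectivity of $\phi$, I would run the symmetric construction starting from $V_k^l$: write the associated predecessor $R''_{k,l}=g'(R_i)$ and check $g'^{-1}(R_k)$ is the successor $R'_{i,j}$ of $R_i$, recovering $H_i^j$. This produces an inverse of $\phi$, forcing $\sum_i h_i=\sum_i v_i$, and completes the verification that $G(\mathcal{R}):=(n,(h_i),(v_i),\mathcal{H},\mathcal{V},\phi,u)$ is a geometric type in the sense of Definition~\ref{d.geometrictype}.

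Finally, I would argue canonicity. The construction depends on (a) the choice of orbit representatives $R_1,\ldots,R_n$ and (b) the choice of orientations for $\mathcal{F}^s,\mathcal{F}^u$ on $\mathcal{P}$. Replacing $R_i$ by $h(R_i)$ for some $h\in\pi_1(M)$ induces a relabeling of the subrectangles via $h$, which changes the data of $\phi$ and $u$ only by conjugation and composition with an orientation-preserving or reversing homeomorphism; the resulting geometric type is equivalent to the original in the sense of Definition~\ref{d.equivalentgeomtypes}. Reversing the orientation of $\mathcal{F}^s$ or $\mathcal{F}^u$ reindexes each family $(H_i^j)_j$ or $(V_i^l)_l$ and flips the corresponding $\epsilon_i$ or $\epsilon'_i$, again within the same equivalence class. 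Together with Remark~\ref{r.finitenumbergeometrictypes}, this shows the class is finite and independent of all choices. \textbf{The main obstacle} I expect is the well-definedness of $\phi$ and $u$ at rectangles with non-trivial stabilizer in $\pi_1(M)$: one must show that stabilizing elements act on the labeled lists of predecessors and successors in a controlled way, compatible with the equivalence relation on geometric types; treating this requires understanding the possible symmetries of a $\pi_1(M)$-invariant rectangle and checking that the combinatorial ambiguity they create is exactly the one built into Definition~\ref{d.equivalentgeomtypes}.
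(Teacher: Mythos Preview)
Your construction is essentially the same as the paper's: choose orbit representatives and orientations, read off successors/predecessors via Lemma~\ref{l.existenceofpredecessors}, and define $\phi,u$ by transporting through the $\pi_1(M)$-action. The equivalence argument for changing representatives/orientations is also the paper's.

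Where you diverge is in the ``main obstacle''. You worry about rectangles $R_k$ with non-trivial stabilizer and plan to absorb the resulting ambiguity into the equivalence relation of Definition~\ref{d.equivalentgeomtypes}. In fact this obstacle does not exist: every rectangle in $\mathcal{R}$ has \emph{trivial} stabilizer, so the element $g$ with $g(R_k)=R'_{i,j}$ is unique and $\phi,u$ are well-defined on the nose. The paper's argument is short: if $h\neq\mathrm{id}$ satisfies $h(R_k)=R_k$, then (Brouwer on the disk $R_k$) $h$ has a fixed point $p\in R_k$; but any non-trivial element of $\pi_1(M)$ fixing a point of $\mathcal{P}$ acts hyperbolically there, in particular as a genuine expansion or contraction on $\mathcal{F}^s(p)$, which is incompatible with preserving the compact rectangle $R_k$. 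Once you have this, your proposed workaround is unnecessary, and the canonicity argument reduces exactly to the two checks you already outlined (change of representatives, change of orientations).

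One small slip: when you invoke Remark~\ref{r.precedentsuivant} you write ``$R_i$ is a successor of $R'_{i,j}$''; it is a \emph{predecessor} (you use this correctly in the next clause, so the argument is fine).
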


\begin{proof}
\textit{Construction of a geometric type}

 Let us begin by constructing a (non-canonical) geometric type associated to $\mathcal{R}$. We will later show that all geometric types constructed in this way are equivalent. 
 
We first need to define an integer $n\in \mathbb{N}^*$ associated to $\mathcal{R}$. The integer $n$ will correspond to the number of rectangles forming our geometric type (see Definition \ref{d.geometrictype}). The set $\mathcal{R}$ is the union of the (distinct) $\pi_1(M)$-orbits of a finite number of rectangles: $r_1,...,r_K \subset \mathcal{P}$. We take $n=K$. To each $r_i$ we associate one copy of $[0,1]^2$ trivially bifoliated, that we will denote by $R_i$. 

We define $h_i$ (resp. $v_i$) as the number of successors (resp. predecessors) of the rectangle $r_i$. It is easy to see that replacing $r_i$ by another rectangle in its orbit by $\pi_1(M)$ does not change change either $h_i$ or $v_i$. An easy consequence of Remark \ref{r.precedentsuivant} is that $$\overset{n}{\sum_{i=1}} h_i =\overset{n}{\sum_{i=1}} v_i$$
The previous $h_i$ and $v_i$ will thus play the role of the number of horizontal and vertical subrectangles in our geometric type. 

Choose now an orientation of the stable and unstable foliations $\mathcal{F}^s,\mathcal{F}^u$ in $\mathcal{P}$ (this is always possible for a line foliation in the plane).

Consider for every $R_i$, $h_i$ horizontal subrectangles and $v_i$ vertical subrectangles, denoted respectively by $H_i^1,...,H_i^{h_i}$ and $V_i^1,...,V_i^{v_i}$. We will assume that all the $H_i^j$ (resp. $V_i^j$) are mutually disjoint and ordered from bottom to top (resp. from left to right). Using the orientation of the unstable (resp. stable) foliation on $\mathcal{P}$ we can order the successors (resp. predecessors) of $r_i$ from bottom to top (resp. from left to right). The previous orders allow us to identify (bijectively) the successors (resp. predecessors) of $r_i$ with the rectangles $H_i^1,...,H_i^{h_i}$ (resp.$V_i^1,...,V_i^{v_i}$).

We define $\mathcal{H}$ and $\mathcal{V}$ as $\mathcal{H}:=\lbrace H^j_i| i\in\llbracket 1,n \rrbracket, j \in \llbracket 1,h_i \rrbracket  \rbrace$ and $\mathcal{V}:=\lbrace V^j_i| i\in\llbracket 1,n \rrbracket, j \in \llbracket 1,v_i \rrbracket  \rbrace$. In order to define a geometric type associated to $\mathcal{R}$, we still need to define the functions $\phi, u$ (we follow here the notations of Definition \ref{d.geometrictype}). 

Take any $i \in\llbracket 1,n \rrbracket$ and any successor of $r_i$, say $R$. We know that $R$ belongs to the $\pi_1(M)$-orbit of some $r_j$, where $j \in\llbracket 1,n \rrbracket$. Consider $g\in \pi_1(M)$ such that $g(R)=r_j$. The previous $g$ is unique. Indeed, suppose that there exists another $g'\in \pi_1(M)$ such that $g'(R)=r_j$. In this case, we have that $g'^{-1}(g(R))=R$, therefore $g'^{-1}\circ g$ has a fixed point in $R$. Since $g'\neq g$, by eventually considering its inverse, we can assume that $g'^{-1}\circ g$ acts as an expansion along the stable leaf of its fixed point in $R$, which contradicts the fact that $g'^{-1}(g(R))=R$. Therefore, the element $g$ is unique.

Suppose now that $R$ is the $k$-th successor of $r_i$ (for the bottom to top order) and that $g(r_i)$ is the $l$-th predecessor of $r_j$ (for the left to right order, see also Remark \ref{r.precedentsuivant}). We define $\phi(H_i^k)=V_j^l$. Furthermore, we define $u(H_i^k)=+1$ if $g$ preserves the orientation of the foliations and $u(H_i^k)=-1$ if not.

It is not difficult to see that  $G=(n,(h_i)_{i \in \llbracket 1,n \rrbracket}, (v_i)_{i\in \llbracket 1,n \rrbracket}, \mathcal{H}, \mathcal{V},\phi, u)$ is a geometric type, whose construction depended  solely on our choice of orientation of $\mathcal{F}^s$ and $\mathcal{F}^u$ and also on our initial choice of rectangles $r_i$. We would now like to show that modifying the orientation of $\mathcal{F}^s$ or $\mathcal{F}^u$ or changing our initial choice of rectangles $r_i$ leads to the construction of geometric types that are equivalent to $G$.

\textit{A unique equivalence class of geometric types}

Suppose that we change our initial choice of rectangles, while keeping fixed our initial  orientations of $\mathcal{F}^{s}$ and $\mathcal{F}^{u}$. Say that the $r_i$ have been reordered and then replaced by the rectangles $g_i(r_i)$ (with $g_i\in \pi_1(M)$). By our previous arguments, we can canonically associate to $\mathcal{R}$ together with this choice of rectangles and orientations  a geometric type $G':=(n, (h'_i)_{i \in \llbracket 1,n \rrbracket}, (v'_i)_{i\in \llbracket 1,n \rrbracket}, \mathcal{H'}, \mathcal{V'},\phi', u')$. Using the analogy between a geometric type and a set of rectangles, once again:
\begin{itemize}
    \item the integer $n$ corresponds to $n$ rectangles in $G'$, say $R_1',...,R_n'$
    \item every $R'_i$ contains $h'_i$ (resp. $v'_i$) horizontal (resp. vertical) subrectangles two by two disjoint and ordrered from bottom to top (resp. left to right), say ${H'_i}^1,...,{H'_i}^{h_i}$ (resp. ${V'_i}^1,...,{V'_i}^{v_i}$) 
\end{itemize}

We will show that $G$ and $G'$ are equivalent. Indeed, up to reindexing the $R_i'$, for every $i$, the rectangles $R_i$ and $R'_i$ correspond to the same orbit of rectangles in $\mathcal{R}$. Therefore, by our previous construction, we have that $h_i=h'_i$ and $v_i=v'_i$.

We would now like to define a bijection from $\mathcal{H}\cup \mathcal{V}$ to $ \mathcal{H'}\cup \mathcal{V'}$. There exists a natural way to associate any rectangle in $\mathcal{H}$ to a rectangle in $\mathcal{H}'$. Indeed, recall that for every $i \in\llbracket 1,n \rrbracket$ and every $j \in \llbracket 1,h_i \rrbracket$, the rectangle ${H_i}^j$ (resp.${H'_i}^j$) corresponds to a successor of $r_i$ (resp. $g_i(r_i)$), say $R_{i,j}\in \mathcal{R}$ (resp. $R'_{i,j}$). Since the successors of $r_i$ and $g_i(r_i)$ are ordered from bottom to top, if $g_i$ preserves the orientation of the foliations, $g_i(R_{i,j})=R'_{i,j}$. If not, $g_i(R_{i,j})=R'_{i,(v_i-j)}$. We can therefore associate in the first case ${H_i}^j$ to ${H'_i}^j$ and in the second ${H_i}^j$ to ${H'_i}^{(h_i-j)}$. By symmetry the previous map from $\mathcal{H}$ to $\mathcal{H}'$ admits an inverse, it thus defines a bijection $H_h:\mathcal{H}\rightarrow \mathcal{H'}$. Moreover, notice that for every $i$ we have that $H_h(\{{H_i}^j, j\in \llbracket 1, h_i\rrbracket \})= \{{H'_i}^{j'}, j'\in \llbracket 1, h_i\rrbracket\}$ and that $H_h$ is increasing with respect to $j$ when $g_i$ preserves the orientations of the foliations and decreasing when $g_i$ reverses the orientations of the foliations. In the exact same way, we can construct a bijection $H_v$ from $\mathcal{V}$ to $\mathcal{V'}$ with similar properties. By combining $H_v$ and $H_h$, we can define a bijection $H:\mathcal{H}\cup \mathcal{V}\rightarrow  \mathcal{H'}\cup \mathcal{V'}$. Let us now show that $H$ is an equivalence between $G$ and $G'$. 
 


Define $\epsilon(R_i)=\epsilon'(R_i)=+1$ if $g_i$ respects the orientation of the foliations and $\epsilon(R_i)=\epsilon'(R_i)=-1$ otherwise. Notice that for every $i$ we have that $\epsilon(R_i)\cdot \epsilon'(R_i)=+1$. It suffices to show that for any rectangle $h\in \mathcal{H}$ such that $h\subset R_i$ and $\phi(h)\subset R_j$, we have  $H\circ \phi(h)= \phi'\circ H(h)$ and $u'(H(h))=\epsilon(R_i)\epsilon(R_j)u(h)$. Let us start by proving the first equality. By construction, the subrectangles in $\mathcal{V}'$ are bijectively identified with the predecessors of the rectangles $g_i(r_i)$; it thus suffices to show that both $H\circ \phi(h) \in \mathcal{V}'$ and $\phi'\circ H(h) \in \mathcal{V}'$ correspond to the same predecessor of $g_j(r_j)$.
 
 Recall that $h$ corresponds to a successor $R$ of $r_i$, that there exists a unique $G\in \pi_1(M)$ such that $G(R)=r_j$ and that the rectangle $\phi(h)$ corresponds to the predecessor $G(r_i)$ of $r_j$. Therefore, by our definition of $H$, the rectangle $H(\phi(h))$ corresponds to the predecessor $g_j(G(r_i))$ of $g_j(r_j)$. 
 
Similarly, the rectangle $H(h)$ corresponds to the successor $g_i(R)$ of $g_i(r_i)$. Also, by our construction in  \textit{Construction of a geometric type}, since $(g_j\circ G \circ g_i^{-1})(g_i(R))=g_j(r_j)$, the rectangle  $\phi'(H(h))$ corresponds to the predecessor $(g_j\circ G \circ g_i^{-1})(g_i(r_i))=g_j(G(r_i))$ of $g_j(r_j)$. This proves the first equality. Recall now that by construction of $u$ and $u'$, $u'(H(h))=+1$ (resp. $u(h)=+1$) if and only if $g_j\circ G \circ g_i^{-1}$  (resp. $G$) preserves the orientation of the stable/unstable foliations in $\mathcal{P}$. The previous statement is equivalent to  $u'(H(h))=\epsilon(R_i)\epsilon(R_j)u(h)$.

 Our previous arguments show that $G$ and $G'$ are equivalent geometric types. By a similar argument, we can show that changing the orientation of $\mathcal{F}^u$ or $\mathcal{F}^s$ also leads to geometric types equivalent to $G$, which concludes the proof of the theorem. 

\end{proof}
 In the above proof we have shown that
\begin{rema}\label{r.canonicalassociationgeometrictype}
Let $\mathcal{R}$ be a Markovian family in $\mathcal{P}$. Choose $r_1,..,r_n$ representatives of every rectangle orbit (by $\pi_1(M)$) in $\mathcal{R}$ and an orientation of $\mathcal{F}^s$ and $\mathcal{F}^u$. For each one of the previous choices, we can canonically associate a geometric type to $\mathcal{R}$. 
\end{rema}
\begin{lemm}\label{l.geomtypeinclass}
Let $G=(n,(h_i)_{i \in \llbracket 1,n \rrbracket}, (v_i)_{i\in \llbracket 1,n \rrbracket}, \mathcal{H}, \mathcal{V},\phi, u)$ be a geometric type in the class of geometric types canonically associated to $\mathcal{R}$. When the flow $\Phi$ does not have transversally orientable foliations, by an appropriate choice of representatives of every rectangle orbit in  $\mathcal{R}$ and an appropriate choice of orientation of $\mathcal{F}^s$ and $\mathcal{F}^u$, we can canonically associate $\mathcal{R}$ to $G$. If we also assume that $u_{|\mathcal{H}}=1$, the same result is true when the flow $\Phi$ has transversally orientable foliations 
\end{lemm}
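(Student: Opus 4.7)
The plan is to pick any geometric type $G_0 = (n, (h_i^0)_i, (v_i^0)_i, \mathcal{H}_0, \mathcal{V}_0, \phi_0, u_0)$ canonically associated to $\mathcal{R}$ from some initial choice $C_0$ of orbit representatives $r_1, \ldots, r_n$ in $\mathcal{R}$ and some orientations of $\mathcal{F}^s, \mathcal{F}^u$, as produced in the proof of Theorem \ref{t.associatemarkovfamiliestogeometrictype} (see also Remark \ref{r.canonicalassociationgeometrictype}). Since $G$ lies in the same equivalence class as $G_0$, Definition \ref{d.equivalentgeomtypes} provides a bijection $H$ between horizontal and vertical subrectangles together with signs $\epsilon_i, \epsilon_i'$ such that $\epsilon_i \epsilon_i' =: \epsilon$ is independent of $i$. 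My goal is to modify $C_0$ into a new choice $C$ whose canonically associated geometric type is \emph{exactly} $G$ (not merely equivalent).

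The modification will have three ingredients. First, I will reindex the representatives $r_i$ to match the reindexing given by the equivalence, so that the resulting $(h_i, v_i)$ agree with those of $G$. Next, writing $\eta_u, \eta_s \in \{\pm 1\}$ for whether the new orientations of $\mathcal{F}^u, \mathcal{F}^s$ agree with the initial ones, I will pick them so that $\eta_u \eta_s = \epsilon$. Finally, for each $i$, I will set $\sigma_i := \eta_u \epsilon_i = \eta_s \epsilon_i'$ (the two expressions agree by the previous constraint) and replace $r_i$ by $g_i(r_i)$ for some $g_i \in \pi_1(M)$ preserving the transverse orientations of $\mathcal{F}^{s,u}$ if $\sigma_i = +1$ and reversing them if $\sigma_i = -1$. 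In the non-transversally orientable case, there exists $\gamma \in \pi_1(M)$ reversing both transverse orientations, so one can always realize any prescribed $\sigma_i$ by taking $g_i \in \{1, \gamma\}$.

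The verification that the canonically associated geometric type of this new choice equals $G$ then follows by direct bookkeeping, patterned on the \emph{A unique equivalence class of geometric types} part of the proof of Theorem \ref{t.associatemarkovfamiliestogeometrictype}: the ordering of successors (resp. predecessors) of $r_i$ flips by the combined sign $\eta_u \sigma_i = \epsilon_i$ (resp. $\eta_s \sigma_i = \epsilon_i'$); the new bijection on rectangle labels is $H \circ \phi_0 \circ H^{-1} = \phi$; and the new $u$-value at $H(h)$ equals $\sigma_i \sigma_j \cdot u_0(h) = \epsilon_i \epsilon_j \cdot u_0(h)$ for $h \subset R_i$ and $\phi_0(h) \subset R_j$, which by Definition \ref{d.equivalentgeomtypes} coincides with $u(H(h))$.

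The main obstacle will be handling the transversally orientable case. Since every element of $\pi_1(M)$ then preserves both transverse orientations, $\sigma_i = +1$ is forced for every $i$, and the only reachable targets are those with $\epsilon_i = \eta_u$ and $\epsilon_i' = \eta_s$ constant in $i$. The assumption $u_{|\mathcal{H}} = 1$ is precisely the needed constraint on $G$: since $u_0 \equiv 1$ automatically in this setting and $u(H(h)) = \epsilon_i \epsilon_j$ whenever $h \subset R_i$ and $\phi_0(h) \subset R_j$, the condition $u \equiv 1$ forces $\epsilon_i \epsilon_j = 1$ on all ``transition edges'', and the transitivity of $\Phi$ together with the Markovian intersection axiom (which ensure the transition graph on $\{R_1, \ldots, R_n\}$ is connected) then gives $\epsilon_i$ constant in $i$. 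This is consistent with $\sigma_i \equiv +1$ and the construction above then goes through, producing a choice $C$ with $G(C) = G$.
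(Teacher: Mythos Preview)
Your proposal is correct and follows essentially the same approach as the paper: start from an arbitrary choice giving an equivalent geometric type $G_0$, then adjust orientations of $\mathcal{F}^{s,u}$ and replace representatives $r_i$ by $g_i(r_i)$ (with $g_i$ orientation-reversing when available) to turn the equivalence into an equality. The paper carries this out in two successive normalizations (first force $\epsilon_i\epsilon_i'=+1$ via one orientation flip, then fix each $\epsilon_i$ individually), whereas you package the same moves more uniformly via the parameters $\eta_u,\eta_s,\sigma_i$; and in the transversally orientable case your connectivity argument for the transition graph makes explicit what the paper dismisses as ``not difficult to see.''
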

\begin{proof}
Let us first chose arbitrarily representatives $r_1,...,r_n$ for every rectangle orbit in $\mathcal{R}$ and orientations for $\mathcal{F}^s$ and $\mathcal{F}^u$. Thanks to the previous choices,  Theorem \ref{t.associatemarkovfamiliestogeometrictype} and Remark \ref{r.canonicalassociationgeometrictype}, we can canonically associate $\mathcal{R}$ to a geometric type $G'$ equivalent to $G$. Up to permuting the $(h_i,v_i)$ we can assume that $G'=(n,(h_i)_{i \in \llbracket 1,n \rrbracket}, (v_i)_{i\in \llbracket 1,n \rrbracket}, \mathcal{H'}, \mathcal{V'},\phi', u')$. Let us first assume that the flow $\Phi$ does not have transversally orientable foliations. We will now adapt our choices of representatives and orientations in order to canonically obtain $G$. 

By definition, there exists an equivalence $H$ between $G$ and $G'$. Following the notations of Definition \ref{d.equivalentgeomtypes}, consider for every $i$ the integers $\epsilon_i,\epsilon_i'\in \{-1,+1\}$. Recall that either $\epsilon_i \cdot \epsilon_i'=-1$ for all $i$ or $\epsilon_i \cdot \epsilon_i'=+1$ for all $i$. Changing the orientation of $\mathcal{F}^s$, changes the order of all vertical subrectangles in $G'$ (i.e. the elements in $\mathcal{V}'$) and  replaces every  $\epsilon_i'$ by $-\epsilon_i'$, while keeping all the $\epsilon_i$ fixed. Therefore, by eventually changing the orientation of $\mathcal{F}^s$, we can assume that for every $i$ we have $\epsilon_i \cdot \epsilon_i'=+1$. 

Suppose that there exists $j$ such that $\epsilon_j= \epsilon_j'=-1$. Take $g\in\pi_1(M)$ any element reversing the orientation of the foliations. Replacing $r_j$ by $g(r_j)$, changes the order of the horizontal and vertical subrectangles in $r_j$, thus simultaneously changes the signs of both $\epsilon_j$ and  $\epsilon_j'$. We can therefore assume by an eventual change of representatives that all the $\epsilon_i$ and $ \epsilon_i'$ are equal to $+1$. By definition, this implies that $G'=G$ which gives us the desired result. 

If the flow has transversally orientable foliations, by the same argument, we can assume that for every $i$ we have $\epsilon_i \cdot \epsilon_i'=+1$. Let us now suppose that for all $i$ we have that $\epsilon_i= \epsilon_i'=-1$. By eventually changing the orientations of $\mathcal{F}^s$ and $\mathcal{F}^u$, we can once again change simultaneously the signs of all the $\epsilon_i$ and $\epsilon_i'$, which gives us the desired result. Assume now that there exists $j_0,j_1$ such that $\epsilon_{j_0}= \epsilon_{j_0}'= -\epsilon_{j_1}= -\epsilon_{j_1}'=-1$. It is not difficult to see that in this case, there exists $h\in \mathcal{H}$ or $h'\in \mathcal{H}'$ such that $u(h)=-1$ or $u'(h')=-1$ respectively. By hypothesis $u$ cannot be negative and by our construction in Theorem \ref{t.associatemarkovfamiliestogeometrictype}, $u'$ cannot be negative too since all elements in $\pi_1(M)$ preserve the orientations of the foliations. We thus arrive to an absurd and we get the desired result. 
\end{proof}
\section{Boundary periodic points and arc points}\label{s.boundarypoints}
A Markovian family does not cover all the points of the bifoliated plane in the same way: for every Markovian family there exist points that do not belong in the interior of any rectangle of the family. Among those points, we can distinguish non periodic points that we will call \textit{boundary arc points} and periodic points that we will call  \textit{boundary periodic points}. 

The sets of boundary arc and boundary periodic points will be of great importance to us throughout this paper. In particular, in view of Theorem B, we will prove that a class of geometric types associated to a Markovian family contains all the information concerning the flow, except from the behaviour of the flow around the boundary periodic orbits. In this section, our goal is to show that for any Markovian family the set of boundary arc points and also the set of boundary periodic orbits are non-empty and are the union of the $\pi_1(M)$-orbits of a finite number of points.

As in the previous section, we fix $M$ an orientable closed 3-manifold carrying a transitive Anosov flow $\Phi$ endowed with its weak stable and unstable foliations $F^s,F^u$,  $(\mathcal{P}, \mathcal{F}^s, \mathcal{F}^u)$ the bifoliated plane of $\Phi$ endowed with an orientation and $\mathcal{R}$ a Markovian family of $\mathcal{P}$.
\begin{defi}
Consider $x\in\mathcal{P}$ that is not contained in the interior of any rectangle of $\mathcal{R}$. If $x$ is a periodic point, we will call $x$ a \emph{boundary periodic point}. If not, we will call $x$ a \emph{boundary arc point}. 

\end{defi} 

\begin{prop}\label{p.boundaryperiodicpoints}
The set of boundary periodic orbits of $\mathcal{R}$ is non-empty and is the union of a finite number of orbits by the action of $\pi_1(M)$. 
\end{prop}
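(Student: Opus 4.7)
The plan is to identify the boundary periodic points with the periodic points sitting on the finitely many periodic leaves that actually carry boundary segments of rectangles of $\mathcal{R}$. Let $\Lambda^s$ (resp.\ $\Lambda^u$) be the set of periodic stable (resp.\ unstable) leaves in $\mathcal{P}$ containing a stable (resp.\ unstable) boundary segment of some rectangle in $\mathcal{R}$. By Lemma \ref{l.periodicboundary} combined with the finiteness axiom, $\Lambda^s$ and $\Lambda^u$ are $\pi_1(M)$-invariant and consist of finitely many orbits of leaves. Since each periodic leaf of $\mathcal{P}$ carries exactly one periodic point, the set
\[
P := \{p_S \mid S \in \Lambda^s\} \cup \{p_U \mid U \in \Lambda^u\}
\]
is a $\pi_1(M)$-invariant union of finitely many orbits of periodic points.

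For the finiteness half, I will show that every boundary periodic point lies in $P$. If $x$ is such a point, then by the finite return time axiom $x$ belongs to some $R \in \mathcal{R}$; since $x \notin \mathrm{Int}(R)$, the point $x$ sits on a stable or an unstable boundary edge of $R$, which forces $\mathcal{F}^s(x) \in \Lambda^s$ or $\mathcal{F}^u(x) \in \Lambda^u$. Being the unique periodic point on its stable (or unstable) leaf, $x$ equals $p_{\mathcal{F}^s(x)}$ or $p_{\mathcal{F}^u(x)}$, and therefore lies in $P$.

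For the existence half, I will prove the converse inclusion: every $p_S$ with $S \in \Lambda^s$ is a boundary periodic point (and analogously for $p_U$ with $U \in \Lambda^u$). Since every rectangle in $\mathcal{R}$ has stable boundary edges, $\Lambda^s$ is non-empty, so this yields at least one such orbit. Arguing by contradiction, suppose $p_S \in \mathrm{Int}(R_0)$ for some $R_0 \in \mathcal{R}$. Pick a rectangle $R \in \mathcal{R}$ carrying a stable boundary edge $\alpha \subset S$, and let $h \in \mathrm{Stab}(p_S)$ be an element acting as a contraction on $S$ towards $p_S$ (and hence as an expansion on $\mathcal{F}^u(p_S)$ away from $p_S$). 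Then $h^n(R) \in \mathcal{R}$ carries $h^n(\alpha)$ as one of its stable boundary edges, and $h^n(\alpha)$ accumulates on $p_S$ along $S$, so $h^n(\alpha) \subset \mathrm{Int}(R_0)$ for all large $n$. Uniform hyperbolicity implies that the stable extent of $h^n(R)$ tends to $0$ (in particular, it is eventually strictly smaller than that of $R_0$), and near $p_S$ the rectangle $h^n(R)$ lies entirely on one side of $S$.

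The main obstacle is to extract a contradiction from the Markovian intersection axiom applied to $R_0$ and $h^n(R)$. The interiors of these two rectangles intersect in a half-neighborhood of $p_S$, so axiom (2) of Definition \ref{d.markovfamily} forces $R_0 \cap h^n(R)$ to be a horizontal subrectangle of one and a vertical subrectangle of the other. For $n$ large, however, this intersection is contained in the thin stable strip $h^n(R)$ and therefore cannot span $R_0$ in the stable direction, ruling it out as a horizontal subrectangle of $R_0$; and it is confined to one side of $S$ inside $R_0$ while $p_S \in \mathrm{Int}(R_0)$ forces $R_0$ to extend on both sides of $S$, ruling it out as a vertical subrectangle of $R_0$. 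This contradiction shows that $p_S$ cannot lie in the interior of any rectangle of $\mathcal{R}$, completing the proof.
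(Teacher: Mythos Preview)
Your proof is correct and follows essentially the same strategy as the paper: define the finite set $P$ of periodic points lying on leaves that carry boundary segments (your $\Lambda^s \cup \Lambda^u$ is the paper's set $A$, your $P$ is the paper's $B$), observe that boundary periodic points are contained in $P$, and then show the reverse inclusion by producing a violation of the Markovian intersection axiom whenever such a $p_S$ lies in the interior of some rectangle.

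The only difference is cosmetic. Where you take a sequence $h^n(R)$ and invoke ``uniform hyperbolicity'' to say the stable extent shrinks, the paper simply picks a single $g \in \mathrm{Stab}(p)$ with $g(s) \subset \mathrm{Int}(R_0)$ and observes directly that the intersection $g(R') \cap R_0$ cannot be Markovian. Your appeal to uniform hyperbolicity is a bit loose (there is no canonical metric on $\mathcal{P}$), but it is also unnecessary: once you have $h^n(\alpha) \subset \mathrm{Int}(R_0)$, the two cases you rule out follow immediately from the fact that the unstable boundaries of $h^n(R)$ pass through the endpoints of $h^n(\alpha)$ (hence lie inside $R_0$, killing the horizontal case) and that $h^n(R)$ sits on one side of its stable boundary $h^n(\alpha) \subset S$ while $R_0$ straddles $S$ (killing the vertical case). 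So you could drop the ``stable extent tends to $0$'' sentence entirely and the argument would still go through, matching the paper's more direct version.
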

\begin{proof}
By Lemma \ref{l.periodicboundary}, the stable and unstable boundary segments of any rectangle in $\mathcal{R}$ belong to periodic leaves. By the finiteness axiom, we have that the set of periodic leaves containing a stable/unstable boundary component of some rectangle in $\mathcal{R}$ -we will denote this set by $A$- corresponds to the union of the $\pi_1(M)$-orbits of a finite number of leaves. Therefore, the set of periodic points belonging to a leaf in $A$ - we will denote this set by $B$- is a also the union of the $\pi_1(M)$-orbits of a finite number of points. The set of boundary periodic points is a subset of $B$. We have therefore shown the second part of the proposition. 

In order to show that there exists at least one boundary periodic point, it suffices to show that every point of $B$ is a boundary periodic orbit. Take $p\in B$. 

Suppose $p$ is contained in the interior of some rectangle $R\in \mathcal{R}$. Since $p\in B$, we can assume without any loss of generality that $\mathcal{F}^s(x)$ contains a stable boundary component, say $s$, of some rectangle $R' \in \mathcal{R}$. Let $g \in \text{Stab}(p) \subset \pi_1(M)$ such that $g(s)\subset \int{R}$. The intersection of $g(R')$ and $R$ does not satisfy the Markovian intersection axiom. Therefore $p$ is not contained in the interior of any rectangle in $\mathcal{R}$ and is a boundary periodic point. 
\end{proof}
\begin{prop}\label{p.arcpointsexist}
The set of boundary arc points of $\mathcal{R}$ is non-empty and is the union of a finite number of orbits by the action of $\pi_1(M)$. 
\end{prop}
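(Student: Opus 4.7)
The plan is to show that every boundary arc point is a corner of some rectangle of $\mathcal{R}$. Combined with the finiteness axiom (which bounds the $\pi_1(M)$-orbits of rectangles, hence those of their four corners each), this yields the required finiteness. Non-emptiness will then follow by exhibiting a non-periodic corner near a boundary periodic point produced by Proposition \ref{p.boundaryperiodicpoints}.

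I would first prove the easy direction: every non-periodic corner of a rectangle of $\mathcal{R}$ is a boundary arc point. Indeed, suppose $x$ is a corner of $R\in\mathcal{R}$ and $x\in\mathring{R'}$ for some other $R'\in\mathcal{R}$; then a small neighborhood of $x$ is contained in $R'$ and meets the unique quadrant of $x$ that $R$ fills, so $\mathring R\cap\mathring{R'}\neq\emptyset$. By the Markovian intersection axiom, $R\cap R'$ is a non-trivial horizontal subrectangle of one and a non-trivial vertical subrectangle of the other. In both sub-cases, a direct parametric inspection shows that $x$ must be a corner of $R\cap R'$ in the $R$-parametrization, while $x\in\mathring{R'}$ forces both $R'$-coordinates of $x$ to lie in $(0,1)$, which is incompatible with being a corner of $R\cap R'$ in the $R'$-parametrization -- a direct contradiction. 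Non-emptiness then follows: by Lemma \ref{l.periodicboundary} any boundary periodic point $p$ is a corner of some $R_0\in\mathcal{R}$, and the other endpoint on $\mathcal{F}^s(p)$ of the stable boundary segment of $R_0$ through $p$ is distinct from $p$ (the unique periodic point of that leaf), hence a non-periodic corner, hence a boundary arc point.

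For the harder direction, let $x$ be a boundary arc point; applying the finite return time axiom in each of the four quadrants yields rectangles $R_1,\dots,R_4\in\mathcal{R}$ whose interiors contain germs of $x$ in the respective quadrants, with $x\in\partial R_i$. Assume, for contradiction, that $x$ is not a corner of any rectangle of $\mathcal{R}$, so each $R_i$ has $x$ strictly on a stable or an unstable edge. I split into two configurations. In the \emph{mixed} case, two of the $R_i$ share a germ-quadrant $Q$ with $x$ on the stable boundary of one and on the unstable boundary of the other; applying the Markovian intersection axiom to this pair, $x$ becomes a vertex of the intersection rectangle in both parametrizations, and the parametric comparison forces $x$ to be a corner of one of the two original rectangles, contradicting the assumption. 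In the \emph{pure} case, all four $R_i$ have $x$ on the same type of boundary (say stable); I would iterate Lemma \ref{l.vertsubrectangleexists} at one fixed $R_i$ to produce a sequence $R^{(k)}\in\mathcal{R}$ with $R^{(k+1)}\cap R^{(k)}$ a non-trivial vertical subrectangle of $R^{(k)}$ and $x\in R^{(k)}$ for every $k$. Since $x$ is never in the interior, never at a corner, and (by the pure-case assumption) never on an unstable boundary of any $R^{(k)}$, the parametric position of $x$ in $R^{(k)}$ is progressively constrained; applying Lemma \ref{l.infiniteintersectionverticalrectangles} then produces an unstable segment of $R^{(0)}$ containing $x$, which, combined with a symmetric application of Lemma \ref{l.horizsubrectangleexists} in the unstable direction, delivers the required contradiction.

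The main obstacle will be the \emph{pure} case of the converse direction: the non-periodicity of $x$ is essential there, since for a boundary periodic point the analogous configuration does occur (it is precisely the configuration exploited in the proof of Proposition \ref{p.boundaryperiodicpoints}). Consequently the contradiction cannot be purely combinatorial; it will require a careful interplay between the Markovian intersection axiom, the two infinite-intersection Lemmas \ref{l.infiniteintersectionverticalrectangles} and \ref{l.infiniteintersectionhorizontalrectangles}, and the proper discontinuity of the $\pi_1(M)$-action on $\widetilde M$ together with the absence of a non-trivial stabilizer of $x$ in $\pi_1(M)$.
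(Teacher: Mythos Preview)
Your plan rests on the claim that every boundary arc point is a corner of some rectangle of $\mathcal{R}$, and this claim is false. The breakdown is already visible in your ``mixed case'': take $R_i$ with $x$ in the interior of a stable edge and $R_j$ with $x$ in the interior of an unstable edge, sharing a quadrant germ. The Markovian intersection axiom forces, say, $R_i\cap R_j=\phi_{R_i}([a,b]\times[0,1])=\phi_{R_j}([0,1]\times[c,d])$; then indeed $x$ is a corner of $R_i\cap R_j$, but in $R_i$-coordinates this only says $x=\phi_{R_i}(s,\epsilon)$ with $s\in\{a,b\}$ and $\epsilon\in\{0,1\}$, and nothing forces $a$ or $b$ to equal $0$ or $1$. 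So $x$ need not be a corner of $R_i$ (nor of $R_j$, by the symmetric computation). In fact the paper's own description of a cycle around a boundary arc point $p$ (Definition~\ref{d.cyclearcpoint} and Figure~\ref{f.cycle}) exhibits $p$ sitting in the \emph{interior} of an edge of each of $L_0,\dots,L_4$; there is no reason any rectangle of $\mathcal{R}$ should have $p$ as a corner. Since the mixed configuration always occurs (that is exactly the paper's characterisation in Remark~\ref{r.caracterisationarcpoints}), this is not a side case you can avoid.

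There is a second, smaller gap: your non-emptiness step asserts that a boundary periodic point is a corner of some rectangle, citing Lemma~\ref{l.periodicboundary}, but that lemma only says the boundary segments lie in periodic leaves; it does not place the periodic point at a corner (and in general it is not there, cf.\ Lemma~\ref{l.nocyclearoundperiod}). Your ``easy direction'' (non-periodic corner $\Rightarrow$ boundary arc point) is correct and does give non-emptiness once you produce a single non-periodic corner, but the finiteness half cannot be rescued along these lines. The paper instead characterises boundary arc points as the points of $\partial^s R\cap \partial^u R'$ with $R'$ an $s$-crossing predecessor of $R$ (Remark~\ref{r.caracterisationarcpoints}), and deduces finiteness from the finiteness of $s$-crossing predecessors established in Lemmas~\ref{l.crossingrectanglesnoperiodicpoints} and~\ref{l.crossingrectangleswithperiodicpoints}.
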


In order to prove the above proposition, we will use the following lemma that resembles closely Lemma \ref{l.existenceofpredecessors}. 
\begin{lemm}\label{l.crossingrectanglesnoperiodicpoints}
Take a rectangle $R \in \mathcal{R}$ and $s$ one of its stable boundaries. Suppose that $s$ contains no periodic point, then there exist $R_1,...,R_n \in \mathcal{R}$ ($n\in \mathbb{N}$) intersecting $R$ along non-trivial vertical subrectangles and whose stable boundaries don't intersect $s$ such that: 
\begin{enumerate}
\item $R_1,...,R_n$ are maximal for this property: any $R' \in \mathcal{R}$ intersecting $R$ along a non-trivial vertical subrectangle and whose stable boundary doesn't intersect $s$ verifies $R' \cap R \subseteq R_i \cap R$ for some $i \in \llbracket 1, n \rrbracket$ 
\item $R_1,...,R_n$ have disjoint interiors 
\item $R_1,...,R_n$ cover $R$: $\overset{n}{\underset{i=1}{\cup}} R_i \cap R = R$ 
\end{enumerate}
\end{lemm}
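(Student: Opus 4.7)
\medskip
The plan is to mimic the proof scheme of Lemma~\ref{l.existenceofpredecessors}, modifying only the production of an initial rectangle at each point of $R$. Let $(\star)$ be the property on $R' \in \mathcal{R}$: ``$R' \cap R$ is a non-trivial vertical subrectangle of $R$ and the stable boundaries of $R'$ are disjoint from $s$''. Once I know that every $x \in R$ is contained in at least one rectangle of $\mathcal{R}$ with property $(\star)$, the rest of the proof (existence of a maximal $(\star)$-rectangle at $x$, disjointness of interiors of distinct maximal ones, covering of $R$, and finiteness of the family) will follow the corresponding steps of Lemma~\ref{l.existenceofpredecessors} essentially verbatim, the Markovian intersection axiom together with Lemmas~\ref{l.infiniteintersectionverticalrectangles} and~\ref{l.infiniteintersectionhorizontalrectangles} being the only tools needed.

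\medskip
The key step is therefore: for every $x \in R$, exhibit $R' \in \mathcal{R}$ with $(\star)$ containing $x$. I would split on whether the stable leaf $L$ carrying $s$ is periodic. In the non-periodic case, Lemma~\ref{l.periodicboundary} forbids any rectangle of $\mathcal{R}$ from having a stable boundary on $L$; in particular, any predecessor of $R$ at $x$ given by Lemma~\ref{l.existenceofpredecessors} has its stable boundaries off $L \supset s$ and therefore satisfies $(\star)$. In the periodic case, $L$ carries a unique periodic point $p$, which by hypothesis lies off $s$. The subsegments of $L$ appearing as stable boundaries of rectangles of $\mathcal{R}$ then form, by the finiteness axiom together with the cyclicity of $\operatorname{Stab}(L)$, a family which is discrete on $L \setminus \{p\}$; in particular there exist arbitrarily long subsegments of $L$ adjacent to $s$ on either side that carry no stable boundary of any rectangle of $\mathcal{R}$. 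Starting from $y = \mathcal{F}^u(x) \cap s$, I would apply axiom~(3) of Definition~\ref{d.markovfamily} at a point lying just below $s$ on $\mathcal{F}^u(y)$ and combine the resulting rectangle with a predecessor of $R$ at $x$, using Lemmas~\ref{l.vertsubrectangleexists} and~\ref{l.horizsubrectangleexists}, to produce an element of $\mathcal{R}$ through $x$ which crosses $s$ transversally and whose lower stable boundary lands on a component of $L \setminus s$.

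\medskip
For the remaining parts, maximality follows from the infinite-chain argument: a strictly ascending sequence $r_0 \subsetneq r_1 \subsetneq \cdots$ of $(\star)$-rectangles at $x$ would yield, via the Markovian intersection axiom and Lemma~\ref{l.infiniteintersectionhorizontalrectangles}, a stable segment as $\bigcap_k r_k$, contradicting the non-trivial unstable extent of $r_0 \cap R$. Disjointness of interiors of two distinct maximal $(\star)$-rectangles is immediate from the Markovian intersection axiom combined with maximality, and covering of $R$ is guaranteed by the existence step. Finally, finiteness comes from a compactness argument: an infinite family of pairwise-disjoint maximal $(\star)$-rectangles would accumulate to an unstable segment of $R$, and applying the existence step on the accumulation side would produce a $(\star)$-rectangle strictly containing cofinally many of them, contradicting maximality. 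I expect the genuine obstacle to be the periodic subcase of the existence step, where one must convert the dynamical hypothesis ``$s$ contains no periodic point'' into the geometric statement that there is enough room on $L$ around $s$ to place the lower stable boundary of the desired rectangle off $s$; everything else is a transposition of arguments already developed in the proof of Lemma~\ref{l.existenceofpredecessors}.
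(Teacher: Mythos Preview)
Your overall plan—reduce to an existence step at each point and then copy the maximality, disjointness, covering and finiteness arguments from Lemma~\ref{l.existenceofpredecessors}—matches the paper. The gap is in the existence step itself.

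First, your case split collapses: since $s$ is a stable boundary of $R \in \mathcal{R}$, Lemma~\ref{l.periodicboundary} forces the leaf $L$ carrying $s$ to be periodic, so only the ``periodic'' branch is ever live. Second, that branch does not go through as written. A rectangle $R'$ with property $(\star)$ has $R' \cap R$ a full vertical subrectangle of $R$, hence $R'$ meets $s$; since you also require $\partial^s R' \cap s = \emptyset$, the segment $R' \cap s$ lies in the \emph{interior} of $R'$, and both stable boundaries of $R'$ sit on leaves \emph{different} from $L$. Your stated target ``crosses $s$ transversally and whose lower stable boundary lands on a component of $L \setminus s$'' is therefore self-contradictory, and the discreteness of stable-boundary segments on $L \setminus \{p\}$ (even granting it) is not the relevant fact. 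The step ``combine the resulting rectangle with a predecessor'' is also too vague to carry weight: there is no operation in $\mathcal{R}$ that merges two rectangles into a third one.

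The paper's existence argument is by contradiction, and this is exactly where $p \notin s$ enters. Fix $x \in s$ and suppose no rectangle with $(\star)$ contains $x$ on the relevant side. Iterating Lemma~\ref{l.existenceofpredecessors} produces a chain $R = R_0, R_1, R_2, \ldots$ of successive predecessors, each of which, failing $(\star)$, must have $x$ on its stable boundary. By the finiteness axiom some subsequence lies in a single $\pi_1(M)$-orbit, and the connecting deck transformations preserve $\mathcal{F}^s(x) = L$, hence lie in $\operatorname{Stab}(p)$. The corresponding nested stable-boundary segments then shrink (via Lemma~\ref{l.infiniteintersectionverticalrectangles}) to a single point, necessarily the fixed point $p$; but all of them contain $x \in s$, forcing $p \in s$. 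This dynamical squeeze—turning the absence of a crossing predecessor into the presence of $p$ inside $s$—is the missing mechanism in your sketch.
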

\begin{proof}
The proof of this lemma is an adaptation of the proof of Lemma \ref{l.existenceofpredecessors}. Let us call the property of ``intersecting $R$ along a non-trivial vertical subrectangle and having stable boundaries not intersecting $s$", property $(\star)$. Suppose without any loss of generality that $s$ is the lower stable boundary of $R$ (i.e. for every $x\in s$,  $(\mathcal{F}^s_{\epsilon}(x)-x) \cap R \neq \emptyset$). Let us prove that for any point $x\in s$, there exists $r \in \mathcal{R}$ verifying $(\star)$, containing $x$ and such that $(\mathcal{F}^s_{\epsilon}(x)-x) \cap r \neq \emptyset$. Let us fix a point $x\in s$ and suppose that there is no rectangle in $\mathcal{R}$ with this property. 

By Lemma \ref{l.existenceofpredecessors}, $x$ belongs to exactly one predecessor of $R$ intersecting non-trivially $\mathcal{F}^s_+(x)$, say $R_1$. By our assumption, $R_1$ doesn't verify $(\star)$. The point $x$ also belongs to exactly one predecessor of $R_1$ intersecting non-trivially $\mathcal{F}^s_{+}(x)$, say $R_2$. Again, $R_2$ doesn't verify $(\star)$. By induction, we construct $R_0=R,R_1,...,R_n...$ such that $R_{n+1}$ is a predecessor of $R_n$, none of the $R_n$ satisfy $(\star)$ and every $R_n$ intersects non-trivially $\mathcal{F}^s_+(x)$. The fact that the $R_n$ don't satisfy $(\star)$ implies that all the $R_n$ contain $x$ in their stable boundaries. 

By the finiteness axiom, there exists $(R_{i(n)})_{n\in \mathbb{N}}$ a subsequence of $(R_n)_{n\in \mathbb{N}}$ containing rectangles in the same $\pi_1(M)$-orbit. Consider $g_n \in \pi_1(M)$ such that $g_n.R_{i(n)}=R_{i(n+1)}$. By eventually extracting a subsequence, we can assume that the $g_n$ preserve the orientation of the foliations. Hence, every $g_n$ preserves $\mathcal{F}^s(x)$ and  $g_n\in \text{Stab}(p)$, where $p$ is the unique periodic point in $\mathcal{F}^s(x)$. Let us denote $s'$ the stable boundary of $R_{i(0)}$ containing $x$. We have that $g_n \circ g_{n-1} \circ ... g_0 (s') \subset g_{n-1} \circ ... g_0 (s') \subset ... \subset s' \subset s$. We deduce from this that the periodic point on $\mathcal{F}^s(x)$ is in $g_n \circ g_{n-1} \circ ... g_0 (s')$ for every $n$. This implies together with Lemma \ref{l.infiniteintersectionverticalrectangles}, that the segment $g_n \circ g_{n-1} \circ ... g_0 (s')$ tends to become a point; more specifically the point $p$, which is impossible since $s$ does not contain periodic points. 
\begin{figure}[h!]
\includegraphics[scale=0.5]{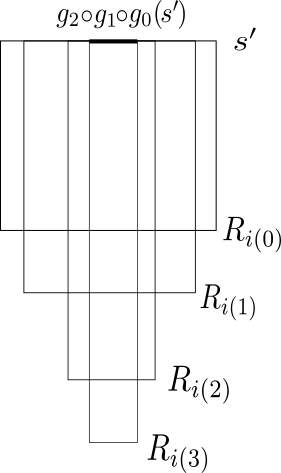}
\caption{}
\label{f.proof34}
\end{figure}

The existence of a maximal rectangle satisfying $(\star)$ for any point $x\in s$, the fact that the maximal rectangles for $(\star)$ are finite and cover $R$ follow by the same arguments that were used in the proof of Lemma  \ref{l.existenceofpredecessors}. 
\end{proof}

Of course the analogue of the previous lemma for horizontal rectangles is also true. By a similar argument, we can also prove the following: 
\begin{lemm}\label{l.crossingrectangleswithperiodicpoints}
Take a rectangle $R \in \mathcal{R}$ and $s$ one of its stable boundary components. Suppose that $s$ contains a periodic point $p$. There exist $R_1,...,R_n \in \mathcal{R}$ ($n\in \mathbb{N}$) intersecting $R$ along non-trivial vertical subrectangles and whose stable boundaries don't intersect $s$ such that 
\begin{enumerate}
\item the $R_1,...,R_n$ are ``maximal" for this property: any $R' \in \mathcal{R}$ intersecting $R$ along a non-trivial vertical subrectangle and whose stable boundary doesn't intersect $s$ verifies $R' \cap R \subseteq g.R_i \cap R$ for some $i \in \llbracket 1, n \rrbracket$ and some $g\in \text{Stab}(p)$
\item $R_1,...,R_n$ have disjoint interiors 
\item $R_1\cap s,...,R_n \cap s$ cover a fundamental domain of $\mathcal{F}^s(p)$ for the action of $\text{Stab}^+(p)$, the group of elements in $\text{Stab}(p)$ preserving the orientation of the foliations. 
\end{enumerate}
\end{lemm}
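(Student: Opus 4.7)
The plan is to mimic the proof of Lemma~\ref{l.crossingrectanglesnoperiodicpoints}, with the compact segment $s$ replaced by a compact fundamental domain $D \subset s$ for the action of $\text{Stab}^+(p)$ on the stable separatrix of $p$ containing $s$. Let $(\star)$ denote the property, for a rectangle $R' \in \mathcal{R}$, of intersecting $R$ along a non-trivial vertical subrectangle with stable boundary disjoint from $s$.

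First, I would establish that for every $x \in s$ with $x \neq p$ there exists a rectangle satisfying $(\star)$ that contains $x$ on its stable boundary. The infinite-descent argument of Lemma~\ref{l.crossingrectanglesnoperiodicpoints} applies verbatim: it produces a nested sequence of stable segments $g_n \circ \cdots \circ g_0(s')$, all containing $x$, which by Lemma~\ref{l.infiniteintersectionverticalrectangles} must contract to the unique periodic point $p$ of $\mathcal{F}^s(p)$, contradicting $x \neq p$. An upward-ascent argument parallel to the first part of the proof of Lemma~\ref{l.existenceofpredecessors}, bounded above by Lemma~\ref{l.infiniteintersectionhorizontalrectangles}, then upgrades this to the existence of a \emph{maximal} rectangle for $(\star)$ through $x$.

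Next, I would fix $D \subset s$ a closed fundamental domain for $\text{Stab}^+(p)$ (a closed sub-segment of $s$ bounded away from $p$). By the accumulation argument at the end of the proof of Lemma~\ref{l.existenceofpredecessors}, only finitely many maximal rectangles for $(\star)$ meet $D$; call them $R_1,\ldots,R_n$. Their intersections with $s$ cover $D$ by the previous step, and their interiors are pairwise disjoint by the Markovian intersection axiom together with maximality.

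The main obstacle is the maximality clause modulo $\text{Stab}(p)$. Given any $R' \in \mathcal{R}$ satisfying $(\star)$, I would first use an upward ascent as above to produce a maximal rectangle $R_{\max}$ for $(\star)$ with $R' \cap R \subseteq R_{\max} \cap R$. I would then pick a point $y \in R_{\max} \cap s$ with $y \neq p$, and $g \in \text{Stab}^+(p)$ such that $g^{-1}(y) \in D$. The delicate point is that $g$ does not in general preserve $R$, $s$, or even the property $(\star)$; it only conjugates $(\star)$-relative-to-$s$ into $(\star)$-relative-to-$g(s)$. However, the local configuration of rectangles of $\mathcal{R}$ near $y$ is carried by $g^{-1}$ to the local configuration near $g^{-1}(y) \in D$, and the maximality of the $R_i$ passing through $g^{-1}(y)$ is intrinsic to this local picture. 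A careful comparison, via the Markovian intersection axiom applied to $R_{\max}$ and $g(R_i)$ at the point $y$, then yields $R_{\max} \cap R = g(R_i) \cap R$, whence $R' \cap R \subseteq g(R_i) \cap R$, as required.
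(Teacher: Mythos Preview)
Your overall strategy matches the paper's intent (the paper only says ``by a similar argument''), and the first two steps---existence of maximal $(\star)$-rectangles through each $x\neq p$ via the contraction-to-$p$ contradiction, and finiteness/covering over a compact fundamental domain $D$---are correct adaptations of Lemma~\ref{l.crossingrectanglesnoperiodicpoints}.

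The gap is in your final paragraph. Applying the Markovian intersection axiom to $R_{\max}$ and $g(R_i)$ at $y$ only tells you that one of $R_{\max}\cap R$, $g(R_i)\cap R$ contains the other; it does not by itself force equality. If the inclusion goes the wrong way you need an additional argument, and the phrase ``intrinsic to this local picture'' is not yet that argument. What actually makes maximality intrinsic is the following observation: for $x\in s\setminus\{p\}$, having $x\in\int R'$ for \emph{some} $R'\in\mathcal{R}$ already forces $R'$ to satisfy $(\star)$ (otherwise $R'\cap R$ would be horizontal in $R$, hence $R'\supset s\ni p$, contradicting that $p$ is a boundary periodic point). Thus the endpoints of $R_{\max}\cap s$ are exactly the points of $s\setminus\{p\}$ lying in no rectangle interior---an intrinsic, $\pi_1(M)$-equivariant condition. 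It follows that $\text{Stab}(p)$ permutes these endpoints, hence permutes the gaps between consecutive ones, and each gap is some $R_{\max}\cap s$. This is what pins down $R_{\max}\cap R=g(R_i)\cap R$.

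A second point you should address: your argument implicitly needs $g(R_i)$ to still intersect $R$ in a vertical subrectangle. This is automatic when $g$ contracts along $\mathcal{F}^s(p)$ (then $g(R_i)$ is taller and thinner, still crosses $s$, and cannot contain $p$), but can fail when $g$ expands. The cleanest fix is to enlarge your finite list to include \emph{all} maximal $(\star)$-rectangles whose trace on $s$ lies between $D$ and the far endpoint of $s$ (finitely many by your compactness step); then every remaining maximal rectangle lies closer to $p$ than $D$ and only contracting $g$ is ever needed.
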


\begin{defi}
Take a rectangle $R \in \mathcal{R}$ and $s$ one of its stable boundary components. We will call $R'$ an \emph{$s$-crossing predecessor} of $R$ if it is maximal for the following property (in the sense of the previous lemmas): it intersects $R$ along a non-trivial vertical subrectangle and $s\cap \partial^s R' = \emptyset$. We define similarly an \emph{$s$-crossing successor} when $s$ in an unstable boundary component of $R$. 
\end{defi}

\begin{proof}[Proof of Proposition \ref{p.arcpointsexist}]
It suffices to show that for every $R\in \mathcal{R}$, if $s$ is a stable (resp. unstable) boundary component of $R$, then the intersection of $s$ with the unstable (resp. stable)  boundary of one  $s$-crossing predecessor (resp.  successor ) of $R$ consists of two boundary arc points. Indeed if this is the case, by Lemmas \ref{l.crossingrectanglesnoperiodicpoints} and \ref{l.crossingrectangleswithperiodicpoints}, the set of boundary arc points is non-empty. Furthermore, by Lemma \ref{l.crossingrectanglesnoperiodicpoints} if $s$ is either a stable or unstable boundary component of $R$ that contains no periodic points the number of boundary arc points in $s$ is finite. If $s$ contains a periodic point, by Lemma \ref{l.crossingrectangleswithperiodicpoints} the number of boundary arc points in $s$ is finite up to the action of the stabilizer of the periodic point. Therefore, since the number of rectangles of any Markovian family is finite up to the action of $\pi_1(M)$, the number of boundary arc points up to the action of $\pi_1(M)$ is also finite. 
 
Let us now prove our original claim. Suppose $x$ is a boundary arc point. Take any rectangle $R$ in $\mathcal{R}$ containing $x$. Such a rectangle exists because a Markovian family always covers $\mathcal{P}$. Since $x$ is a boundary arc point, it belongs to the boundary of $R$. Without any loss of generality, let us assume that $x$ belongs to a stable boundary component of $R$, say $s$. By the  Lemmas \ref{l.crossingrectanglesnoperiodicpoints} and \ref{l.crossingrectangleswithperiodicpoints}, we have that $x$ belongs to some $s$-crossing predecessor of $R$, say $R'$. But since $x$ is a boundary arc point and $R'$ is a $s$-crossing predecessor of $R$, $x$ belongs to the unstable boundary of $R'$.

Conversely, take a rectangle $R\in \mathcal{R} $, $s$ a stable boundary component of $R$, $R'$ a $s$-crossing predecessor of $R$ and $x \in \partial^u R' \cap s$. Let us show that $x$ is a boundary arc point. Let's start by proving that $x$ is not a periodic point. Suppose the contrary. Since $R'$ is $s$-crossing, $x$ is not a corner point of the rectangle $R'$. Without any loss of generality, let's assume that $x$ is contained in the right unstable boundary component of $R'$ (see Figure \ref{f.Proofprop33}(a)). Let us denote $s'$ the connected component of $s-\lbrace x \rbrace$ on the left of $x$. Notice that $s'\neq \emptyset$. Consider now $g \in \text{Stab}(x)$ such that $g(s')\subset \int{R'}$. Since $x$ is not a corner point of $R'$, the intersection of $g(R)\in \mathcal{R}$ and $R'$ does not verify the Markovian intersection axiom. Therefore, $x$ in not periodic. 

Finally, let us show that $x$ is in the interior of no rectangle in $\mathcal{R}$. Suppose that $x$ is contained in the interior of $R'' \in \mathcal{R}$ (see Figure \ref{f.Proofprop33}(b)). In this case, $\overset{\circ}{R'} \cap   \overset{\circ}{R''} \neq \emptyset$ and also $\overset{\circ}R \cap   \overset{\circ}{R''} \neq \emptyset$. It is not difficult to see that  the Markovian intersection axiom implies that $R'\cap R''$ is a vertical subrectangle of $R''$ and $R\cap R''$ is also a vertical subrectangle of $R$. This contradicts the maximality of the $s$-crossing predecessor $R'$ and finishes the proof of our initial claim. 

\begin{figure}[h]
  \begin{minipage}[ht]{0.4\textwidth}
    \centering    
    \includegraphics[width=0.5\textwidth]{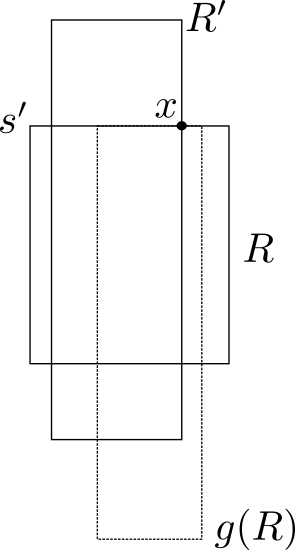}
    \caption*{\quad (a)}
    
  \end{minipage}
 \begin{minipage}[ht]{0.4\textwidth}
 \centering
    \includegraphics[width=0.5\textwidth]{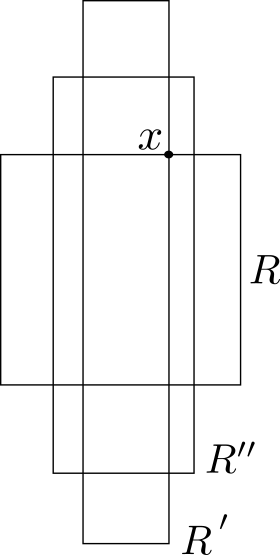}
    \caption*{\quad (b)}
    
  \end{minipage}
  \caption{(a) This configuration does not satisfy the Markovian intersection property (b) This is the only possible configuration for which $x\in \int{R''}$}
  \label{f.Proofprop33}
\end{figure}
  
\end{proof}




\begin{rema}\label{r.caracterisationarcpoints}In the proofs of Propositions \ref{p.boundaryperiodicpoints} and \ref{p.arcpointsexist} we established that:
\begin{itemize}
\item A periodic point that is contained in the boundary of one rectangle in $\mathcal{R}$ is a boundary periodic points and vice versa. 

\item Consider $R\in \mathcal{R}$ and $s$ a stable (resp. unstable) boundary component of $R$. A point in the intersection of $s$ with the unstable (resp. stable)  boundary of one $s$-crossing predecessor (resp.  successor ) of $R$ is a boundary arc point. The converse is also true. 
\end{itemize}
\end{rema}
\section{Rectangle paths}\label{s.rectpaths}
\subsection{Rectangle paths in $\mathcal{P}$}\label{s.rectanglepathsinP}
 A Markovian family in $\mathcal{P}$ can function as a coordinate system of the bifoliated plane. We can travel inside $\mathcal{P}$ using \textit{rectangle paths} and describe points of  $\mathcal{P}$ as an infinite intersection of rectangles in $\mathcal{R}$. The goal of this section is to define the notion of rectangle paths for any Markovian family, to associate to a curve a rectangle path and finally to show that all rectangles in $\mathcal{P}$ can be reached via a rectangle path. In view of Theorem B, this will later allow us to locally identify bifoliated planes endowed with Markovian families associated to the same class of geometric types. We fix again $\mathcal{R}$ a Markovian family of $\mathcal{P}$. 

\begin{defi}\label{d.rectanglepath}
A finite sequence of rectangles in $\mathcal{R}$ of the form $R_0$,$R_1$,...,$R_n$ will be called a \emph{rectangle path} going from $R_0$ to $R_n$ if for every $i \in \llbracket 0, n-1 \rrbracket$ the rectangle $R_{i+1}$ is either a successor or predecessor of $R_i$. The length of the sequence defining the rectangle path will be called length of the rectangle path. We will say that the rectangle path $R_0$,$R_1$,...,$R_n$ is 
\begin{itemize}
    \item closed if it also verifies $R_n=R_0$
    \item increasing (resp. decreasing) if it also verifies that $R_{i+1}$ is a successor (resp. predecessor) of $R_i$ for every $i$ 
    \item monotonous if it is increasing or decreasing
\end{itemize}
\end{defi}
\begin{defi}\label{d.polygonalcurve}
A continuous curve $\gamma: [0,1]\rightarrow \mathcal{P}$ that is a finite juxtaposition of alternating stable and unstable segments will be called a \emph{polygonal curve} in $\mathcal{P}$. Furthermore, if the curve $\gamma$ is closed, we will call it a \emph{closed} polygonal curve.
\end{defi}
 \begin{rema}\label{r.gamma0}
 Notice that by our above definition if $\gamma$ is a good polygonal curve, then $\gamma(0)$ is necessarily a corner point of $\gamma$ (i.e. not in the interior of any segment of $\gamma$). 
 \end{rema}
\begin{defi}\label{d.goodcurve}
By definition, for any polygonal curve $\gamma$ there exist $c_0=0<c_1<...<c_n=1$ dividing $\gamma$ in alternating stable and unstable segments. The number $n$ will be called the \emph{length} of the polygonal curve. Also, a polygonal curve $\gamma$ will be called \emph{good} if 
\begin{enumerate}
\item none of its stable or unstable segments belongs to the stable or unstable leaf of a boundary periodic point 
\item there don't exist $i\neq j\in \llbracket 1,n-1\rrbracket$ such that $\gamma([c_i,c_{i+1}])$ and $\gamma([c_j,c_{j+1}])$ belong to the same stable or unstable leaf of $\mathcal{P}$
\end{enumerate}

\end{defi}

\begin{lemm}\label{l.createpolygonalcurve}
Any smooth curve $\gamma: [0,1]\rightarrow \mathcal{P}$ for which $\gamma(0)$ and $\gamma(1)$ do not belong to a stable or unstable leaf of a boundary periodic point is homotopic relatively to its boundary to a good polygonal curve in $\mathcal{P}$

\end{lemm}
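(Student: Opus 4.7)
The plan is to first construct \emph{some} polygonal curve $\gamma_0$ homotopic to $\gamma$ relative to its endpoints, and then to refine $\gamma_0$ by small perturbations so that it becomes good. For the approximation step, I cover the compact image $\gamma([0,1])$ by finitely many bifoliated rectangles of $\mathcal{P}$ (for instance, rectangles of $\mathcal{R}$, which exist around every point by the finite return time axiom); a Lebesgue number argument yields a partition $0=t_0<t_1<\cdots<t_k=1$ such that each subarc $\gamma([t_i,t_{i+1}])$ lies in a single rectangle $R_i$. Inside $R_i$, the points $\gamma(t_i)$ and $\gamma(t_{i+1})$ can be joined by a juxtaposition of a stable and an unstable segment (in either order), and this path is homotopic to $\gamma|_{[t_i,t_{i+1}]}$ relative to its endpoints since $R_i$ is contractible. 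By choosing the two orderings segment by segment, one can ensure that stable and unstable segments alternate across every junction, producing a polygonal curve $\gamma_0$ homotopic to $\gamma$ rel $\{\gamma(0),\gamma(1)\}$.

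For the refinement step, view $\gamma_0$ as a sequence of alternating segments $s_1,\ldots,s_n$, where each $s_i$ lies on a leaf $L_i\in\mathcal{F}^s\cup\mathcal{F}^u$ and consecutive leaves meet transversally at corner points. Goodness requires that (1) no $L_i$ be the stable or unstable leaf of a boundary periodic point, and (2) the relevant $L_i$ (as specified in Definition \ref{d.goodcurve}) be pairwise distinct. The set of leaves to avoid is countable: by Proposition \ref{p.boundaryperiodicpoints} the boundary periodic points form finitely many $\pi_1(M)$-orbits, and $\pi_1(M)$ is countable, so their stable and unstable leaves form a countable family; to this one adds the finitely many leaves already chosen by the other segments.

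The leaves $L_1$ and $L_n$ of the first and last segments are rigid, since they must contain the fixed endpoints $\gamma(0)$ and $\gamma(1)$; but by hypothesis these endpoints lie on no boundary periodic leaf, so condition (1) is automatic for $s_1$ and $s_n$. For each interior segment $s_i$, I can perturb its leaf $L_i$ within the continuous one-parameter family of nearby parallel leaves; this only displaces the two adjacent corners along $L_{i-1}$ and $L_{i+1}$, and leaves every other segment untouched. Among uncountably many nearby leaves only countably many are forbidden, so a generic small perturbation satisfies all the constraints. Proceeding inductively through the interior segments, each perturbation chosen small enough to stay within the covering rectangles $R_i$ and to preserve the conditions already achieved, produces a good polygonal curve. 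Since $\mathcal{P}\cong\mathbb{R}^2$ is simply connected, small $C^0$-perturbations of curves with fixed endpoints are automatically homotopic rel endpoints, so the final curve remains homotopic to $\gamma_0$, hence to $\gamma$, relative to $\{\gamma(0),\gamma(1)\}$.

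The main technical point will be coordinating the successive perturbations so that the alternating polygonal structure is preserved while all goodness conditions hold simultaneously; the countability of the forbidden set, combined with the freedom to vary each interior leaf continuously in a one-parameter family of transverse leaves, is what makes this achievable.
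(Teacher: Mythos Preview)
Your proposal is correct and follows essentially the same approach as the paper: first approximate $\gamma$ by a polygonal curve using local bifoliated charts and compactness, then perturb the interior segments off a countable set of forbidden leaves while the endpoint hypothesis takes care of the first and last segments. Your version is in fact more explicit than the paper's, which simply asserts that one can ``locally deform $\gamma$ to a finite juxtaposition of alternating stable and unstable segments'' and then ``change a little bit the lengths of the segments $u_{k-1}$ and $u_k$'' to move $s_k$ off the countable bad set; your Lebesgue-number construction and one-parameter perturbation argument spell out exactly these two steps.
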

\begin{proof}
Indeed, we can locally deform $\gamma$ to a finite juxtaposition of alternating stable and unstable segments. By compactness, $\gamma$ is homotopic relatively to its boundary to a polygonal curve $\gamma'$ in $\mathcal{P}$. Without any loss of generality, we can assume that $\gamma'$ is the juxtaposition of a stable segment $s_1$, followed by an unstable segment $u_1$,..., ending with an unstable segment $u_n$. 

Suppose that one of the stable segments in $\gamma'$ belongs to a stable leaf of a boundary periodic point. Those points are finite up to the action of $\pi_1(M)$, therefore the stable leaves of those points are only countable in $\mathcal{P}$. This stable segment cannot be $s_1$ because of our initial hypothesis. Let us denote this segment by $s_k$. By changing a little bit the ``lengths" of the segments $u_{k-1}$ and $u_k$, we can replace $s_k$ by another stable segment in $s_k$'s neighbourhood that does not belong to the countable set of stable leaves of boundary periodic points. By a repeated application of the previous argument, $\gamma'$ is homotopic relatively to its boundary to a polygonal curve satisfying the axiom 1 of the above definition.

By the exact same procedure, we can show that $\gamma'$ is homotopic relatively to its boundary to a polygonal curve satisfying also the second axiom of the above definition; hence that $\gamma'$ is homotopic relatively to its boundary to a good polygonal curve.  

\end{proof}

\begin{prop}\label{p.curvetorectanglepath}
Any good polygonal curve $\gamma$ in $\mathcal{P}$ together with a rectangle $r_0\in \mathcal{R}$ such that $\gamma(0)\in \overset{\circ}{r_0}$ is canonically associated to a rectangle path starting from $r_0$. 

\end{prop}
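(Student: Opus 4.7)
The strategy is to build the rectangle path iteratively, tracking the rectangles encountered as we traverse $\gamma$, and using the Markovian axioms to ensure that each successive rectangle is a predecessor or successor of the previous one.

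Write $\gamma$ as the concatenation of its alternating stable and unstable segments $\sigma_1,\ldots,\sigma_n$ meeting at corners $p_0=\gamma(0),p_1,\ldots,p_n=\gamma(1)$, and initialise the path with $r_0$. The construction proceeds in two types of steps. In the \emph{transit} step, suppose that the current rectangle is $r$ and we are following a stable segment $\sigma_k$ (the unstable case is symmetric). We trace $\sigma_k$ forward until it first exits $r$; since $\sigma_k$ is stable it can only exit through $\partial^u r$, at some point $q$. By the successor-version of Lemma \ref{l.existenceofpredecessors}, the successors of $r$ form a finite family with pairwise disjoint interiors covering $r$, and we take the next rectangle $r'$ to be the successor containing the forward germ of $\sigma_k$ past $q$. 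By Remark \ref{r.precedentsuivant}, $r'$ is then a successor of $r$, as required in Definition \ref{d.rectanglepath}. The \emph{corner} step, at a point $p_k$ where $\sigma_k$ meets $\sigma_{k+1}$, uses Lemmas \ref{l.vertsubrectangleexists} and \ref{l.horizsubrectangleexists}: the outgoing segment $\sigma_{k+1}$ determines a specific quadrant of $p_k$, and one selects the (necessarily existing) rectangle containing a germ of $p_k$ in that quadrant and intersecting the current rectangle in a non-trivial horizontal or vertical subrectangle.

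The hard part is establishing the \emph{uniqueness} of the next rectangle at each exit point $q$ (and, analogously, at each corner $p_k$). A priori, $q$ might be a boundary arc point, lying simultaneously on $\partial^u r$ and on $\partial^s R'$ for some successor $R'$ of $r$; in that configuration $q$ would belong to the common stable-boundary of two successors of $r$ and the choice of $r'$ would not be forced. The two axioms of goodness in Definition \ref{d.goodcurve} are designed precisely to rule this out. Indeed, by Lemma \ref{l.periodicboundary} any such shared boundary segment lies on a periodic stable leaf, so that $\sigma_k$ itself would be contained in a periodic stable leaf; combining this with Remark \ref{r.caracterisationarcpoints} and axiom (1) of goodness (which forbids $\sigma_k$ from lying on the stable leaf of a boundary periodic point) forces the periodic point of that leaf to be non-boundary, while simultaneously forcing it onto $\partial^s R'$, a contradiction with Remark \ref{r.caracterisationarcpoints}. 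Axiom (2) plays the parallel role at the corner steps, ensuring that distinct segments of $\gamma$ do not share leaves and that the outgoing quadrant at each $p_k$ picks out a single rectangle via Lemma \ref{l.vertsubrectangleexists} or \ref{l.horizsubrectangleexists}.

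Finiteness and canonicity are then straightforward to package. Because $\gamma$ is compact and, by the finiteness axiom together with the proper discontinuity of the $\pi_1(M)$-action, only finitely many rectangles of $\mathcal{R}$ can meet any given lift of $\gamma$ to $\widetilde{M}$, each segment $\sigma_k$ crosses only finitely many rectangle boundaries, so the total number of transitions is finite. Since every transit and corner step is forced by a maximality condition depending only on the data $(\gamma,r_0,\mathcal{R})$, the resulting rectangle path depends only on this data, proving the canonical association.
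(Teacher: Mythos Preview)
Your overall plan is reasonable, but there are two genuine gaps.

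\textbf{Finiteness.} Your termination argument is incorrect: it is simply false that only finitely many rectangles of $\mathcal{R}$ meet a compact subset of $\mathcal{P}$ --- every single point lies in infinitely many rectangles (its entire chain of successors and predecessors), so compactness and proper discontinuity cannot bound the number of transitions. The paper's argument is not a formality. Assuming the process for a stable segment $s_1$ never halts, one obtains an infinite sequence $R^{(0)},R^{(1)},\ldots$ of crossing successors whose traces on the stable leaf $F\supset s_1$ form an increasing bounded chain of segments, accumulating at a first point $s\in F$ not reached by any $R^{(i)}$. The finite return time axiom then produces $R\in\mathcal{R}$ containing a one-sided stable neighbourhood of $s$; Lemma~\ref{l.infiniteintersectionhorizontalrectangles} forces $R^{(n)}$, for $n$ large, to be long in the stable direction and thin in the unstable one, so that $\int{R^{(n)}}\cap\int{R}\neq\emptyset$ in a configuration violating the Markovian intersection axiom. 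This dynamical contradiction is the heart of the proof and is missing from your argument.

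\textbf{The transit step and the corner step.} When $\sigma_k$ exits $r$ through an unstable boundary $S$ at $q$, you select ``the successor containing the forward germ past $q$'', but a direct successor $R'$ of $r$ only satisfies that $R'\cap r$ is a \emph{non-trivial} vertical subrectangle of $R'$; nothing prevents $\partial^u R'$ from meeting $S$, in which case $R'$ need not extend beyond $S$ on the side $\sigma_k$ is heading. The paper handles this by passing instead to the unique $S$-\emph{crossing} successor (Lemmas~\ref{l.crossingrectanglesnoperiodicpoints}--\ref{l.crossingrectangleswithperiodicpoints}), which by definition has $\partial^u R'\cap S=\emptyset$, and then filling in the unique monotone path to it via Lemma~\ref{l.npredecessor}. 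Finally, your separate ``corner step'' is both unnecessary and non-canonical: once $\sigma_k$ terminates inside the current rectangle, $\sigma_{k+1}$ starts there and one simply resumes the transit procedure; Lemmas~\ref{l.vertsubrectangleexists}--\ref{l.horizsubrectangleexists} give existence only, so invoking them would destroy canonicity. Axiom~(2) of goodness plays no role in this proposition --- only axiom~(1) is used, to ensure $\sigma_k$ never lies on the leaf of a boundary periodic point and hence never on the shared boundary of two crossing successors.
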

\begin{proof}
Take a good polygonal curve $\gamma:[0,1] \rightarrow \mathcal{P}$. Assume without any loss of generality, that $\gamma$ is the juxtaposition of the stable segment $s_1$, followed by the unstable segment $u_1$,..., ending with the unstable segment $u_n$. We will now construct a sequence of rectangles that will correspond to the desired rectangle path. Since $\gamma(0)\in \overset{\circ}{r_0}$, we will begin our sequence by $r_0$. 

Suppose that $s_1$ crosses the unstable boundary component $S$ of $r_0$. Because of Definition \ref{d.goodcurve} and Remark \ref{r.caracterisationarcpoints}, $s_1$ doesn't belong to the stable leaf of a boundary arc or boundary periodic point. Therefore, by Lemmas \ref{l.crossingrectanglesnoperiodicpoints}, \ref{l.crossingrectangleswithperiodicpoints},  there exists a unique $S$-crossing successor of $r_0$, say $R^{(0)}$ containing $S\cap s_1$. By the Lemma \ref{l.npredecessor} and the fact that the successors of a rectangle have disjoint interiors, there exists a unique sequence of rectangles $R_0=r_0,R_1,...,R_n=R^{(0)}$ such that $R_{i+1}$ is a successor of $R_i$ for every $i\in \llbracket 0, n-1 \rrbracket$. This sequence of rectangles will constitute the first part of our rectangle path. If $s_1$ exits $R^{(0)}$, we apply the same algorithm as before to extend our sequence of rectangles and by our previous arguments $s_1$ will visit a crossing successor of $R^{(0)}$, say $R^{(1)}$. 

Suppose now that this algorithm never stops for $s_1$ (therefore the sequence $(R_n)_{n\in \mathbb{N}}$ becomes infinite thus not corresponding to a rectangle path). Under this hypothesis, we would  construct an infinite sequence $(R^{(n)})_{n\in \mathbb{N}}$ such that $R^{(i+1)}$ is a crossing successor of $R^{(i)}$ and $s_1$ exits every $R^{(i)}$. Let us denote by $F$ the stable separatrix of $\gamma(0)$  containing $s_1$. The sequence of closed segments $R^{(i)} \cap F$ is strictly increasing and is contained in $s_1$, therefore it is bounded in $F$ (see Figure \ref{f.Proof44}). This would imply that there  exists $s$ a first point of $F$ that is not contained in any of the $R^{(i)}$. Suppose that the $R^{(i)} \cap F$ approach $s$ from the left and consider $[t,s]$ a small stable segment in $F$ on the left of $s$. By the finite return time axiom, there exists $R\in \mathcal{R}$ containing $[t,s]$. Notice that since $F$ does not belong to the stable leaf of a boundary periodic orbit, $[t,s]-\lbrace s \rbrace \subset \int{R}$.

On the other hand, for every $i$, $R^{(i+1)}\cap R^{(i)}$ is a non-trivial horizontal subrectangle of $R^{(i)}$, therefore by Lemma \ref{l.infiniteintersectionhorizontalrectangles}, for $n$ sufficiently big, $R^{(n)}$ will be a very thin along the vertical direction and a very long along the horizontal direction rectangle approaching $s$ from the left. Hence, for $n$ big $\overset{\circ}{R^{(n)}}\cap \overset{\circ}{R} \neq \emptyset$, which contradicts the Markovian intersection axiom and leads to an absurd. 

\begin{figure}[h!]
\includegraphics[scale=0.5]{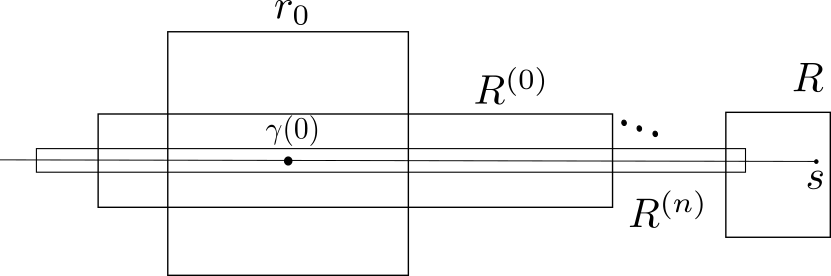}
\caption{}
\label{f.Proof44}
\end{figure}

Therefore, the algorithm for $s_1$ that was described previously eventually stops. In other words, there exists $n$ in our previous sequence such that $s_1$ doesn't exit $R^{(n)}$. At this point, we move on to $u_1$ and we apply the same algorithm to extend our sequence of rectangles, until we reach $u_n$. At the end of this procedure, we obtain a rectangle path starting from $r_0$ and  canonically associated to the good polygonal curve $\gamma$. 


\end{proof}
\begin{coro}\label{c.rectpathsstartingending}
For every two rectangles $R_0, R\in \mathcal{R}$ there exists a rectangle path starting from $R_0$ and ending at $R$. 
\end{coro}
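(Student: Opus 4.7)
The plan is to reduce the statement to Proposition \ref{p.curvetorectanglepath} by producing a good polygonal curve from the interior of $R_0$ to the interior of $R$, and then to bridge any gap between the terminal rectangle of the resulting path and $R$ itself by a single application of the Markovian intersection axiom together with Lemma \ref{l.npredecessor}.

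First I would choose two well-behaved endpoints. By Proposition \ref{p.boundaryperiodicpoints} the boundary periodic points form countably many $\pi_1(M)$-orbits, hence countably many points of $\mathcal{P}$, so the union of all their stable and unstable leaves is a countable family of $1$-dimensional subsets of the plane $\mathcal{P}\cong \mathbb{R}^2$. I can therefore pick $x_0\in \overset{\circ}{R_0}$ and $x\in \overset{\circ}{R}$ that avoid every such leaf. Connect $x_0$ to $x$ by any smooth curve in $\mathcal{P}$ (possible since $\mathcal{P}$ is path-connected), and apply Lemma \ref{l.createpolygonalcurve} to homotope it relatively to its endpoints to a good polygonal curve $\gamma$ with $\gamma(0)=x_0\in\overset{\circ}{R_0}$ and $\gamma(1)=x\in\overset{\circ}{R}$.

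Next I would apply Proposition \ref{p.curvetorectanglepath} to $\gamma$ and the starting rectangle $R_0$, producing a canonical rectangle path $R_0,R_1,\dots,R_N$ whose terminal rectangle $R_N$ contains the endpoint $\gamma(1)=x$. If $R_N=R$, concatenation is trivial and the result is proved. Otherwise, since $x\in\overset{\circ}{R}$ and $x\in R_N$, every sufficiently small neighbourhood of $x$ inside $\overset{\circ}{R}$ meets $\overset{\circ}{R_N}$ (if $x\in\overset{\circ}{R_N}$ this is clear; if $x\in \partial R_N$ it follows because interior points of $R_N$ accumulate at $x$). Thus $\overset{\circ}{R_N}\cap \overset{\circ}{R}\neq \emptyset$, and the Markovian intersection axiom forces $R_N\cap R$ to be a non-trivial horizontal subrectangle of one and a non-trivial vertical subrectangle of the other. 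By Lemma \ref{l.npredecessor} (combined with Remark \ref{r.precedentsuivant} to obtain the horizontal analogue) one of $R,R_N$ is an $n$-th generation predecessor of the other for some $n\in\mathbb{N}^*$, which supplies a monotonous rectangle path between $R_N$ and $R$.

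Concatenating the two paths gives the required rectangle path from $R_0$ to $R$. The only place that demands care is the very last step, where one has to be sure that $R_N$ and $R$ sit in a successor/predecessor chain; the appeal to Lemma \ref{l.npredecessor} handles this cleanly because Proposition \ref{p.curvetorectanglepath} guarantees that $R_N$ actually contains the chosen endpoint $x$, and our generic choice of $x$ in $\overset{\circ}{R}$ ensures the interiors overlap.
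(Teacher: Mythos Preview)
Your proof is correct and follows essentially the same route as the paper's: pick generic endpoints in $\overset{\circ}{R_0}$ and $\overset{\circ}{R}$, apply Lemma~\ref{l.createpolygonalcurve} and Proposition~\ref{p.curvetorectanglepath} to get a rectangle path ending at some $R_N\ni x$, and then bridge $R_N$ to $R$ using Lemma~\ref{l.npredecessor}. Your extra case analysis for $x\in\partial R_N$ is unnecessary (by Lemma~\ref{l.periodicboundary} and the proof of Proposition~\ref{p.boundaryperiodicpoints}, rectangle boundaries lie on leaves of boundary periodic points, which your choice of $x$ avoids, so $x\in\overset{\circ}{R_N}$ automatically), but it does no harm.
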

\begin{proof}
We consider a smooth curve $\gamma: [0,1] \rightarrow \mathcal{P}$ such that $\gamma(0)\in \overset{\circ}{R_0}$, $\gamma(1)\in \overset{\circ}{R}$ and $\gamma(0),\gamma(1)$ don't belong to the stable or unstable leaf of a boundary periodic point. We can deform $\gamma$ to a good polygonal curve $\gamma'$ by Lemma \ref{l.createpolygonalcurve}. By Proposition \ref{p.curvetorectanglepath}, $\gamma'$ is associated to a canonical rectangle path of the form $R_0,R_1,...R_n$, with  $R_n$ not necessarily equal to $R$. However, since $\gamma(1)$ does not belong to the stable or unstable leaf of a boundary periodic point, $\gamma(1)\in \overset{\circ}{R}\cap \int{R_n}$ and by Lemma \ref{l.npredecessor} $R_n$ is a $k$-th successor or predecessor of $R$ for some $k \in \mathbb{N}$. Assume without any loss of generality that $R_n$ is a $k$-th predecessor of $R$. In this case, since the successors of a rectangle have disjoint interiors,  there exists a unique sequence of rectangles $R_n,...,R_{n+k}=R$ such that  $R_{l+1}$ is a successor of $R_l$ for every $l\in \llbracket n, n+k-1 \rrbracket$. The rectangle path $R_0,...,R_{n+k}=R$ is the desired rectangle path.
\end{proof}

Of course the rectangle path of the previous corollary is not unique; it depends on the choice of the original curve $\gamma$ and how we deformed it into a good polygonal curve $\gamma'$. In fact, for any rectangle $R\in \mathcal{R}$ there exist infinitely many distinct rectangle paths going from $R_0$ to $R$: if $R_0,R_1,...,R$ is one such rectangle path, then $R_0,R_1,R_0,R_1,...,R$ is also a rectangle path from $R_0$ to $R$. 

According to Proposition \ref{p.curvetorectanglepath}, it is possible to associate a rectangle path to any good polygonal curve in $\mathcal{P}$. The inverse is also possible:  
\begin{prop}\label{p.rectanglepathtocurve}
For every rectangle path $R_0,...,R_n$ there exists a good polygonal curve $\gamma$ associated to $R_0,...,R_n$ by Proposition \ref{p.curvetorectanglepath}. Furthermore, if $R_0,...,R_n$ is closed we can choose $\gamma$ to be good and closed. 
\end{prop}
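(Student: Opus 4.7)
The plan is to proceed by induction on the length $n$ of the rectangle path $R_0, R_1, \ldots, R_n$. For the base case $n=0$, the path consists of the single rectangle $R_0$, and I would take $\gamma$ to be any short good polygonal curve contained in $\int{R_0}$ (for example a small stable segment followed by a small unstable segment), chosen generically to avoid the countably many stable and unstable leaves of boundary periodic points. Since $\gamma$ never exits $R_0$, the algorithm of Proposition \ref{p.curvetorectanglepath} applied to $\gamma$ with starting rectangle $R_0$ produces precisely the path $R_0$.

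For the inductive step, I would assume that a good polygonal curve $\gamma_n$ with $\gamma_n(0)=p_0\in\int{R_0}$ and endpoint $p_n\in\int{R_n}$, associated to $R_0,\ldots,R_n$, has been constructed, and extend it to reach a point $p_{n+1}\in\int{R_{n+1}}$. Suppose $R_{n+1}$ is a direct successor of $R_n$ (the predecessor case is dual). By the Markovian intersection axiom, $R_n\cap R_{n+1}$ is a horizontal subrectangle of $R_n$ whose two unstable boundary segments lie simultaneously on the unstable boundary of $R_n$ and inside $R_{n+1}$. Non-triviality of $R_n\cap R_{n+1}$ as a vertical subrectangle of $R_{n+1}$ implies that at most one unstable boundary component of $R_{n+1}$ can coincide with a part of $R_n$'s unstable boundary, so I would pick $S$ to be an unstable boundary component of $R_n$ meeting no unstable boundary of $R_{n+1}$. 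I then extend $\gamma_n$ by: possibly a short preliminary segment inside $\int{R_n}$ to ensure proper alternation with the last segment of $\gamma_n$; an unstable segment from $p_n$ staying in $\int{R_n}$ and ending at a point $q$ whose stable leaf meets the interior of $R_n\cap R_{n+1}\cap S$; and a stable segment along this leaf that exits $R_n$ through $S$ at a point in $\int{R_{n+1}}$ and continues a short distance into $\int{R_{n+1}}$, ending at $p_{n+1}$.

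I would then verify that the algorithm of Proposition \ref{p.curvetorectanglepath} applied to the extended curve appends exactly $R_{n+1}$ after $R_n$. The only new boundary crossing occurs when the final stable segment exits $R_n$ through $S$. By the choice of $S$, $R_{n+1}$ satisfies the $S$-crossing condition, and it is maximal for this property because it is already maximal as a direct successor; moreover the crossing point was chosen to lie in $\int{R_{n+1}}$. Hence $R_{n+1}$ is the unique $S$-crossing successor of $R_n$ identified by the algorithm, and since it is a direct successor the chain of direct successors filled in between $R_n$ and this crossing successor reduces to $R_n, R_{n+1}$. The good polygonal property can be preserved throughout by making the choices generically: at each step only a countable family of stable and unstable leaves must be avoided (those of boundary periodic points and those already used by the partial curve).

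Finally, in the closed case $R_n=R_0$, I would first build a (non-closed) good polygonal curve as above ending at $p_n\in\int{R_0}$ and then close it up by appending at most two short segments inside $\int{R_0}$ (one stable and one unstable, in the order forced by alternation) that join $p_n$ to $\gamma(0)=p_0$. These closing segments cause no boundary crossing and hence leave the rectangle path output by the algorithm unchanged; they can again be chosen generically to preserve the good polygonal property. The main obstacle throughout is the inductive verification that the algorithm reconstructs each direct-successor/predecessor transition exactly, without inserting extra rectangles or selecting a different crossing successor. This hinges on the key observation that a direct successor whose unstable boundary does not lie on the chosen unstable boundary $S$ of $R_n$ is automatically the maximal $S$-crossing successor containing any chosen exit point in $\int{R_{n+1}}\cap S$.
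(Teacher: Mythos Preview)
Your proposal is correct and follows the same step-by-step construction that the paper sketches in one sentence (``exiting $R_0$ in order to enter $R_1$, then exiting $R_1$ in order to enter $R_2$, and so on''). You supply the details the paper omits, most importantly the verification that a direct successor $R_{n+1}$, once you choose the unstable side $S$ of $R_n$ lying in $\int{R_{n+1}}$, is itself the maximal $S$-crossing successor hit by the exiting stable segment, so that the algorithm of Proposition~\ref{p.curvetorectanglepath} appends exactly one rectangle; this is the right point to isolate and your justification (maximality as a direct successor forces maximality among the smaller class of $S$-crossing rectangles) is sound.
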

\begin{proof}
Consider $R_0,...,R_n$ a rectangle path and $x\in \int{R_0}$ that doesn't belong to the stable or unstable leaf of a boundary periodic point. Consider now a good polygonal path starting from $x\in R_0$, exiting $R_0$ in order to enter $R_1$, then exiting $R_1$ in order to enter $R_2$ and so on until we reach $R_n$. If $R_n=R_0$ we can furthermore ask that the path ends at $x$. The above curve corresponds to the desired good polygonal curve. 
\end{proof}

\begin{rema}\label{r.polygonalcurverectassociation}
\begin{itemize}
    \item The construction of $\gamma$ in Proposition  \ref{p.rectanglepathtocurve} is not unique. 
    \item If $\gamma:[0,1]\rightarrow \mathcal{P}$ is associated to the rectangle path $r_0,...,r_n$, then there exist $0=c_0<c_1<...<c_{n+1}=1$ such that $\gamma([c_i,c_{i+1}])\subset r_i$. The $c_i$ are not unique. Using the above $c_i$, we can define a function $\textit{Rect}_{\gamma, r_0}:[0,1]\rightarrow \left\{ r_0,...,r_n \right\}$ sending every interval of the form $[c_i,c_{i+1})$ to $r_i$ and with $\textit{Rect}_{\gamma, r_0}(1)=r_n$. The function $\textit{Rect}_{\gamma, r_0}$ associates every point of $\gamma$ to a rectangle and every segment in $[0,1]$ to a rectangle path.
 \end{itemize}
 \end{rema}
 \begin{rema}\label{r.choiceci}
We can easily deduce from our proof of Proposition \ref{p.curvetorectanglepath} that if the good polygonal curve $\gamma:[0,1]\rightarrow \mathcal{P}$ is associated to the rectangle path $r_0,...,r_n$, we can always choose $0=c_0<c_1<...<c_{n+1}=1$ and define $\textit{Rect}_{\gamma, r_0}$ so that for any stable (resp. unstable) segment $S$ of $\gamma$ the rectangle path $\textit{Rect}_{\gamma,r_0}(\gamma^{-1}(S))$ is trivial (i.e. of length one) or of the form $r_k,...,r_{k+l}$, where for every $i\in \llbracket k,k+l-1\rrbracket $ $r_{i+1}$ is a successor (resp. predecessor) of $r_{i}$. Furthermore, for this choice of $c_i$, notice that the rectangle path starting from $r_0$ associated to $\gamma_{|[c_0,c_k]}$ is precisely $\textit{Rect}_{\gamma,r_0}([c_0,c_k])$.

\end{rema}
\subsection{Singularities of polygonal curves}\label{s.singularities}
Rectangle paths and good polygonal curves, as it was proved in the previous section, are two objects that are very closely related. Understanding the behaviour of a polygonal curve will give us information about its associated rectangle path and vice versa. In this section, we would like to define the notion of tangency for a good polygonal curve and prove a combinatorial result linking the different types of tangencies of a good polygonal curve. We fix once again $(M,\Phi)$ a transitive Anosov flow, $\mathcal{P}$ its bifoliated plane endowed with an orientation, $\mathcal{F}^{s,u}$ the stable/unstable foliations in $\mathcal{P}$, $\mathcal{R}$ a Markovian family on $\mathcal{P}$ and $\Gamma$ the boundary periodic orbits of $\mathcal{R}$. 

\begin{defi}
A closed poygonal curve $\gamma:[0,1]\rightarrow \mathcal{P}$ will be called \emph{simple} if the function $\gamma$ restricted on $[0,1)$ is injective. 

By Jordan's theorem, a simple closed polygonal curve $\gamma$ defines in the plane $\mathcal{P}$ two complementary regions: a bounded region that we will name the \emph{interior} of $\gamma$ and an unbounded region that we will name the \emph{exterior} of $\gamma$

\end{defi}

Take $\gamma$ a simple closed polygonal curve in $\mathcal{P}$, $D$ the interior of $\gamma$ and $U$ a neighbourhood of $D\cup \gamma$. Since $\mathcal{P}$ is simply connected, the foliations $\mathcal{F}^{s,u}$ are orientable and transversally orientable (even if the flow has not transversally orientable folations in $M$). Therefore,  we can endow the foliations $\mathcal{F}^{s,u}$ with an orientation and also there exists a continuous non-singular vector field $X$ in $U$ such that for every point $x\in U$, $X(x)$ is tangent to $\mathcal{F}^s(x)$. 

\begin{defi}
We can define 4 types of tangencies to the vector field $X$ for the polygonal curve $\gamma$, that we will call respectively \emph{tangencies of type (a), (b), (c) and (d)} (see Figure \ref{f.singularities}). More specifically, if we consider  $\gamma$ as a function from $\mathbb{S}^1$ (endowed with an orientation) to $\mathcal{P}$, then to every stable segment $s$ in $\gamma$ corresponds an  unstable segment $u^{-}$ coming before it and an unstable segment $u^+$ coming after it. By using the orientation on $\mathbb{S}^1$, we can naturally orient $u^{-}$ and $u^+$. We will say that $s$ is a (stable) tangency of $\gamma$ if one of the two previous segments is negatively oriented and the other is positively oriented with respect to the orientation of $\mathcal{F}^{u}$. 

Furthermore, we will say that a tangency $s$ of $\gamma$ is a tangency of type (a) (resp. (d)) if for any small positive unstable segment $u$ starting from $s$ we have that $u\subset D$ and for  any stable leaf of $\mathcal{F}^{s}$ intersecting $u$ intersects also (resp. does not intersect) $u^{-}$ and $u^+$. 

By changing the positive segment $u$ to a negative segment, we can similarly define a tangency of type (b) or (c).
\begin{figure}[h]
  \begin{minipage}[ht]{0.2\textwidth}
    \centering    
    \includegraphics[width=0.7\textwidth]{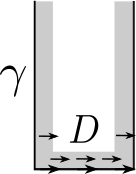}
    \caption*{\quad (a)}
    
  \end{minipage}
 \begin{minipage}[ht]{0.2\textwidth}
 \centering
    \includegraphics[width=0.7\textwidth]{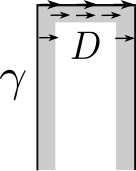}
    \caption*{\quad (b)}
    
  \end{minipage}
  \begin{minipage}[ht]{0.2\textwidth}
  \centering
    \includegraphics[width=0.7\textwidth]{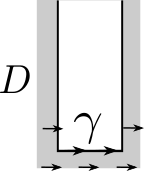}
    \caption*{\quad (c)}
    
  \end{minipage}
  \begin{minipage}[ht]{0.2\textwidth}
  \centering
    \includegraphics[width=0.7\textwidth]{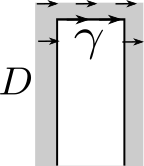}
    \caption*{\quad (d)}
    
  \end{minipage}
  \caption{Types of singularities for $\gamma$}
  \label{f.singularities}
\end{figure}

\end{defi}

Perturb $\clos{D}$ a little bit in $U$ so that its boundary becomes smooth and is in general position with respect to $X$. Let us name $D'$ this disc. $\partial D'$ has a finite number of tangencies with $X$ (i.e. points of $\partial D'$ whose tangent direction is collinear with $X$).   

Take $x$ a point of tangency between $\partial D'$ and $X$. We will say that $x$ is an \textit{interior} (resp. \textit{exterior}) \textit{tangency } if $X(x)$ points towards the interior (resp. exterior) of $D'$. By the Poicar\'e Index theorem for the disc $D'$, since $X$ has no singularities, we have that $$(\text{number of exterior tangencies}) - (\text{number of interior  tangencies})=2 $$ 
Therefore, there are always more exterior tangencies than interior ones. 

\begin{rema}
The definition of tangency of type (a) (resp. (b), (c), (d)) or the definition of exterior and interior tangencies for $X$ does not depend on the choice of the non-singular vector field $X$ in $U$. From now on, we will therefore speak of tangencies without necessarily specifying our choice of vector field $X$.  
\end{rema}

It is possible to perturb $\clos{D}$ so that every stable tangency of type (a) or (b) (resp. (c) or (d)) for $\gamma$ becomes an exterior (resp. interior) tangency for $\partial D'$ and all the tangencies of $\partial D'$ are obtained in this way; in other words every exterior or interior tangency of $\partial D'$ corresponds to a stable tangency of type (a), (b), (c) or (d) of  $\gamma$. Indeed, any stable segment of $\gamma$ that is not a tangency can be perturbed into a small segment transverse to  $\mathcal{F}^s$. By the previous arguments, we can assume that the  correspondence between the tangencies of $\partial D'$ and the tangencies of $\gamma$ is a bijection. Therefore we obtain the following result:
\begin{lemm}\label{l.numbertangencies} Consider the stable tangencies of any simple closed polygonal curve $\gamma$ in $\mathcal{P}$. We have that:
$$(\text{number of tangencies of type (a) or (b)}) - (\text{number of tangencies of type (c) or (d)})=2 $$ 
\end{lemm}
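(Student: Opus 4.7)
The statement is essentially an application of the Poincaré index theorem for vector fields on a disc, once we have matched the combinatorial types of tangencies with the geometric interior/exterior tangencies of a suitably smoothed boundary. The setup already present in the excerpt does most of the work; the plan is simply to carry out carefully the smoothing argument that the paragraph preceding the lemma sketches.

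First I would fix, as in the preamble, a simply connected neighbourhood $U$ of $\overline{D}=\overline{\text{Int}(\gamma)}$ and a continuous non-singular vector field $X$ on $U$ with $X(x)$ tangent to $\mathcal{F}^s(x)$. Such an $X$ exists because $\mathcal{P}$ is simply connected, so $\mathcal{F}^s$ is orientable. The point is that $X$ has no zeros on $U$, hence on $\overline{D}$.

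Next I would perturb the topological disc $\overline{D}$ (whose boundary is the polygonal curve $\gamma$, made up of a finite alternating sequence of stable and unstable segments) to a smoothly embedded closed disc $D'\subset U$ whose boundary $\partial D'$ is in general position with respect to $X$, meaning $\partial D'$ is transverse to $X$ except at finitely many points where the contact is of first order. The perturbation is local and can be done segment-by-segment. On any unstable segment $u$ of $\gamma$, the vector field $X$ is already transverse to $u$, so a small $C^0$-small, $C^1$-generic perturbation keeps transversality. On a stable segment $s$ of $\gamma$, the segment is tangent to $X$ along its entire interior; here the perturbation is chosen so that the smoothed arc is transverse to $X$ except at one point near each endpoint of $s$ where the tangent direction is forced to rotate. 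A direct local inspection of the four pictures in Figure \ref{f.singularities} shows that precisely one exterior tangency is created by smoothing a type-$(a)$ or type-$(b)$ stable tangency of $\gamma$, while precisely one interior tangency is created by smoothing a type-$(c)$ or type-$(d)$ stable tangency. If the stable segment $s$ is not a tangency of $\gamma$ (that is, the two adjacent unstable segments are oriented in the same direction with respect to $\mathcal{F}^u$), a similar inspection shows that the perturbation can be chosen to create no tangency at all between $\partial D'$ and $X$ near $s$. After this choice the map from stable tangencies of $\gamma$ to tangencies of $\partial D'$ with $X$ is a bijection that sends types $(a),(b)$ to exterior tangencies and types $(c),(d)$ to interior tangencies.

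Finally I would invoke the Poincaré index theorem (equivalently, Poincaré--Hopf with boundary) for the non-singular vector field $X$ on the smooth disc $D'$: since $\chi(D')=1$ and $X$ has no singularities in $D'$, one has
\[
(\text{number of exterior tangencies of }\partial D'\text{ with }X) - (\text{number of interior tangencies}) \;=\; 2.
\]
Combining this with the bijection established in the previous step yields the claim.

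The main technical point -- and really the only thing that is not pure bookkeeping -- is the local smoothing argument at each stable segment of $\gamma$: one has to verify that the pictures in Figure \ref{f.singularities} genuinely force the sign of the created tangency (outward for $(a),(b)$, inward for $(c),(d)$), and that a non-tangent stable segment can be smoothed without creating any tangency at all. This is a finite local check, but it is the place where the definitions of the four types $(a),(b),(c),(d)$ -- which distinguish whether the nearby stable leaves cross both adjacent unstable segments or neither -- actually enter.
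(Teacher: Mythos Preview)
Your proposal is correct and follows essentially the same route as the paper: smooth $\overline{D}$ to a disc $D'$ whose boundary is in general position with respect to a non-singular vector field $X$ tangent to $\mathcal{F}^s$, arrange the smoothing so that stable tangencies of types (a),(b) (resp.\ (c),(d)) correspond bijectively to exterior (resp.\ interior) tangencies of $\partial D'$ while non-tangent stable segments become transverse, and conclude via the Poincar\'e index theorem. One small slip in your exposition: when smoothing a stable tangency $s$ you should obtain a \emph{single} tangency point (the smoothed arc is a parabola-like curve touching one stable leaf), not ``one point near each endpoint of $s$''; your subsequent sentence already reflects this correctly.
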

\begin{rema}
For any stable tangency of $\gamma$, its type ((a),(b),(c) or (d)) depends on our choice of orientation of $\mathcal{F}^{u}$. However, the previous lemma is true independently from our choice of orientation. 
\end{rema}
Of course by considering a vector field tangent to the unstable foliation, we can similarly define 4 types of unstable tangencies and prove the above lemma for unstable tangencies. Let us finish this section with the following lemma: 
\begin{lemm}\label{l.canonicalneighbourhoodtangency}
Let $\gamma$ be a simple closed polygonal curve, $s$ a stable tangency of type (a) or (b) and  $D$ the interior of $\gamma$. There exists a canonical neighbourhood for $s$ in $D\cup \gamma$ trivially foliated by the stable foliation
\end{lemm}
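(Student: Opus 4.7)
The plan is to construct a rectangle (in the sense of Definition \ref{d.rectanglesinplane}) contained in $D \cup \gamma$, having $s$ as one of its stable boundary components, and trivially foliated by $\mathcal{F}^s$.

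Let $p_-$ and $p_+$ be the endpoints of $s$, with $u^-$ emanating from $p_-$ and $u^+$ from $p_+$. Since $s$ is a stable tangency, the initial unstable directions of $u^\pm$ at $p_\pm$ lie on the same side of $\mathcal{F}^s(s) = \mathcal{F}^s(p_-) = \mathcal{F}^s(p_+)$. The hypothesis that the tangency is of type (a) or (b) says that this common side points into $D$ and that the stable leaves slightly inside $D$ from $s$ cross both $u^-$ and $u^+$. Parameterize small initial portions of $u^\pm$ by continuous embeddings $\alpha,\beta\colon [0,\epsilon]\to\mathcal{P}$ with $\alpha(0)=p_-$, $\beta(0)=p_+$, going into $D$. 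By the type (a)/(b) condition combined with the topological transversality of $\mathcal{F}^s$ and $\mathcal{F}^u$, for all sufficiently small $t>0$ the stable leaf $\mathcal{F}^s(\alpha(t))$ meets $u^+$ at a well-defined first point $\beta(\tau(t))$, and $\tau$ is continuous with $\tau(0^+)=0$.

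Next, I would choose $T>0$ small enough that $\tau$ is defined and continuous on $[0,T]$ and that each stable segment $[\alpha(t),\beta(\tau(t))]_s$ stays inside $D\cup\gamma$ for $t\in[0,T]$; then set
\[
N \;:=\; \bigcup_{t\in [0,T]} [\alpha(t),\beta(\tau(t))]_s.
\]
The natural map $[0,T]\times[0,1]\to N$ sending $(t,r)$ to the point at normalized parameter $r$ along $[\alpha(t),\beta(\tau(t))]_s$ is a continuous bijection onto $N$; the transversality of the foliations and the fact that distinct stable leaves are disjoint make it a homeomorphism. Thus $N$ is a rectangle in $\mathcal{P}$ whose stable boundary components are $s$ and $[\alpha(T),\beta(\tau(T))]_s$, and it is trivially foliated by $\mathcal{F}^s$ by construction. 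To obtain canonicality I would take $T$ to be maximal: any two neighbourhoods produced this way are nested along a common family of stable segments, and the supremum $T^*$ yields a neighbourhood determined only by $s$, $\gamma$ and $D$.

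The main technical obstacle will be verifying that, for small $t>0$, the stable segment $[\alpha(t),\beta(\tau(t))]_s$ actually stays in $D\cup\gamma$ (rather than exiting $D$ by crossing some other stable or unstable segment of $\gamma$ before reaching $\beta(\tau(t))$). I expect this to follow from the local picture at $s$ together with the fact that $\gamma$ is a simple closed polygonal curve made of finitely many stable/unstable segments: any accidental crossing with $\gamma$ near $s$ would produce stable or unstable segments of $\gamma$ accumulating on $s$, contradicting the finite length of $\gamma$ and condition (2) of Definition \ref{d.goodcurve}. Once this is established, shrinking $T$ slightly if necessary yields the desired canonical neighbourhood.
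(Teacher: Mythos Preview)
Your construction of a small trivially foliated rectangle near $s$ is fine, and the idea of making it canonical by pushing $T$ to its supremum $T^*$ is exactly what the paper does. But you have skipped the one genuinely nontrivial step: \emph{why is the neighbourhood still a trivially foliated rectangle at $T^*$?} In the bifoliated plane of a transitive Anosov flow, stable leaves need not be Hausdorff-separated. As $t\to T^*$, the stable segments $[\alpha(t),\beta(\tau(t))]_s$ could accumulate onto portions of two distinct non-separated stable leaves through $\alpha(T^*)$ and the corresponding limit point on $u^+$, rather than onto a single stable segment. In that case your ``maximal'' neighbourhood is not closed, not a rectangle, and not trivially foliated up to its boundary; the supremum does not automatically yield a well-defined canonical object.

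The paper confronts this head-on. Writing $X^-\in u^-$ and $X^+\in u^+$ for the first points at which the stable-segment construction fails, it first shows (by a monotone-limit argument) that either $X^-,X^+$ lie on the same stable leaf or on two non-separated stable leaves. It then rules out the non-separated case using Fenley's structure theorem (Corollary~4.4 of \cite{Fe}): if the leaves were non-separated there would exist an unstable leaf $L$ meeting every $s_x$, and $L\cap\bigcup_x s_x$ would be an unbounded unstable ray contained in the compact region $D\cup\gamma$, a contradiction. Only then can one add the closing stable segment from $X^-$ to $X^+$ and obtain a canonical rectangle. This non-separation argument is the heart of the proof and is absent from your proposal. (A minor separate point: the lemma does not assume $\gamma$ is good, so your appeal to condition~(2) of Definition~\ref{d.goodcurve} is not available.)
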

\begin{proof}
By considering $\gamma$ as a function from $\mathbb{S}^1$ to $\mathcal{P}$, let $u^{-}$ be the unstable segment coming before $s$ and $u^+$ the unstable segment  coming after it. Suppose without any loss of generality that $s$ is a tangency of type (a). For every point $x$ of $u^-$ close to $s$, there exists a stable segment $s_x$ from $x$ to $u^+$ such that $\int{s_x}\subset D$. If such a segment exists for all $x\in u^-$ then the desired neighbourhood will be $\underset{x\in u^-}{\cup}s_x$. If not, repeat the same procedure for $u^+$. 

If for every $x\in u^-$ there exists a stable segment $s_x$ from $x$ to $u^+$ such that $\int{s_x}\subset D$  and for every $x'\in u^+$ there exists a stable segment $s_{x'}$ from $x'$ to $u^-$ such that $\int{s_{x'}}\subset D$, then it is easy to see that $\underset{x\in u^-}{\cup}s_x=\underset{x'\in u^+}{\cup}s_{x'}$. Therefore, the previous construction does not depend on the choice of $u^-$ or $u^+$.

Let us now suppose that there exist
\begin{itemize}
    \item $X^- \in u^-$ for which there is no stable segment $s_X$ going from $X^-$ to $u^+$ and such that $\int{s_X}\subset D$
    \item $X^+ \in u^+$ with the analogous property
\end{itemize}  The set of points with the previous properties is closed, we can therefore suppose that $X^-$ is the first point in $u^-$ with this property starting from the unique point in $s\cap u^-$. Same for $X^+$. This implies that for every point $x_-$ in $u^-$ below $X^-$, there exists a stable segment $s_{x_-}$ from $x_-$ to  $P(x_-)\in u^+$ such that $\int{s_{x_-}}\subset D$. Similarly, for every point $x_+$ in $u^+$ below $X^+$, there exists a stable segment $s_{x_+}$ from $x_+$ to  $P(x_+)\in u^-$ such that $\int{s_{x_+}}\subset D$. 

Furthermore, any sequence $x_n$ in $u^-$ (resp. $u^+$) converging to $X^-$ (resp. $X^+$) from below verifies $P(x_n)\rightarrow X^+$   (resp. $P(x_n)\rightarrow X^-$). Indeed, it suffices to prove this for monotonous sequences $x_n$. If $x_n$ is monotonous, then $P(x_n)\in u^+$ is also monotonous and since $u^+$ is compact, it converges to a point $y\in u^+$. $y$ cannot be strictly above  $X^+$, since arbitrarily close to $y$ we can find a point  $x\in u^+$ and a segment $s_x$ going from $x$ to $u^-$ and such that $\int{s_x}\subset D$. This would imply the existence of a stable segment with analogous properties from $X^+$ to $u^{-}$, which is impossible. Also, $y$ cannot be strictly below $X^+$, because that would imply that there exists a neighbourhood of $X^-$ containing points $x$ and segments $s_x$ going from $x$ to $u^+$ and such $\int{s_x}\subset D$. We deduce that $y=X^+$ and that either $X^-$ and $X^+$ belong to the same stable leaf or to two stable leaves that are not separated in $\mathcal{P}$.

$X^-$ and $X^+$ cannot belong to two non separated stable leaves. Indeed, if this is the case, by Corollary 4.4 of  \cite{Fe}, there exists an unstable leaf $L$, such that for every point $x\in u^-$ below $X^-$ we have that $L\cap s_x \neq \emptyset$. Furthermore, by the same result of \cite{Fe}, if $I^-$ is the set of points in $u^-$ below $X^-$, then $L \cap \underset{x\in I^-}{\cup}s_x$ is an unbounded segment of $L$ contained in $D\cup \gamma$, which is a contradiction. Therefore, $X^-$ and $X^+$ belong to the same stable leaf. 

Finally, since $X^-$ and $X^+$ belong to the same stable leaf, we can consider $s'$ the stable segment from $X^-$ to $X^+$. The desired canonical neighbourhood in this case will be $\underset{x\in I^-}{\cup}s_x \cup s' \subset D\cup \gamma$. 

\end{proof} 
The canonical neighbourhood constructed in the previous lemma for the tangency $s$ has two stable and two unstable boundary components and will be called the \textit{domain} of $s$. Furthermore, if the domain of $s$ covers completely either $u^-$ or $u^+$ (we follow here the notations of the proof of Lemma \ref{l.canonicalneighbourhoodtangency}), we will call it a \textit{complete domain}. In any other case, we will call it an \emph{incomplete domain}.

By the proof of Lemma \ref{l.canonicalneighbourhoodtangency} we can deduce that: 
\begin{rema}\label{r.incompletedomain}
 If the domain $D$ of $s$ is  incomplete, there exists a finite number of stable segments of $\gamma$ in the interior of the stable boundary component of $D$ that is not $s$.  
\end{rema}
\section{Comparing bifoliated planes up to surgeries}\label{s.constructionPbar}
Let $(M_1,\Phi_1)$,  $(M_2,\Phi_2)$ be two transitive Anosov flows,  $\mathcal{P}_1$,  $\mathcal{P}_2$ their bifoliated planes endowed with an orientation, $\mathcal{R}_1$, $\mathcal{R}_2$ two Markovian families in $\mathcal{P}_1$ and $\mathcal{P}_2$ respectively and $\Gamma_1$,  $\Gamma_2$ their boundary periodic orbits. Assume that  $\mathcal{R}_1$ and $\mathcal{R}_2$ are associated by Theorem \ref{t.associatemarkovfamiliestogeometrictype} to the same class of geometric types. We would like to prove that the two previous Anosov flows are orbitally equivalent up to surgeries along boundary periodic orbits. In this section we will develop tools that will allow us to compare flows up to surgeries. 

More specifically, instead of comparing the bifoliated planes $\mathcal{P}_1$ and $\mathcal{P}_2$ that depend greatly on the choice of surgery on the boundary periodic orbits (see \cite{BonattiIakovoglou}), we will compare the universal covers of $\mathcal{P}_1-\Gamma_1$ and $\mathcal{P}_2-\Gamma_2$, denoted by $\widetilde{\mathcal{P}_1}$ and $\widetilde{\mathcal{P}_2}$ that have the advantage of being independent of the choice of surgery on the boundary periodic orbits. The lifts of the rectangles in $\mathcal{R}_1$, $\mathcal{R}_2$ on  $\widetilde{\mathcal{P}_{1}}$, $\widetilde{\mathcal{P}_{2}}$ do not always correspond to  rectangles in the sense of Definition \ref{d.rectanglesinplane}; some rectangles lift to ``rectangles" with corners at infinity and thus cease being compact. In order to avoid this difficulty, by adding some points at infinity, in Sections \ref{s.constructionPbar12} and \ref{s.relationbetweenptildepbar} we will complete  $\widetilde{\mathcal{P}_{1}}$, $\widetilde{\mathcal{P}_{2}}$ to ``branched" cover spaces of $\mathcal{P}_{1}$, $\mathcal{P}_{2}$ that we will denote by $\clos{\mathcal{P}_{1}}$, $\clos{\mathcal{P}_{2}}$. The ``ramification points" of these covers will correspond to $\Gamma_{1}$, $\Gamma_{2}$ and their ``ramification indexes" will be infinite. The spaces $\clos{\mathcal{P}_{1}}$, $\clos{\mathcal{P}_{2}}$ will be called the \emph{bifoliated planes of $\Phi_1,\Phi_2$ up to surgeries along $\Gamma_1,\Gamma_2$}. In Sections  \ref{s.ptildebifoliated} and \ref{s.pbarbifoliated} we will establish that the bifoliated plane up to surgeries has many points in common with the bifoliated plane of an Anosov flow: it is endowed with two (singular) transverse foliations $\clos{\mathcal{F}^{s,u}}$ and one transitive group action that preserves $\clos{\mathcal{F}^{s,u}}$. Our main goal in this setion consists in proving Theorem C according to which $\clos{\mathcal{P}_{1}}$, $\clos{\mathcal{P}_{2}}$ describe the flows $\Phi_1,\Phi_2$ up to surgeries along $\Gamma_1$ and $\Gamma_2$.

\subsection{The construction of $\clos{\mathcal{P}_{1}}, \clos{\mathcal{P}_{2}}$} \label{s.constructionPbar12} 
For this first part of the section, we will fix $(M,\Phi)$ a transitive Anosov flow, $\mathcal{P}$ its bifoliated plane endowed with an orientation, $\mathcal{R}$ a Markovian family on $\mathcal{P}$ and $\Gamma \subset \mathcal{P}$ the set of boundary points of $\mathcal{R}$. We will denote by $\widetilde{\Gamma}$ the lift of $\Gamma$ on $\mathbb{R}^3$ and by $\Gamma_M$ the projection of $\widetilde{\Gamma}$ on $M$. . 

The construction of  $\clos{\mathcal{P}}$ is very similar to the construction of the universal cover $\widetilde{\mathcal{P}}$ of $\mathcal{P}-\Gamma$. Let us first recall the latter one. 
Fix $x_0\in \mathcal{P}-\Gamma$ and $\text{curv}_1= \lbrace \gamma:[0,1]\overset{C^0}{\rightarrow} \mathcal{P} | \gamma(0)=x_0, \gamma[0,1]\cap \Gamma= \emptyset \rbrace$.  For any  $\gamma_1,\gamma_2\in \text{curv}_1$, we will say that $\gamma_1 \sim_1 \gamma_2$ if there exists $H:[0,1]^2\overset{C^0}{\rightarrow} \mathcal{P}$ such that $H(0,\cdot)=\gamma_1$, $H(1,\cdot)=\gamma_2$, $H(\cdot,1)$ is constant and for every $t\in [0,1]$ we have $H(t,\cdot)\in  \text{curv}_1$. We have that $$ \widetilde{\mathcal{P}} = \text{curv}_1 \big{/}\sim_1 $$


Any point $\tilde{x}\in \widetilde{\mathcal{P}}$ corresponds to a union of arcs of $\text{curv}_1$ starting from $x_0$ and ending at the same point of $\mathcal{P}-\Gamma$, say $x$. We define the projection $\tilde{\pi}$ from $\widetilde{\mathcal{P}}$ on $\mathcal{P}-\Gamma$ as the function associating $\tilde{x}$ to $x$. It is well known that $\tilde{\pi}$ is continuous.

Now let us define the space $\clos{\mathcal{P}}$. Take $\text{curv}_2= \lbrace \gamma:[0,1]\overset{C^0}{\rightarrow} \mathcal{P} | \gamma(0)=x_0, \gamma[0,1)\cap \Gamma= \emptyset \rbrace $. For any $\gamma_1,\gamma_2\in \text{curv}_2$, we will say that $\gamma_1 \sim_2 \gamma_2$  if there exists $H:[0,1]^2\overset{C^0}{\rightarrow} \mathcal{P}_1$ such that $H(0,\cdot)=\gamma_1$, $H(1,\cdot)=\gamma_2$, $H(\cdot,1)$ is constant and for every $t\in [0,1]$ we have $H(t,\cdot)\in  \text{curv}_2$. We define
$$ \clos{\mathcal{P}} := \text{curv}_2 \big{/}\sim_2 $$

Any point $\clos{x}\in \clos{\mathcal{P}}$ corresponds to a union of arcs of $\text{curv}_2$ starting from $x_0$ and ending at the same point of $\mathcal{P}$, say $x$. We define the projection $\clos{\pi}$ from $\clos{\mathcal{P}}$ on $\mathcal{P}$ as the function associating $\clos{x}$ to $x$. The function $\clos{\pi}$ is clearly a surjection.

In order to define the topology of $\clos{\mathcal{P}}$, let us construct $\clos{\mathcal{P}}$ in a different way. Blow-up every point in $\Gamma\subset \mathcal{P}$ to a circle and denote this new space by $\mathcal{P}_{\Gamma, blowup}$. The space $\mathcal{P}_{\Gamma, blowup}$ is homeomorphic to a plane minus countably many open disks, forming a discrete set. Its universal cover $\widetilde{\mathcal{P}_{\Gamma, blowup}}$ is a plane with countably many line boundaries. By identifying each of those lines to a point, we obtain the space $\clos{\mathcal{P}}$. We endow $\clos{\mathcal{P}}$ with the quotient topology for this projection.

\subsection{The relation between $\clos{\mathcal{P}}$ and $\widetilde{\mathcal{P}}$}
\label{s.relationbetweenptildepbar}
As an immediate result of our second construction of  $\clos{\mathcal{P}}$ and the fact that $\widetilde{\mathcal{P}}$ embeds continuously in $\widetilde{\mathcal{P}_{\Gamma, blowup}}$ we get the following proposition: 

\begin{prop}\label{p.densityoftildeinsidebar}
There exists a continuous embedding $\phi$ of $ \widetilde{\mathcal{P}}$ in $\clos{\mathcal{P}}$. Furthermore, $\clos{\mathcal{P}}- \phi(\widetilde{\mathcal{P}})$ is a discrete (countable) set of points 
\end{prop}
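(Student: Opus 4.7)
The plan is to exploit the second construction of $\clos{\mathcal{P}}$ via the blow-up space $\mathcal{P}_{\Gamma, blowup}$ and simultaneously build $\phi$ directly at the level of curves. I would start by observing that $\text{curv}_1 \subset \text{curv}_2$, since avoiding $\Gamma$ on $[0,1]$ is strictly stronger than avoiding it on $[0,1)$, and that any $\sim_1$-homotopy between curves in $\text{curv}_1$ is automatically a $\sim_2$-homotopy. Hence the inclusion descends to a canonical map
$$\phi: \widetilde{\mathcal{P}} \longrightarrow \clos{\mathcal{P}},\quad [\gamma]_{\sim_1}\longmapsto [\gamma]_{\sim_2}.$$
The crucial point, to be checked immediately afterwards, is that a curve $\gamma \in \text{curv}_2$ lies in $\text{curv}_1$ if and only if $\gamma(1)\notin \Gamma$; this remark will both give a clean description of the image of $\phi$ and will be used for injectivity.

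Next I would verify injectivity. If $\phi([\gamma_1]_{\sim_1}) = \phi([\gamma_2]_{\sim_1})$, the common endpoint lies in $\mathcal{P}-\Gamma$ because each $\gamma_i \in \text{curv}_1$. Taking a $\sim_2$-homotopy $H$ between the two, the condition $H(t,1)=\gamma_1(1)\notin \Gamma$ for every $t$, combined with $H(t,[0,1))\cap \Gamma=\emptyset$, gives $H(t,[0,1])\cap \Gamma=\emptyset$; thus $H(t,\cdot)\in \text{curv}_1$ for every $t$ and $\gamma_1\sim_1 \gamma_2$. For continuity I would use the blow-up description: the inclusion $\mathcal{P}-\Gamma \hookrightarrow \mathcal{P}_{\Gamma, blowup}$ is a homotopy equivalence, hence lifts to an identification of $\widetilde{\mathcal{P}}$ with the interior of $\widetilde{\mathcal{P}_{\Gamma, blowup}}$; composing this inclusion with the quotient map that collapses each boundary line to a point yields exactly the map $\phi$. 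Both factors being continuous, so is $\phi$. Openness onto its image follows because the collapsing quotient map is a homeomorphism when restricted to the interior of $\widetilde{\mathcal{P}_{\Gamma, blowup}}$; together with continuity and injectivity this promotes $\phi$ to an embedding.

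For the second assertion, by the description of the image I have $\phi(\widetilde{\mathcal{P}})=\clos{\pi}^{-1}(\mathcal{P}-\Gamma)$, so the complement is $\clos{\pi}^{-1}(\Gamma)$. Since $\Gamma$ is countable by Proposition \ref{p.boundaryperiodicpoints}, and for each $p\in \Gamma$ the fiber $\clos{\pi}^{-1}(p)$ is in bijection with the collection of boundary lines of $\widetilde{\mathcal{P}_{\Gamma, blowup}}$ lying above the circle over $p$ (each collapsed to a single point in $\clos{\mathcal{P}}$), the complement is a countable set. Discreteness is then immediate: a small neighborhood in $\widetilde{\mathcal{P}_{\Gamma, blowup}}$ of any one boundary line meets no other, so its image in $\clos{\mathcal{P}}$ is a neighborhood of the corresponding point that contains no other point of $\clos{\pi}^{-1}(\Gamma)$.

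I expect the main obstacle to be the identification of $\widetilde{\mathcal{P}}$ with the interior of $\widetilde{\mathcal{P}_{\Gamma, blowup}}$, which underpins the argument for continuity and for the image description. Concretely one needs that the universal cover is compatible with the homotopy equivalence $\mathcal{P}-\Gamma \simeq \mathcal{P}_{\Gamma, blowup}$, and that under this identification the ends at infinity of $\text{curv}_2$-curves terminating in $\Gamma$ correspond to the boundary lines of $\widetilde{\mathcal{P}_{\Gamma, blowup}}$ in a way that is equivariant under $\sim_2$-homotopy. Once this dictionary is set up, every remaining verification is essentially formal.
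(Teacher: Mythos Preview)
Your proposal is correct and follows essentially the same route as the paper: both rely on the second construction of $\clos{\mathcal{P}}$ via $\widetilde{\mathcal{P}_{\Gamma,blowup}}$ and the identification of $\widetilde{\mathcal{P}}$ with its interior. The paper in fact treats the proposition as an immediate consequence of that construction and gives no further argument, so your explicit curve-level definition of $\phi$, the direct verification of injectivity, and the analysis of the complement as $\clos{\pi}^{-1}(\Gamma)$ are useful elaborations rather than a genuinely different approach.
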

Using the embedding $\phi$, by a small abuse of language, we will assume from now on that $\widetilde{\mathcal{P}}\subset \clos{\mathcal{P}}$. 

\begin{figure}[h]

  \begin{minipage}[ht]{0.4\textwidth}
    \centering 
     \vspace{0.8cm}
    \includegraphics[width=0.4\textwidth]{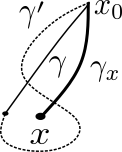}
  \hspace{-1cm}
    \caption*{(a)}
    
  \end{minipage}
 \begin{minipage}[ht]{0.4\textwidth}
 \centering
    \includegraphics[width=0.8\textwidth]{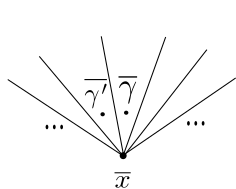}
    \hspace{-1cm}
    \caption*{\quad \quad (b)}
    
  \end{minipage}
  
  \caption{(a) $\clos{\gamma},\clos{\gamma'}$ belong in the neighbourhood of $\overline{\gamma_x}$ ($\clos{\gamma}\neq\clos{\gamma'}$ when $x\in \Gamma$) (b) We can picture $\overline{\mathcal{P}}$ as a branched cover over $\mathcal{P}$, where the points of $\Gamma$ correspond to points with infinite branching index }
  \label{f.conicpoint}
\end{figure}

Notice that by our first construction of $\clos{\mathcal{P}}$, we have $\clos{\mathcal{P}}-\widetilde{\mathcal{P}}=\clos{\pi}^{-1}(\Gamma)$. Furthermore, going around a point in $\Gamma$, changes  the class in $\sim_2$ of a curve whose endpoints are not in $\Gamma$ (see Figure \ref{f.conicpoint}a). However, doing so for a curve going from $x_0$ to a point in $\Gamma$ does not change the class of the curve in $\sim_2$. This  justifies the fact that we can visualize $\clos{\mathcal{P}}$ as a branched cover of $\mathcal{P}$, whose branching points have infinite index and are exactly $\clos{\Gamma}: \clos{\pi}^{-1}(\Gamma)$ (see Figure \ref{f.conicpoint}b). We would also like to point out at this point that:

\begin{rema} 
The projection $\clos{\pi}: \clos{\mathcal{P}}\rightarrow \mathcal{P}$ is continuous. 
\end{rema}
Indeed, by our first definition of $\clos{\mathcal{P}}$, we have that $\clos{\pi}_{|\widetilde{\mathcal{P}}}\equiv \tilde{\pi}$. Therefore, $\clos{\pi}$ is continuous on $\widetilde{\mathcal{P}}$. Consider now, any open set of $\mathcal{P}$ around a unique point in $\Gamma$. The previous open can be naturally associated with an open set in $\mathcal{P}_{\Gamma, blowup}$ and lifts to an open neighbourhood of some boundary line in $\widetilde{\mathcal{P}_{\Gamma, blowup}}$. By definition of the quotient topology, the previous open set projects to an open  neighbourhood of some point in $\clos{\Gamma}$, which gives us the desired result.

\subsection{$\widetilde{\mathcal{P}}$ as the bifoliated plane of a flow}\label{s.ptildebifoliated}
The space $\widetilde{\mathcal{P}}$ is the universal cover of a plane minus a countable number of points forming a discrete set; it is therefore a plane. Furthermore, by lifting on $\widetilde{\mathcal{P}}$ the stable and unstable foliations of $\mathcal{P}-\Gamma$, we obtain two transverse foliations $\widetilde{\mathcal{F}^s}$ and $\widetilde{\mathcal{F}^u}$ that we will respectively call stable and unstable foliations in $\widetilde{\mathcal{P}}$. Notice that for any $x\in \Gamma$ any of the connected components of $\mathcal{F}^s(x)-\lbrace x\rbrace$ (resp. $\mathcal{F}^u(x)-\lbrace x\rbrace$) lifts to a leaf of $\widetilde{\mathcal{F}^s}$ (resp. $\widetilde{\mathcal{F}^u}$). Notice also that, as for $\mathcal{F}^s$ and $\mathcal{F}^u$, any two leaves $s\in \widetilde{\mathcal{F}^s}$ and $u \in \widetilde{\mathcal{F}^s}$, intersect at most at one point.

Not only is the space $\widetilde{\mathcal{P}}$ endowed with two transverse foliations, but also a natural group action: 

\begin{prop}\label{p.orbitspacetilde}
The space $\widetilde{\mathcal{P}}$ is the orbit space of the lift on $\mathbb{R}^3$ of the flow $(\Phi,M)$ minus the boundary periodic orbits $\Gamma_M$. Consequently, $\widetilde{\mathcal{P}}$ is naturally endowed with a faithful action of $\pi_1(M-\Gamma_M)$. 
\end{prop}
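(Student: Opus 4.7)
The plan is to identify $\widetilde{\mathcal{P}}$ with the orbit space by viewing the usual orbit projection $\pi\colon\mathbb{R}^3\to\mathcal{P}$ as a principal $\mathbb{R}$-bundle and then passing to universal covers after removing $\widetilde{\Gamma}$ and $\Gamma$. First I would verify that $\widetilde{\Phi}$ is a free $\mathbb{R}$-action: it has no stationary point, and no closed orbit since $\mathbb{R}^3$ is simply connected. Its quotient is the Hausdorff plane $\mathcal{P}$, so $\pi$ is a principal $\mathbb{R}$-bundle, and its restriction
\[
\pi\colon \mathbb{R}^3-\widetilde{\Gamma}\longrightarrow \mathcal{P}-\Gamma
\]
is again one because $\widetilde{\Gamma}=\pi^{-1}(\Gamma)$ is saturated. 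Contractibility of the fiber then forces the induced map $\pi_\ast\colon \pi_1(\mathbb{R}^3-\widetilde{\Gamma})\xrightarrow{\;\cong\;}\pi_1(\mathcal{P}-\Gamma)$ to be an isomorphism.

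Next I would pull this bundle back along the universal covering $\widetilde{\mathcal{P}}\to\mathcal{P}-\Gamma$. The pullback is a principal $\mathbb{R}$-bundle over $\widetilde{\mathcal{P}}$; its total space is a simply connected cover of $\mathbb{R}^3-\widetilde{\Gamma}$, and so coincides with the universal cover of $\mathbb{R}^3-\widetilde{\Gamma}$. Because $\widetilde{\Gamma}=\pi_M^{-1}(\Gamma_M)$ for the covering map $\pi_M\colon \mathbb{R}^3\to M$, the latter restricts to a covering $\mathbb{R}^3-\widetilde{\Gamma}\to M-\Gamma_M$ with deck group $\pi_1(M)$, which identifies this total space with the universal cover $\widetilde{M-\Gamma_M}$. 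The corresponding bundle projection is, by construction, the orbit projection of the flow lifted to $\widetilde{M-\Gamma_M}$, and its base is $\widetilde{\mathcal{P}}$ --- this is the first assertion.

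The deck transformations of $\widetilde{M-\Gamma_M}\to M-\Gamma_M$ preserve the lifted vector field, hence the lifted flow, so the natural action of $\pi_1(M-\Gamma_M)$ descends to an action on the orbit space $\widetilde{\mathcal{P}}$. To establish faithfulness I would split along the short exact sequence
\[
1\to\pi_1(\mathbb{R}^3-\widetilde{\Gamma})\to\pi_1(M-\Gamma_M)\to\pi_1(M)\to 1
\]
associated with the regular covering $\mathbb{R}^3-\widetilde{\Gamma}\to M-\Gamma_M$. Via the isomorphism $\pi_\ast$, a nontrivial element of the kernel acts on $\widetilde{\mathcal{P}}$ as a nontrivial deck transformation of $\widetilde{\mathcal{P}}\to\mathcal{P}-\Gamma$. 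An element projecting to a nontrivial $\bar g\in\pi_1(M)$ acts nontrivially on $\widetilde{\mathcal{P}}$ because the action of $\pi_1(M)$ on $\mathcal{P}$ is faithful by Barbot's theorem (\cite{Barbotthese}), and this faithfulness is inherited by the restriction to $\mathcal{P}-\Gamma$ since, by Proposition \ref{p.boundaryperiodicpoints}, $\Gamma$ is the union of finitely many $\pi_1(M)$-orbits of isolated periodic points, hence discrete in $\mathcal{P}$. The main point that deserves care is verifying the principal $\mathbb{R}$-bundle property of $\pi$ --- freeness is immediate, but I would spell out properness using Hausdorffness of $\mathcal{P}$ so that the subsequent homotopy-theoretic identifications of covers and orbit spaces are justified; once that is pinned down, everything else follows formally.
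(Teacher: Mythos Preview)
Your argument is correct and follows essentially the same route as the paper: both identify $\widetilde{M-\Gamma_M}$ with the universal cover of $\mathbb{R}^3-\widetilde{\Gamma}$ and then recognize its orbit space as the universal cover of $\mathcal{P}-\Gamma$. The paper does this concretely by taking a transversal plane $P\subset\mathbb{R}^3$ meeting every $\widetilde{\Phi}$-orbit once and lifting $P-\widetilde{\Gamma}$ to the universal cover, whereas you phrase the same identification in the language of principal $\mathbb{R}$-bundles and pullbacks; these are equivalent viewpoints. One genuine addition on your side is the explicit faithfulness argument via the short exact sequence $1\to\pi_1(\mathbb{R}^3-\widetilde{\Gamma})\to\pi_1(M-\Gamma_M)\to\pi_1(M)\to1$, which the paper leaves implicit; your treatment here is clean and correct.
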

\begin{proof}
Let us denote by $\widetilde{\Phi}$ the lift of $\Phi$ on $\widetilde{M}=\mathbb{R}^3$. Since $\widetilde{M}-\widetilde{\Gamma} $ is a covering space of $M-\Gamma_M$, the universal cover of $M-\Gamma_M$, denoted by $\widetilde{M-\Gamma_M}$, coincides with  the universal cover of $\widetilde{M}-\widetilde{\Gamma} $. Moreover, the universal cover of $\mathbb{R}^3$ minus a countable and transversally discrete set of lines is homeomorphic to $\mathbb{R}^3$, therefore  $\widetilde{M-\Gamma_M}$ is homeomorphic to $\mathbb{R}^3$. Similarly, the flow $\Phi$ minus $\Gamma_M$ lifted on $\widetilde{M-\Gamma_M}$ can be identified with the lift of $\widetilde{\Phi}$ minus $\widetilde{\Gamma}$ on the universal cover of  $\widetilde{M}-\widetilde{\Gamma}$. We will denote this flow by $\widetilde{\Phi_{\Gamma_M}}$. 

Recall that $\widetilde{\Phi}$ is orbitally equivalent to the constant vertical flow on $\mathbb{R}^3$. Take $P$ a topological plane intersecting once every orbit of $\widetilde{\Phi}$. $P$ can be identified with the bifoliated plane $\mathcal{P}$ of $\Phi$. Consider $\widetilde{P}$ the lift of $P-\widetilde{\Gamma} \simeq \mathcal{P}-\Gamma$ on the universal cover of  $\widetilde{M}-\widetilde{\Gamma} $. $\widetilde{P}$ is a topological plane, where the punctures of $P$ have now become ``points at infinity".

From one side, the plane $\widetilde{P}$ intersects once every orbit of $\widetilde{\Phi_{\Gamma_M}}$; it can therefore be identified with the bifoliated plane of $\Phi-\Gamma_M$. From the other side, $\widetilde{P}$ corresponds to the universal cover of $P-\widetilde{\Gamma} \simeq \mathcal{P}-\Gamma$, which gives us the desired result.

\end{proof}

Using the above proposition, it is possible to show, in the same exact way as for the bifoliated plane of an Anosov flow (see for instance \cite{Barbotthese}) that 

\begin{itemize}
\item the bifoliated plane of $\Phi$ minus $\Gamma_M$ can be given a structure of $C^1$ manifold.
\item $\pi_1(M-\Gamma_M)$ acts on $\widetilde{\mathcal{P}}$ by preserving the foliations $\widetilde{\mathcal{F}^s}$ and $\widetilde{\mathcal{F}^u}$. The previous action is closely related to the action of $\pi_1(M)$ on $\mathcal{P}$. Denote by $\tilde{\phi}$ the morphism $\pi_1(M-\Gamma_M) \rightarrow \text{Hom}(\widetilde{\mathcal{P}})$, by $\phi$ the morphism $\pi_1(M) \rightarrow \text{Hom}(\mathcal{P})$ and by $\rho$ the natural morphism $\pi_1(M-\Gamma_M) \rightarrow \pi_1(M)$. By  quotienting $\widetilde{\mathcal{P}}$ by the action of $\text{ker}(\rho)$, we obtain $\mathcal{P}-\Gamma$. Furthermore, by the constructions of the actions $\phi, \tilde{\phi}$, for every element $g\in \pi_1(M-\Gamma_M)$ we have that $$\tilde{\pi}\circ \tilde{\phi}(g) = \phi(\rho(g)) \circ \tilde{\pi}$$
\item the orbit by $\pi_1(M-\Gamma_M)$ of any stable or unstable leaf in $\widetilde{\mathcal{F}^{s,u}}$ is dense in $\widetilde{\mathcal{P}}$
\item if $g\in \pi_1(M-\Gamma_M)$ acts trivially on a stable or unstable leaf then $g=id$
\item for any point $x \in \widetilde{\mathcal{P}}$ we have that $\text{Stab}(x) = \mathbb{Z}$ if and only if $x$ is periodic and $\text{Stab}(x) =\lbrace \text{id} \rbrace$ in all the other cases
\item for any stable/unstable leaf $f$ in $\widetilde{\mathcal{F}^{s,u}}$ we have that $\text{Stab}(f) = \mathbb{Z}$ or $\text{Stab}(f) =\lbrace \text{id} \rbrace$. More specifically,  $\text{Stab}(f) = \mathbb{Z}$ if and only if $f$ contains a periodic point or $f$ projects to a stable/unstable separatrix of a point in $\Gamma$
\end{itemize}
A major difference between the bifoliated plane of an Anosov flow and $\widetilde{\mathcal{P}}$ resides in the following remark:
\begin{rema}
If $g\in \pi_1(M-\Gamma_M)$ preserves a leaf $f\in \widetilde{\mathcal{F}^s}$, then $g$ fixes a point in $f$ if and only if $f$ does not project to a stable separatrix of some point in $\Gamma$.
\end{rema}

\subsection{Extending the structure of bifoliated plane to $\clos{\mathcal{P}}$}\label{s.pbarbifoliated}
Even though $\clos{\mathcal{P}}$ is not the bifoliated plane of some Anosov flow, it shares many properties with both $\mathcal{P}$ and $\widetilde{\mathcal{P}}$. 
\begin{prop}\label{c.extensionhomeomorphisms}
The action of $\pi_1(M-\Gamma_M)$ on $\widetilde{\mathcal{P}}$ can be extended continuously to a faithful action on $\clos{\mathcal{P}}$.
\end{prop}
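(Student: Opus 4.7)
The plan is to first extend the $\pi_1(M-\Gamma_M)$-action from $\widetilde{\mathcal{P}}$ to the intermediate covering $\widetilde{\mathcal{P}_{\Gamma,\text{blowup}}}$ appearing in the second construction of $\clos{\mathcal{P}}$, and then pass to the quotient $\clos{\mathcal{P}}$. The starting observation is that $\Gamma \subset \mathcal{P}$ is $\pi_1(M)$-invariant: indeed $\Gamma$ is the set of points not contained in the interior of any rectangle of the Markovian family $\mathcal{R}$, and $\mathcal{R}$ is itself $\pi_1(M)$-invariant (finiteness axiom). Consequently $\pi_1(M)$ acts by homeomorphisms on the blowup $\mathcal{P}_{\Gamma,\text{blowup}}$, permuting its boundary circles, and composing with $\rho:\pi_1(M-\Gamma_M)\to \pi_1(M)$ gives an action of $\pi_1(M-\Gamma_M)$ on $\mathcal{P}_{\Gamma,\text{blowup}}$ which preserves the collection of boundary circles.

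Next I would lift this action to the universal cover $\widetilde{\mathcal{P}_{\Gamma,\text{blowup}}}$. Since $\mathcal{P}_{\Gamma,\text{blowup}}$ deformation retracts onto $\mathcal{P}-\Gamma$, its interior lifts canonically to $\widetilde{\mathcal{P}}$, and the existing action $\tilde{\phi}$ of $\pi_1(M-\Gamma_M)$ on $\widetilde{\mathcal{P}}$ from Proposition \ref{p.orbitspacetilde} provides the action on the complement of the boundary lines. I would extend this across each boundary line by continuity, using the fact that a small half-plane neighborhood of a boundary line is contractible, so any continuous homeomorphism defined on the interior extends uniquely to the boundary. The resulting homeomorphism of $\widetilde{\mathcal{P}_{\Gamma,\text{blowup}}}$ automatically sends each boundary line to a boundary line, since it does so on the underlying blowup $\mathcal{P}_{\Gamma,\text{blowup}}$.

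Once this is done, the extension to $\clos{\mathcal{P}}$ is immediate: because $\clos{\mathcal{P}}$ is obtained from $\widetilde{\mathcal{P}_{\Gamma,\text{blowup}}}$ by collapsing each boundary line to a point, and the $\pi_1(M-\Gamma_M)$-action permutes boundary lines, it descends to a continuous action on the quotient. On the subset $\widetilde{\mathcal{P}} \subset \clos{\mathcal{P}}$ the new action coincides with the original one by construction. Faithfulness is inherited for free: any $g \in \pi_1(M-\Gamma_M)$ acting trivially on $\clos{\mathcal{P}}$ acts trivially on $\widetilde{\mathcal{P}}$, hence equals the identity by the faithfulness statement recorded in the bulleted list of Section \ref{s.ptildebifoliated}.

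The main obstacle is the rigor of the lifting step: one must verify that extending $\tilde{\phi}(g)$ across the boundary lines yields a well-defined homeomorphism of $\widetilde{\mathcal{P}_{\Gamma,\text{blowup}}}$ that is compatible with the deck action of $\pi_1(\mathcal{P}-\Gamma)$. The key identity making this work is the equivariance formula $\tilde{\pi}\circ \tilde{\phi}(g) = \phi(\rho(g)) \circ \tilde{\pi}$ from Section \ref{s.ptildebifoliated}: it guarantees that the downstairs action on $\mathcal{P}_{\Gamma,\text{blowup}}$ and the already-constructed upstairs action on $\widetilde{\mathcal{P}}$ fit together coherently along each boundary circle, so the extension is unambiguous. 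A minor technical point is checking that the continuity at points of $\clos{\Gamma}\subset\clos{\mathcal{P}}$ survives the collapse; this follows from the quotient-topology definition of $\clos{\mathcal{P}}$ together with the continuity of the extended action on $\widetilde{\mathcal{P}_{\Gamma,\text{blowup}}}$.
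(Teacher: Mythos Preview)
Your overall architecture---pass to the blowup, lift to the universal cover, collapse to $\clos{\mathcal{P}}$---matches the paper's proof, and your treatment of faithfulness and of the quotient step is fine. However, there is a genuine gap at the core technical step.

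The claim ``$\pi_1(M)$ acts by homeomorphisms on the blowup $\mathcal{P}_{\Gamma,\text{blowup}}$'' does not follow from $\Gamma$ being $\pi_1(M)$-invariant. A blowup at $p$ replaces $p$ by the circle of tangent directions at $p$; for a homeomorphism $g$ of $\mathcal{P}$ with $g(p)=q$ to induce a homeomorphism of the blowup, $g$ must have a well-defined action on directions at $p$, which is not automatic for a mere homeomorphism. Your attempted justification upstairs---``a small half-plane neighborhood of a boundary line is contractible, so any continuous homeomorphism defined on the interior extends uniquely to the boundary''---is false as stated: the map $(x,y)\mapsto (x+\sin(1/y),y)$ is a homeomorphism of the open upper half-plane with no continuous extension to $y=0$. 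Contractibility is irrelevant here.

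The paper fills exactly this gap by invoking the fact (recorded in the bulleted list after Proposition~\ref{p.orbitspacetilde} and attributed to \cite{Barbotthese}) that $\mathcal{P}$ carries a $C^1$ structure for which the $\pi_1(M)$-action is by diffeomorphisms. Then the differential $Dg_p$ acts on the unit normal bundle and gives the required homeomorphism of blowup circles. Once this is in place, the lift to $\widetilde{\mathcal{P}_{\Gamma,\text{blowup}}}$ is standard covering-space theory (choose the lift agreeing with the already-known $\tilde\phi(g)$ on the interior), and your remaining steps go through.
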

\begin{proof}
As we remarked in Section \ref{s.relationbetweenptildepbar}, $\widetilde{\mathcal{P}}$ embeds continuously in $\widetilde{\mathcal{P}_{\Gamma,blowup}}$. More specifically, the image of this embedding is exactly the interior of $\widetilde{\mathcal{P}_{\Gamma,blowup}}$. 

Recall that $\mathcal{P}$ can be given a structure of $C^1$ manifold, where the action by $\pi_1(M)$ can be seen as an action by diffeomorphisms (see for instance \cite{Barbotthese}). Consider now the diffeomorphism $g\in \pi_1(M)$ acting on $\mathcal{P}$. Using the action of the differential of $g$ on the tangent bundle of $\mathcal{P}$, $g$ corresponds to a unique homeomoprhism on $\mathcal{P}_{\Gamma,blowup}$. Take $\widetilde{g}$ a lift of this homeomorphism on $\widetilde{\mathcal{P}_{\Gamma,blowup}}$. Using the relation between the actions  of $\pi_1(M)$ on $\mathcal{P}$ and of $\pi_1(M-\Gamma_M)$ on $\widetilde{\mathcal{P}}$ (see Section \ref{s.relationbetweenptildepbar}), we get that $\widetilde{g}_{|\widetilde{\mathcal{P}}}$ corresponds to a unique element of $\pi_1(M-\Gamma_M)$. By a density argument, we can extend the action of $\pi_1(M-\Gamma_M)$ on $\widetilde{\mathcal{P}}$ to an action of  $\pi_1(M-\Gamma_M)$ on $\widetilde{\mathcal{P}_{\Gamma,blowup}}$. The previous action projects to a $C^0$ action of $\pi_1(M-\Gamma_M)$ on  $\clos{\mathcal{P}}$, which gives us the desired result.

\end{proof}

Furthermore, 
\begin{rema} \label{r.projectequiv} Any two points in the same $\pi_1(M-\Gamma_M)$-orbit in $\clos{\mathcal{P}}$ project to two points in the same $\pi_1(M)$-orbit in $\mathcal{P}$ and vice-versa. 
\end{rema}
Indeed, the previous remark is true for any two points in $\widetilde{\mathcal{P}}$ that are in the same $\pi_1(M-\Gamma_M)$-orbit. We obtain the desired result by using the density of $\widetilde{\mathcal{P}}$ in $\clos{\mathcal{P}}$.

 Moreover, it is possible to endow $\clos{\mathcal{P}}$ with two (singular) foliations $\clos{\mathcal{F}^{s,u}}$ by extending the foliations $\widetilde{\mathcal{F}^{s,u}}$. More precisely, $F$ will be a leaf of $\clos{\mathcal{F}^{s,u}}$ if and only if
 \begin{enumerate}
     \item $F$ projects on $\mathcal{P}$ to a stable/unstable leaf in $\mathcal{F}^{s,u}$ disjoint from $\Gamma$, or 
     \item $F$ projects on $\mathcal{P}$ to a stable/unstable separatrix of a point in $\Gamma$
     
 \end{enumerate}
 The two foliations $\clos{\mathcal{F}^{s}}$ and $\clos{\mathcal{F}^{u}}$ are transverse everywhere, except at their singularities, namely the points of $\clos{\Gamma}$. More particularly, we have

\begin{prop}\label{p.singularitiesoffoliations}
For every $\clos{\gamma}\in \clos{\Gamma}$, the stable leaves in $\clos{\mathcal{F}^{s}}$ that intersect $\clos{\gamma}$  form a countable set of leaves $...,s_{-2},s_{-1},s_0,s_1,s_2...$ ordered along $\mathbb{Z}$ and  satisfying the following:
\begin{itemize}
\item for all $k, l\in \mathbb{Z}$  such that $(k-l)$ is even there exists $g_{k,l}\in \pi_1(M-\Gamma_M)$ such that $g_{k,l}(s_k)=s_l$. If furthermore $\clos{\gamma}$ projects on $M$ to a periodic orbit with negative eigenvalues the previous stands for all $k, l\in \mathbb{Z}$
\item for all $k\in \mathbb{Z}$ $s_k\cap \clos{\Gamma}= \lbrace \clos{\gamma} \rbrace$
\item for all $k,m\in \mathbb{Z}$ $s_k$ is not separated from $s_{m}$ in $\clos{\mathcal{P}}$ (i.e. $s_k-\lbrace \clos{\gamma} \rbrace$ and  $s_{m}-\lbrace \clos{\gamma} \rbrace$ are non-separated stable leaves in $\widetilde{\mathcal{P}}$ ) if and only if $|k-m|=1$
\end{itemize}
\end{prop}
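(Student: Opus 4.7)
The plan is to analyze the local structure at $\clos{\gamma}$ via the blow-up construction of $\clos{\mathcal{P}}$ from Section \ref{s.constructionPbar12}. Let $\gamma = \clos{\pi}(\clos{\gamma}) \in \Gamma$. Blowing up $\gamma$ to a circle $C_\gamma$ in $\mathcal{P}_{\Gamma,\text{blowup}}$, the four separatrices of $\gamma$ produce four marked points on $C_\gamma$ that alternate cyclically between stable and unstable type (by transversality of $\mathcal{F}^s$ and $\mathcal{F}^u$). Lifting to the universal cover $\widetilde{\mathcal{P}_{\Gamma,\text{blowup}}}$, the circle $C_\gamma$ becomes a boundary line $\ell_\gamma$ carrying infinitely many marked points $\ldots, p_{-1}, p_0, p_1, \ldots$ in the same alternating pattern. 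Collapsing $\ell_\gamma$ to $\clos{\gamma}$ sends each stable mark $p_{2k}$ to the endpoint of a leaf $s_k$ of $\clos{\mathcal{F}^s}$, producing the enumeration of the stable leaves through $\clos{\gamma}$ indexed by $k \in \mathbb{Z}$ following the linear order on $\ell_\gamma$.

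I would verify the second and third bullets directly in this model. For the second: $s_k$ projects to a stable separatrix of $\gamma$, which lies in $\mathcal{F}^s(\gamma)$ and contains at most one periodic point of $\Gamma$, namely $\gamma$ itself (since the leaf's stabilizer is $\langle[\gamma_M]\rangle$, acting as a contraction with unique fixed point $\gamma$). Since $s_k \setminus \{\clos{\gamma}\}$ lies in $\widetilde{\mathcal{P}} = \clos{\mathcal{P}} \setminus \clos{\Gamma}$, we obtain $s_k \cap \clos{\Gamma} = \{\clos{\gamma}\}$. For the third: the consecutive stable marks $p_{2k}$ and $p_{2k+2}$ are separated on $\ell_\gamma$ by exactly one unstable mark $p_{2k+1}$. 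A family of stable leaves in $\mathcal{P}$ converging to $\mathcal{F}^s(\gamma)$ without passing through $\gamma$ lifts in $\widetilde{\mathcal{P}_{\Gamma,\text{blowup}}}$ to leaves whose two endpoints approach $p_{2k}$ and $p_{2k+2}$ respectively; upon collapsing $\ell_\gamma$ this exhibits $s_k$ and $s_{k+1}$ as non-separated. Conversely, for $|k-m|\geq 2$, any interposed stable leaf (say $s_{k+1}$) topologically separates $\clos{\mathcal{P}}$ into two components containing $s_k$ and $s_m$ respectively, forcing them to be separated.

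The heart of the proof lies in the first bullet. I would analyze the action on $\ell_\gamma$ of the peripheral subgroup $\Pi_\gamma \leq \pi_1(M-\Gamma_M)$ of $\gamma_M$, a rank-two free abelian group generated by the meridian $m_\gamma$ around $\gamma_M$ and a lift $g$ of $[\gamma_M] \in \pi_1(M)$ chosen (by multiplication with a suitable power of $m_\gamma$) to fix $\clos{\gamma}$. These generators fix $\clos{\gamma}$ and thus act on $\ell_\gamma$ by translations. The meridian $m_\gamma$ is the deck transformation shifting $\ell_\gamma$ by one full period, i.e., by four marked positions. In the positive-eigenvalue case, $[\gamma_M]$ preserves each separatrix of $\gamma$ and hence fixes each mark on $C_\gamma$; the lift $g$ then shifts $\ell_\gamma$ by a multiple of four, so the achievable shifts form the subgroup $4\mathbb{Z}$. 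Since sending $s_k$ to $s_l$ requires a shift of $2(l-k)$ marks, this is achievable precisely when $(k-l)$ is even, yielding the desired $g_{k,l}$. In the negative-eigenvalue case, $[\gamma_M]$ swaps the two stable (respectively the two unstable) separatrices of $\gamma$ and thus rotates $C_\gamma$ by a half-turn; the lift $g$ then shifts $\ell_\gamma$ by an odd multiple of two, the total shift subgroup becomes $2\mathbb{Z}$, realizing $g_{k,l}$ for all pairs $(k,l)$.

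The main obstacle I anticipate is the careful identification of the translation action on $\ell_\gamma$: verifying that the peripheral $\mathbb{Z}^2 \leq \pi_1(M - \Gamma_M)$ induces precisely the expected subgroup of translations of $\ell_\gamma$ preserving $\clos{\gamma}$, and keeping rigorous track of the compatibility $\clos{\pi} \circ g = \rho(g) \circ \clos{\pi}$ (via the extended action of Proposition \ref{c.extensionhomeomorphisms}), so that the abstract elements of $\pi_1(M-\Gamma_M)$ correspond faithfully to shift counts on $\ell_\gamma$ and hence to the claimed equivalences between the leaves $s_k$.
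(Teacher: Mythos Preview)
Your approach is correct and coincides with the paper's: both rely on the blow-up construction of $\clos{\mathcal{P}}$ (the ``second construction'' of Section~\ref{s.constructionPbar12}) to read off the structure of the stable leaves through $\clos{\gamma}$ from the boundary line $\ell_\gamma$. The paper's proof is extremely terse---it dispatches the first and third bullets as ``immediate consequences of our second construction of $\clos{\mathcal{P}}$ and Remark~\ref{r.projectequiv}''---while you spell out in detail the peripheral-subgroup translation analysis on $\ell_\gamma$ that the paper leaves implicit (and which reappears in the paper only later, in the proof of Proposition~\ref{p.stabilizersz2} and Remark~\ref{r.secondproofofcommutation}).
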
 
\begin{proof}
The second point of the proposition results immediately from the fact that $\clos{\pi}(s_k)$ is a stable separatrix of $\gamma:=\clos{\pi}(\gamma)\in \Gamma$. Therefore, it contains a unique periodic point:  $\clos{\pi}(s_k)\cap\Gamma=\{\gamma\}$. 

The first and third point of the proposition are immediate consequences of our second  construction of $\clos{\mathcal{P}}$ and Remark \ref{r.projectequiv} (see Figure \ref{f.pbarform}). 

\begin{figure}
    \centering
    \includegraphics[scale=0.5]{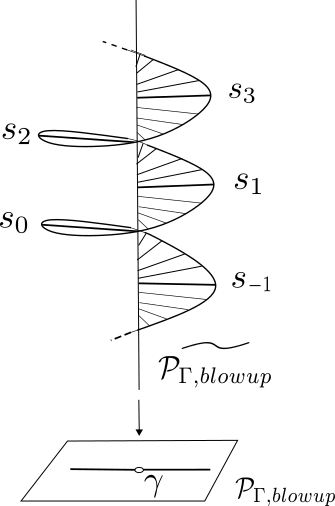}
   \caption{}
    \label{f.pbarform}
\end{figure}
\end{proof}

The space $\clos{\mathcal{P}}$ endowed with the action of $\pi_1(M-\Gamma_M)$ together with its two foliations $\clos{\mathcal{F}^{s,u}}$ resembles closely a bifoliated plane of some Anosov flow. By our previous discussion: 

\begin{itemize}
\item The action of $\pi_1(M-\Gamma_M)$ preserves the singular foliations $\clos{\mathcal{F}^s}$ and $\clos{\mathcal{F}^u}$. The previous action is closely related to the action of $\pi_1(M)$ on $\mathcal{P}$. Denote by $\clos{\phi}$ the morphism $\pi_1(M-\Gamma_M) \rightarrow \text{Hom}(\clos{\mathcal{P}})$, by $\phi$ the morphism $\pi_1(M) \rightarrow \text{Hom}(\mathcal{P})$ and by $\rho$ the natural morphism $\pi_1(M-\Gamma_M) \rightarrow \pi_1(M)$. By quotienting $\clos{\mathcal{P}}$ by the action of $\text{ker}(\rho)$, we obtain the bifoliated plane $\mathcal{P}$ of $\Phi$. Furthermore, by the constructions of the actions $\phi, \clos{\phi}$, for every element $g\in \pi_1(M-\Gamma_M)$ we have that $$\clos{\pi}\circ \clos{\phi}(g) = \phi(\rho(g)) \circ \clos{\pi}$$
\item Any two leaves $s\in \clos{\mathcal{F}^s}$ and $u \in \clos{\mathcal{F}^u}$  intersects at most at one point
\item For any point $x \in \clos{\mathcal{P}}-\clos{\Gamma}=\widetilde{\mathcal{P}}$ we have that $\text{Stab}(x) = \mathbb{Z}$ if and only if $x$ is periodic (i.e. corresponds to a periodic orbit in $M$)  and $\text{Stab}(x) = \lbrace \text{id} \rbrace$ if and only if $x$ is not periodic. 
\item If $g\in \pi_1(M-\Gamma_M)$ preserves a leaf $f\in \clos{\mathcal{F}^s}$, then $f$ carries a periodic point
\item If $g\in \pi_1(M-\Gamma_M)$ acts trivially on a stable or unstable leaf then $g=id$
\item The orbit of every leaf in  $\clos{\mathcal{F}^{s,u}}$ by $\pi_1(M-\Gamma_M)$ is dense
\end{itemize}

One major difference between $\clos{\mathcal{P}}$ and the bifoliated plane of some Anosov flow is given by the following proposition: 

\begin{prop}\label{p.stabilizersz2}
The stabilizer in $\pi_1(M-\Gamma_M)$ of any point in $\clos{\Gamma}$ is isomorphic to $\mathbb{Z}^2$
\end{prop}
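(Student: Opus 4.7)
The plan is to identify $\text{Stab}(\clos{\gamma})$ with (a conjugate of) the image, under inclusion, of $\pi_1(T_\gamma)$, where $T_\gamma \subset M-\Gamma_M$ is the boundary torus of a small tubular neighbourhood of the periodic orbit $\gamma_M$ corresponding to $\gamma := \clos{\pi}(\clos{\gamma})$. Since $M$ is closed and carries an Anosov flow, $M$ is not $S^3$, $S^1 \times S^2$, nor a lens space, so $M-\gamma_M$ is not a solid torus; hence $T_\gamma$ is incompressible in $M-\Gamma_M$, and the inclusion yields an injection $\iota_{\ast}\colon\pi_1(T_\gamma) \cong \mathbb{Z}^2 \hookrightarrow \pi_1(M-\Gamma_M)$. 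It therefore suffices to show $\text{Stab}(\clos{\gamma}) = \iota_{\ast}(\pi_1(T_\gamma))$.

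For the inclusion $\iota_{\ast}(\pi_1(T_\gamma)) \subseteq \text{Stab}(\clos{\gamma})$, I would pick generators $m$ (a meridian of $\gamma_M$) and $p$ (a parallel, freely homotopic in $M$ to $\gamma_M$). Since $\rho(m)=1$, the element $m$ acts on $\clos{\mathcal{P}}$ as a deck transformation of $\clos{\pi}$; in the blow-up model, $m$ corresponds to the loop once around $C_\gamma \subset \mathcal{P}_{\Gamma,\text{blowup}}$, which lifts to the deck translation along the specific boundary line of $\widetilde{\mathcal{P}_{\Gamma,\text{blowup}}}$ whose collapse yields $\clos{\gamma}$, and so fixes $\clos{\gamma}$. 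For $p$, the projection $\rho(p)$ generates $\text{Stab}_{\pi_1(M)}(\gamma)\cong\mathbb{Z}$ and fixes $\gamma$, and by the continuity of the extended action (Proposition~\ref{c.extensionhomeomorphisms}) together with an appropriate basepoint choice, $p$ fixes $\clos{\gamma}$ as well.

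For the reverse inclusion, take $g\in\text{Stab}(\clos{\gamma})$. Then $\rho(g) \in \text{Stab}_{\pi_1(M)}(\gamma) = \langle \rho(p)\rangle$, so $\rho(g)=\rho(p^n)$ for some $n \in \mathbb{Z}$, and $gp^{-n}\in\ker\rho\cap\text{Stab}(\clos{\gamma})$. It remains to prove that $\ker\rho\cap\text{Stab}(\clos{\gamma})=\langle m\rangle$. An element of $\ker\rho$ acts on $\clos{\mathcal{P}}$ as a deck transformation above $\mathcal{P}$, hence, via the blow-up construction, as a deck transformation of $\widetilde{\mathcal{P}_{\Gamma,\text{blowup}}} \to \mathcal{P}_{\Gamma,\text{blowup}}$ that fixes $\clos{\gamma}$ if and only if it preserves setwise the unique boundary line collapsing to $\clos{\gamma}$. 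The setwise stabiliser of such a boundary line in $\pi_1(\mathcal{P}_{\Gamma,\text{blowup}})\cong\pi_1(\mathcal{P}-\Gamma)$ is infinite cyclic, generated by a small loop around the corresponding puncture, which pulls back in $\pi_1(M-\Gamma_M)$ to $\langle m\rangle$. Finally, because $m$ and $p$ commute inside $\iota_{\ast}\pi_1(T_\gamma)$, the extension $0\to\langle m\rangle \to \text{Stab}(\clos{\gamma}) \to \mathbb{Z} \to 0$ is an abelian split extension, so $\text{Stab}(\clos{\gamma})\cong\mathbb{Z}^2$.

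The main obstacle will be the rigorous justification of $\ker\rho\cap\text{Stab}(\clos{\gamma})=\langle m\rangle$: intuitively, no nontrivial combination of meridians around \emph{other} boundary periodic orbits fixes $\clos{\gamma}$, but proving this cleanly requires tracking how elements of $\ker\rho$, viewed as deck transformations of $\widetilde{\mathcal{P}_{\Gamma,\text{blowup}}} \to \mathcal{P}_{\Gamma,\text{blowup}}$, act on the specific boundary line of $\widetilde{\mathcal{P}_{\Gamma,\text{blowup}}}$ projecting to $\clos{\gamma}$.
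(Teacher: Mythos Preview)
Your approach is correct, and it is essentially the second proof that the paper sketches in Remark~\ref{r.secondproofofcommutation} immediately after the main argument. The paper's \emph{primary} proof, however, takes a different, purely dynamical route that avoids 3-manifold topology entirely: using Proposition~\ref{p.singularitiesoffoliations}, the stable half-leaves through $\clos{\gamma}$ form a set canonically indexed by $\mathbb{Z}$, and any element of $\text{Stab}(\clos{\gamma})$ must permute them while preserving the non-separation relation between consecutive leaves; since the action is orientation preserving, each element acts as a translation on this $\mathbb{Z}$. The paper then exhibits two generators: $t$, the element realizing the minimal nontrivial translation (taken in $\ker\rho$), and $s$, the generator of $\text{Stab}(s_0)\cong\mathbb{Z}$ for a fixed half-leaf $s_0$. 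Commutation is obtained by a covering-space uniqueness argument ($tst^{-1}$ fixes $s_0$ and projects to $\rho(s)$, hence equals $s$), and torsion-freeness of $\pi_1(M-\Gamma_M)$ (from asphericity) gives $\langle s,t\rangle\cong\mathbb{Z}^2$.

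Your topological approach has the advantage of directly producing the peripheral identification $\text{Stab}(\clos{\gamma})=\iota_*\pi_1(T_\gamma)$, which is exactly what is needed downstream in the proof of Theorem~\ref{t.generalbarbot}; the paper's dynamical proof is more self-contained within the bifoliated-plane framework and requires no incompressibility input. Two small points on your write-up: first, the implication ``$M-\gamma_M$ is not a solid torus $\Rightarrow$ $T_\gamma$ incompressible'' tacitly uses irreducibility of $M$ (from $\widetilde{M}\cong\mathbb{R}^3$) and that $\gamma_M$ is not null-homotopic, both standard for Anosov flows but worth making explicit; second, once you have established both inclusions and injectivity of $\iota_*$, the concluding split-extension paragraph is redundant, since $\text{Stab}(\clos{\gamma})=\iota_*\pi_1(T_\gamma)\cong\mathbb{Z}^2$ is already in hand.
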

\begin{proof}
Take $\clos{\gamma}\in \clos{\Gamma}$. By Proposition \ref{p.singularitiesoffoliations}, the set of stable leaves in $\clos{\mathcal{F}^s}$ intersecting  $\clos{\gamma}$ is countable and ordered along $\mathbb{Z}$. Using the orientation of $\widetilde{\mathcal{P}}$, we will denote those leaves by  $...s_{-2},s_{-1},s_0,s_1,s_2...$ following the anti-clockwise direction (see Figure \ref{f.pbarform}). Since the elements of $\pi_1(M-\Gamma_M)$ preserve $\clos{\mathcal{F}^{s,u}}$, then any element in $\text{Stab}(\clos{\gamma})$ permutes the set of leaves $\lbrace s_k|k\in \mathbb{Z}\rbrace$ and even more associates non-separated leaves to non-separated leaves. Therefore, by Proposition \ref{p.singularitiesoffoliations} any element $g\in \text{Stab}(\clos{\gamma})$ acts on the set $\lbrace s_k|k\in \mathbb{Z}\rbrace \equiv \mathbb{Z}$ as the composition of some symmetry on $\mathbb{Z}$ and some translation. But since $\pi_1(M-\Gamma_M)$ acts on $\clos{\mathcal{P}}$ by orientation preserving homeomorphisms, $g$ can  only act as a translation on $\lbrace s_k|k\in \mathbb{Z}\rbrace$. 

By Proposition \ref{p.singularitiesoffoliations}, the smallest possible non-trivial translation on $\lbrace s_k|k\in \mathbb{Z}\rbrace$ is a translation by $\pm 1$ or $\pm 2$. Take $t\in\pi_1(M-\Gamma_M)$ acting as a translation by $+1$ or $+2$ on $\lbrace s_k|k\in \mathbb{Z}\rbrace$, depending on whether $\clos{\gamma}$ has respectfully negative or positive eigenvalues. We will assume that $t\in \text{ker}(\rho)$, where $\rho$ is the natural morphism from $\pi_1(M-\Gamma_M)$ to $\pi_1(M)$. 

Concerning the elements of $\pi_1(M-\Gamma_M)$ that preserve $s_0$ (therefore also every $s_i$), by our discussion at the end of Section \ref{s.ptildebifoliated},  $\text{Stab}(s_0)= \mathbb{Z}$. Take $s$ the  generator of $\text{Stab}(s_0)$ acting on $s_0$ as an expansion. Recall that any element of $\pi_1(M-\Gamma_M)$ that fixes all the points of a stable leaf is the identity. Hence, we have that $\text{Stab}(\clos{\gamma})=<s,t>$. It now suffices to show that $s$ and $t$ commute. 

Indeed,  $\rho(tst^{-1})=\rho(t) \rho(s) \rho(t)^{-1}=\rho(s)$  and by construction $tst^{-1}\in \text{Stab}(s_0)$. By a classical fact in covering space theory, the unique element of $\pi(M-\Gamma_M)$ that fixes $s_0$ and that projects in $\pi(M)$ to $\rho(s)$ is $s$. Therefore, $tst^{-1}=s$ and since $M-\Gamma$ is aspherical, its fundamental group is torsion-free. Therefore,  $<t,s>=\mathbb{Z}^2$.
\end{proof}
Another way of proving that $<s,t>=\mathbb{Z}^2$ is by understanding the homotopy classes in $M$ corresponding to $s$ and $t$. Fix $x_0\in M-\Gamma_M$ and one of its lifts in $\widetilde{M-\Gamma_M}=\mathbb{R}^3$, say $\widetilde{x_0}$. Consider $\clos{\gamma}\in \clos{\Gamma}$, $x\in \clos{\mathcal{F}^s}(\clos{\gamma})$ very close to $\clos{\gamma}$ and $X$ a lift of $x$ in $\widetilde{M-\Gamma_M}$. Take $\delta$ a continuous path in $\widetilde{M-\Gamma_M}$ from $\widetilde{x_0}$ to $X$ and project it to a path $\delta_M$ in $M-\Gamma_M$ going from $x_0$ to $x_M$, the projection of $X$ on $M-\Gamma$.

The point $\clos{\gamma}$ corresponds to a periodic orbit $\gamma$ in $M$. Take $T$ the torus boundary of a tubular neighbourhood of $\gamma$.  Without any loss of generality we can assume that $x_M\in T$. In Section \ref{s.surgeries}, we defined a meridian and a parallel in $T$, denoted by $m$ and $p$ and forming a basis of the fundamental group of $T$. We also defined the class $P=2p-m$ that plays the role of a ``parallel" when doing surgeries on periodic orbits with negative eigenvalues. Assume without any loss of generality that $m,p$ and $P$ contain $x_M$. Let $m^M$ (resp. $p^M, P^M$) be the homotopy class in $\pi_1(M-\Gamma,x_0)$ obtained by juxtaposing $\delta_M$, then $m$ (resp. $p,P$) and $\delta_M^{-1}$.  By the classical construction of the action of the fundamental group on the orbit space (see \cite{Barbotthese}) and following the notations of the proof of Proposition \ref{p.stabilizersz2}: 

\begin{rema}\label{r.secondproofofcommutation}
When $\gamma$ has positive eigenvalues $s=p^M$ and $t=m^M$. When $\gamma$ has negative eigenvalues $s=P^M$ and $t=p^M$. 
\end{rema}
In both cases, $<s,t>=\pi_1(T)=\mathbb{Z}^2$ which gives us a second proof of the desired result.

\subsection{The main result: $\clos{\mathcal{P}}$ is a complete invariant of $\Phi$ up to surgeries on $\Gamma_M$
}\label{s.barbotgeneral}
\begin{theorem}\label{t.generalbarbot}
Let $M_1,M_2$ be two closed,  orientable 3-manifolds, $(\Phi_1, M_1)$, $(\Phi_2, M_2)$ two transitive Anosov flows, $\Gamma_1$ (resp. $\Gamma_2$) a finite set of periodic orbits of $\Phi_1$ (resp. $\Phi_2$) and $\clos{\mathcal{P}_1}$ (resp.$\clos{\mathcal{P}_2}$) the extension of the bifoliated plane $\widetilde{\mathcal{P}_1-\Gamma_1}$ (resp.$\widetilde{\mathcal{P}_2-\Gamma_2}$). The flow $(\Phi_2, M_2)$ can be obtained (up to orbital equivalence) by performing surgeries on $(\Phi_1, M_1)$ along the orbits $\Gamma_1$, and $\Gamma_2$ are the orbits of $\Phi_2$ corresponding to $\Gamma_1$ after surgery if and only if there exists a homeomorphism $h: \clos{\mathcal{P}_1}\rightarrow \clos{\mathcal{P}_2}$ such that: 

\begin{enumerate}
    \item the image by $h$ of any stable/unstable leaf in $\clos{\mathcal{F}_1^{s,u}}$ is a stable/unstable leaf in $\clos{\mathcal{F}_2^{s,u}}$ 
    \item there exists an isomorphism $\alpha: \pi_1(M_1-\Gamma_1) \rightarrow \pi_1(M_2-\Gamma_2)$ such that for every $g\in \pi_1(M_1-\Gamma_1)$ and every $x\in \clos{\mathcal{P}_1}$ we have $$h(g(x))= \alpha(g)(h(x))$$ 
\end{enumerate}

\end{theorem}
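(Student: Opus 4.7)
The plan is to prove each implication separately, leveraging the two descriptions of $\clos{\mathcal{P}}$ from Section \ref{s.constructionPbar12}.

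\textbf{Forward direction.} Assume $\Phi_2$ is obtained from $\Phi_1$ by Dehn-Goodman-Fried surgeries along $\Gamma_1$. By the construction of Section \ref{s.surgeries}, the surgery does not alter the flow away from $\Gamma_1$, so there is a natural orbital equivalence $\psi\colon (M_1-\Gamma_1,\Phi_1) \to (M_2-\Gamma_2,\Phi_2)$. Lifting $\psi$ to the universal covers (each homeomorphic to $\mathbb{R}^3$ by Proposition \ref{p.orbitspacetilde}) and passing to orbit spaces yields a homeomorphism $\widetilde{\mathcal{P}_1} \to \widetilde{\mathcal{P}_2}$ equivariant with respect to the induced isomorphism $\alpha=\psi_*$. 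To extend this homeomorphism to $\clos{\mathcal{P}_i}$, I would use the alternative description of $\clos{\mathcal{P}_i}$ as a quotient of the universal cover of $\mathcal{P}_{i,\Gamma_i,\mathrm{blowup}}$. The flow $\Phi_i$ extends smoothly to the blown-up manifold $M_{i,\gamma}$ using the action on the normal bundle along $\Gamma_i$, and $\psi$ extends accordingly to a homeomorphism of the two blown-up manifolds; this descends to a homeomorphism of blown-up planes, lifts to their universal covers, and after collapsing the boundary lines produces the required $h$.

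\textbf{Backward direction.} Given $h$ and $\alpha$, my first step is to restrict $h$ to the open dense subset $\widetilde{\mathcal{P}_1}$ and apply a version of Barbot's theorem \cite{Ba1} to the flows $\Phi_i|_{M_i-\Gamma_i}$. Combining the foliation-preserving $\alpha$-equivariant homeomorphism of orbit spaces with the product structure $\widetilde{M_i-\Gamma_i}\simeq \widetilde{\mathcal{P}_i}\times \mathbb{R}$ produced by the lifted flows (which exists because the lifted flow on $\mathbb{R}^3$ is orbitally equivalent to the vertical flow), I obtain a $\pi_1$-equivariant orbit equivalence of universal covers descending to an orbital equivalence $\psi\colon \Phi_1|_{M_1-\Gamma_1} \to \Phi_2|_{M_2-\Gamma_2}$ realising $\alpha$.

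The second and crucial step is to show that $\psi$ extends, after an appropriate reparametrisation, across each surgery torus. For each $\gamma_1\in \Gamma_1$ pick a lift $\clos{\gamma_1}\in \clos{\Gamma_1}$, so that $h(\clos{\gamma_1})\in \clos{\Gamma_2}$ sits above some $\gamma_2\in \Gamma_2$, and $\alpha$ restricts to an isomorphism between the $\mathbb{Z}^2$ stabilisers of these two points. By Proposition \ref{p.stabilizersz2} and Remark \ref{r.secondproofofcommutation}, each stabiliser carries an intrinsic basis $(s_i,t_i)$ where $t_i$ is the meridian of the tube around $\gamma_i$ (or $2p_i-m_i$ in the negative eigenvalue case) and $s_i$ is the canonical parallel. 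The constraint that $\alpha$ conjugates the translation action on the cyclically ordered set of stable leaves $\{s_k^{(i)}\}$ through $\clos{\gamma_i}$ (which is forced because $h$ preserves $\clos{\mathcal{F}^s}$ and its non-separated leaf structure described in Proposition \ref{p.singularitiesoffoliations}) pins down $\alpha(t_1)=t_2+n\cdot s_2$ for a unique integer $n$. This integer is precisely the characteristic number of the Dehn-Goodman-Fried surgery on $\gamma_2$ that will restore $\gamma_1$.

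\textbf{The main obstacle} will lie in this second step: I must verify that performing simultaneously, for every $\gamma_1\in\Gamma_1$, the inverse of the surgery with the characteristic number $n$ just read off from $\alpha$ yields a flow orbitally equivalent to $\Phi_1$ \emph{via} an extension of the $\psi$ produced by the first step. Concretely, this amounts to tracking the asymptotic behaviour of $\psi$ on the tubular neighbourhood of each $\gamma_2\in\Gamma_2$ in $M_2-\Gamma_2$ and showing that after a reparametrisation along the flow $\psi$ sends the canonical foliation $\mathcal{G}_n$ on $T_{\gamma_2}$ from Section \ref{s.surgeries} to the fibre foliation of the blow-up of $\gamma_1$. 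The equivariance under $\alpha$ and the local model of $\clos{\mathcal{P}}$ near a singular point (Proposition \ref{p.singularitiesoffoliations}) should provide exactly the information needed to perform this matching torus by torus.
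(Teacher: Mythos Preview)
Your forward direction matches the paper's argument closely; the extension to $\clos{\mathcal{P}_i}$ via blow-ups is essentially what the paper does (more tersely) when it says that $H$ sends a punctured neighbourhood of a point in $\clos{\Gamma_1}$ to a punctured neighbourhood of a unique point in $\clos{\Gamma_2}$.

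For the backward direction, however, you diverge from the paper and the obstacle you flag is real. Your plan is to build an orbital equivalence $\psi$ on the \emph{open} manifolds $M_i-\Gamma_i$ by a Barbot-type argument, read off the surgery integer $n$ from $\alpha$ on each $\mathbb{Z}^2$ stabiliser, and then extend $\psi$ across the surgery tori by controlling its asymptotics. Two issues arise. First, Barbot's Theorem~3.4 in \cite{Ba1} is stated for Anosov flows on \emph{closed} manifolds; transporting the argument to $M_i-\Gamma_i$ requires care, since compactness is used to get continuity of the descended map. Second, and more seriously, the $\psi$ produced this way has no a priori regularity near $\Gamma_i$: showing that a reparametrisation of $\psi$ sends the foliation $\mathcal{G}_n$ on $T_{\gamma_2}$ to the meridian foliation on $T_{\gamma_1}$ is a genuine topological problem that you only sketch.

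The paper circumvents both difficulties by a clean algebraic trick. After reading off the integer $k(\gamma)$ exactly as you do (your $n$ is the paper's $k$), it \emph{performs} the surgeries of coefficient $-k(\gamma)$ on $\Phi_1$ to obtain a new Anosov flow $(M_3,\Phi_3)$ on a closed manifold, applies the forward direction to get $H\colon\clos{\mathcal{P}_1}\to\clos{\mathcal{P}_3}$, and forms $K=H\circ h^{-1}\colon\clos{\mathcal{P}_2}\to\clos{\mathcal{P}_3}$. The point is that by construction the associated isomorphism $\beta$ now sends $t^{M_2}_{\clos\gamma}$ to $t^{M_3}_{K(\clos\gamma)}$ exactly, i.e.\ meridians to meridians. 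By Seifert--Van Kampen, $\beta$ therefore carries $\ker\bigl(\pi_1(M_2-\Gamma_2)\to\pi_1(M_2)\bigr)$ onto $\ker\bigl(\pi_1(M_3-\Gamma_3)\to\pi_1(M_3)\bigr)$, so $K$ descends to a foliation-preserving, $\pi_1$-equivariant homeomorphism $\mathcal{P}_2\to\mathcal{P}_3$ between honest bifoliated planes of Anosov flows on closed manifolds. At that point the original Barbot theorem applies directly and gives $\Phi_2\simeq\Phi_3$. No extension of an orbital equivalence across tori is ever needed; everything is reduced to an identity of normal subgroups.
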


Thanks to this result, we will refer from now on to $\clos{\mathcal{P}_1}$ as \emph{the bifoliated plane of $\Phi_1$ up to surgeries on $\Gamma_1$}. 
\begin{proof}
Suppose that after performing surgeries on $(\Phi_1, M_1)$ along the orbits $\Gamma_1$, we obtain a flow that is orbitally equivalent to $(\Phi_2, M_2)$ and that $\Gamma_2$ is the set of periodic orbits of $\Phi_2$ corresponding to $\Gamma_1$ after surgery. This means that the flows $(\Phi_1-\Gamma_1, M_1-\Gamma_1)$ and $(\Phi_2-\Gamma_2, M_2-\Gamma_2)$ are orbitally equivalent. The orbital equivalence, say $H$, between $\Phi_1-\Gamma_1$ and $\Phi_2-\Gamma_2$ defines an isomorphism $\alpha: \pi_1(M_1-\Gamma_1)\rightarrow \pi_1(M_2-\Gamma_2)$ and lifts to an orbital equivalence $\widetilde{H}$ between the two flows lifted to the universal cover. Moreover, $\widetilde{H}$ is equivariant with respect to the action of the fundamental groups; hence by Proposition \ref{p.orbitspacetilde}, there exists a homeomorphism  $h:\widetilde{\mathcal{P}_1-\Gamma_1} \rightarrow \widetilde{\mathcal{P}_2-\Gamma_2}$ such that 
\begin{enumerate}
    \item the image by $h$ of any stable/unstable leaf in $\widetilde{\mathcal{F}_1^{s,u}}$ is a stable/unstable leaf in $\widetilde{\mathcal{F}_2^{s,u}}$ 
    \item for every $g\in \pi_1(M_1-\Gamma_1)$ and every $x\in \widetilde{\mathcal{P}_1}$ we have $$h(g(x))= \alpha(g)(h(x))$$ 
\end{enumerate}
Since $H$ sends a neihgbourhood of an orbit in $\Gamma_1$ to the neighbourhood of a unique point in $\Gamma_2$, it is easy to see that the homeomorphism $h$ sends a punctured neighbourhood of any point in $\clos{\Gamma_1}$ to a punctured neighbourhood of a unique point in $\clos{\Gamma_2}$. Therefore, $h$ can be extended to a homeomorphism from $\clos{\mathcal{P}_1}$ to  $\clos{\mathcal{P}_2}$. 

Let us now show the converse. Assume that there exists a homeomorphism $h: \clos{\mathcal{P}_1}\rightarrow \clos{\mathcal{P}_2}$ with the above properties. Let us first notice that the cardinals of $\Gamma_1$ and $\Gamma_2$ are equal. Indeed, using the Proposition \ref{p.stabilizersz2}, the cardinal of $\Gamma_1$ (resp. $\Gamma_2$) is equal to the number of orbits -for the action of $\pi_1(M_1-\Gamma_1)$ (resp. $\pi_1(M_2-\Gamma_2)$)- of points in $\clos{\mathcal{P}_1}$ (resp. $\clos{\mathcal{P}_2}$), whose stabilizers are isomorphic to $\mathbb{Z}^2$. By hypothesis, $h$ is equivariant for the action of the fundamental groups and sends points with $\mathbb{Z}^2$ stabilizers to points with $\mathbb{Z}^2$ stabilizers. We conclude that the cardinals of $\Gamma_1$ and $\Gamma_2$ are equal.

In the proof of Proposition \ref{p.stabilizersz2}, for every element $\clos{\gamma}$ in $\clos{\Gamma_1}$ (resp. $\clos{\Gamma_2}$), we constructed a basis $s^{M_1}_{\clos{\gamma}},t^{M_1}_{\clos{\gamma}}$ (resp. $s^{M_2}_{\clos{\gamma}},t^{M_2}_{\clos{\gamma}}$) of its stabilizer group, where $s^{M_1}_{\clos{\gamma}}$ (resp.$s^{M_2}_{\clos{\gamma}}$) fixes all the stable (or unstable) leaves intersecting $\clos{\gamma}$ and $t^{M_1}_{\clos{\gamma}}$ (resp.$t^{M_2}_{\clos{\gamma}}$) acts as a translation by either $+1$ or $+ 2$ on the previous set of leaves  depending on whether $\clos{\gamma}$ has respectfully negative or positive eigenvalues. We will assume that $s^{M_1}_{\clos{\gamma}}$ and $s^{M_2}_{\clos{\gamma}}$ act as expansions on every unstable leaf crossing $\clos{\gamma}$ and also that $\rho_1(t^{M_1}_{\clos{\gamma}})=id$ and $\rho_2(t^{M_2}_{\clos{\gamma}})=id$, where $\rho_1$ (resp.$\rho_2$) is the natural morphism from $\pi_1(M_1-\Gamma_1)$ (resp. $\pi_1(M_2-\Gamma_2)$) to $\pi_1(M_1)$ (resp. $\pi_1(M_2)$). The elements $s^{M_1}_{\clos{\gamma}}$, $t^{M_1}_{\clos{\gamma}}$ (resp. $s^{M_2}_{\clos{\gamma}}$, $t^{M_2}_{\clos{\gamma}}$) are uniquely defined in   $\pi_1(M_1-\Gamma_1)$ (resp.$\pi_1(M_2-\Gamma_2)$) by the above properties. Therefore by the equivariance of $h$ 
\begin{equation}\label{eq.bar1}
    \alpha(s^{M_1}_{\clos{\gamma}})= s^{M_2}_{h(\clos{\gamma})}
\end{equation}

Let us now remark that $h$ sends periodic points of $\clos{\Gamma_1}$ with positive (resp. negative) eigenvalues to periodic orbits of $\clos{\Gamma_2}$ with positive (resp. negative) eigenvalues. Indeed, let $\clos{\gamma}\in \clos{\Gamma_1}$.  By Proposition \ref{p.singularitiesoffoliations}, $\clos{\gamma}$ has positive eigenvalues if and only if the smallest possible translation in $\text{Stab}(\clos{\gamma})$ on the set of stable leaves intersecting $\clos{\gamma}$ is a translation by $\pm 2$. By using the equivariance of $h$, we have that the previous condition stands for $\clos{\gamma}$ if and only if it it stands for $h(\clos{\gamma})$. Therefore, $\clos{\gamma}$ has positive eigenvalues if and only if $h(\clos{\gamma})$ has positive eigenvalues. 

We deduce from the above that the element $\alpha(t^{M_1}_{\clos{\gamma}})$ corresponds to a translation by $+1$ or $+2$ (this depends on the eigenvalues of $\clos{\gamma}$) acting on the set of stable leaves intersecting $h(\clos{\gamma})$.  Therefore, there exists $k\in \mathbb{Z}$ such that \begin{equation}\label{eq.bar2}
\alpha(t^{M_1}_{\clos{\gamma}})= t^{M_2}_{h(\clos{\gamma})}+ k \cdot s^{M_2}_{h(\clos{\gamma})}
\end{equation}

Let $\gamma\in \Gamma_1$ be the periodic orbit in $M_1$ that is associated to $\clos{\gamma}$. Notice that $k$ does not depend on the choice of representative of $\gamma$ in $\clos{\mathcal{P}}$. Indeed, take $g\in \pi_1(M_1-\Gamma_1)$, we have that 
\begin{itemize}
    \item $s^{M_1}_{g(\clos{\gamma})}= gs^{M_1}_{\clos{\gamma}}g^{-1}$,  $t^{M_1}_{g(\clos{\gamma})}=gt^{M_1}_{\clos{\gamma}}g^{-1}$ 
    \vspace{0.7cm}
    \item $s^{M_2}_{h(g(\clos{\gamma}))}= \alpha(g)s^{M_2}_{h(\clos{\gamma})}\alpha(g)^{-1}$, $t^{M_2}_{h(g(\clos{\gamma}))}= \alpha(g)t^{M_2}_{h(\clos{\gamma})}\alpha(g)^{-1}$  \end{itemize}
Therefore,

$$\alpha(t^{M_1}_{g(\clos{\gamma})})= t^{M_2}_{h(g(\clos{\gamma}))}+ k \cdot s^{M_2}_{h(g(\clos{\gamma}))}$$
By the above, to every $\gamma\in \Gamma_1$ we can associate an integer $k(\gamma)$. Consider $(M_3,\Phi_3)$ the Anosov flow obtained from $(M_1,\Phi_1)$ by performing a surgery of coefficient $-k(\gamma)$ for every $\gamma\in \Gamma_1$. Let us show that $(M_3,\Phi_3)$ is orbitally equivalent to $(M_2,\Phi_2)$.  

Denote by $\Gamma_3\subset M_3$ the periodic orbits corresponding to $\Gamma_1$ after surgery. By the first part of this proof, there exists a homeomorphism  $H:\clos{\mathcal{P}_1}\rightarrow \clos{\mathcal{P}_3}$ satisfying the following: 
\begin{enumerate}
    \item the image by $H$ of any stable/unstable leaf in $\clos{\mathcal{F}_1^{s,u}}$ is a stable/unstable leaf in $\clos{\mathcal{F}_3^{s,u}}$ 
    \item using the identification of $\pi_1(M_1-\Gamma_1)$ and $\pi_1(M_3-\Gamma_3)$ and also the fact that the orbital equivalence between  $(M_1-\Gamma_1,\Phi_1-\Gamma_1)$ and $(M_3-\Gamma_3,\Phi_3-\Gamma_3)$ can be taken isotopic to the identity,  for every $g\in \pi_1(M_1-\Gamma_1)$ and every $x\in \clos{\mathcal{P}_1}$, we have $$H(g(x))= g(H(x))$$ 
\end{enumerate}

Furthermore, for any $\clos{\gamma}\in \clos{\Gamma_1}$ we have that \begin{equation}\label{eq.bar3}
    s^{M_1}_{\clos{\gamma}}= s^{M_3}_{H(\clos{\gamma})}
\end{equation}
\begin{equation}\label{eq.bar4}
    t^{M_3}_{H(\clos{\gamma})}= t^{M_1}_{\clos{\gamma}}- k \cdot s^{M_1}_{\clos{\gamma}}
\end{equation}
    Indeed, let us first consider the case where $\gamma$ has positive eigenvalues. Using the notations of Remark \ref{r.secondproofofcommutation}, $s^{M_1}_{\clos{\gamma}}$ (resp. $t^{M_1}_{\clos{\gamma}}$) is defined as the juxtaposition of $\delta_{M_1}$ followed by $p_1$ (resp. $m_1$) and by $\delta_{M_1}^{-1}$, where $p_1,m_1$ is a canonical basis of $T_1$, the boundary of a tubular neighbourhood of $\gamma\in \Gamma_1$. The elements $s^{M_3}_{H(\clos{\gamma})}$ and $t^{M_3}_{H(\clos{\gamma})}$ are similarly defined. But since $M_1-\Gamma_1$ is homeomorphic to $M_3 -\Gamma_3$, we can identify $\delta_{M_1}$ with $\delta_{M_3}$ and $T_1$ with $T_3$. After performing surgery, the natural parallel on $T_3$ didn't change, therefore $p_1=p_3$. However, the natural meridian of $T_3$ satisfies $m_3=m_1- k\cdot p_1$. This proves the above relations. 

Consider now the case where $\gamma$ has negative eigenvalues. Again using the notations of Remark \ref{r.secondproofofcommutation}, $s^{M_1}_{\clos{\gamma}}$ (resp. $t^{M_1}_{\clos{\gamma}}$) is defined as the juxtaposition of $\delta_{M_1}$ followed by $P_1=2p_1-m_1$ (resp. $p_1$) and by $\delta_{M_1}^{-1}$, where $p_1,m_1$ is a canonical basis of $T_1$, the boundary of a tubular neighbourhood of $\gamma\in \Gamma_1$. The elements $s^{M_3}_{H(\clos{\gamma})}$ and $t^{M_3}_{H(\clos{\gamma})}$ are similarly defined. Once again, since $M_1-\Gamma_1$ is homeomorphic to $M_3 -\Gamma_3$, we can identify $\delta_{M_1}$ with $\delta_{M_3}$ and $T_1$ with $T_3$. By  performing surgery on $\Phi_1$, we leave $P_1$ intact and we add $-2k$ copies of $P_1$ to the meridian, hence $m_3=m_1- 2k\cdot P_1$. Therefore, $2p_3=P_3+m_3=P_1+ m_1- 2k\cdot P_1=2p_1- 2k\cdot P_1$. This implies that $p_3=p_1-k\cdot P_1$, which proves the above relations.
Consider now the homeomorphism $K=H\circ h^{-1}: \clos{\mathcal{P}_2}\rightarrow \clos{\mathcal{P}_3}$. Using (\ref{eq.bar1}), (\ref{eq.bar2}), (\ref{eq.bar3}) and (\ref{eq.bar4}) we have that $K$ satisfies the following: 
\begin{enumerate}
    \item the image by $K$ of any stable/unstable leaf in $\clos{\mathcal{F}_2^{s,u}}$ is a stable/unstable leaf in $\clos{\mathcal{F}_3^{s,u}}$ 
    \item there exists an isomorphism $\beta: \pi_1(M_2-\Gamma_2) \rightarrow \pi_1(M_3-\Gamma_3)$ such that for every $g\in \pi_1(M_2-\Gamma_2)$, $\clos{\gamma}\in \clos{\Gamma_2}$ and $x\in \clos{\mathcal{P}_2}$ we have $$K(g(x))= \beta(g)(K(x))$$ \vspace{0.3pt} $$\beta(s^{M_2}_{\clos{\gamma}})= s^{M_3}_{K(\clos{\gamma})}$$ \vspace{0.3pt} $$\beta(t^{M_2}_{\clos{\gamma}})= t^{M_1}_{h^{-1}(\clos{\gamma})}- k \cdot s^{M_1}_{h^{-1}(\clos{\gamma})}= t^{M_3}_{K(\clos{\gamma})}$$
\end{enumerate}
Take $\clos{\gamma_1},...,\clos{\gamma_n}$ a representative of each $\pi_1(M_2-\Gamma_2)$-orbit in $\clos{\Gamma_2}$ and their associated orbits $\gamma_1,...,\gamma_n$ in $M$. Define $m^{M_2}_{\clos{\gamma_i}}=t^{M_2}_{\clos{\gamma_i}}$ when $\clos{\gamma_i}$ has positive eigenvalues and $m^{M_2}_{\clos{\gamma_i}}=2t^{M_2}_{\clos{\gamma_i}}- s^{M_2}_{\clos{\gamma_i}}$ if not (see our discussion prior to Remark \ref{r.secondproofofcommutation}). In both cases the homotopy class $m^{M_2}_{\clos{\gamma_i}}$ corresponds to a meridian around $\gamma_i$. By the Seifert-Van Kampen theorem, $$\text{ker}(\pi_1(M_2-\Gamma_2)\rightarrow \pi_1(M_2))=<m^{M_2}_{\clos{\gamma_1}},...,m^{M_2}_{\clos{\gamma_n}}>^{\pi_1(M_2-\Gamma_2)}$$ where $<A>^{\pi_1(M_2-\Gamma_2)}$ stands for the normal subgroup of $\pi_1(M_2-\Gamma_2)$ generated by $A$. We have that 
\begin{equation}\label{eq.imageofkernel}
K(\text{ker}(\pi_1(M_2-\Gamma_2)\rightarrow \pi_1(M_2)))= \text{ker}(\pi_1(M_3-\Gamma_3)\rightarrow \pi_1(M_3))
\end{equation}
We also remind that (see our discussion prior to Proposition \ref{p.stabilizersz2})  $$\quotient{\clos{\mathcal{P}_2}}{\text{ker}(\pi_1(M_2-\Gamma_2)\rightarrow \pi_1(M_2))}=\mathcal{P}_2$$
$$\quotient{\clos{\mathcal{P}_3}}{\text{ker}(\pi_1(M_3-\Gamma_3)\rightarrow \pi_1(M_3))}=\mathcal{P}_3$$
Finally, by (\ref{eq.imageofkernel}) and since $K$ is equivariant with respect to the group actions on $\clos{\mathcal{P}_2}$ and $\clos{\mathcal{P}_3}$, $K$  projects to a homeomorphism $k:\mathcal{P}_2 \rightarrow \mathcal{P}_3 $ that satisfies the following: 
\begin{enumerate}
    \item the image by $k$ of any stable/unstable leaf in $\mathcal{F}_2^{s,u}$ is a stable/unstable leaf in $\mathcal{F}_3^{s,u}$ 
    \item there exists an isomorphism $\mu: \pi_1(M_2) \rightarrow \pi_1(M_3)$ such that for every $g\in \pi_1(M_2)$ and $x\in \mathcal{P}_2$ we have $$k(g(x))= \mu(g)(k(x))$$ 
\end{enumerate}
By Theorem 3.4 of \cite{Ba1}, we deduce that $\Phi_2$ and $\Phi_3$ are orbitally equivalent and we get the desired result. 
\end{proof}

\section{Rectangle paths as coordinate systems in $\clos{\mathcal{P}}$}\label{s.homotopiesofpaths}

In view of Theorem B, we would like to show that the geometric type of a Markovian family caracterizes the bifoliated plane up to surgeries on the boundary periodic points $\clos{\mathcal{P}}$. In order to do so, in Section \ref{s.liftmarkovianfamilies} we will lift  Markovian families on $\clos{\mathcal{P}}$ and show that the lifted family is associated to the same class of geometric types as its projection on $\mathcal{P}$ (see Proposition \ref{p.liftedmarkovfamiliessamegeomtype}). Next, in Section \ref{s.rectanglepathscoordinates} we will explain how to use rectangle paths in order to navigate simultaneously into two bifoliated planes up to surgeries endowed with Markovian families associated to the same class of geometric types (see Theorem \ref{t.associatingrectanglepaths}). The main goal of this section is to use Markovian families as coordinate systems of the bifoliated plane up to surgeries $\clos{\mathcal{P}}$ and show that Markovian families that are associated to the same class of geometric types correspond to two  compatible coordinate systems in $\clos{\mathcal{P}}$ (see Theorem \ref{t.closedrectanglespathscorrespondtoclosedpaths}). The proof of this result is rather technical, but also the most important step in the proof of Theorem B.

\subsection{Markovian families in $\clos{\mathcal{P}}$}\label{s.liftmarkovianfamilies}
Let $M$ be a closed, oriented 3-manifold carrying a transitive Anosov flow $\Phi$. Let $\mathcal{P}$ be the bifoliated plane of $\Phi$, carrying a Markovian family $\mathcal{R}$. Denote by $\Gamma$ the boundary periodic points of $\mathcal{R}$, by $\clos{\mathcal{P}}$ the bifoliated plane of $\Phi$ up to surgeries on $\Gamma$ and by $\clos{\pi}$ the projection of $\clos{\mathcal{P}}$ on $\mathcal{P}$. Using as a starting point our definition of Markovian family in $\mathcal{P}$, we can extend the notion of Markovian family in $\clos{\mathcal{P}}$. This allows us to define rectangle paths in $\clos{\mathcal{P}}$ and use them later as coordinate systems for comparing  bifoliated planes up to surgeries.  

We define rectangles in $\clos{\mathcal{P}}$ in the exact same way as for $\mathcal{P}$ (see Definition \ref{d.rectanglesinplane}). We then define a Markovian family in $\clos{\mathcal{P}}$ as follows: 
\begin{defi}\label{d.markovfamilyinPbar}
A \emph{Markovian family of rectangles} in $\clos{\mathcal{P}}$ is a set of mutually distinct rectangles $(R_i)_{i \in I}$ covering $\clos{\mathcal{P}}$ such that 
\begin{enumerate}
\item $(R_i)_{i \in I}$ is the union of a finite number of orbits of rectangles by the action of $\pi_1(M-\Gamma)$ 
\item $(\clos{\pi}(R_i))_{i \in I}$ is a Markovian family of $\mathcal{P}$
\end{enumerate} 
\end{defi}
 
 \begin{rema}
 Let $\mathcal{R}$ be a Markovian family in $\mathcal{P}$. The lift $\clos{\mathcal{R}}$ of $\mathcal{R}$ on $\clos{\mathcal{P}}$  is also a Markovian family.
 \end{rema} Indeed, all points in $\Gamma\subset \mathcal{P}$ do not belong in the interior of some rectangle in $\mathcal{R}$. Therefore, $\clos{\mathcal{R}}$  is a family of rectangles in the sense of Definition \ref{d.rectanglesinplane}, that covers  $\clos{\mathcal{P}}$ and that satisfies the second property of Definition \ref{d.markovfamilyinPbar}. The family $\clos{\mathcal{R}}$ consists of a finite number of $\pi_1(M-\Gamma)$-orbits of rectangles as a result of Remark \ref{r.projectequiv}. We deduce that $\clos{\mathcal{R}}$ is a Markovian family in $\overline{\mathcal{P}}$. 
 
The definitions of the notions $k$-th successor/predecessor and $s$-crossing predecessor can be extended for Markovian families on $\overline{\mathcal{P}}$:  
\begin{defi}\label{d.successorinpbar}For any two rectangles $R_1,R_2 \in \overline{\mathcal{R}}$, we will say that $R_1$ is a \emph{predecessor} (resp. \emph{successor}) of $R_2$ if $\overset{\circ}{R_1}\cap \overset{\circ}{R_2} \neq \emptyset$ and the $\clos{\pi}(R_1)\subset \mathcal{P}$ is a predecessor (resp. successor) of $\clos{\pi}(R_2)$. 

We similarly define \emph{predecessors/successors of generation $k\geq 2$} and \emph{$s$-crossing predecessors/successors} on $\clos{\mathcal{P}}$. 
\end{defi}

We define a rectangle path in $\clos{\mathcal{P}}$ in the same way as in $\mathcal{P}$ (see Definition \ref{d.rectanglepath}). 

\begin{defi}
A point in $\overline{\mathcal{P}}$ will be called \emph{periodic} (resp. \emph{boundary arc}, \emph{boundary periodic}) if its projection on $\mathcal{P}$ is a periodic (resp. boundary arc, boundary periodic) point. 

A stable/unstable leaf $f$ of $\overline{\mathcal{P}}$ will be called \emph{periodic} if there exists $g\in \pi_1(M-\Gamma)$ such that $g.f=f$

\end{defi}
We also define a \emph{polygonal/closed polygonal/good polygonal curve} in $\overline{\mathcal{P}}$ in the same way as in $\mathcal{P}$ (see Definitions \ref{d.polygonalcurve} and \ref{d.goodcurve}).

By the same arguments as in Section \ref{markovianfamilytogeometrictype}, we can show that the family $\clos{\mathcal{R}}$ is canonically associated to a unique class of geometric types. In fact, we have the following result:

\begin{prop}\label{p.liftedmarkovfamiliessamegeomtype}
The Markovian families $\clos{\mathcal{R}}$ and $\mathcal{R}$ are associated to the same class of geometric types
\end{prop}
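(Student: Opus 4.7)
The plan is to verify that, with appropriately chosen representatives and orientations, the recipe of Theorem \ref{t.associatemarkovfamiliestogeometrictype} produces the same geometric type for $\mathcal{R}$ and $\clos{\mathcal{R}}$, so these types lie in the same equivalence class. First I would orient the singular foliations $\clos{\mathcal{F}^{s,u}}$ on $\clos{\mathcal{P}}$ as the pullbacks under the local homeomorphism $\clos{\pi}\colon \clos{\mathcal{P}}\setminus \clos{\Gamma}\to \mathcal{P}\setminus \Gamma$ of the chosen orientations of $\mathcal{F}^{s,u}$. Then I would choose representatives $\clos{r}_1,\ldots,\clos{r}_n$ of the $\pi_1(M-\Gamma)$-orbits of rectangles in $\clos{\mathcal{R}}$, and check that their projections $r_i=\clos{\pi}(\clos{r}_i)$ form a complete system of representatives for the $\pi_1(M)$-orbits in $\mathcal{R}$: by equivariance of $\clos{\pi}$ via $\rho$, the projection sends orbits to orbits, and since $\rho\colon\pi_1(M-\Gamma)\to\pi_1(M)$ is surjective, this induces a bijection on orbit sets of rectangles (using Remark \ref{r.projectequiv} applied to any interior point).

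Next, I would establish that $\clos{\pi}$ restricts to a homeomorphism from each $\clos{R}\in \clos{\mathcal{R}}$ onto $\clos{\pi}(\clos{R})\in \mathcal{R}$. The interior of any $R\in \mathcal{R}$ is simply connected and disjoint from $\Gamma$, so each of its lifts embeds homeomorphically in $\clos{\mathcal{P}}\setminus \clos{\Gamma}$; a boundary periodic point on $\partial R$ is approached from a single quadrant within $R$, and such a quadrant selects a unique preimage in $\clos{\Gamma}$ by our construction of $\clos{\mathcal{P}}$ as a branched cover. Using this homeomorphism together with Definition \ref{d.successorinpbar}, I would then deduce that the successors (resp.\ predecessors) of $\clos{r}_i$ in $\clos{\mathcal{R}}$ are in order-preserving bijection with the successors (resp.\ predecessors) of $r_i$ in $\mathcal{R}$, the ordering being preserved because the orientations of $\clos{\mathcal{F}^{s,u}}$ are pulled back from those of $\mathcal{F}^{s,u}$. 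This matches the integers $h_i,v_i$ and identifies the sets $\mathcal{H}$ and $\mathcal{V}$ of the two constructions.

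Finally, for each successor $\clos{R}$ of $\clos{r}_i$ in $\clos{\mathcal{R}}$, let $\tilde g \in \pi_1(M-\Gamma)$ be the unique element with $\tilde g(\clos{R})=\clos{r}_j$. The identity $\clos{\pi}\circ \clos{\phi}(\tilde g)=\phi(\rho(\tilde g))\circ \clos{\pi}$ implies that $\rho(\tilde g)$ is the unique element of $\pi_1(M)$ carrying $R$ to $r_j$, and that the predecessor $\tilde g(\clos{r}_i)$ of $\clos{r}_j$ projects to the predecessor $\rho(\tilde g)(r_i)$ of $r_j$, with the same index in the left-to-right order. This forces the bijections $\phi$ associated to the two Markovian families to coincide. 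Equivariance and compatibility of the chosen orientations likewise imply $\tilde g$ preserves orientations of $\clos{\mathcal{F}^{s,u}}$ if and only if $\rho(\tilde g)$ preserves orientations of $\mathcal{F}^{s,u}$, so the functions $u$ also coincide. Hence the two geometric types are in fact equal under the identification of representatives, and in particular lie in the same equivalence class. I expect the main technical obstacle to be the verification that $\clos{\pi}$ restricts to a homeomorphism on each rectangle of $\clos{\mathcal{R}}$, since it requires a careful analysis of how lifts extend across the boundary periodic points in $\clos{\Gamma}$; the subsequent identification of the geometric types then follows directly from the equivariance of $\clos{\pi}$ via $\rho$.
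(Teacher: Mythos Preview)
Your proposal is correct and follows essentially the same approach as the paper: fix compatible orientations, match representatives between $\clos{\mathcal{R}}$ and $\mathcal{R}$ via $\clos{\pi}$ and Remark~\ref{r.projectequiv}, then check that $n$, the $h_i,v_i$, the bijection $\phi$, and the sign function $u$ all agree using Definition~\ref{d.successorinpbar} and the equivariance relation $\clos{\pi}\circ\clos{\phi}(g)=\phi(\rho(g))\circ\clos{\pi}$. The only cosmetic differences are that the paper chooses representatives $r_i$ in $\mathcal{R}$ first and lifts them (rather than the reverse), and that your technical step about $\clos{\pi}$ restricting to a homeomorphism on each rectangle is already handled in the remark immediately preceding the proposition (where it is observed that since no point of $\Gamma$ lies in the interior of a rectangle of $\mathcal{R}$, the lifts are indeed rectangles in $\clos{\mathcal{P}}$).
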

\begin{proof}
 Let us endow the foliations $\mathcal{F}^{s,u}$ with an orientation. This orientation canonically defines an orientation on $\clos{\mathcal{F}^{s,u}}$. Consider also a choice of representatives $r_1,...,r_n$ of every rectangle orbit in $\mathcal{R}$ and a lift $\clos{r_i}\in \clos{\mathcal{R}}$ of every $r_i$. Thanks to Remark \ref{r.projectequiv}, the rectangles $\clos{r_i}$ are representatives of every rectangle orbit in $\clos{\mathcal{R}}$. According to Remark \ref{r.canonicalassociationgeometrictype},  together with this choice of representatives and orientations, the Markovian families $\clos{\mathcal{R}}$ and $\mathcal{R}$ can be associated to a unique geometric type $\clos{G}=(\clos{n},(\clos{h_i})_{i \in \llbracket 1,n \rrbracket}, (\clos{v_i})_{i\in \llbracket 1,n \rrbracket}, \clos{\mathcal{H}}, \clos{\mathcal{V}},\clos{\phi},\clos{u})$ and $G=(n,(h_i)_{i \in \llbracket 1,n \rrbracket}, (v_i)_{i\in \llbracket 1,n \rrbracket}, \mathcal{H}, \mathcal{V},\phi, u)$ respectfully. 

Let us now go back to the construction of $G$ and $\clos{G}$ (see proof of Theorem \ref{t.associatemarkovfamiliestogeometrictype}). The number $n$ (resp. $\clos{n}$) in $G$ (resp. $\clos{G}$) is equal to the number of distinct orbits in $\mathcal{R}$ (resp.$\clos{\mathcal{R}}$) by the action of $\pi_1(M)$ (resp. $\pi_1(M-\Gamma)$) . By Remark \ref{r.projectequiv}, we get that $n=\clos{n}$. 

Next, the number $h_i$ (resp.$v_i$) will correspond to the number of successors (resp. predecessors) of $r_i$. Same for $\clos{h_i}$ and $\clos{v_i}$.  By Definition \ref{d.successorinpbar}, we easily have that $\clos{r_i}$ and $r_i$ have the same number of predecessors and successors. In other words, $h_i=\clos{h}_i$ and $v_i=\clos{v}_i$.

Now, let us order the successors (resp. predecessors) of every $r_i$ from bottom to top (resp. from left to right) using the orientations of $\mathcal{F}^{u}$ (resp. $\mathcal{F}^{s}$). We will denote the $k$-th successor (resp.predecessor) of $r_i$ for this order by $H_i^k$ (resp. $V_i^k$). Recall that $\mathcal{H}=\{H^k_i, i\in \llbracket 1, n\rrbracket, k\in \llbracket 1, h_i\rrbracket\}$ and $\mathcal{V}=\{V^k_i, i\in \llbracket 1, n\rrbracket, k\in \llbracket 1, v_i\rrbracket\}$. We similarly define $\clos{V_i^k}$, $\clos{H^k_i}$ for $\clos{r_i}$ and $\clos{\mathcal{H}}, \clos{\mathcal{V}}$. By our choice of orientations on $\clos{\mathcal{F}^{s,u}}$ we have that $\clos{\pi}(\clos{H^k_i})=H_i^k$ and $\clos{\pi}(\clos{V_i^k})=V_i^k$. Therefore, we can canonically identify  $\mathcal{H}$ with $\clos{\mathcal{H}}$ and $\mathcal{V}$ with  $\clos{\mathcal{V}}$. 

Also, $\phi(H^k_i)=V^l_j$ if and only if there exists $g\in \pi_1(M)$ such that $g(H^k_i)=r_j$ and $g(r_i)$ is the $l$-th predecessor (from left to right) of $r_j$. Recall that the previous $g$ is unique if it exists and that $u(H^k_i)=+1$ if $g$ preserves the orientations of the stable/unstable foliations and $u(H^k_i)=-1$ if not. The functions $\clos{\phi}$ and $\clos{u}$ are similarly defined. Thanks to Remark \ref{r.projectequiv}, we deduce that $\phi(H^k_i)=V^l_j$ if and only if $\clos{\phi}(\clos{H^k_i})=\clos{V^l_j}$. Finally, take $\clos{h}\in\clos{\mathcal{H}}$, $h=\clos{\pi}(\clos{h})\in \mathcal{H}$, $g$ (resp. $\clos{g}$) the unique element in $\pi_1(M)$ (resp. $\pi_1(M-\Gamma)$) such that $g(h)=r_j$ (resp. $\clos{g}(\clos{h})=\clos{r_j}$). It is not difficult to see that $g$ preserves the orientation of the foliations if and only if $\clos{g}$ does. Hence, $u(h)=\clos{u}(\clos{h})$. Finally, using the identifications between $\mathcal{H}$ and $\clos{\mathcal{H}}$, $\mathcal{V}$ and $\clos{\mathcal{V}}$, we have that $\phi$ and $\clos{\phi}$ (resp. $u$ and $\clos{u}$) define the same functions from $\mathcal{H}$ to $\mathcal{V}$, which proves the desired result.
\end{proof}
\subsection{Rectangle paths as coordinate systems} \label{s.rectanglepathscoordinates}

Let $(M_1,\Phi_1)$ and $(M_2,\Phi_2)$ be two transitive Anosov flows, $\mathcal{P}_1$ and $\mathcal{P}_2$ their bifoliated planes, $\mathcal{F}_{1,2}^{s}$,$\mathcal{F}_{1,2}^{u}$ the stable and unstable foliations in $\mathcal{P}_{1,2}$. Assume that $\mathcal{P}_1$ and $\mathcal{P}_2$ carry two Markovian families $\mathcal{R}_1$ and $\mathcal{R}_2$ associated canonically by Theorem \ref{t.associatemarkovfamiliestogeometrictype} to the same class of geometric types. By appropriately choosing representatives for every rectangle orbit in $\mathcal{R}_1$ and $\mathcal{R}_2$, and orientations for the foliations $\mathcal{F}_{1,2}^{s}$,$\mathcal{F}_{1,2}^{u}$, we may assume that $\mathcal{R}_1$ and $\mathcal{R}_2$ are canonically associated to the same geometric type $(n,(h_i)_{i \in \llbracket 1,n \rrbracket}, (v_i)_{i\in \llbracket 1,n \rrbracket}, \mathcal{H}, \mathcal{V},\phi, u)$, thanks to Remark \ref{r.canonicalassociationgeometrictype} and Lemma \ref{l.geomtypeinclass}.

Let us denote $\Gamma_{1,2}$ (resp. $\Gamma_{1,2}^{M_{1,2}}$) the boundary periodic orbits associated to $\mathcal{R}_{1,2}$ in $\mathcal{P}_{1,2}$ (resp. in $M_{1,2}$), $\clos{\mathcal{P}_{1,2}}$ the bifoliated planes of $\Phi_1,\Phi_2$ up to surgeries on $\Gamma_{1,2}$, $\clos{\mathcal{F}^{s,u}_{1,2}}$ their stable and unstable singular foliations, $\clos{\mathcal{R}_{1,2}}$ the lifts of $\mathcal{R}_{1,2}$ on $\clos{\mathcal{P}_{1,2}}$ and $\clos{\Gamma_{1,2}}$ the lifts of $\Gamma_{1,2}$ on $\clos{\mathcal{P}_{1,2}}$. We would like to show that any rectangle path in $\clos{\mathcal{P}_{1}}$ corresponds to a rectangle path in $\clos{\mathcal{P}_{2}}$.

By Proposition \ref{p.liftedmarkovfamiliessamegeomtype}, since $\mathcal{R}_1$ and $\mathcal{R}_2$ are associated to the same geometric type, we can endow $\clos{\mathcal{F}^{s,u}_{1,2}}$ with orientations and we can chose representatives in every rectangle orbit in $\clos{\mathcal{R}_{1,2}}$  so that $\clos{\mathcal{R}_1}$ and $\clos{\mathcal{R}_2}$ are  associated to the same geometric type $(n,(h_i)_{i \in \llbracket 1,n \rrbracket}, (v_i)_{i\in \llbracket 1,n \rrbracket}, \mathcal{H}, \mathcal{V},\phi, u)$. By the proof of Proposition  \ref{p.liftedmarkovfamiliessamegeomtype}, every $\pi_1(M_{1,2}-\Gamma_{1,2}^{M_{1,2}})$-orbit of rectangles in $\clos{\mathcal{R}_{1,2}}$ corresponds to a unique $i\in\llbracket 1,n\rrbracket$ and a unique couple $(h_i,v_i)$ of the geometric type. 
\begin{defi}
We will say that $R^1 \in \clos{\mathcal{R}_1}$ (resp. $R^2 \in \clos{\mathcal{R}_2}$) are of \emph{type $i$} if the $\pi_1(M_{1}-(\Gamma_{1})_{M_{1}})$-orbit (resp. $\pi_1(M_{2}-(\Gamma_{2})_{M_{2}})$ ) of $R^1$ (resp. $R^2$) in $\clos{\mathcal{P}_1}$ (resp.$\clos{\mathcal{P}_2}$) corresponds to the same integer $i\in\llbracket 1,n\rrbracket$ and couple $(h_i,v_i)$ of the geometric type. 
\end{defi}

Fix $r_0^1\in \clos{\mathcal{R}_1}$ and $r_0^2 \in \clos{\mathcal{R}_2}$ a rectangle of the same type as $r_0^1$. From now on, we will call $r_0^{1,2}$ the \textit{origin rectangle} in $\clos{\mathcal{P}_{1,2}}$ and we will also call any rectangle path in $\clos{\mathcal{P}_{1,2}}$ starting from $r_0^{1,2}$ a \textit{centered rectangle path}.

\begin{theorem}\label{t.associatingrectanglepaths}
Any centered rectangle path $r^1_0,...,r^1_n$ in $\clos{\mathcal{P}_{1}}$ corresponds canonically to a unique centered rectangle path $r^2_0,...,r^2_n$ in $\clos{\mathcal{P}_{2}}$ such that for every $i\in \llbracket 0,n \rrbracket$ $r^1_i$ and $r^2_i$ are of the same type. 

\end{theorem}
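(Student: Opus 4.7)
The plan is to proceed by induction on the length $n$ of the centered rectangle path $r_0^1, \ldots, r_n^1$, exploiting the fact that, by Proposition \ref{p.liftedmarkovfamiliessamegeomtype} together with the choice of representatives and orientations made just before the theorem, the lifted Markovian families $\clos{\mathcal{R}_1}$ and $\clos{\mathcal{R}_2}$ are associated to the same geometric type $(n,(h_i),(v_i),\mathcal{H},\mathcal{V},\phi,u)$. The base case $n=0$ is the hypothesis that $r_0^1$ and $r_0^2$ are of the same type.

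For the inductive step, assume $r_0^2,\ldots,r_k^2$ have been defined, with each $r_j^2$ of the same type as $r_j^1$. Suppose that the common type of $r_k^1$ and $r_k^2$ is $i$. Then Lemma \ref{l.existenceofpredecessors} (transported to $\clos{\mathcal{P}_{1,2}}$ via the projection $\clos{\pi}$, using Definition \ref{d.successorinpbar}) furnishes each of $r_k^1$ and $r_k^2$ with a canonical finite list of exactly $h_i$ successors, ordered from bottom to top for the orientation of $\clos{\mathcal{F}^u_{1,2}}$, and exactly $v_i$ predecessors, ordered from left to right for the orientation of $\clos{\mathcal{F}^s_{1,2}}$. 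By definition of a rectangle path, $r_{k+1}^1$ is either the $j$-th successor or the $j$-th predecessor of $r_k^1$ for a unique $j$; I define $r_{k+1}^2$ to be the $j$-th successor (resp. predecessor) of $r_k^2$ accordingly. Uniqueness of $r_{k+1}^2$ is immediate from the uniqueness of the ordered list, and the sequence $r_0^2,\ldots,r_{k+1}^2$ is a centered rectangle path by construction. That $r_{k+1}^2$ has the same type as $r_{k+1}^1$ follows from the way $\phi$ was defined in the proof of Theorem \ref{t.associatemarkovfamiliestogeometrictype}: the equality $\phi(H_i^j)=V_{i'}^{l}$ encodes precisely the fact that the $j$-th successor of any type-$i$ rectangle in $\clos{\mathcal{R}_{1,2}}$ is itself of type $i'$ (realizing the latter's $l$-th predecessor). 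Since $\phi$ is shared by both families, the two types agree; the predecessor case is symmetric via $\phi^{-1}$.

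This step is essentially mechanical once the two lifted geometric types have been identified; the only delicate point, and in some sense the main one worth spelling out, is that the canonical orderings of successors and predecessors of a rectangle in $\clos{\mathcal{P}_{1,2}}$ project via $\clos{\pi}$ to the canonical orderings of successors and predecessors in $\mathcal{P}_{1,2}$. This is exactly what is verified inside the proof of Proposition \ref{p.liftedmarkovfamiliessamegeomtype}, where the orientations of $\clos{\mathcal{F}^{s,u}_{1,2}}$ are chosen so as to project to those of $\mathcal{F}^{s,u}_{1,2}$, and where the bijection between successors (resp. predecessors) of $r_k^{1,2}$ and of $\clos{\pi}(r_k^{1,2})$ is shown to respect these orientations. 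Once this compatibility is noted, the induction closes without further difficulty, yielding both the existence and the canonicity (hence uniqueness) of the corresponding path $r_0^2,\ldots,r_n^2$.
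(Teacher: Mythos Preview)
Your proof is correct and follows essentially the same inductive approach as the paper's own proof: at each step, if $r_{k+1}^1$ is the $j$-th successor (resp. predecessor) of $r_k^1$, one defines $r_{k+1}^2$ to be the $j$-th successor (resp. predecessor) of $r_k^2$, and the equality of types follows from the fact that both lifted families realize the same geometric type. You have simply made the role of $\phi$ and the compatibility of the orderings (via Proposition \ref{p.liftedmarkovfamiliessamegeomtype}) more explicit than the paper does.
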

\begin{proof}
Take $r^1_0,...,r^1_n$ a centered rectangle path in $\clos{\mathcal{P}_{1}}$.  

Let us start by constructing $r^2_1$. Since $r^1_0,...,r^1_n$ is a rectangle path $r^1_1$ is a predecessor or a successor of $r^1_0$. Assume without any loss of generality that it is a successor. Using the orientation on $\clos{\mathcal{F}^{u}_{1}}$, assume that $r^1_1$ is the $k-$th successor of $r^1_0$ from bottom to top. Take $r^2_1$ to be the $k$-th successor of $r^2_0$ from bottom to top (we use here the orientation on $\clos{\mathcal{F}^{u}_{2}}$). By the proof of Proposition \ref{p.liftedmarkovfamiliessamegeomtype} and the fact that $\clos{\mathcal{R}_{1}}$ and $\clos{\mathcal{R}_{2}}$ correspond to the same geometric type, we have that $r^2_1$ is of the same type as $r^1_1$. We construct in the exact same way $r^2_2,...,r_n^2$ by induction. 
\end{proof}
We will call the rectangle path $r^2_0,...,r^2_n$  the \textit{associated rectangle path} of  $r^1_0,...,r^1_n$. In the next pages, we will use rectangle paths and associated rectangle paths as coordinate systems that will allow us to compare $\clos{\mathcal{P}_1}$ and $\clos{\mathcal{P}_2}$. The main goal of this section is to prove that those coordinate systems are compatible:
\begin{theorem}\label{t.endingbysamerectangle}
Any two centered rectangle paths in $\clos{\mathcal{P}_1}$ ending by the same rectangle are associated to two centered rectangle paths in $\clos{\mathcal{P}_2}$ ending by the same rectangle.
\end{theorem}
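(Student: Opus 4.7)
The plan is first to reduce the statement to closed paths. Given two centered paths $r^1_0,\dots,r^1_n = R^1$ and $r^1_0,\ldots,r^1_m = R^1$ in $\clos{\mathcal{P}_1}$, form the concatenation $r^1_0,\dots,r^1_n,r^1_{m-1},\dots,r^1_0$, a closed centered rectangle path. By unfolding Theorem \ref{t.associatingrectanglepaths} step by step, the rectangle path associated to a concatenation is the concatenation of the associated paths, and the associated path of the reverse is the reverse of the associated path. Hence it suffices to prove: \emph{every closed centered rectangle path in $\clos{\mathcal{P}_1}$ is associated to a closed centered rectangle path in $\clos{\mathcal{P}_2}$}.

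The next step is to translate this claim from rectangle paths to good polygonal curves. Using Proposition \ref{p.rectanglepathtocurve}, realise any closed centered rectangle path in $\clos{\mathcal{P}_1}$ by a closed good polygonal curve $\gamma_1$ passing through $r^1_0$ and carrying the same combinatorial data $\textit{Rect}_{\gamma_1,r^1_0}$ (Remarks \ref{r.polygonalcurverectassociation} and \ref{r.choiceci}). The associated rectangle path in $\clos{\mathcal{P}_2}$ can be represented by a good polygonal curve $\gamma_2$ with the same alternation of stable/unstable segments and crossing the rectangles in the prescribed order. The target is then: \emph{if $\gamma_1$ is closed, so is $\gamma_2$}. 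Since $\clos{\mathcal{P}_1}$ is simply connected (it is obtained from the simply connected surface $\widetilde{\mathcal{P}_{\Gamma_1,\text{blowup}}}$ by collapsing its boundary circles), up to a small homotopy preserving the goodness condition (Lemma \ref{l.createpolygonalcurve}) we may assume $\gamma_1$ is simple and bounds a topological disk $D_1$ in $\clos{\mathcal{P}_1}$.

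The heart of the argument will be an induction on the length of $\gamma_1$ using the tangency count from Lemma \ref{l.numbertangencies}. By that lemma $\gamma_1$ carries at least two stable tangencies of type (a) or (b) (and similarly for the unstable foliation). Pick one such tangency $s \subset \gamma_1$ and consider its canonical domain $D_s \subset D_1$ (Lemma \ref{l.canonicalneighbourhoodtangency}), which is a rectangle with two stable and two unstable boundary components. Two cases arise: if $D_s$ is a complete domain, the two unstable segments of $\gamma_1$ adjacent to $s$ can be simultaneously shortened so that $s$ is replaced by a single stable segment, i.e.\ the rectangle path shrinks by removing a ``turn''; if $D_s$ is incomplete, by Remark \ref{r.incompletedomain} the opposite stable boundary of $D_s$ meets finitely many stable segments of $\gamma_1$, and we split $\gamma_1$ into two closed good polygonal curves of strictly smaller length by a saddle-connection style surgery through $D_s$. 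In both cases the reduction is purely combinatorial: it amounts to replacing a subpath $r^1_{k-1},r^1_k,r^1_{k+1}$ by $r^1_{k-1},r^1_{k+1}$ (when $r^1_{k+1}$ is both a successor/predecessor of $r^1_{k-1}$ of the appropriate type) or to cutting the cyclic word into two cyclic words.

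The main obstacle, and the crux of the proof, is to show that the same reduction is admissible in $\clos{\mathcal{P}_2}$. This is where the hypothesis that $\mathcal{R}_1,\mathcal{R}_2$ carry the same geometric type enters. Using Lemmas \ref{l.vertsubrectangleexists}--\ref{l.existenceofpredecessors} and the proof of Proposition \ref{p.liftedmarkovfamiliessamegeomtype}, the successor/predecessor structure of rectangles in $\clos{\mathcal{R}_1}$ and $\clos{\mathcal{R}_2}$ are governed solely by the maps $\phi$ and $u$ of the geometric type; thus the fact that $r^1_{k+1}$ is a successor of $r^1_{k-1}$ of a prescribed position translates literally to $r^2_{k+1}$ being the corresponding successor of $r^2_{k-1}$ in $\clos{\mathcal{P}_2}$. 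In the complete-domain case one checks that the tangency inside $\gamma_2$ stays of the same type (a) or (b) because the orientations of $\clos{\mathcal{F}^{s,u}_{1,2}}$ are chosen so that the associated geometric type is identical. In the incomplete-domain case one verifies that the new cutting segment corresponds to an actual stable leaf of $\clos{\mathcal{F}^s_2}$ visiting the same combinatorial rectangles; this uses the second axiom of good polygonal curves to avoid stable leaves of boundary periodic points and hence avoid spurious crossings through singularities of $\clos{\mathcal{F}^s_2}$. Once this coherence is established, the induction closes: after finitely many reductions $\gamma_1$ degenerates to a trivial closed path (a constant at $r^1_0$), and by construction the same combinatorial reduction applied in $\clos{\mathcal{P}_2}$ yields the trivial closed path at $r^2_0$; tracing the reductions backwards shows that $\gamma_2$ was closed to begin with.
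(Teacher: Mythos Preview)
Your reduction to closed paths is correct and matches the paper (this is the equivalence with Theorem~\ref{t.closedrectanglespathscorrespondtoclosedpaths}). The overall scheme of reducing via tangencies of type (a)/(b) and their canonical domains is also the paper's route. However, there are two genuine gaps.

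\textbf{Reduction to simple curves.} You claim that ``up to a small homotopy preserving the goodness condition we may assume $\gamma_1$ is simple''. This is not justified: Lemma~\ref{l.createpolygonalcurve} turns smooth curves into good polygonal ones, it does not remove self-intersections. A good closed polygonal curve associated to a given rectangle path can have unavoidable transverse self-intersections, and deforming it to be simple will in general change the associated rectangle path. The paper treats this as a separate induction on the number of self-intersections (the subsection ``Proposition~\ref{p.homotopictotrivialsimplecase} implies Theorem~\ref{t.homotopictotrivialpath}''), cutting off a simple subloop at the first self-intersection and applying the simple case to each piece.

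\textbf{Boundary arc points and the type~$\mathcal{C}$ move.} Your description of the reduction as ``replacing $r^1_{k-1},r^1_k,r^1_{k+1}$ by $r^1_{k-1},r^1_{k+1}$'' or ``cutting the cyclic word in two'' misses the central obstruction. Even when the domain $D_s$ of a tangency is complete, it can contain boundary arc points in its interior (cases $(6')$, $(7')$ and the ``rectangle with $M(\gamma)>0$'' case in the paper). In that situation, pushing $s$ across $D_s$ does \emph{not} simplify the rectangle path by a turn removal: the associated rectangle path must detour around the boundary arc point via a \emph{cycle} $(L_0,L_1,L_2,L_3,L_4)$ of crossing successors/predecessors (Definition~\ref{d.cyclearcpoint}). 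The heart of the proof is precisely to show that such a cycle in $\clos{\mathcal{P}_1}$ is carried by the correspondence of Theorem~\ref{t.associatingrectanglepaths} to a cycle around a boundary arc point in $\clos{\mathcal{P}_2}$ (Section~\ref{s.proofoftheoremcorrespondancehom}). This uses more than ``$\phi$ and $u$ govern successors/predecessors'': one must check that $s$-crossing predecessors correspond to $s$-crossing predecessors and that the positive/negative cycle structure is preserved. Your plan never mentions boundary arc points, cycles, or the type~$\mathcal{C}$ homotopy, so this step is missing entirely; without it the induction stalls whenever the interior of $\gamma_1$ contains a boundary arc point.
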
 
The previous theorem is equivalent to the following:
\begin{theorem}\label{t.closedrectanglespathscorrespondtoclosedpaths}
Any closed centered rectangle path in $\clos{\mathcal{P}_1}$ is associated to a closed centered rectangle path in $\clos{\mathcal{P}_2}$.
\end{theorem}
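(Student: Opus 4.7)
The plan is to translate the statement about rectangle paths into one about closed polygonal curves, and then argue by induction on a combinatorial complexity using the tangency analysis developed in Section \ref{s.singularities}. By Proposition \ref{p.rectanglepathtocurve} applied in $\clos{\mathcal{P}_1}$, associate to the closed centered rectangle path $r^1_0,\ldots,r^1_n = r^1_0$ a closed good polygonal curve $\gamma_1$ in $\clos{\mathcal{P}_1}$, based at a chosen point of $\overset{\circ}{r^1_0}$ lying off the stable and unstable leaves of all boundary periodic points. Let $r^2_0,\ldots,r^2_n$ be the associated rectangle path in $\clos{\mathcal{P}_2}$ from Theorem \ref{t.associatingrectanglepaths}; the goal is to show $r^2_n = r^2_0$. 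The strategy is to construct, in parallel, a closed good polygonal curve $\gamma_2$ in $\clos{\mathcal{P}_2}$ based at a point of $\overset{\circ}{r^2_0}$ and having the same combinatorial structure as $\gamma_1$ (same alternating sequence of stable/unstable segments, the $i$-th of which is contained in the $i$-th rectangle of the associated path). If such a $\gamma_2$ exists and is closed, then by Proposition \ref{p.curvetorectanglepath} its canonically associated rectangle path is precisely $r^2_0,\ldots,r^2_n$, which forces $r^2_n = r^2_0$.

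The construction of $\gamma_2$ proceeds by induction on the number $N(\gamma_1)$ of stable plus unstable segments of $\gamma_1$, the base case $N(\gamma_1) \le 2$ being essentially trivial. For the inductive step, first reduce to the simple case: since $\clos{\mathcal{P}_1}$ is simply connected (it is obtained from the simply connected space $\widetilde{\mathcal{P}_{\Gamma_1,blowup}}$ by collapsing contractible boundary lines to points), a non-simple closed polygonal curve can be decomposed as a finite concatenation of simple closed polygonal loops and backtracks, and the closure of the total path reduces to closure of each piece. For a simple closed polygonal curve $\gamma_1$, Lemma \ref{l.numbertangencies} provides at least two stable tangencies of type (a) or (b); pick one such tangency $s$, with adjacent unstable segments $u^-,u^+$. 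By Lemma \ref{l.canonicalneighbourhoodtangency}, $s$ admits a canonical stable-foliated domain $D$ inside the disk bounded by $\gamma_1$. The crucial point is that the type of $s$, and whether $D$ is complete or incomplete, are determined purely by the combinatorics of how the rectangles of $\clos{\mathcal{R}_1}$ adjacent to $u^-,u^+$ intersect each other; by Proposition \ref{p.liftedmarkovfamiliessamegeomtype} and the identification of geometric types, the analogous tangency and analogous domain exist in $\clos{\mathcal{P}_2}$ around the corresponding location in the partially built $\gamma_2$.

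Inside $D$, the three consecutive segments $u^-\!\cdot s\cdot u^+$ of $\gamma_1$ can be replaced by a single stable segment contained in $D$ (when $D$ is complete), or by a bounded polygonal shortcut that also absorbs the finitely many additional stable segments of $\gamma_1$ lying on the opposite stable boundary of $D$ in the incomplete case (Remark \ref{r.incompletedomain}). This yields a new closed good polygonal curve $\gamma_1'$ in $\clos{\mathcal{P}_1}$ with $N(\gamma_1')<N(\gamma_1)$, whose canonically associated rectangle path is obtained from $r^1_0,\ldots,r^1_n$ by a local modification that depends only on the common geometric type. Performing the analogous modification on $r^2_0,\ldots,r^2_n$ produces the rectangle path associated to the analogous reduction $\gamma_2'$ of $\gamma_2$. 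Applying the induction hypothesis to $\gamma_1'$ gives the closedness of $\gamma_2'$, and since the modification took place inside a topological disk disjoint from the basepoint, closedness of $\gamma_i'$ is equivalent to closedness of $\gamma_i$ for $i=1,2$; this completes the induction. The main obstacle will be making the combinatorial correspondence of tangencies rigorous: one must verify that the tangency type, the completeness of the domain, and the number and position of the extra stable segments of $\gamma_1$ appearing on $\partial D$ in the incomplete case are indeed invariants of the geometric type and match those constructed on the $\clos{\mathcal{P}_2}$ side, which requires an explicit local model of the crossing predecessors/successors near boundary periodic and boundary arc points of the Markovian family.
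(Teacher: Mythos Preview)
Your strategy shares the paper's core reduction idea --- use tangencies of a polygonal curve to simplify the associated rectangle path --- but the way you set it up introduces two genuine gaps.

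\textbf{Circularity with $\gamma_2$.} You repeatedly invoke ``the partially built $\gamma_2$'', ``the analogous tangency in $\clos{\mathcal{P}_2}$'', and ``closedness of $\gamma_2'$ is equivalent to closedness of $\gamma_2$''. But the existence of a simple closed curve $\gamma_2$ bounding a disk in which one can speak of tangency types and domains is precisely what is in question: a priori $r^2_0,\dots,r^2_n$ is just an abstract rectangle path, and until you know $r^2_n=r^2_0$ there is no curve, no bounded interior, and no tangency structure on the $\clos{\mathcal{P}_2}$ side to which Lemmas \ref{l.numbertangencies} and \ref{l.canonicalneighbourhoodtangency} apply. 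The paper avoids this by never referring to $\gamma_2$ at all: it formalises the reduction as a sequence of elementary moves (types $\mathcal{A},\mathcal{B},\mathcal{C}$) acting on \emph{rectangle paths} --- objects that exist on both sides from the outset --- and then checks that each move transfers (Section \ref{s.proofoftheoremcorrespondancehom}).

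\textbf{Boundary arc points in the domain $D$.} Your inductive step says: replace $u^-\!\cdot s\cdot u^+$ by a single stable segment in $D$, obtaining $\gamma_1'$ with $N(\gamma_1')<N(\gamma_1)$, and claim the resulting change in the rectangle path ``depends only on the common geometric type''. This fails exactly when $D$ contains a boundary arc point. In that situation (e.g.\ the case where the triple $(U',s,U)$ is associated to $(1,1,1)$ or $(0,1,1)$ in the paper's case analysis), sliding $s$ across $D$ does not correspond to any combination of moves that are manifestly geometric-type invariants: the rectangle path change involves going around the boundary arc point, and the two routes around it are encoded by the \emph{cycle} $(L_0,L_1,L_2,L_3,L_4)$ of Definition \ref{d.cyclearcpoint}. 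This is the content the paper packages as homotopy of type $\mathcal{C}$, and verifying that cycles in $\clos{\mathcal{P}_1}$ correspond to cycles in $\clos{\mathcal{P}_2}$ is a nontrivial computation done explicitly in Section \ref{s.proofoftheoremcorrespondancehom}. Your final paragraph flags ``an explicit local model near boundary arc points'' as the obstacle, but this is not a technicality to be filled in later --- it is the heart of the argument, and without the cycle machinery your reduction step is not well-defined on the $\clos{\mathcal{P}_2}$ side. Relatedly, inducting only on $N(\gamma_1)$ is too coarse: the paper's induction is on the lexicographic pair $(M,n)$ with $M$ the number of boundary arc points in the interior, precisely because the type-$\mathcal{C}$ move decreases $M$ while possibly increasing $n$.
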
 
\begin{proof}[Proof of the equivalence of theorems \ref{t.endingbysamerectangle} and \ref{t.closedrectanglespathscorrespondtoclosedpaths}]$\quad$

(\ref{t.endingbysamerectangle} $\Rightarrow$ \ref{t.closedrectanglespathscorrespondtoclosedpaths}): Take $r^1_0$ the trivial rectangle path and $R$ any closed centered rectangle path in $\clos{\mathcal{P}_1}$. Since $R$ is closed, it also ends at $r^1_0$. By Theorem \ref{t.endingbysamerectangle}, the two rectangle paths are associated to two centered rectangle paths of $\clos{\mathcal{P}_2}$ ending at the same rectangle. But the trivial rectangle path $r^1_0$ is associated by Theorem \ref{t.associatingrectanglepaths} to the trivial rectangle path $r^2_0$. Therefore, $R$ is associated to a closed centered rectangle path in  $\clos{\mathcal{P}_2}$.

(\ref{t.closedrectanglespathscorrespondtoclosedpaths}$\Rightarrow$ \ref{t.endingbysamerectangle}): 
Take two centered rectangle paths $r^1_0,...r^1_n$ and $r^1_0=R^1_0,R^1_1,...,R^1_m=r^1_n$ in $\clos{\mathcal{P}_1}$. The rectangle path $r^1_0,...r^1_n, R^1_{m-1}, R^1_{m-2},...,R^1_1,r^1_0$ is a closed centered rectangle path in $\clos{\mathcal{P}_1}$. Therefore, by Theorem \ref{t.closedrectanglespathscorrespondtoclosedpaths} it is associated to a closed centered rectangle path in $\clos{\mathcal{P}_2}$, say $r^2_0,...r^2_n, R^2_{m-1}, R^2_{m-2},...,R^2_1,r^2_0$. It is not difficult to see, that $r^2_0,...r^2_n$ is the associated rectangle path of $r^1_0,...r^1_n$ and that $r^2_0=R^2_0,R^2_1,...,R^2_{m-1},R^2_{m}:=r^2_n$ is the associated rectangle path of $R^1_0,R^1_1,...,R^1_m$. We deduce that the two associated rectangle paths end by the same rectangle. 

\end{proof}

Our goal from now on will be to prove Theorem \ref{t.closedrectanglespathscorrespondtoclosedpaths}, which is the most important step in the proof of Theorem B. In order to do that we will introduce the notion of \emph{homotopy of rectangle paths}, we will show that only closed rectangle paths can be homotopic to the trivial rectangle path (see Section \ref{s.closedpathsarehomotopicallytr}), that any closed centered rectangle path in $\clos{\mathcal{P}_1}$ is homotopic to the trivial rectangle path $r_0^1$ (see Section \ref{s.proofoftheoremhomtrivialpath}) and finally that any homotopy of rectangle paths on $\clos{\mathcal{P}_1}$ is associated to a homotopy of rectangle paths in $\clos{\mathcal{P}_2}$ (see Section \ref{s.proofoftheoremcorrespondancehom}).

\subsection{Different types of homotopies of rectangle paths}\label{s.closedpathsarehomotopicallytr}
Take $(M,\Phi)$ a transitive Anosov flow, $\mathcal{P}$ its bifoliated plane, $\mathcal{F}^{s,u}$ the stable/unstable foliations in $\mathcal{P}$, $\mathcal{R}$ a Markovian family on $\mathcal{P}$, $\Gamma \subset \mathcal{P}$ the set of boundary points of $\mathcal{R}$, $\clos{\mathcal{P}}$ the bifoliated plane of $\Phi$ up to surgeries on $\Gamma$, $\clos{\mathcal{R}}$ and  $\clos{\Gamma}$ the lifts of $\mathcal{R}$ and $\Gamma$ on  $\clos{\mathcal{P}}$, and $\clos{\mathcal{F}^{s,u}}$ the singular stable/unstable foliations in $\clos{\mathcal{P}}$. 

By repeating the arguments used in Section \ref{s.rectanglepathsinP}, we can prove the analogues of Lemma  \ref{l.createpolygonalcurve} and Propositions \ref{p.curvetorectanglepath} and \ref{p.rectanglepathtocurve} for $\clos{\mathcal{P}}$. It therefore remains true that we can associate good polygonal curves in $\clos{\mathcal{P}}$ to rectangle paths of $\clos{\mathcal{R}}$ and vice versa. In the following lines, we will use this association in order to define the notion of homotopy for rectangle paths. 

We will fix for the rest of this section $R_0,...,R_n$ a rectangle path in $\clos{\mathcal{R}}$.
\begin{defi}
We will say that two rectangle paths are \emph{homotopic by a homotopy of type $\mathcal{A}$} if they are equal. 

Two good polygonal curves $\gamma,\delta:[0,1]\rightarrow \clos{\mathcal{P}}$ associated to the rectangle path $R_0,...,R_n$ will be called \emph{homotopic by a homotopy of type $\mathcal{A}$} (relatively to $R_0,...,R_n$).

\end{defi}
The notion of homotopy of type $\mathcal{A}$ between two polygonal curves $\gamma$ and $\delta$ associated to the same rectangle path can be described in terms of the existence of a homotopy $H:[0,1]^2\rightarrow \clos{\mathcal{P}}$ from $\gamma$ to $\delta$ satisfying the following property: for every $t\in [0,1]$ there exists a $(n+2)$-uple $d_0=0<d_1<...<d_{n+1}=1$ such that $H(t,[d_k,d_{k+1}])\subset R_k$. It is possible to prove that the two definitions coincide. The same applies for homotopies of type $\mathcal{B}$ and $\mathcal{C}$ that will later be defined. For the sake of simplicity, we will define homotopies of good polygonal curves in terms of the associated rectangle paths, rather than the existence of a homotopy satisfying a number of properties.  

Notice that the notion of homotopy of type $\mathcal{A} $ for good polygonal curves depends on the rectangle path relatively to which we perform it. Indeed, take $\gamma$ a good polygonal curve contained inside $R\in \clos{\mathcal{R}}$ and also in the union of two predecessors of $R$, $R_1$ and $R_2$ such that $\gamma(0)\in R_1$ and $\gamma(1)\in R_2$. By Proposition \ref{p.curvetorectanglepath}, we can associate $\gamma$ to a non-trivial rectangle path starting from $R_1$ and also to the trivial rectangle path $R$. Notice that relatively to $R$ the curve $\gamma$ is homotopic by a homotopy of type $\mathcal{A}$ to a point. Relatively to the non-trivial rectangle path starting from $R_1$, this is not the case. 

\begin{conv}
 By an abuse of language, whenever the context makes it clear, we will omit the rectangle paths relatively to which we will perform homotopies of good polygonal curves.  
\end{conv}

\begin{figure}[h!]
\includegraphics[scale=0.75]{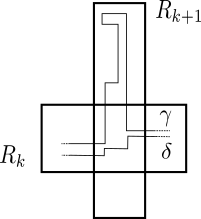}
\caption{$\gamma$ and $\delta$ are homotopic by a homotopy of type $\mathcal{B}$}
\label{f.homotopyoftypeB}
\end{figure}
\begin{defi}\label{d.homotopyB}
Consider $R_0,...,R_n$ a rectangle path such that  there exists $k\in \llbracket 0,n-2\rrbracket$ for which $R_k=R_{k+2}$. We are going to say that the rectangle paths $R_0,..,R_k,R_{k+3},R_{k+4},...$ and $R_0,...,R_n$ are \emph{homotopic by a homotopy of type $\mathcal{B}$}. 

Consider $\gamma$, $\delta$ two good polygonal curves associated to the rectangle paths $R_0,...,R_n$ and $R_0,..,R_k,R_{k+3},R_{k+4},...,R_n$ respectively (see Figure \ref{f.homotopyoftypeB}). We will say that $\gamma$ and $\delta$ are \emph{homotopic by a homotopy of type $\mathcal{B}$} (relatively to $R_0,...,R_n$ and $R_0,..,R_k,R_{k+3},R_{k+4},...,R_n$).  
\end{defi}
It is not difficult to see that both homotopies of type $\mathcal{A}$ or type $\mathcal{B}$ (seen as maps $H:[0,1]^2\rightarrow \clos{\mathcal{P}}$) can be performed in the interior of rectangles without ever crossing a boundary arc point. We would like at this point to define a homotopy allowing good polygonal curves to cross boundary arc points in $\overline{\mathcal{P}}$. We will name this homotopy, \emph{homotopy of type $\mathcal{C}$}. In order to introduce the notion of homotopy of type $\mathcal{C}$, we will first need to introduce the notion of \textit{cycle} around a boundary arc point: 
\begin{defi}\label{d.cyclearcpoint}
Take $p\in \clos{\mathcal{P}}$ a boundary arc point and $L_0\in \clos{\mathcal{R}}$ such that $L_0$ contains $p$ and two germs of quadrants of $p$. Assume without any loss of generality that $p\in \partial^s L_0$ (see Figure \ref{f.cycle}). Consider $r$ the stable boundary component of $L_0$ containing $p$. By Remark \ref{r.caracterisationarcpoints}, there exists $L_1$ an $r$-crossing predecessor of $L$ such that $p\in \partial{L_0}\cap \partial{L_1}$. 

Consider $r_1$ the unstable boundary component of $L_1$ containing $p$. Again, by Remark \ref{r.caracterisationarcpoints}, there exists $L_2$ an $r_1$-crossing successor of $L_1$ such that $p\in \partial{L_2}\cap \partial{L_1}$ and $\overset{\circ}{L_2}\cap \overset{\circ}{L_0}=\emptyset$. By the same exact procedure, we construct $L_3$ and $L_4$ such that $\overset{\circ}{L_3}\cap \overset{\circ}{L_1}=\emptyset$ and $\overset{\circ}{L_4}\cap \overset{\circ}{L_2}=\emptyset$. We are going to call $(L_0,L_1,L_2,L_3,L_4)$ a \emph{cycle} around $p$ starting from $L_0$.
\end{defi}
\begin{figure}[h!]
\includegraphics[scale=1]{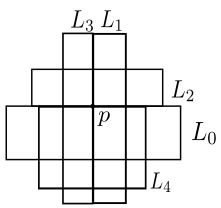}
\caption{}
\label{f.cycle}
\end{figure}

\begin{rema}\label{r.existenceofcycle}
For any boundary arc point $p$, there exists a cycle around $p$.\end{rema}

Indeed, by Definition \ref{d.cyclearcpoint}, it suffices to show that there exists $L \in\clos{\mathcal{R}}$ containing $p$ and the germs of two quadrants of $p$. Take $L\in\clos{\mathcal{R}}$ containing $p$. Since $p$ is a boundary arc point, it is contained in the boundary of $L$. If furthermore $p$ is not a corner point of $L$, then $L$ contains the germs of two quadrants of $p$. If $p$ is a corner point, let us name $s$ the stable boundary of $L$ in which it is contained. Consider $L'$ the $s$-crossing predecessor of $L$ that contains $p$. The rectangle $L'$ has the desired property.

Boundary arc points, by definition, are not periodic points, they are therefore regular points of the foliations $\clos{\mathcal{F}^{s}}$ and $\clos{\mathcal{F}^{u}}$. Take $p$ a boundary arc point and $(L_0,L_1,L_2,L_3,L_4)$ a cycle around of $p$. Let us denote $G_{(\epsilon,\epsilon')}$ a germ of the $(\epsilon,\epsilon')$ quadrant of $p$, where $\epsilon,\epsilon'\in \lbrace -,+ \rbrace$. By the definition of a cycle, up to cyclic permutation of the indices of the $L_i$ a cycle verifies one of the two following statements: 

\vspace{0.2cm}
\hspace{-1.5cm}\begin{minipage}{0.5\linewidth}
\center{\underline{Positive cycle}}
\begin{itemize}
    \item $\overset{\circ}{L_0}\cap G_{(-,-)}\neq \emptyset$, $\overset{\circ}{L_0}\cap G_{(+,-)}\neq \emptyset$ 
    \item $\overset{\circ}{L_1}\cap G_{(+,-)}\neq \emptyset$, $\overset{\circ}{L_1}\cap G_{(+,+)}\neq \emptyset$
    \item $\overset{\circ}{L_2}\cap G_{(+,+)}\neq \emptyset$, $\overset{\circ}{L_2}\cap G_{(-,+)}\neq \emptyset$
    \item $\overset{\circ}{L_3}\cap G_{(-,+)}\neq \emptyset$, $\overset{\circ}{L_3}\cap G_{(-,-)}\neq \emptyset$
    \item $\overset{\circ}{L_4}\cap G_{(-,-)}\neq \emptyset$, $\overset{\circ}{L_4}\cap G_{(+,-)}\neq \emptyset$
\end{itemize} 
\end{minipage}
\hspace{-0.5cm}\begin{minipage}{0.5\linewidth}
\center{\underline{Negative cycle}}
\begin{itemize}
    \item $\overset{\circ}{L_0}\cap G_{(+,-)}\neq \emptyset$, $\overset{\circ}{L_0}\cap G_{(-,-)}\neq \emptyset$ 
    \item $\overset{\circ}{L_1}\cap G_{(-,-)}\neq \emptyset$, $\overset{\circ}{L_1}\cap G_{(-,+)}\neq \emptyset$
    \item $\overset{\circ}{L_2}\cap G_{(-,+)}\neq \emptyset$, $\overset{\circ}{L_2}\cap G_{(+,+)}\neq \emptyset$
    \item $\overset{\circ}{L_3}\cap G_{(+,+)}\neq \emptyset$, $\overset{\circ}{L_3}\cap G_{(+,-)}\neq \emptyset$
    \item $\overset{\circ}{L_4}\cap G_{(+,-)}\neq \emptyset$, $\overset{\circ}{L_4}\cap G_{(-,-)}\neq \emptyset$
\end{itemize}
\end{minipage}

\vspace{0.2cm}
In the first case, we will call the cycle \textit{positive} and in the second one \textit{negative}. Notice in particular, that for every cycle $(L_0,L_1,L_2,L_3,L_4)$ around $p$, we have that $\overset{\circ}{L_4}\cap \overset{\circ}{L_0}\neq \emptyset$. 
\begin{lemm}\label{l.twocycles}
Take $p\in \clos{\mathcal{P}}$ a boundary arc point and $L_0 \in \clos{\mathcal{R}}$ such that $L_0$ contains $p$ and the germs of two quadrants of $p$. There exist exactly two cycles around $p$ starting from $L_0$: one positive and one negative. 
\end{lemm}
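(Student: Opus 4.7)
The plan is to perform a local analysis near $p$. Since $p$ is a boundary arc point it is not periodic, so the foliations $\clos{\mathcal{F}^{s,u}}$ are locally non-singular at $p$ and divide a small neighbourhood of $p$ into four quadrants $Q_{\epsilon,\epsilon'}$ ($\epsilon,\epsilon'\in\{+,-\}$). My strategy is to track, at each step of the cycle construction, exactly which pair of quadrant-germs each rectangle $L_k$ contains, and to show that the axioms of a cycle force this pair to rotate monotonically around $p$.

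By the WLOG in Definition \ref{d.cyclearcpoint}, I will assume $p\in\partial^s L_0$; then, since $p\notin \overset{\circ}{L_0}$ (as $p$ is a boundary arc point) and $L_0$ contains germs of exactly two quadrants, $p$ sits in the interior of some stable boundary component $r$ of $L_0$, and $L_0$ contains the germs of the two quadrants on one side of $\mathcal{F}^s(p)$, say $Q_{++}$ and $Q_{-+}$. The first task is to enumerate the candidates for $L_1$. The $r$-crossing predecessors of $L_0$ cover $L_0$ with pairwise disjoint interiors by Lemmas \ref{l.crossingrectanglesnoperiodicpoints} and \ref{l.crossingrectangleswithperiodicpoints} (whose statements transfer to $\clos{\mathcal{P}}$ via $\clos{\pi}$), so the boundary arc point $p\in r$ lies between exactly two consecutive such predecessors. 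These are the only two candidates for $L_1$: one contains the germs $Q_{++}\cup Q_{+-}$, the other $Q_{-+}\cup Q_{--}$.

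Next, I will show that once $L_1$ is fixed the remaining rectangles $L_2,L_3,L_4$ are uniquely determined. The central tool is the following germ-intersection principle, an immediate consequence of the Markovian intersection axiom: two rectangles of $\clos{\mathcal{R}}$ whose interiors both contain the germ of a common quadrant of $p$ have non-empty interior intersection. Given $L_1$, the same counting argument as before yields two $r_1$-crossing successors of $L_1$ whose boundary contains $p$, one on each side of $\mathcal{F}^s(p)$. The constraint $\overset{\circ}{L_2}\cap\overset{\circ}{L_0}=\emptyset$ then picks out, via the germ-intersection principle, the unique candidate lying on the side of $\mathcal{F}^s(p)$ opposite to $L_0$. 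Iterating this argument with the non-overlap conditions $\overset{\circ}{L_{k+2}}\cap\overset{\circ}{L_k}=\emptyset$ pins down $L_3$ and $L_4$ in turn.

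Finally, the two cycles obtained from the two choices of $L_1$ are readily identified as one positive and one negative by listing the sequence of quadrant-germ pairs they traverse: one rotates around $p$ counterclockwise and the other clockwise. The main technical point is the germ-intersection principle above; it relies on $p$ being a regular (non-periodic) point of both foliations, so that the Markovian intersection axiom applies cleanly in a neighbourhood of $p$ and forbids any configuration other than the one forced by the non-overlap constraints. Verifying that the fifth rectangle $L_4$ in each case really satisfies $\overset{\circ}{L_4}\cap\overset{\circ}{L_0}\neq\emptyset$, closing the cycle, is then immediate from the germ-intersection principle once one observes that $L_4$ and $L_0$ contain germs of the same pair of quadrants.
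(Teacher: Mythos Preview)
Your proof is correct and follows essentially the same route as the paper's: identify the two $r$-crossing predecessors of $L_0$ meeting $p$ as the only choices for $L_1$, then argue that the disjoint-interior constraints $\overset{\circ}{L_{k+2}}\cap\overset{\circ}{L_k}=\emptyset$ force each subsequent $L_k$ uniquely. Your explicit naming of the ``germ-intersection principle'' makes the uniqueness step at each stage more transparent than the paper's terser ``by our previous argument $L_2$ is uniquely defined,'' but the underlying idea is identical.
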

\begin{proof}
Without any loss of generality, we can assume that $p$ is contained in a stable boundary component of $L_0$, say $s$, and that a germ of the $(-,-)$ and $(+,-)$ quadrants of $p$ is contained in $L_0$. By Remark \ref{r.caracterisationarcpoints}, since $p$ is a boundary arc point and any two distinct $s$-crossing predecessors of $L_0$ intersect only along their boundaries, there exist exactly two $s$-crossing predecessors $L_1,L_1'$ of $L_0$ containing $p$: $L_1$ contains a germ of the $(+,+)$ and $(+,-)$ quadrants of $p$ and $L_1'$ a germ of the $(-,+)$ and $(-,-)$ quadrants of $p$. Since $L_1$ is an $s$-crossing predecessor, the point $p$ is contained in the interior of some unstable boundary component of $L_1$. Therefore, by our previous argument the rectangle $L_2$ of Definition \ref{d.cyclearcpoint} is uniquely defined. Similarly, the rectangles $L_3$ and $L_4$ of Definition \ref{d.cyclearcpoint} are uniquely defined. The same argument applies for $L_1'$. We thus obtain exactly two cycles around $p$ starting from $L_0$: one positive and one negative. 
\end{proof}
\begin{lemm}
Take $p\in \clos{\mathcal{P}}$ a boundary arc point. If  $(L_0,L_1,L_2,L_3,L_4)$ is a cycle around $p$, then  $(L_0,L_3,L_2,L_1,L_4)$ is also a cycle around $p$. 
\end{lemm}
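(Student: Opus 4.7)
The plan is to deduce the lemma from Lemma \ref{l.twocycles}. Assume without loss of generality that $(L_0, L_1, L_2, L_3, L_4)$ is a positive cycle, and let $(L_0, M_1, M_2, M_3, M_4)$ denote the unique negative cycle around $p$ starting from $L_0$ guaranteed by that lemma. I would prove the identifications $M_1 = L_3$, $M_2 = L_2$, $M_3 = L_1$, and $M_4 = L_4$, which yields $(L_0, L_3, L_2, L_1, L_4) = (L_0, M_1, M_2, M_3, M_4)$ and hence the claim.

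First, using the quadrant description of positive and negative cycles given just before Lemma \ref{l.twocycles}, I would tabulate the pair of adjacent quadrants of $p$ covered by each rectangle in both cycles. A direct check shows that $L_3$ and $M_1$ both cover the pair of quadrants opposite to $L_1$ across $\ell^u = \clos{\mathcal{F}^u}(p)$; $L_2$ and $M_2$ both cover the pair opposite $L_0$ across $\ell^s = \clos{\mathcal{F}^s}(p)$; $L_1$ and $M_3$ cover the same pair on the opposite side of $\ell^u$ from $L_3$; and both $L_4$ and $M_4$ cover the same pair as $L_0$.

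Second, since $p$ is a boundary arc point, it is non-periodic and hence a regular point of both singular foliations $\clos{\mathcal{F}^{s}}$ and $\clos{\mathcal{F}^{u}}$. Combining Remark \ref{r.caracterisationarcpoints} with the maximality built into Lemmas \ref{l.crossingrectanglesnoperiodicpoints} and \ref{l.crossingrectangleswithperiodicpoints}, I would establish a local uniqueness statement at $p$: each of the four pairs of adjacent quadrants of $p$ admits at most one element of $\clos{\mathcal{R}}$ that is simultaneously a crossing predecessor (or successor) of another cycle-level rectangle containing $p$ and has $p$ in the interior of the appropriate boundary component. Combined with the quadrant bookkeeping above, this forces $M_i = L_{4-i}$ for $i \in \{1, 2, 3\}$. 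Applying the same local uniqueness at the ``bottom'' edge and using that $L_4$ and $M_4$ are both cycle-level rectangles covering the pair of $L_0$ and satisfying $\overset{\circ}{L_4} \cap \overset{\circ}{L_2} = \emptyset$ and $\overset{\circ}{M_4} \cap \overset{\circ}{L_2} = \emptyset$, identifies $L_4$ with $M_4$.

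The main obstacle will be the local-uniqueness step: a priori several rectangles of $\clos{\mathcal{R}}$ could cover the same germ of a pair of quadrants at $p$, and one has to rule this out for cycle-level rectangles. I expect to handle this by carefully combining Remark \ref{r.caracterisationarcpoints} (which identifies $p$ as the unique corner between the two crossing predecessors of the anchoring rectangle through $p$) with the fundamental-domain statement in Lemma \ref{l.crossingrectangleswithperiodicpoints}, which controls how the stabilizers of the periodic leaves $\ell^s$ and $\ell^u$ act on nearby rectangles.
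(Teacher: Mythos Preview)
Your strategy via Lemma~\ref{l.twocycles} is different from the paper's and could in principle work, but the ``local uniqueness'' step is where the real content lies, and your plan for it does not close the gap.

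The difficulty is that the uniqueness you want is \emph{not} the statement that at most one rectangle of $\clos{\mathcal{R}}$ covers a given pair of adjacent quadrants of $p$ with $p$ in the interior of a boundary side: that statement is false, as $L_0$ and $L_4$ already cover the same pair of quadrants and both have $p$ in the interior of a stable boundary, yet generally $L_0\neq L_4$ (Remark~\ref{r.cycles}). What you actually need is, for instance, that $L_3$ (built as a crossing predecessor of $L_2$) coincides with $M_1$ (built as a crossing predecessor of $L_0$). These are defined as crossing predecessors of \emph{different} anchoring rectangles, so matching their quadrants is not enough; you must show that $L_3$ is itself an $s$-crossing predecessor of $L_0$. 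That is precisely the computation the paper carries out directly, using the Markovian intersection axiom and Remark~\ref{r.caracterisationarcpoints} to check that $\partial^u L_3\cap s$ consists of boundary arc points. The same issue recurs at each step, and most acutely for $L_4=M_4$, where $L_4$ is a crossing successor of $L_3$ while $M_4$ is a crossing successor of $L_1$.

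Your proposed tools do not supply this. Remark~\ref{r.caracterisationarcpoints} tells you which points are boundary arc points but does not by itself give the uniqueness across different anchors. Lemma~\ref{l.crossingrectangleswithperiodicpoints} and stabilizers are a red herring here: $p$ is a boundary \emph{arc} point, hence non-periodic, and while the leaves $\ell^s,\ell^u$ through $p$ are periodic, the stabilizer action they carry does not pin down a single rectangle at $p$. The clean fix is simply to verify directly, as the paper does, that $L_3$ is an $s$-crossing predecessor of $L_0$, $L_2$ a crossing successor of $L_3$, $L_1$ a crossing predecessor of $L_2$, and $L_4$ a crossing successor of $L_1$; the disjointness conditions then come for free from the original cycle.
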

\begin{proof}
Let us assume without any loss of generality that $p \in \partial^s L_0$. Denote by $s$ the stable boundary component of $L_0$ containing $p$. Let us show that $L_3$ is an $s$-crossing predecessor of $L_0$. 

If $s'$ is the stable boundary component of $L_2$ containing $p$, then by construction $L_3$ is a $s'$-crossing predecessor of $L_2$. By  Remark \ref{r.caracterisationarcpoints}, the unstable boundary of $L_3$ intersects $s'$ along boundary arc points. By the Markovian intersection property, $L_3\cap L_0$ is a vertical subrectangle of $L_0$ (see Figure \ref{f.cycle}). Since $L_3\cap s$ consists of two boundary arc points, we deduce from Remark \ref{r.caracterisationarcpoints} that $L_3$ is a $s$-crossing predecessor of $L_0$. We show in the exact same way, that $L_2$ is a crossing successor of $L_3$, that $L_1$ is a crossing predecessor of $L_2$ and that $L_4$ is a crossing successor of $L_1$. Finally, since $(L_0,L_1,L_2,L_3,L_4)$ is a cycle, we have that $\overset{\circ}{L_2}\cap \overset{\circ}{L_0}=\emptyset$, $\overset{\circ}{L_3}\cap \overset{\circ}{L_1}=\emptyset$ and $\overset{\circ}{L_4}\cap \overset{\circ}{L_2}=\emptyset$. We conclude that $(L_0,L_3,L_2,L_1,L_4)$ is also a cycle around $p$. 
\end{proof}
\begin{rema}\label{r.cycles}
\begin{itemize}
    \item Notice that in the above lemma, the rectangles $L_1,L_2,L_3,L_4$ are canonically associated to $p$ independently of the choice of $L_0$. Indeed, assume that $L_1$ contains the point $p$ and a germ of the $(+,+)$ and $(+,-)$ quadrants of $p$. Then $L_1$ corresponds to the unique rectangle in $\clos{\mathcal{R}}$ containing in its unstable boundary $p$ and the boundary arc point after $p$ in $\clos{\mathcal{F}^s_+(p)}$.
    \item Generally, $L_0\neq L_4$. This results from the fact that, contrary to the case of successors/predecessors, if $L_1$ is a $s$-crossing predecessor of $L_0$, then $L_0$ is not necessarily a $u$-crossing successor of $L_1$ (see Figure \ref{f.cycle}).
\end{itemize}
\end{rema}
\begin{defi}\label{d.generalisedrectpaths}
Consider $L_0,...,L_n$ a sequence of rectangles in $\clos{\mathcal{R}}$ such that for every $i$, the rectangle $L_{i+1}$ is a successor or predecessor or $u$-successor or $s$-predecessor of $L_i$. We will call $L_0,...,L_n$ a \emph{generalised rectangle path}. By Proposition \ref{l.npredecessor} and Lemma \ref{l.existenceofpredecessors}, for every $i$ there exist a unique monotonous rectangle path (see Definition \ref{d.rectanglepath})  $L_i=L_{i0},L_{i1},...L_{i(s(i)+1)}=L_{i+1}$ going from $L_i$ to $L_{i+1}$. 
We are going to say that the generalised rectangle path $L_0,...,L_n$ is \emph{associated} to the rectangle path $L_0,L_{01},...,L_{0s(0)},L_1, L_{11},...,L_{1s(1)},L_2,...., L_{n-1},L_{(n-1)1},...,L_{(n-1)s(n-1)}, L_{n}$.
\end{defi}
\begin{defi}\label{d.homotopyC}
Consider $R_0,...,R_n$ a rectangle path, $p\in \clos{\mathcal{P}}$ a boundary arc point, $L_0\in \clos{\mathcal{R}}$ such that $L_0$ contains $p$ and the germs of two quadrants of $p$. Denote by $(L_0,L_1,L_2,L_3,L_4)$ and $(L_0,L_3,L_2,L_1,L_4)$ the two cycles around $p$ starting from $L_0$. Consider $(L_0,L_1...,L_k)$ the first $k+1$ terms of the first cycle with $k\in \llbracket 1, 4\rrbracket $ and $(L_0,L_3...,L_k)$ the part of the second cycle starting from $L_0$ and ending at $L_k$. 

Following the notations of Definition \ref{d.generalisedrectpaths}, we can associate $L_0,L_1,...,L_k$ and $L_0,L_3,...,L_k$ to the rectangle paths $L_0,L_{01},...,L_{0s(0)},L_1, L_{11},...,L_{1s(1)},...,L_k$ and \newline{}$L_0,L'_{01},...,L'_{0s'(0)},L_3, L'_{31},...,L'_{3s'(3)},...,L_k$ respectively. Assume that the first rectangle path is contained in $R_0,...,R_n$. In other words, assume that $R_0,...,R_n$ is of the form $$ R_0,...,R_m,L_0,L_{01},...,L_{0s(0)},L_1, L_{11},...,L_{1s(1)},..., L_k,R_l,R_{l+1},...,R_n$$  
Consider now the rectangle path 
$$R'_0,...,R'_{n'}:=R_0,...,R_m,L_0,L'_{01},...,L'_{0s'(0)},L_3, L'_{31},...,L'_{3s'(3)},...,L_k,R_l,R_{l+1},...,R_n$$

We are going to say that the rectangle paths $R_0,...,R_n$ and $R'_0,...,R'_{n'}$ are \emph{homotopic by a homotopy of type $\mathcal{C}$}.

Take $\gamma$ and $\delta$ two good polygonal curves associated to the rectangle paths $R_0,...,R_n$ and $R'_0,...,R'_{n'}$ respectively (see Figure \ref{f.homotopyC}). We will say that $\gamma$ is homotopic to $\delta$ by a \emph{homotopy of type $\mathcal{C}$} (relatively to $R_0,...,R_n$ and $R'_0,...,R'_{n'}$). 
\end{defi}
\begin{figure}[h!]
\includegraphics[scale=0.6]{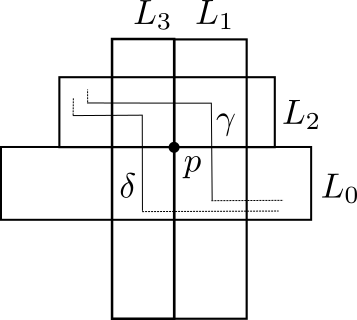}
\caption{}
\label{f.homotopyC}
\end{figure}


\begin{defi}
Take $R:=r_0,r_1,...,r_n$ and $R':=r_0,r'_1,...,r'_m$ two centered rectangle paths. We are going to say that $R$ and $R'$ are \emph{homotopic} if there exists $R_0:=R, R_1, ..., R_s:=R'$ a sequence of rectangle paths such that  $R_i$ is homotopic to $R_{i+1}$ by a homotopy of type $\mathcal{A}$, $\mathcal{B}$ or $\mathcal{C}$ (relatively to $R_i$ and $R_{i+1}$). 
\end{defi}

 Let us remark that because of our definition of homotopies of type $\mathcal{A},\mathcal{B} $ and $\mathcal{C}$, a closed rectangle path is only homotopic to closed rectangle paths. Therefore, a rectangle path homotopic to the trivial rectangle path, is necessarily closed. It turns out that the converse of the above result is also true:  

\begin{theorem}\label{t.homotopictotrivialpath}
Any centered closed rectangle path $r_0,r_1,...,r_n$ in $\clos{\mathcal{P}}$ is homotopic to the trivial rectangle path $r_0$

\end{theorem}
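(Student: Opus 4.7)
The plan is to associate to the closed rectangle path $r_0, r_1, \ldots, r_n = r_0$ a good closed polygonal curve $\gamma$ in $\clos{\mathcal{P}}$ via the analogue of Proposition \ref{p.rectanglepathtocurve} (valid in $\clos{\mathcal{P}}$ as explained at the start of Section \ref{s.closedpathsarehomotopicallytr}), and then to exploit the fact that $\clos{\mathcal{P}}$ is simply connected -- being the quotient of the plane $\widetilde{\mathcal{P}_{\Gamma,blowup}}$ obtained by collapsing each boundary line to a point -- to contract $\gamma$ through a finite sequence of homotopies of types $\mathcal{A}$, $\mathcal{B}$, and $\mathcal{C}$. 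The induction will be on a combined complexity: the number of segments of $\gamma$ together with the number of its self-intersections.

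First I would reduce to the case where $\gamma$ is a simple closed polygonal curve. If $\gamma$ has a self-intersection, an innermost sub-loop is a simple closed polygonal curve $\eta$, which corresponds by (the $\clos{\mathcal{P}}$-analogue of) Proposition \ref{p.curvetorectanglepath} to a closed rectangle sub-path; contracting this sub-path by induction eliminates the self-intersection, strictly decreasing the complexity. Assuming $\gamma$ simple, it bounds a disk $D \subset \clos{\mathcal{P}}$. By Lemma \ref{l.numbertangencies} there exists a stable tangency $s$ of $\gamma$ of type (a) or (b); by Lemma \ref{l.canonicalneighbourhoodtangency}, $s$ admits a canonical domain $N \subset D \cup \gamma$ trivially foliated by the stable foliation.

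The inductive step splits according to whether $N$ is complete or incomplete. If $N$ is complete, then its closure is exhausted by a finite sequence of rectangles of $\clos{\mathcal{R}}$ (obtained via Lemmas \ref{l.existenceofpredecessors} and \ref{l.crossingrectanglesnoperiodicpoints}), and a homotopy of type $\mathcal{A}$ slides the two unstable segments $u^-, u^+$ of $\gamma$ adjacent to $s$ across $N$ to the opposite stable boundary, producing a polygonal curve with two fewer unstable segments, hence a shorter rectangle path; any boundary arc point encountered on the traversed stable segment is bypassed by a homotopy of type $\mathcal{C}$ using Lemma \ref{l.twocycles}. If $N$ is incomplete, Remark \ref{r.incompletedomain} furnishes a stable segment $\sigma$ of $\gamma$ lying in the interior of the opposite stable boundary of $N$; then $\sigma$, together with a stable arc inside $N$ joining the endpoints of $\sigma$ to $\gamma$, splits $D$ into two disks whose boundaries are strictly shorter simple closed polygonal curves. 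Expressing the original rectangle path as a concatenation of the two sub-rectangle paths (with an inserted back-and-forth along $\sigma$ that cancels via a homotopy of type $\mathcal{B}$) lets one conclude by induction.

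The main obstacle is verifying that each of these topological reductions can genuinely be realised as a finite sequence of the three rigid combinatorial moves, especially when the domain $N$ or the disk $D$ contains points of $\clos{\Gamma}$. Since a boundary periodic point is a foliation singularity with the complicated structure described by Proposition \ref{p.singularitiesoffoliations}, sliding $\gamma$ across $N$ near such a point will in general force a detour through infinitely many stable separatrices and rectangles of $\clos{\mathcal{R}}$ accumulating on the singular point. However, the finiteness (up to the $\pi_1(M-\Gamma_M)$-action) of boundary periodic and boundary arc points provided by Propositions \ref{p.boundaryperiodicpoints} and \ref{p.arcpointsexist}, combined with the explicit local structure of cycles given by Lemma \ref{l.twocycles} and Remark \ref{r.cycles}, should allow each such detour to be decomposed into finitely many $\mathcal{C}$-moves followed by $\mathcal{A}$- and $\mathcal{B}$-moves, at the cost of some careful bookkeeping.
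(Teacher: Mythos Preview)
Your overall strategy is close to the paper's, but there is one key misunderstanding and one genuine gap.

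The misunderstanding concerns your stated ``main obstacle''. The interior of a simple closed good polygonal curve in $\clos{\mathcal{P}}$ can \emph{never} contain a point of $\clos{\Gamma}$. A punctured neighbourhood of any $\clos{\gamma}\in\clos{\Gamma}$ is homeomorphic to the universal cover of a punctured disk, hence simply connected; equivalently, in the description of $\clos{\mathcal{P}}$ as a quotient of $\widetilde{\mathcal{P}_{\Gamma,blowup}}$, each point of $\clos{\Gamma}$ arises from collapsing an entire boundary line, which no compact loop can enclose. The paper makes exactly this observation when showing that the number $M$ of boundary arc points in the interior of $\gamma$ is finite. So the infinite detours you anticipate never occur, and the bookkeeping you worry about is unnecessary.

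The genuine gap is in your reduction from non-simple to simple curves. If $\gamma$ self-intersects at $\gamma(y)=\gamma(x_0)$, the rectangles $\textit{Rect}_{\gamma,r_0}(y)$ and $\textit{Rect}_{\gamma,r_0}(x_0)$ both contain this point but need not coincide: one is in general a $k$-th successor or predecessor of the other. Hence the rectangle sub-path associated to the innermost loop is typically \emph{not} closed, and you cannot simply ``contract it by induction''. The paper handles this by proving the stronger Proposition~\ref{p.homotopictotrivialsimplecase}: for a simple closed good $\gamma$, the associated rectangle path is homotopic either to the trivial path \emph{or to a monotonous one}. The monotonous remainder is then cancelled against the complementary loop using Lemma~\ref{l.increasingrectanglepaths}, and this stronger inductive hypothesis is exactly what makes the self-intersection reduction go through.

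For the simple case itself, the paper's induction is on the pair $(M,n)$ in lexicographic order, with $M$ the number of boundary arc points enclosed: type-$\mathcal{C}$ moves decrease $M$, while types $\mathcal{A}$ and $\mathcal{B}$ decrease $n$. The paper also establishes (Lemma~\ref{l.existencetangenciespropstar}) that one can always find a tangency of type (a) or (b) with a \emph{complete} domain satisfying property $(\star)$, so your incomplete-domain splitting is never needed; what remains is a detailed case analysis (cases $(1')$--$(8')$ and $(1'')$--$(8'')$) according to which of the segments $U',s,U$ around the chosen tangency carry non-trivial rectangle sub-paths.
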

The above theorem constitutes the key argument in the proof of Theorem \ref{t.closedrectanglespathscorrespondtoclosedpaths}, which we will later show that it implies  Theorem B. 

\subsection{Proof of Theorem \ref{t.homotopictotrivialpath}} 
\label{s.proofoftheoremhomtrivialpath}
The proof of the above theorem is rather technical and will be split two parts: 
\begin{enumerate}
    \item Proving Theorem \ref{t.homotopictotrivialpath} in the case of rectangle paths that can be associated to simple closed curves: 
    \begin{prop}\label{p.homotopictotrivialsimplecase}
    Let $\gamma$ be a simple closed and good polygonal curve associated to the rectangle path $R_0,..,R_n$. 
    \begin{itemize}
        \item If $R_0=R_n$, then $R_0,...,R_n$ is homotopic to the trivial rectangle path.
        \item If $R_n$ is a successor or  predecessor of generation $k$ of $R_0$ and $R_0=r_0,r_1,..r_k=R_n$ is the unique monotonous rectangle path from $R_0$ to $R_n$, then $R_0,...,R_n$ is homotopic to $r_0,...,r_k$. 
    \end{itemize}
    \end{prop}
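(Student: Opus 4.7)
I will argue both bullets simultaneously by induction on the length of the polygonal curve $\gamma$, the key combinatorial input being Lemma \ref{l.numbertangencies}: any simple closed polygonal curve in $\clos{\mathcal{P}}$ carries at least two stable tangencies of type $(a)$ or $(b)$ (and, symmetrically, two unstable ones). The base case is trivial: a polygonal curve of length $0$ is constant, and the associated rectangle path is $R_0$.

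For the inductive step, fix a stable tangency $s$ of $\gamma$ of type $(a)$ or $(b)$ provided by Lemma \ref{l.numbertangencies} and let $D_s \subset D \cup \gamma$ be its canonical domain given by Lemma \ref{l.canonicalneighbourhoodtangency}. The domain $D_s$ is a bifoliated rectangle with two stable sides, $s$ and an opposite segment $s'$, and two unstable sides that are sub-arcs of the two unstable segments $u^{-},u^{+}$ of $\gamma$ adjacent to $s$. I distinguish two cases. \emph{Complete domain:} one of $u^{\pm}$, say $u^{-}$, coincides with an unstable side of $D_s$. In this case the portion of $\gamma$ formed by $u^-, s$ and the initial sub-arc of $u^+$ bounds (with $s'$) the rectangle region $D_s$, and the closed polygonal curve $\gamma'$ obtained from $\gamma$ by replacing this portion by $s'$ is a simple closed good polygonal curve (after an arbitrarily small perturbation to restore the goodness axioms of Definition \ref{d.goodcurve}) of strictly smaller length. \emph{Incomplete domain:} by Remark \ref{r.incompletedomain} the interior of $s'$ contains finitely many stable segments of $\gamma$; picking one that is innermost in the bifoliated strip $D_s$, together with $s$ and two unstable arcs one bounds a strictly smaller sub-simple-closed polygonal sub-curve $\gamma_0 \subset \gamma$ whose domain for the tangency $s$ is complete, reducing to the previous case.

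Now comes the transition from curves to rectangle paths. Cover $D_s$ by the finite collection of rectangles of $\clos{\mathcal{R}}$ meeting its interior (a finite family by Lemma \ref{l.existenceofpredecessors}, applied iteratively inside $D_s$). The rectangle path $R_0,\dots,R_n$ associated to $\gamma$, restricted to the sub-arc of $\gamma$ running along $\partial D_s$, can be rewritten, by successive homotopies of type $\mathcal{A}$, as the concatenation of monotonous sub-paths dictated by the crossings of the four sides of $\partial D_s$. At each boundary arc point of $\clos{\mathcal{R}}$ encountered on $\partial D_s$, the two possible cycles of Lemma \ref{l.twocycles} yield a homotopy of type $\mathcal{C}$ that lets the associated polygonal curve cross from one side of the arc point to the other; inside $D_s$, any resulting back-and-forth is cancelled by homotopies of type $\mathcal{B}$. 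Repeating this procedure along the finitely many boundary arc points lying on $\partial D_s$ shows that the rectangle path attached to $\gamma$ is homotopic to the rectangle path attached to $\gamma'$. The induction hypothesis then concludes the first bullet.

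For the second bullet, when $R_n$ is a $k$-th successor or predecessor of $R_0$ with associated monotonous path $r_0=R_0,\dots,r_k=R_n$, close the good polygonal curve $\gamma$ by concatenating it with a small good polygonal arc $\delta$ contained in $R_0 \cap R_n$ (non-empty by Definition \ref{d.successorinpbar}) to obtain a closed good polygonal curve $\hat{\gamma}$; after a general-position perturbation $\hat{\gamma}$ decomposes into finitely many simple closed good polygonal curves, to each of which the first bullet applies. The associated rectangle path of $\hat{\gamma}$ is the concatenation of $R_0,\dots,R_n$ with the reverse of $r_0,\dots,r_k$, and its homotopy to the trivial path translates, by reading the homotopy backwards, into a homotopy between $R_0,\dots,R_n$ and $r_0,\dots,r_k$. \emph{The main obstacle} is the bookkeeping in the transition above: one must verify that every rearrangement of a rectangle path along $\partial D_s$ is implementable using exactly the three homotopy types $\mathcal{A}$, $\mathcal{B}$, $\mathcal{C}$ and that the cycles around boundary arc points meeting $\partial D_s$ (whose orientation alternates between positive and negative depending on which side of $D_s$ they sit) match the sense in which $\gamma$ crosses these points; this is where Lemmas \ref{l.crossingrectanglesnoperiodicpoints}, \ref{l.crossingrectangleswithperiodicpoints} and the two-cycle alternative of Lemma \ref{l.twocycles} must be combined carefully.
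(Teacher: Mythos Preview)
Your induction scheme is the main gap. You induct only on the length of $\gamma$, but when the domain $D_s$ contains boundary arc points you must apply homotopies of type $\mathcal{C}$ to let the curve cross them, and a type-$\mathcal{C}$ homotopy does not decrease the length of the associated polygonal curve (in fact it may increase it, since the two cycles around a boundary arc point have different associated rectangle-path lengths). So your inductive hypothesis is unavailable exactly when it is needed. The paper handles this by inducting on the pair $(M,n)$ in lexicographic order, where $M$ is the number of boundary arc points in the interior of $\gamma$: a type-$\mathcal{C}$ move strictly drops $M$ even if $n$ grows, while the length-reducing moves of type $\mathcal{A}$ and $\mathcal{B}$ operate at fixed $M$. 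Without this second induction parameter your argument does not terminate.

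There are two further problems. First, your treatment of the second bullet misreads the hypothesis: $\gamma$ is already a simple \emph{closed} curve in both bullets (the distinction is only whether the final rectangle $R_n$ of the associated path equals $R_0$ or is a $k$-th predecessor/successor containing $\gamma(0)=\gamma(1)$), so ``closing $\gamma$ by an arc $\delta$'' makes no sense; the paper instead proves both bullets simultaneously within the same $(M,n)$-induction. Second, your ``transition from curves to rectangle paths'' is not an argument: a homotopy of type $\mathcal{A}$ by definition relates two curves with the \emph{same} rectangle path, so it cannot ``rewrite'' a rectangle path into monotonous sub-paths. The actual work here --- the careful $\{0,1\}^3$ case analysis of whether each of $U',s,U$ exits its current rectangle, together with Lemma \ref{l.increasingrectanglepaths} to straighten words in a monotone chain --- is precisely what the paper spends several pages on and cannot be absorbed into a single sentence.
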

    \item Proposition \ref{p.homotopictotrivialsimplecase} implies the general case 
\end{enumerate}

\subsubsection*{Proof of Proposition \ref{p.homotopictotrivialsimplecase}}
We will prove  Proposition \ref{p.homotopictotrivialsimplecase} by induction. 
Let $\gamma$ be a good closed polygonal curve associated to $R_0,...,R_n$ and $M$ be the number of boundary arc points in the interior of $\gamma$ (i.e. the closure of the bounded connected component of $\clos{\mathcal{P}}-\gamma$). 

Notice here that $M<\infty$. Indeed, by compactedness, we can cover the interior of $\gamma$ by a finite number of rectangles of our Markovian family $\clos{\mathcal{R}}$. By Lemmas  \ref{l.crossingrectanglesnoperiodicpoints},  \ref{l.crossingrectangleswithperiodicpoints} and Remark \ref{r.caracterisationarcpoints}, the number of boundary arc points in the interior of $\gamma$ is finite except if a point of $\clos{\Gamma}$ is in the interior of $\gamma$. This is impossible, since $\gamma$ is good and no loop in $\clos{\mathcal{P}}$ can go around a point in $\clos{\Gamma}$. 

Let $n$ be the length of the good polygonal curve $\gamma$. We will apply an induction on the well ordered set of couples $(M,n)\in \mathbb{N}\times \mathbb{N}$ endowed with the lexicographic order: $(M_1,n_1)<(M_2,n_2)$ if and only if $M_1<M_2$ or $M_1=M_2$ and $n_1<n_2$.  At every step of our induction, we  will perform a finite sequence of homotopies of type $\mathcal{A}, \mathcal{B}$ or $\mathcal{C}$ on $\gamma$ -therefore also on its associated rectangle path-, that will produce a new simple, good and closed polygonal curve $\gamma'$ such that $(M(\gamma'),n(\gamma'))<(M(\gamma),n(\gamma))$
 
Notice that any simple and good polygonal curve has length at least equal to 4. Hence, the smallest such curves have the form of a rectangle. We will therefore initialize our induction by considering the case where $\gamma$  is a rectangle containing no boundary arc points in its interior. 

Let us first fix some notations. By Remark \ref{r.polygonalcurverectassociation}, there exists $0=c_0<c_1<...c_{n+1}=1$ a $(n+2)$-uple such that $\gamma(c_i,c_{i+1})\subset R_i$ and a function $\textit{Rect}_{\gamma,R_0}:[0,1]\rightarrow \lbrace R_0,...,R_n\rbrace$ associated to $c_0<c_1<...c_{n+1}$ sending points of $\gamma$ to rectangles. Fix such a collection of $c_i$. We will assume that we have chosen the $c_i$ as in  Remark \ref{r.choiceci}. Also, by a small abuse of language and notation, for the sake of simplicity if $A$ is a connected subset of $ \gamma([0,1])$, then we will denote $\textit{Rect}_{\gamma,R_0}(\gamma^{-1}(A))$ by  $\textit{Rect}_{\gamma,R_0}(A)$ and we will refer to it as the rectangle path associated to $A$.

\subsubsection*{Initializing the induction} 
Assume that $(M(\gamma),n(\gamma))=(0,4)$. We will show that under this hypothesis $\gamma$ is associated to a rectangle path of the desired form.

 By Remark \ref{r.gamma0},  $\gamma(0)$ is a corner of the rectangle $\gamma$. Let us denote by $u',s,u,s'$, following the order in which $\gamma$ visits them, the four segments contained in $\gamma$.

We will associate a segment of $\gamma$, say $S$, to $0$ if the rectangle path $\textit{Rect}_{\gamma,R_0} (S)$ is trivial and to $1$ if not. We can therefore associate $\gamma$ to an element of $\{0,1\}^4$. Our method for associating rectangle paths to curves (see Proposition \ref{p.curvetorectanglepath}) restricts the elements of $\{0,1\}^4$ that can be associated to $\gamma$. 

\begin{lemm}\label{l.possiblecases}
If $\gamma$ is a rectangle, then the only elements of $\{0,1\}^4$ that can be associated to $\gamma$ are $(0,0,0,0),(0,1,1,1)$, $(0,1,0,0), (1,0, 0,0)$ and $(1,1,1,1)$. If furthermore, the interior of $\gamma$ contains no boundary arc points, $(1,1,1,1)$ and $(0,1,1,1)$ are impossible.
\end{lemm}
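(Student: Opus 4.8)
I would argue by a careful case analysis on the corner point $\gamma(0)$ and on how the associated rectangle path is built by the algorithm of Proposition \ref{p.curvetorectanglepath}. Write $\gamma$ as the juxtaposition of four segments, which (since $\gamma(0)$ is a corner) are alternately unstable and stable; following the proof's notation let them be $u', s, u, s'$ in the order $\gamma$ visits them. The key observation is that the algorithm associating a rectangle path to $\gamma$ only enlarges the path (produces a nontrivial subpath $\textit{Rect}_{\gamma,R_0}(S)$) when the segment $S$ genuinely \emph{exits} the current rectangle through one of its boundary components, and that once a segment exits, the \emph{next} segment, being perpendicular and starting at the exit point, is forced either to stay inside the crossing successor/predecessor just reached or to exit it again in a controlled way. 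Concretely, I would track, after processing each segment, which rectangle of the path we are "sitting in" and which boundary components of the original rectangle $\gamma$ lie inside it; this bookkeeping restricts which $0/1$ patterns are realizable.

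**Main steps.** First I would set up the bookkeeping: by Remark \ref{r.choiceci} choose the $c_i$ so that the rectangle path associated to each of the four segments of $\gamma$ is monotonous (increasing for unstable segments crossing unstable boundaries, decreasing for stable ones). Second, I would note that the first segment $u'$ starts at the corner $\gamma(0)\in\overset{\circ}{R_0}$; whether $\textit{Rect}_{\gamma,R_0}(u')$ is trivial or not depends only on whether $u'$ reaches an unstable boundary component of $R_0$. Third — the heart — I would show that if $u'$ is associated to the trivial path (first coordinate $0$), then by the crossing-successor/predecessor structure the curve $\gamma$ stays in a single rectangle long enough that at most one of the remaining three segments can be nontrivial, and if exactly one is, it is forced to be $s$ (the segment immediately after $u'$), giving $(0,1,0,0)$ or $(0,0,0,0)$; whereas if $u'$ is nontrivial (first coordinate $1$), the exit point is a corner-type situation and by Remark \ref{r.caracterisationarcpoints} the only consistent continuations give $(1,0,0,0)$ or $(1,1,1,1)$. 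The pattern $(0,1,1,1)$ arises symmetrically when one starts the bookkeeping from the other corner. Fourth, I would rule out $(1,1,1,1)$ and $(0,1,1,1)$ when $\mathring{\gamma}$ contains no boundary arc point: in both of these cases all three "late" segments are nontrivial, which forces the two stable boundary components $s,s'$ of $\gamma$ to cross unstable boundaries of rectangles of $\clos{\mathcal{R}}$ and the two unstable components to cross stable boundaries; applying the Markovian intersection axiom together with Lemmas \ref{l.crossingrectanglesnoperiodicpoints}, \ref{l.crossingrectangleswithperiodicpoints} and the characterization of boundary arc points (Remark \ref{r.caracterisationarcpoints}), one locates a boundary arc point at an intersection of one of the $\gamma$-segments with the unstable/stable boundary of a crossing predecessor/successor, and that point necessarily lies in the interior region bounded by $\gamma$ — contradiction.

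**Main obstacle.** The delicate part is the third step, the exhaustive verification that no other $0/1$ pattern than the five listed can occur. It requires being very precise about the algorithm's behaviour at corners: when a segment exits a rectangle exactly at a corner, one must invoke Remark \ref{r.caracterisationarcpoints} (is this corner a boundary arc point or not?) and the structure of $s$-crossing predecessors/successors to see which rectangle the following segment enters, and to check that patterns like $(0,0,1,0)$, $(0,0,0,1)$, $(1,1,0,0)$, $(1,0,1,0)$, etc., are incompatible with "$\gamma$ is a simple closed polygonal curve whose associated path starts at $R_0$". I expect this to reduce to a finite and somewhat tedious, but essentially pictorial, enumeration best organized by the position of $\gamma(0)$ relative to the boundaries of $R_0$ and by whether successive exit points are corners. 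The last step (excluding $(1,1,1,1)$ and $(0,1,1,1)$ under the no-arc-point hypothesis) is conceptually cleaner once the Markovian intersection axiom is brought to bear, but it too needs a careful choice of which segment-boundary intersection to examine.
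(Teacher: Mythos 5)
Your overall plan — a case analysis on the $0/1$ pattern of $(u',s,u,s')$, driven by the behaviour of the crossing-predecessor/successor structure at each exit — is exactly the approach the paper takes, and your steps 1, 2 and 4 are sound. But there is a concrete error in your step 3 that would make the proof fail. You claim that if the first coordinate is $0$ (i.e.\ $\textit{Rect}_{\gamma,R_0}(u')$ is trivial), then at most one of the remaining three segments can be nontrivial and it must be $s$, so the only outcomes are $(0,0,0,0)$ and $(0,1,0,0)$. This is false, and the paper's own case analysis shows why: when $u'\subset R_0$ but $s$ exits $R_0$ (so $(u',s)$ is associated to $(0,1)$), the segment $u$ starts at the corner $s\cap u$ inside $\textit{Rect}_{\gamma,R_0}(s\cap u)=R_N$ and may well exit $R_N$ in the unstable direction opposite to $u'$; if it does, $s'$ is then forced to exit too, producing the pattern $(0,1,1,1)$. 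So from $u'$ trivial one gets three possible patterns, not two. You then try to rescue $(0,1,1,1)$ by saying it "arises symmetrically when one starts the bookkeeping from the other corner," but that move is not available to you: the statement is about a fixed good polygonal curve $\gamma$, and by Remark \ref{r.gamma0} the base point $\gamma(0)$ is a fixed corner, so the cyclic order $(u',s,u,s')$ is determined. The lemma is asserting which patterns can occur for \emph{that} fixed parametrization, and $(0,1,1,1)$ genuinely occurs there.

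The fix is to redo the $(u',s)=(0,1)$ branch the way the paper does the $(1,1)$ branch: after $s$ exits $R_0$ into its final rectangle $R_N$, distinguish whether $u$ stays in $R_N$ (giving $(0,1,0,0)$, since $R_N$ is trivially bifoliated so $s'\subset R_N$ as well) or exits $R_N$ (in which case the same disjoint-interior argument used for $(1,1,1,1)$ forces $s'$ to exit as well, giving $(0,1,1,1)$). With that correction, the rest of your enumeration and your step 4 (locating a boundary arc point at the corner $\partial^u_+R_{u_N}\cap \partial^s_-R_{u_{N+1}}$ in the interior of $\gamma$, via Remark \ref{r.caracterisationarcpoints} and the domain of the tangency) matches the paper's argument.
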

\begin{proof}
Without any loss of generality, let us assume  that $u,u'$ are unstable segments and $s,s'$ are stable segments. 

If $(u',s)$ is associated to $(0,0)$, then Remark \ref{r.choiceci} implies that $u'$ and $s$ do not exit $R_0$ and since $R_0$ is trivially bifoliated $s',u\subset R_0$. Therefore, in this case, since $\gamma$ is a rectangle, $\gamma$ does not exit $R_0$ at all and is associated to $(0,0,0,0)$. 

Suppose now that $(u',s)$ is associated to $(1,1)$. We can assume without any loss of generality that the (oriented by $\gamma$) segments $u'$ and $s$ are positively oriented (for our choice of orientation of $\clos{\mathcal{F}^{s,u}}$). In this case, Remark \ref{r.choiceci} implies that $u'$ will exit $R_0$ in order to visit $R_{u_1}$, a $\partial^s_+R_0$-crossing predecessor of $R_0$ (see Figure \ref{f.impossiblecasesc}), next $u'$ will exit $R_{u_1}$ in order to visit $R_{u_2}$, a $\partial^s_+R_{u_1}$-crossing predecessor of $R_{u_1}$, and so on, until it exits $R_{u_{N-1}}$ and visits  $R_{u_N}:=\textit{Rect}_{\gamma,R_0} (u'\cap s)$. Similarly, $s$ will exit $R_{u_N}$ in order to visit $R_{u_{N+1}}$, a $\partial^u_+R_{u_N}$-crossing successor of $R_{u_N}$. Next, $s$ will exit $R_{u_{N+1}}$ in order to visit $\partial^u_+R_{u_{N+1}}$-crossing successor of $R_{u_{N+1}}$, and so on until it exits $R_{u_{m-1}}$ and visits  $R_m:=\textit{Rect}_{\gamma,R_0} (u\cap s)$ (see Figure \ref{f.impossiblecasesc}).  

Notice that $R_{u_m}$ (resp. $R_0$) is a successor of some generation of $R_{u_{N+1}}$ (resp. $R_{u_{N-1}}$). Also, since $u'$ exits $R_{u_{N-1}}$, we have that $R_{u_{N+1}}$ and $R_{u_{N-1}}$ correspond two distinct successors (of some generation) of $R_{u_N}$. Using the previous facts, it is not difficult to see that $\int{R_{u_m}}\cap \int{R_0}=\emptyset$. Consequently, if $u$ does not exit $R_{u_m}$, then the curve $\gamma$ cannot be a rectangle. Hence, as before, $u$ exits $R_{u_m}$ in order to visit a $\partial^s_-R_{u_{m}}$-crossing predecessor $R_{u_{m+1}}$ of $R_{u_m}$. After this, $u$ exits $R_{u_{m+1}}$ in order to visit a  crossing predecessor of $R_{u_{m+1}}$ and so on until it exits $R_{u_{k-1}}$ and visits $R_{u_k}:=\textit{Rect}_{\gamma,R_0} (u\cap s')$. By the same arguments as before, $\int{R_{u_k}}\cap \int{ R_{u_N}}=\emptyset$; hence $s'$ has to exit $R_{u_k}$ in order to reach $\gamma(0)\in R_{u_N}$. We conclude that if $(u',s)$ is associated to $(1,1)$, then $(u',s,u,s')$ is associated to $(1,1,1,1)$. 
\begin{figure}[h]
\centering 
\includegraphics[scale=0.7]{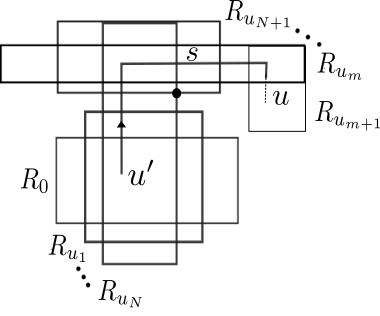}
\caption{}
\label{f.impossiblecasesc}
\end{figure}

If $(u',s)$ is associated to $(1,0)$, then as before $u'$ exits $R_0$ in order to visit $R_{u_1}$, a crossing predecessor of $R_0$, next $u'$ exits $R_{u_1}$ in order to visit a crossing predecessor of $R_{u_1}$ and so on until it exits $R_{u_{N-1}}$ and visits $R_{u_N}:=\textit{Rect}_{\gamma,R_0}(u'\cap s)$ a predecessor of some generation of $R_0$ that contains completely $u'$. Since $s$ by hypothesis does not exit $R_N$ and $R_N$ is trivially bifoliated, $u,s'$ do not exit $R_N$ either. Therefore, if $(u',s)$ is associated to $(1,0)$, then $(u',s,u,s')$ is associated to $(1,0,0,0)$.

We show in a similar way that if $(u',s)$ is associated to $(0,1)$, then $(u',s,u,s')$ is associated to $(0,1,0,0)$ or $(0,1,1,1)$.

Finally, let us show that if $\gamma$ is associated to an element of $\{0,1\}^4$ with three consecutive $1$s, then the interior of $\gamma$ contains boundary arc points. Assume that $\gamma$ is associated to $(1,1,1,1)$ (the case $(0,1,1,1)$ follows by the same argument). Following our previous notations (see the case $(u',s)$ associated to $(1,1)$), $u'\subset R_{u_N}$, $R_{u_{N+1}}$ is a $\partial^u_+R_{u_N}$-crossing successor of $R_{u_N}$ and $s$ exits $R_{u_N}$. By Remark \ref{r.incompletedomain}, the domain of $u'$ and $s$ coincide with the  interior of $\gamma$, therefore the interior of $\gamma$ is trivially bifoliated and contains the  $\partial^u_+R_{u_N}\cap \partial^s_-R_{u_{N+1}}$ (see Figure \ref{f.impossiblecasesc}), which is a boundary arc point according to Remark \ref{r.caracterisationarcpoints}. 
\end{proof}
We now ready to describe the initialization of our induction. Recall that $\gamma$ is associated to the rectangle path $R_0,...,R_n$. Since $\gamma$ is a rectangle and contains no boundary arc points in its interior, we have the following 3 cases:  

\begin{enumerate}
    \item If $\gamma$ corresponds to $(0,0,0,0)$ by our proof of Lemma \ref{l.possiblecases}, we have that $\gamma$ never exits $R_0$, it is therefore associated to the trivial rectangle path. 
    \item If $\gamma$ corresponds to $(1,0,0,0)$ by our proof of Lemma \ref{l.possiblecases} (see the case where $(u',s)$ is associated to $(1,0)$), $\gamma$ is associated to a decreasing rectangle path of the form $R_0,...,R_{u_N}$. 
    \item Similarly, to the previous case, if  $\gamma$ corresponds to $(0,1,0,0)$, then $\gamma$ is associated to an increasing rectangle path. 
\end{enumerate}
We deduce that if  $(M(\gamma),n(\gamma))=(0,4)$, then Proposition \ref{p.homotopictotrivialsimplecase} is true. Let us now describe the step of our induction. 

\subsubsection*{Induction step: the case where $\gamma$ is a rectangle with $M(\gamma)>0$}

Following our previous notations, by our proof of Lemma \ref{l.possiblecases}, we have that $\gamma$ is associated to either $(1,1,1,1)$ or $(0,1,1,1)$. Suppose that $\gamma$ is associated to $(1,1,1,1)$. Assume once again that the (oriented by $\gamma$) segments $u'$ and $s$ are positively oriented (for our choice of orientation of $\clos{\mathcal{F}^{s,u}}$). 
\begin{figure}
    \centering
    \includegraphics[scale=0.7]{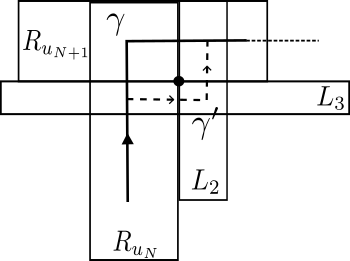}
    \caption{}
    \label{f.caserectanglehomtypec}
\end{figure}

During our proof of Lemma \ref{l.possiblecases}, we associated to $u'$ and $s$ a generalized rectangle path (see Definition \ref{d.generalisedrectpaths})  $\mathcal{R}_{u',s}:= R_0,R_{u_1},...,R_{u_N}, R_{u_{N+1}},...,R_{u_m}$. Notice that by construction, the rectangle path associated to $\mathcal{R}_{u',s}$ corresponds to the first part of the rectangle path associated to $\gamma$. Also, we showed that $u'\subset R_{u_N}$, $s$ exits $R_{u_N}$ in order to visit $R_{u_{N+1}}$ and that $\partial^u_+R_{u_N}\cap \partial^s_-R_{u_{N+1}}$ is a boundary arc point contained in the interior of $\gamma$ (see Figure \ref{f.impossiblecasesc}). Consider the negative cycle $(R_{u_N},R_{u_{N+1}},L_2,L_3,L_4)$ around the previous boundary arc point starting from  $R_{u_N}$. Using the previous cycle, we can  deform $\gamma$ by a homotopy of type $\mathcal{C}$ to a good and simple polygonal curve $\gamma'$ containing strictly less boundary arc points in its interior (see Figure \ref{f.caserectanglehomtypec}).  

We treat the case where $\gamma$ is associated to $(0,1,1,1)$ in a similar way.

\subsubsection*{Induction step in the general case} We will assume here that $\gamma$ is not a rectangle. In this case, our algorithm begins by choosing one tangency $s$ (with respect to the stable or unstable foliation) of type (a) or (b) with a complete domain $D$ (see Lemma \ref{l.canonicalneighbourhoodtangency}) such that we have the two following conditions:
\begin{itemize}
    \item $\gamma(0)\notin D$ or $\gamma(0)$ is a corner point of the rectangle $D$ that is not in $s$
    \item the interior of one side of $\partial D$ is disjoint from $\gamma$
\end{itemize}
We will call a tangency with the above properties, a tangency with property $(\star)$. If furthermore, $\gamma(0)\notin D$ we will say that the tangency satisfies the strong $(\star)$ property. 

\begin{lemm}\label{l.existencetangenciespropstar}
Let $\gamma$ be a good, simple and closed polygonal curve of length strictly greater than 4. There exists $s$ a tangency of $\gamma$ (with the respect to the stable or unstable foliation) with property $(\star)$. 
\end{lemm}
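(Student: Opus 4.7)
The plan is to use Lemma~\ref{l.numbertangencies} iteratively, descending through nested canonical domains when necessary, in order to produce a tangency of $\gamma$ satisfying property $(\star)$.

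First, I would produce a tangency of type (a) or (b) whose canonical domain is complete. Start with any stable tangency $s_0$ of type (a) or (b); such a tangency exists because Lemma~\ref{l.numbertangencies} yields at least two stable tangencies of this kind. If the canonical domain $D_0$ of $s_0$ (given by Lemma~\ref{l.canonicalneighbourhoodtangency}) is incomplete, then by Remark~\ref{r.incompletedomain} the stable boundary component of $D_0$ opposite to $s_0$ contains in its interior at least one (and, by goodness of $\gamma$, exactly one) stable segment $\sigma$ of $\gamma$. Together with the portion of $\gamma$ lying inside $D_0$, the segment $\sigma$ bounds a strictly smaller simple closed good polygonal sub-curve $\gamma_1$ of $\gamma$ contained in $D_0$. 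Reapplying Lemma~\ref{l.numbertangencies} to $\gamma_1$ produces a new stable tangency $s_1$ of type (a) or (b) for $\gamma_1$, whose canonical domain $D_1$ is a rectangle strictly contained in $D_0$. Because $\gamma$ has only finitely many stable segments, this descent terminates in finitely many steps and produces a tangency whose canonical domain is complete.

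Next, I would refine the choice in order to satisfy the two remaining requirements of property $(\star)$. Among all tangencies (stable or unstable) of type (a) or (b) with complete canonical domain, select one whose domain $D$ is minimal, for instance by minimizing the number of stable segments of $\gamma$ contained in $\overline{D}$. For this minimal $D$, the stable side $s'$ opposite to $s$ must then have interior disjoint from $\gamma$; otherwise a stable segment of $\gamma$ crossing the interior of $s'$ would, as in the first step, yield a strictly smaller complete domain inside $D$, contradicting minimality. Hence $s'$ provides a side of $\partial D$ whose interior avoids $\gamma$. To handle $\gamma(0)$, note that Lemma~\ref{l.numbertangencies} provides at least four tangencies of type (a) or (b) in total (at least two stable plus at least two unstable), while $\gamma(0)$ is a single corner point of $\gamma$; by adjusting the choice of tangency and exploiting the flexibility offered by switching between the stable and unstable foliations, one can arrange that $\gamma(0)$ either lies outside $D$, or lies on $\partial D$ only as a corner disjoint from $s$.

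The main obstacle will be the descent argument of the first step: in particular, verifying that the sub-curve $\gamma_1$ obtained by cutting $D_0$ along the interior stable segment $\sigma$ remains simple, closed and good, so that Lemma~\ref{l.numbertangencies} may be legitimately reapplied, and that the resulting tangency lies strictly inside $D_0$. A secondary subtlety is to simultaneously guarantee the side-disjointness condition and the condition on $\gamma(0)$; if these two conditions appear to conflict, one uses the extra room provided by choosing among stable versus unstable tangencies. The termination of the overall procedure is ensured by the finite length of $\gamma$ and the finiteness of the set of boundary arc points inside $\gamma$, as already established in the proof of Proposition~\ref{p.homotopictotrivialsimplecase}.
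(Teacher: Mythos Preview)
Your descent argument in the first step does not work as written. Recall that the canonical domain $D_0$ of $s_0$ is constructed in Lemma~\ref{l.canonicalneighbourhoodtangency} precisely so that $\int{D_0}\cap\gamma=\emptyset$; the only pieces of $\gamma$ touching $D_0$ are $s_0$, portions of the two adjacent unstable segments, and possibly the segment $\sigma$ on the far stable side $s'$. So your ``sub-curve $\gamma_1$ contained in $D_0$'' can be nothing other than the rectangular boundary of $D_0$ itself, and applying Lemma~\ref{l.numbertangencies} to that rectangle returns you to $s_0$ with the same domain $D_0$: there is no strict descent. More fundamentally, a tangency of an auxiliary curve $\gamma_1$ is not a tangency of $\gamma$, so even if $\gamma_1$ were nontrivial the output of Lemma~\ref{l.numbertangencies} would not be usable. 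Note also that $\sigma$ itself is a tangency of $\gamma$ of type (c) or (d), not (a) or (b), so it cannot serve as the next $s_1$.

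Your second step inherits the same problem: domains of distinct tangencies are not nested in general, so ``minimal domain'' is not well defined, and your claim that a segment in $\int{s'}$ produces a strictly smaller \emph{complete} domain inside $D$ is exactly the assertion you are trying to prove.

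The paper avoids the descent entirely by a counting argument. Each tangency of type (a) or (b) with \emph{incomplete} domain is assigned the (c)/(d) tangency $\sigma$ lying on the far stable side of its domain; this assignment is injective because a (c)/(d) tangency has the interior of $\gamma$ on a single side. Since Lemma~\ref{l.numbertangencies} gives two more (a)/(b) tangencies than (c)/(d) tangencies, at least two (a)/(b) tangencies must have \emph{complete} domain. One of these can be chosen with $\gamma(0)\notin s$; if its far side $s'$ still meets $\gamma$, then (because $\gamma$ is good and not a rectangle) the adjacent unstable segment $u$ is itself a tangency of type (a)/(b) with complete domain, and one finishes by switching between the two available stable tangencies if $\gamma(0)$ happens to lie on $u$.
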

\begin{proof}Indeed, take $s$ to be a stable tangency of $\gamma$ of type (a) or (b) (the existence of at least 2 of such tangencies is assured by Lemma \ref{l.numbertangencies}). Suppose that the domain of $s$, that we will denote by $D$, is not complete. In that case, by Remark \ref{r.incompletedomain}, there exists a finite number of (stable) segments of $\gamma$ in the interior of the stable boundary of $D$ that is not $s$. Take $s'$ one such segment. Since $\int{D}\cap \gamma=\emptyset$ and $D$ is incomplete, $s'$ is a tangency of type (c) or (d). Therefore, to every tangency of type (a) or (b) with an incomplete domain we can associate at least one tangency of type (c) or (d). We deduce by Lemma \ref{l.numbertangencies} that there exist at least two tangencies of type (a) or (b) for $\gamma$, whose domains are complete. 


Consider now $s$ a tangency (with respect to the stable foliation) of type (a) or (b) with a complete domain $D$. The function $\gamma:\mathbb{S}^1\rightarrow \clos{\mathcal{P}}$ induces a cyclic order on the segments forming $\gamma$. Denote by $u$ (resp. $u'$) the unstable segment of $\gamma$ after (resp. before) $s$ (see Figure \ref{f.reduction1case}) and $s'$ the stable boundary component of $D$ that is not $s$. By  definition, the domain of $s$ satisfies,  $\int{D}\cap \gamma=\emptyset$, hence if  $\gamma(0)\in D$ then $\gamma\in \partial D$. 

If $\gamma(0)\in D$, then by Remark \ref{r.gamma0}, $\gamma(0)$ cannot belong to the interior of $s$, $u$ or $u'$. Therefore, $\gamma(0)\in s'$ or $\gamma(0)\in \partial s$. Since there are at least 2 tangencies (with respect to the stable foliation) of type (a) or (b) with a complete domain, we can  assume without any loss of generality that $\gamma(0) \notin s$. 

Assume now that $\int{s'}$ intersects $\gamma$. We can assume without any loss of generality that $u\subset D$ (since $D$ is complete). Since $\gamma$ is good, any two stable segments of $\gamma$ do not belong to the same stable leaf of $\clos{\mathcal{F}}^s$. Therefore, we have necessarily that the segment of $\gamma$ after $u$, say $S$, is contained in $s'$. Since $\gamma$ is not a rectangle $S \subsetneq s'$; we are therefore in the situation of Figure  \ref{f.reductioncase1bis}. Hence, $u$ is a tangency (with respect to the unstable foliation) of type (a) or (b) with a complete domain.  If $\gamma(0)\notin u$, then $u$ has property $(\star)$. If $\gamma(0)\in u$, then take $s_{fin}\neq s$ to be another stable tangency of $\gamma$ with a complete domain (we showed that there are at least 2) and  notice that $\gamma(0)\notin s_{fin}$. If  $s_{fin}$ doesn't satisfy $(\star)$, then  repeat our argument for $s$ of this paragraph in order to construct an unstable tangency with propery $(\star)$. 
\end{proof}
\begin{figure}[h]
\centering 
\includegraphics[scale=0.5]{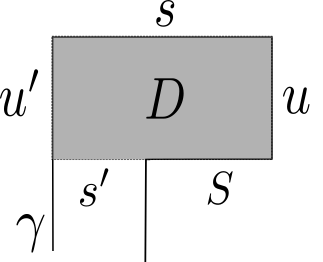}
\caption{}
\label{f.reductioncase1bis}
\end{figure}
Even when $\gamma$ is not a rectangle, it is not always possible to find a tangency with strong property $(\star)$. However, there exist  cases where this is possible:
\begin{lemm}\label{l.existencetangenciesgood}
Let $\gamma$ be a simple, closed and good polygonal curve that is not a rectangle. Assume that $\gamma(0)$ belongs to a tangency (with respect to the stable or unstable foliation) of type (a) or (b). Then there exists a tangency with strong property $(\star)$. 
\end{lemm}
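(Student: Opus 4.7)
The plan is to refine the counting argument of Lemma~\ref{l.existencetangenciespropstar} in order to locate a tangency of type (a) or (b) whose complete domain avoids $\gamma(0)$ altogether. I will assume without loss of generality that the tangency $\sigma$ containing $\gamma(0)$ is stable; otherwise one interchanges the roles of $\clos{\mathcal{F}^s}$ and $\clos{\mathcal{F}^u}$ throughout. First I would invoke Lemma~\ref{l.numbertangencies} applied to stable tangencies, together with the complete/incomplete dichotomy exactly as in the proof of Lemma~\ref{l.existencetangenciespropstar} (every stable tangency of type (a) or (b) with incomplete domain is paired with a distinct stable tangency of type (c) or (d) lying interiorly in its far stable side), to produce at least two stable tangencies $s_1, s_2$ of type (a) or (b) with complete domains $D_1, D_2$.

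The heart of the argument will then be showing that at least one of $D_1, D_2$ does not contain $\gamma(0)$. For any complete-domain stable tangency $s'$ of type (a) or (b), we have $\int{D_{s'}} \cap \gamma = \emptyset$, hence $\gamma(0) \in D_{s'}$ forces $\gamma(0) \in \partial D_{s'}$. By the description of $\partial D_{s'}$ provided by Lemma~\ref{l.canonicalneighbourhoodtangency}, its boundary consists of $s'$, one adjacent unstable segment $u^{\pm}$ of $\gamma$ in its entirety, a sub-segment of the other adjacent unstable segment, and a transverse stable segment $s''$ which is not contained in $\gamma$. Remark~\ref{r.gamma0} tells us $\gamma(0)$ is a corner of $\gamma$ and cannot sit in the interior of any segment of $\gamma$; combined with the previous description, this forces $\gamma(0)$ either to be a corner of $D_{s'}$ which happens to be a corner of $\gamma$, or to lie in the interior of $s''$. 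The hypothesis that $\gamma(0)$ is also an endpoint of the stable tangency $\sigma$ of type (a) or (b) rigidly prescribes which two segments of $\gamma$ meet at $\gamma(0)$ (one of them being $\sigma$ itself), so in the first possibility one checks that $s'$ must either equal $\sigma$ or share with $\sigma$ its extremal unstable segment — a rigid local picture which cannot simultaneously be realized by two distinct choices $s_1, s_2$.

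Once this is established, at least one of $s_1, s_2$ — call it $s_0$, with domain $D_0$ — satisfies $\gamma(0) \notin D_0$, and the proof of Lemma~\ref{l.existencetangenciespropstar} ensures $s_0$ has property $(\star)$; the extra condition $\gamma(0) \notin D_0$ then upgrades this to strong property $(\star)$, closing the argument. The hard part will be handling the case where $\gamma(0)$ lies in the interior of the transverse side $s''$ of some complete domain: in that scenario $\gamma(0)$ is not a corner of $D_{s'}$ yet it is a corner of $\gamma$, so the two segments of $\gamma$ adjacent to $\gamma(0)$ exit $D_{s'}$ transversally to $s''$ — and this configuration must be ruled out or bypassed. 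Following the strategy of the last paragraph of the proof of Lemma~\ref{l.existencetangenciespropstar}, the remedy will be to switch to an unstable tangency of type (a) or (b) with complete domain, produced by Lemma~\ref{l.numbertangencies} applied to $\clos{\mathcal{F}^u}$, and to re-run the argument with the roles of the two foliations exchanged; the hypothesis of the lemma provides precisely the flexibility needed in the choice of foliation.
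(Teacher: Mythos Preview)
Your overall strategy — locate a complete-domain tangency whose domain misses $\gamma(0)$ — is the right one, but you miss the one geometric observation that makes this immediate, and as a result your case analysis never closes.

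The paper's argument rests on the following fact: \emph{if $\gamma$ is not a rectangle, the domain $D$ of a stable tangency $s$ of type (a) or (b) is disjoint from every other stable tangency $\sigma$ of type (a) or (b).} The reason is that $\sigma\subset\gamma$ forces $\sigma\cap D\subset\partial D$; goodness rules out the leaf of $s$, so any nontrivial intersection lies in the far stable side $s''$. But then on one side of $\sigma$ you have $\int D\subset\operatorname{int}(\gamma)$, while on the other side the adjacent unstable segments $U^{\pm}$ of $\sigma$ must go (they cannot enter $\int D$), and since $\sigma$ has type (a)/(b) that side is also $\operatorname{int}(\gamma)$. A segment of the simple closed curve $\gamma$ cannot have interior on both sides, so this is impossible. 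Once you know this, pick any stable $s$ of type (a)/(b) with complete domain and $\gamma(0)\notin s$ (Lemma~\ref{l.numbertangencies} gives two such, and only $\sigma$ contains $\gamma(0)$); the observation yields $\gamma(0)\notin D$ outright, and then the Figure~\ref{f.reductioncase1bis} dichotomy from Lemma~\ref{l.existencetangenciespropstar} finishes, with $D'\subset D$ in the second branch.

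Your proposal does not use this observation, and the resulting gaps are real. First, the claim that both segments of $\gamma$ at $\gamma(0)$ ``exit $D_{s'}$ transversally to $s''$'' is false: one of those segments is $\sigma$, which is stable and lies in the \emph{same} leaf as $s''$, so it cannot cross $s''$ transversally. Second, and more importantly, the ``hard case'' $\gamma(0)\in\int{s''}$ that you flag is in fact impossible (exactly by the both-sides-interior argument above), so the proposed remedy of switching foliations is unnecessary — but as written your switching argument is not shown to terminate, and without ruling the case out you have no proof. Finally, the assertion that the ``rigid local picture cannot simultaneously be realized by two distinct choices $s_1,s_2$'' is left as a claim; proving it would essentially require the disjointness observation you are avoiding.
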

\begin{proof}
Indeed, assume without any loss of generality that $\gamma(0)\in S$, where $S$ is a tangency with respect to the stable foliation of type (a) or (b). By Lemma \ref{l.numbertangencies}, there exists $s$ is a  tangency with respect to the stable foliation with of type (a) or (b) such that $\gamma(0)\notin s$. By our proof of Lemma \ref{l.existencetangenciespropstar}, we can assume that $s$ has a complete domain $D$. Notice that $D$ cannot contain $\gamma(0)$, since a domain of a (stable) tangency of type (a) or (b) cannot intersect a (stable) tangency of type (a) or (b), except when $\gamma$ is a rectangle. By our proof Lemma \ref{l.existencetangenciespropstar}, either $s$ has strong property $(\star)$ and we get the desired result or we are in the case of Figure \ref{f.reductioncase1bis}. In this case, using the notations of Figure \ref{f.reductioncase1bis}, the segment $u$ has property $(\star)$ and its domain $D'\subset D$ does not contain $\gamma(0)$. 
\end{proof}

We will now describe our induction step in the following two subcases:
\begin{enumerate}
    \item there exists $s$ a tangency with strong property $(\star)$ 
    \item no such tangency exists
\end{enumerate}
\subsubsection*{Induction step in the case where a tangency with strong $(\star)$ property exists}

Assume without any loss of generality that the stable tangency $s$ of $\gamma$ has strong property $(\star)$. The function $\gamma:[0,1]\rightarrow \clos{\mathcal{P}}$ endows the segments forming $\gamma$ with a total order, for which $s$ is neither the first nor the last segment (since $\gamma(0)\notin s$). Let us denote by $U'$ (resp. $U$) the unstable segment of $\gamma$ before (resp. after) $s$ and by $s'$ the stable side of the domain of $s$, say $D$,  that is not $s$. We define $u':=U'\cap D$ and $u:=U\cap D$. We are therefore -up to a change of orientation and up to interchanging $u'$ and $u$- in the case of Figure \ref{f.reduction1case}. 
\begin{figure}[h]
\centering 
\includegraphics[scale=0.55]{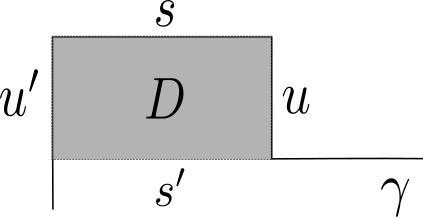}
\caption{}
\label{f.reduction1case}
\end{figure}


As in the previous case, we will associate a segment of $\gamma$, say $S$, to $0$ if $\textit{Rect}_{\gamma,R_0} (S)$ consists of one rectangle and to $1$ if not. We can therefore associate $U',s,U$ to an element of $\{0,1\}^3$. 

We are now ready to describe the induction step in this case: 
\begin{enumerate}[(1{$'$})]
    \item If $(U',s,U)$ is associated to $(0,0,0)$, then  $\textit{Rect}_{\gamma,R_0}(U')=\textit{Rect}_{\gamma,R_0}(U)=\textit{Rect}_{\gamma,R_0}(s)=\{R\}$ and by a homotopy of type $\mathcal{A}$ we can push $s$ along $D\subset R$ in order that $s$ be identified with $s'$ (see Figure \ref{f.reduction1case}). Hence, $\gamma$ is homotopic to a simple, good and closed polygonal curve of strictly smaller length.
    \begin{figure}[h]
  \begin{minipage}[ht]{0.4\textwidth}
    \hspace{0.7cm}   
    \includegraphics[width=0.6\textwidth]{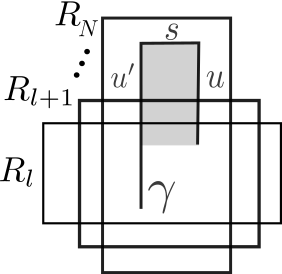}
     
    \caption*{(a)}
    
  \end{minipage}
 \begin{minipage}[ht]{0.4\textwidth}

    \includegraphics[width=0.6\textwidth]{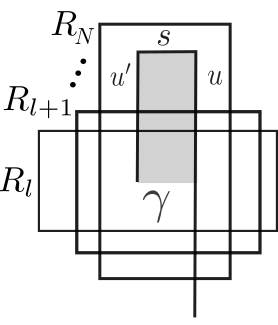}
    
    \caption*{\hspace{-2cm}(b)}
    
  \end{minipage}
\caption{}
\label{f.impossiblecasesab}
\end{figure}
    \item If $(U',s,U)$ is associated to $(1,0,1)$, we are in the case of case of Figure \ref{f.impossiblecasesab}b, where $\{R_l,...,R_N\}=\textit{Rect}_{\gamma,R_0}(U')$, $N>l$ and $u'=U'$. By pushing $\gamma$ along $D$ so that $s$ be identified with $s'$, we can construct a simple, good and closed polygonal curve $\gamma'$ of strictly smaller length.

    Let us show that the previous movement corresponds to a homotopy of type $\mathcal{A}$. Indeed, our  movement pushes the  segments $u',s,u$ of $\gamma$ to the segment $s'$ of $\gamma'$, while keeping $
\gamma-(u'\cup s \cup u)$ fixed. Let $a,b\in [0,1]$ such that $\gamma([a,b])=U'\cup s \cup u$. By eventually reparametrizing $\gamma'$ (this corresponds to a homotopy of type $\mathcal{A}$), we can assume that $\gamma'([a,b])=s'$. Similarly to $\gamma$ we define the function $\textit{Rect}_{\gamma',R_0}$. Notice that the rectangle paths associated to $\gamma_{|[0,a]}\equiv \gamma'_{|[0,a]} $ by $\textit{Rect}_{\gamma',R_0}$ and $\textit{Rect}_{\gamma,R_0}$ are identical, that  $\textit{Rect}_{\gamma',R_0}(s')=\textit{Rect}_{\gamma,R_0}(a)=\{R_l\}$ and that both $u_{\text{after}}:=U-u\subset \gamma'$ and $U\subset \gamma$ will exit $R_N$ from one of its stable boundaries in order to enter a crossing  predecessor of $R_N$, therefore $\{R_l,R_{l+1},...,R_N \} \subset \textit{Rect}_{\gamma',R_0}(u_{\text{after}})$. Since $\gamma$ and $\gamma'$ coincide on $[b,1]$, we deduce that $\gamma'$ is associated to the same rectangle path as $\gamma$ and our movement corresponds indeed to a homotopy of type $\mathcal{A}$.  
    \item If $(U',s,U)$ is associated to $(0,0,1)$, we are in the case of  Figure \ref{f.impossiblecasesab}b, where $N=l$ and $U'=u'$. As in case $(2')$, by pushing $\gamma$ along $D$ so that we erase $U'$ and thus performing a homotopy of type $\mathcal{A}$, we get  that $\gamma$ is homotopic to a simple,  good and closed polygonal curve of strictly smaller length. 
    \item If $(U',s,U)$ is associated to $(1,1,0)$, we are in the case of Figure \ref{f.impossiblecasescsecond}, where $\{R_l,...,R_N\}=\textit{Rect}_{\gamma,R_0}(U')$,  $\{R_N,R_{N+1}
,...,R_m\}=\textit{Rect}_{\gamma,R_0}(s)$, $u=U$ and $l<N<m$. By pushing $\gamma$ along $D$ so that we erase $U$ and thus performing a homotopy of type $\mathcal{A}$, we obtain that $\gamma$ is homotopic to a simple, good and closed polygonal curve of strictly smaller length. 
\begin{figure}[h]
\centering 
\includegraphics[scale=0.5]{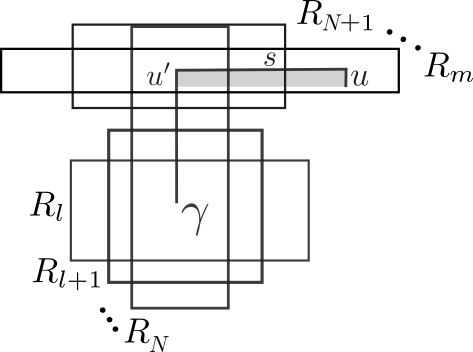}
\caption{}
\label{f.impossiblecasescsecond}
\end{figure}
\item If $(U',s,U)$ is associated to $(0,1,0)$, we are again in the case of Figure \ref{f.impossiblecasescsecond}, with $m>N=l$. As in case $(4')$, by pushing $\gamma$ along $D$ so that we erase $U$ or $U'$ and thus performing a homotopy of type $\mathcal{A}$, we obtain that $\gamma$ is homotopic to a simple, good and closed polygonal curve of strictly smaller length. 
\item Assume now that $(U',s,U)$ is associated to $(1,1,1)$. Denote by $x_{su}$ (resp. $x_{su'}, x_{s'u'}$) the unique point of intersection of $s$ and $u$ (resp. $s$ and $u'$, $s'$ and $u'$), by $R_{u'}$ and $R_s$ the rectangles $\textit{Rect}_{\gamma,R_0}(x_{su'})$ and $\textit{Rect}_{\gamma,R_0}(x_{su})$ respectfully. Notice that since $\textit{Rect}_{\gamma,R_0}(s)$ is not trivial, $R_s\neq R_{u'}$ (see Figure \ref{f.impossiblecase1}). Also, since $\textit{Rect}_{\gamma,R_0}(U)$ is not trivial $U$ will exit $R_s$ in order to enter a crossing predecessor of $R_s$, say $R_u$. By the same argument, by following $U'$ negatively starting from $x_{su'}$, $U'$ must exit $R_s$. We are therefore in the case of Figure \ref{f.impossiblecase1}. As in the case described in \textit{Induction step: the case where $\gamma$ is a rectangle with $M(\gamma)>0$}, by performing a homotopy of type $\mathcal{C}$ inside $D$, we get that $\gamma$ is homotopic to a simple, good and closed polygonal curve with strictly less boundary arc points in its interior. 
\begin{figure}
    \centering
    \includegraphics[scale=0.5]{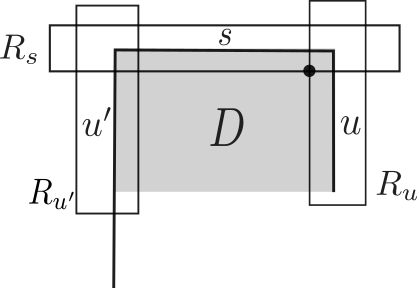}
    \caption{$D$ contains boundary arc points}
    \label{f.impossiblecase1}
\end{figure}

\item If $(U',s,U)$ is associated to $(0,1,1)$, by a similar argument as in the previous case we can show that 
\begin{itemize}
    \item If $U'\not\subset R_s:= \textit{Rect}_{\gamma,R_0}(x_{su})$, we are in the case of Figure \ref{f.impossiblecase1} (with $U'\subset R_{u'}$). Again, by performing a homotopy of type $\mathcal{C}$ inside $D$, we get that $\gamma$ is homotopic to a simple, good and closed polygonal curve with strictly less boundary arc points in its interior.  
    \item If $U'\subset R_s$, then we are in the case of Figure \ref{f.case011}, where $\{R_l,...,R_N\}=\textit{Rect}_{\gamma,R_0}(s)$,  $\{R_N,R_{N+1}
,...,R_m\}=\textit{Rect}_{\gamma,R_0}(U)$ and $u'=U'$. In this case, by pushing $\gamma$ along $D$ so that we erase $U'$ and thus performing a homotopy of type $\mathcal{A}$, we get  that $\gamma$ is homotopic to a simple,  good and closed polygonal curve of strictly smaller length.
     
\end{itemize}

\begin{figure}[h]
\centering 
\includegraphics[scale=0.55]{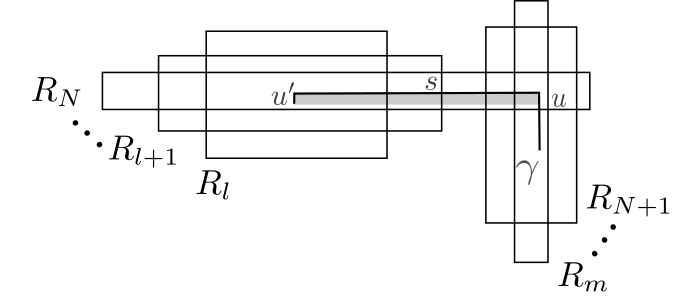}
\caption{}
\label{f.case011}
\end{figure}
    \item If $(U',s,U)$ is associated to $(1,0,0)$, we are in the case of case of Figure \ref{f.impossiblecasesab}a, where $\{R_l,...,R_N\}=\textit{Rect}_{\gamma,R_0}(u')$ and $N>l$. Let $A=\{R_l,...,R_N\}$ and  $a,b \in [0,1]$ such that $\gamma([a,b])=u'\cup s \cup u$.
    Before stating the induction step in this case, let us first state a very useful lemma: 
    
    \begin{lemm}\label{l.increasingrectanglepaths} 
    Let $B:=L_1,...,L_k$ be a decreasing rectangle path. Consider any rectangle path of the form $L_p,w_B,L_{p'}$, where $p,p'\in \llbracket 1,k\rrbracket$ and $w_B$ is a rectangle path formed by rectangles in $B$. If $p'>p$ (resp. $p>p'$, $p=p'$), the previous rectangle path is homotopic by a sequence of homotopies of type  $\mathcal{B}$ to a decreasing (resp. increasing, trivial) rectangle path. 
    \end{lemm}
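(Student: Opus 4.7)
The plan is to reduce the statement to a combinatorial fact about walks on $\mathbb{Z}$ with unit steps. The first step is to show that inside the decreasing chain $B = L_1, \ldots, L_k$, two rectangles $L_i$ and $L_j$ can appear consecutively in a rectangle path only when $|i - j| = 1$. Assuming $i < j$, I would argue by induction on $j - i$ that if $j \geq i + 2$, then the intersection $L_j \cap L_i$ is (possibly trivially) a vertical subrectangle of $L_i$ and is strictly contained in $L_{i+1} \cap L_i$: the Markovian intersection axiom together with Remark \ref{r.precedentsuivant} ensure that the stable extent of every $L_l$ ($l > i+1$) is contained in the stable extent of $L_{i+1}$, which is itself strictly smaller than that of $L_i$. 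It follows that $L_j \cap L_i$ is never a non-trivial horizontal subrectangle of $L_i$, so $L_j$ is not a successor of $L_i$; and from the strict inclusion $L_j \cap L_i \subsetneq L_{i+1} \cap L_i$, the rectangle $L_j$ fails the maximality condition in Definition \ref{d.successor}, so it is not a predecessor of $L_i$ either.

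Thanks to this observation, the rectangle path $L_p, w_B, L_{p'}$ is encoded by a finite sequence of indices $i_0 = p, i_1, \ldots, i_n = p'$ in $\llbracket 1, k \rrbracket$ with $|i_{l+1} - i_l| = 1$ for every $l$: a walk on $\mathbb{Z}$ with unit steps. Under this identification, a homotopy of type $\mathcal{B}$ (Definition \ref{d.homotopyB}) is exactly the operation of deleting a back-and-forth, namely removing a pair $i_{l+1}, i_{l+2}$ at a position where $i_l = i_{l+2}$; such a position exists precisely when $i_{l+1}$ is a strict local extremum of the walk.

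The core combinatorial claim is then: any walk on $\mathbb{Z}$ of length $n$ from $p$ to $p'$ by unit steps can be reduced, by finitely many back-and-forth deletions, to the unique monotone walk from $p$ to $p'$ (the constant walk when $p = p'$). I would prove this by induction on $n$. If $n = |p' - p|$, monotonicity is automatic. If $n > |p' - p|$, the walk cannot be monotone, so it admits a strict local extremum $i_{l+1}$ with $i_l = i_{l+2}$; deleting the corresponding back-and-forth produces a walk of length $n - 2$ from $p$ to $p'$ to which the induction hypothesis applies.

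Finally, I translate the combinatorial conclusion back to rectangles: since $B$ is decreasing, the step $L_i \to L_{i+1}$ is the passage to a predecessor and the reverse step $L_{i+1} \to L_i$ is the passage to a successor (Remark \ref{r.precedentsuivant}). Therefore the monotone walk from $p$ to $p' > p$ yields the decreasing rectangle path $L_p, L_{p+1}, \ldots, L_{p'}$; the monotone walk from $p$ to $p' < p$ yields the increasing rectangle path $L_p, L_{p-1}, \ldots, L_{p'}$; and the constant walk (when $p = p'$) yields the trivial rectangle path $L_p$. I expect the main obstacle to be the initial step — verifying that consecutive rectangles in a rectangle path inside $B$ must have indices differing by exactly one — since it requires carefully tracking the vertical and horizontal structure of nested intersections of rectangles in the chain.
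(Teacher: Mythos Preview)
Your proposal is correct and follows the same induction-on-length strategy that the paper indicates (the paper omits the proof, saying only that it ``can be easily proven by an induction on the length of $w_B$''). You have usefully made explicit the preliminary step the paper leaves implicit: that within the decreasing chain $B$, the only predecessor/successor relations are $L_{i+1} \leftrightarrow L_i$, so any rectangle path in $B$ is a nearest-neighbour walk on indices; your verification of this via the strict nesting $S(L_1)\supsetneq\cdots\supsetneq S(L_k)$ and the maximality clause in Definition~\ref{d.successor} is exactly what is needed.
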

    The above lemma can be easily proven by an induction on the length of $w_B$; we will therefore omit its proof.  
    
    \begin{enumerate}
        
        \item Assume that $\gamma_{|[b,1]}$ eventually exits $\underset{i=l}{\overset{N}{\cup}} R_i$ in order to visit a crossing predecessor (or successor) of $R_{k}\in A$. By pushing $\gamma$ along $D$ so that $s$ be identified with $s'$,  we obtain a simple, good and closed curve $\gamma'$ of strictly smaller length. Let us show that the previous movement corresponds to a sequence of homotopies of type $\mathcal{A}$ or $\mathcal{B}$.

    \par{\quad{} As in the case $(2')$, our  movement pushes the  segments $u',s,u$ of $\gamma$ to the segment $s'$ of $\gamma'$, while keeping $
    \gamma-(u'\cup s \cup u)$ fixed. By eventually reparametrizing $\gamma'$ (this corresponds to a homotopy of type $\mathcal{A}$), assume that $\gamma'([a,b])=s'$. Therefore, $\gamma\equiv \gamma'$ on $[0,a]\cup [b,1]$ and the rectangle paths (starting from $R_0$) associated to $\gamma_{|[0,a]}\equiv \gamma'_{|[0,a]} $ are identical. Let $R_m$ be the last rectangle in the previous rectangle paths. By eventually changing our initial choice of $c_i$ (see our discussion prior to \textit{Initializing the induction}), therefore  also the function $\textit{Rect}_{\gamma,R_0}$, we can assume that $R_m=R_l$. It suffices to show that the rectangle paths (starting from $R_l$) associated to $\gamma_{|[a,1]}$ and $\gamma'_{|[a,1]}$ are homotopic by a sequence of homotopies of type $\mathcal{A}$ and $\mathcal{B}$. 

    Since $\gamma([a,b])=u'\cup s \cup u \subset \underset{i=l}{\overset{N}{\cup}} R_i$,  $\gamma'([a,b])=s'\subset  R_l$ and by hypothesis $\gamma'([b,1])\equiv \gamma([b,1])$ remain in 
    $\underset{i=l}{\overset{N}{\cup}} R_i$ until they  exit this region in order to enter a crossing predecessor (or successor) of $R_k$, we have that there exists $R\notin A$ a predecessor (resp. successor) of $R_k$ and $W$ a rectangle path such that the rectangle paths (starting from $R_l$) associated to $\gamma'_{|[a,1]} $ and  $\gamma_{|[a,1]} $ are of the form  $R_l,w'_A,R_{k},R, W$ and $R_l,w_A,R_{k},R,W$ respectively, where $w'_A, w_A$ are words formed by rectangles in $A$. Notice that the previous two rectangle paths end in the same way since $\gamma\equiv \gamma'$ on $ [b,1]$. We conclude by remarking that $R_l,w_A,R_{k}$ and $R_l,w'_A,R_{k}$ are homotopic by a sequence of homotopies of type $\mathcal{B}$, thanks to  Lemma \ref{l.increasingrectanglepaths}.}
        
       \vspace{0.5cm}  
    \item Assume finally that    $\gamma([a,1])\subset \underset{i=l}{\overset{N}{\cup}} R_i$. In this case, we will show directly that the rectangle path associated to $\gamma$ is homotopic to a trivial or monotonous rectangle path. 
    
    \quad{}Consider $\gamma'$ the curve obtained by pushing $\gamma$ along $D$ so that $s$ be identified with $s'$. The curve $\gamma'$ is simple, good, closed and of strictly smaller length than $\gamma$. By eventually reparametrizing $\gamma'$, assume once again that $\gamma\equiv \gamma'$ on $[0,a]\cup[b,1]$; hence the rectangle paths (starting from $R_0$) associated to $\gamma_{|[0,a]}\equiv \gamma'_{|[0,a]} $ are identical. As before, we will assume that the last rectangle of the previous rectangle paths is $R_l$. Since $\gamma([a,1]),\gamma'([a,1]) \subset \underset{i=l}{\overset{N}{\cup}} R_i$, there exist $R_{k'},R_k\in A$ and $w_A,w'_A$ words formed by rectangles in $A$ such that the rectangle paths (starting from $R_l$) associated to  $\gamma'_{|[a,1]} $ and  $\gamma_{|[a,1]}$ are respectively of the form  $R_l,w'_A,R_{k'}$ and $R_l,w_A,R_{k}$. Notice that $\int{R_k}\cap \int{R_{k'}}\neq \emptyset$, since $\gamma(1)=\gamma'(1)$.

    \quad{}Assume without any loss of generality that $R_{k'}$ is a predecessor of some generation of $R_0$ (recall that $\gamma(0)=\gamma(1)\in \int{R_0}$) and apply the induction hypothesis for $\gamma'$, which verifies $(M(\gamma'),n(\gamma'))<(M(\gamma),n(\gamma))$. We have that the rectangle path associated to $\gamma'$ is homotopic to the decreasing path  $R_0=r_0',r'_1,...,r'_p=R_{k'}$. Consider the rectangle path associated to $\gamma$, which is of the form:  $$R_0,R_1,...,R_l,w_A,R_{k}$$ By Lemma \ref{l.increasingrectanglepaths}, the previous rectangle path is homotopic by a sequence of homotopies of type $\mathcal{B}$ to the rectangle path $$R_0,R_1,...,R_l,w'_A,R_{k'},w'^{-1}_A, R_l, w_A,R_{k}$$ where $w'^{-1}_A$ is the rectangle path $w_A'$ followed from its end to its beginning. The first part of this rectangle path is by hypothesis homotopic to the decreasing rectangle path $R_0=r_0',r'_1,...,r'_p=R_{k'}$. Therefore, the previous rectangle path is homotopic to $$R_0=r_0',r'_1,...,r'_p=R_{k'},w'^{-1}_A, R_l, w_A,R_{k}$$ Recall that the second part of this rectangle path consists only of rectangles in $A$ and that $\int{R_k}$ intersects $\int{R_{k'}}$. If $R_k=R_{k'}$, then by Lemma \ref{l.increasingrectanglepaths} we have that the rectangle path associated to $\gamma$ is homotopic to the decreasing path $R_0,r'_1,...,r'_{p}=R_{k}$, which gives us the desired result. If $R_{k}$ is a predecessor of some generation of $R_{k'}$, then by Lemma \ref{l.increasingrectanglepaths}, $R_{k'},w'^{-1}_A, R_l, w_A,R_{k}$ is homotopic to the decreasing rectangle path $r'_p=R_{k'}, r'_{p+1},...,r'_{p+s}=R_k$. We deduce that the rectangle path associated to $\gamma$ is homotopic to the decreasing rectangle path $R_0=r_0',r'_1,...,r'_p,r'_{p+1},...,r'_{p+s}=R_k$, which gives us the desired result. Finally, assume that $R_k$ is a successor of some generation of $R_{k'}$. By the same argument the rectangle path associated to $\gamma$ is homotopic to the concatenation of a decreasing (from $0$ to $p$) and an increasing (from $p$ to $(p+s)$) rectangle path $R_0,r'_1,...,r'_p=R_{k'},r'_{p+1},...,r'_{p+s}=R_k$. However, since $\int{R_k}\cap \int{R_0}\neq \emptyset$, we get that $r'_{p+1}=r'_{p-1}$. Therefore, by a homotopy of type $\mathcal{B}$, we can decrease the length of the previous path and keep doing this until we obtain an increasing,  decreasing or trivial rectangle path, which gives us the desired result.  
     \end{enumerate}  
\end{enumerate}


\vspace{0.5cm}
\subsubsection*{Induction step in the case where no tangency with strong $(\star)$ property exists}

Assume that the domains of all (stable and unstable) tangencies with property $(\star)$ contain $\gamma(0)$.

Consider $s$ a tangency with property $(\star)$ and $D$ its domain (its existence is ensured by Lemma \ref{l.existencetangenciespropstar}). We will assume without any loss of generality that $s$ is a stable tangecny. Let us  denote once again by $U'$ (resp.$U$) the unstable segment of $\gamma$ before (resp. after) $s$, by $s'$ the stable side of $D$ that is not $s$, $u':=U'\cap D$ and $u:=U\cap D$. We are thus -up to a change of orientation and up to interchanging $u'$ and $u$- in the case of Figure \ref{f.reduction1case}. Recall that by definition of property $(\star)$, $\gamma(0)\notin s$, therefore $\gamma(0)\in s' \cap U'$ or $\gamma(0)\in s' \cap U$.

\begin{lemm}\label{l.reductiontofirstsubcase} 
Let $R_0$ and $R_n$ be the first and last rectangles of the rectangle path associated to $\gamma$. If $\gamma(0)\in U'$ (resp. $\gamma(0)\in U$), $\textit{Rect}_{\gamma,R_0} (U')$ (resp. $\textit{Rect}_{\gamma,R_0} (U)$) is trivial and $U'\subset R_n$ (resp. $U\subset R_0$), then $\gamma$ is homotopic by a homotopy of type $\mathcal{A}$ to a simple, closed and good polygonal curve having at least one tangency with strong property $(\star)$. 
\end{lemm}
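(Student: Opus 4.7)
The plan is to construct a reparametrization $\gamma'$ of $\gamma$ with the same associated rectangle path but with a new basepoint lying outside $\clos{D}$, so that $s$ will satisfy the strong $(\star)$ property for $\gamma'$. Since two good polygonal curves realizing the same rectangle path are homotopic by a homotopy of type $\mathcal{A}$, this will yield the conclusion.

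Assume $\gamma(0)\in U'$; the case $\gamma(0)\in U$ is symmetric. Because $s$ has $(\star)$ but not strong $(\star)$, $\gamma(0)\in D$ and is a corner of $D$ not on $s$. The only corner of $D$ which can lie on $U'$ and is not on $s$ is the corner $s'\cap u'$ where $u':=U'\cap D$, so $\gamma(0)=s'\cap u'$. Since $\gamma(0)$ is then an endpoint of $U'$, the segment $U'$ is necessarily the first segment of $\gamma$ in its linear parametrization. The hypotheses $\textit{Rect}_{\gamma,R_0}(U')=\{R_0\}$ and $U'\subset R_n$ now give $U'\subset R_0\cap R_n$.

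First suppose $u'\subsetneq U'$, so that $U'$ extends beyond $D$ through $s'$. Choose $x_0$ to be any interior point of $U'\setminus\clos{D}$ and let $\gamma'$ be the closed curve obtained from $\gamma$ by cyclically reparametrizing with basepoint $x_0$. Because $U'\subset R_0\cap R_n$, the tail of $\gamma'$ (the sub-arc of $U'$ from the old $\gamma(0)$ back to $x_0$) stays in $R_n$, while the head (from $x_0$ to the endpoint of $U'$ on $s$) stays in $R_0$; thus the rectangle path of $\gamma'$ is still $R_0,\ldots,R_n$, making $\gamma'$ a homotopy of type $\mathcal{A}$ of $\gamma$. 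The image of $\gamma'$ coincides with that of $\gamma$, so $\gamma'$ is simple, closed and good, and since $x_0\notin\clos{D}$ the tangency $s$ now has the strong $(\star)$ property.

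The remaining subcase $u'=U'\subset D$ is more delicate: sliding the basepoint along $U'$ cannot escape $D$, and splitting $U'$ at an interior point would produce two collinear unstable sub-segments on the same leaf, violating goodness. I resolve this by sliding instead along the stable segment $S_{\mathrm{prev}}$ of $\gamma$ meeting $U'$ at $\gamma(0)$. Since $\gamma(0)\in s'$, $S_{\mathrm{prev}}$ lies on the stable leaf of $\clos{\mathcal{F}^s}$ containing $s'$; as $\gamma$ is good and not a rectangle, $S_{\mathrm{prev}}$ cannot be contained in $s'$ and therefore protrudes beyond $s'$. A local Markov analysis at the corner $\gamma(0)\in U'\subset R_0\cap R_n$, using Lemmas~\ref{l.vertsubrectangleexists} and \ref{l.horizsubrectangleexists}, shows that a short initial piece $\sigma$ of $S_{\mathrm{prev}}$ out of $\gamma(0)$ is contained in $R_0\cap R_n$. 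Choosing $x_0\in\sigma\setminus\clos{D}$ and reparametrizing with basepoint $x_0$ then gives the desired $\gamma'$ by the same argument as before. The main obstacle is this last verification, namely that a small piece of $S_{\mathrm{prev}}$ starting at $\gamma(0)$ lies in both $R_0$ and $R_n$; it relies on the Markov intersection axiom together with the fact that $R_0$ and $R_n$ both contain a germ of the quadrant of $\gamma(0)$ on the side of $S_{\mathrm{prev}}$ opposite $U'$.
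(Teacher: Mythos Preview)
Your strategy of cyclically sliding the basepoint is the right idea, but there are two problems with the execution.

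\emph{The first subcase is vacuous.} By Remark~\ref{r.gamma0}, $\gamma(0)$ is a corner of $\gamma$, hence an endpoint of $U'$. Since $\gamma(0)\notin s$, it is the endpoint of $U'$ opposite $U'\cap s$; and since you have already identified $\gamma(0)=s'\cap u'$, both endpoints of $U'$ lie on $\partial D$ (one on $s$, the other on $s'$). As $U'$ is an unstable segment joining the two stable sides of the rectangle-like domain $D$, this forces $U'=u'\subset D$. So the case $u'\subsetneq U'$ never occurs.

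\emph{The new basepoint is not a corner.} In both subcases you place $x_0$ at an \emph{interior} point of a segment ($U'$ in the first, $S_{\mathrm{prev}}$ in the second). After reparametrization, $\gamma'(0)=x_0$ fails to be a corner of $\gamma'$; equivalently, the first and last segments of $\gamma'$ lie on the same leaf. This violates the definition of a good polygonal curve (cf.\ Remark~\ref{r.gamma0} and Definition~\ref{d.goodcurve}), so $\gamma'$ is not an admissible target for a homotopy of type~$\mathcal{A}$. You correctly flag this difficulty for one subcase but not the other, and the suggested fix via $S_{\mathrm{prev}}$ runs into the same obstruction.

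The paper sidesteps the whole issue by moving the basepoint in the \emph{opposite} direction: reparametrize so that $\gamma'(0)$ is the legitimate corner $U'\cap s$. The hypotheses $\textit{Rect}_{\gamma,R_0}(U')=\{R_0\}$ and $U'\subset R_n$ are exactly what makes this a type-$\mathcal{A}$ homotopy: the cyclic shift moves $U'$ from first segment (inside $R_0$) to last segment (inside $R_n$) without altering the rectangle path. Now $\gamma'(0)$ lies on the tangency $s$ of type (a) or (b), so Lemma~\ref{l.existencetangenciesgood} applies and produces \emph{some} tangency with strong~$(\star)$ --- not necessarily $s$ itself, but that is all the lemma claims.
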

\begin{proof}
Indeed, by a homotopy of type $\mathcal{A}$ we can reparametrize $\gamma$ so that $\gamma(0)\in s$. We obtain the desired result by Lemma \ref{l.existencetangenciesgood}.
\end{proof}

We are now ready to describe the induction step in this final case. 

\begin{enumerate}[(1{$''$})]
    \item If $(U',s,U)$ is associated to $(0,0,0)$, by our argument in case $(1')$  we can push $\gamma$ by a homotopy of type $\mathcal{A}$ along $D$ so that $s$ comes arbitrarily close to $s'$. By Lemma \ref{l.reductiontofirstsubcase} a reparametrization of $\gamma$ by a homotopy of type $\mathcal{A}$ has a tangency with strong property $(\star)$. We deduce the induction step by applying cases $(1')-(8')$.
    \item If $(U',s,U)$ is associated to $(1,0,1)$, we are in the case of Figure \ref{f.impossiblecasesab}b, with $u'=U'$ and therefore $\gamma(0)\in u'$. By our argument in case (2$'$), we can push $\gamma$ by a homotopy of type $\mathcal{A}$ along $D$ so that $s$ comes arbitrarily close to $s'$. By doing so  $\textit{Rect}_{\gamma,R_0}(u')$ becomes trivial and therefore by Lemma \ref{l.reductiontofirstsubcase} up to a homotopy of type $\mathcal{A}$, $\gamma$ has a tangency with strong property $(\star)$. We deduce the induction step by applying cases $(1')-(8')$.
    \item If $(U',s,U)$ is associated to $(0,0,1)$, we are in the case of  Figure \ref{f.impossiblecasesab}b, where $N=0$. We apply the same argument as in case $(2'')$. 
    \item If $(U',s,U)$ is associated to $(1,1,0)$, then we are in the case of Figure \ref{f.impossiblecasescsecond}, where $\{R_l,...,R_N\}=\textit{Rect}_{\gamma,R_0}(u')$,  $\{R_N,R_{N+1}
,...,R_m\}=\textit{Rect}_{\gamma,R_0}(s)$, $u=U$ and $l<N<m$. By the same arguments as before, up to a homotopy of type $\mathcal{A}$, $\gamma$ has a tangency with strong property $(\star)$. We deduce the induction step by applying cases $(1')-(8')$.
    \item If $(U',s,U)$ is associated to $(0,1,0)$, then either $\gamma(0)\in u\cap s'$ or $\gamma(0)\in u'\cap s'$. In both cases, by the same arguments as before, up to a homotopy of type $\mathcal{A}$, $\gamma$ has a tangency with strong property $(\star)$. We deduce the induction step by applying cases $(1')-(8')$.
\item If $(U',s,U)$ is associated to $(1,1,1)$, then either $\gamma(0)\in u\cap s'$ or $\gamma(0)\in u'\cap s'$. In both cases, we can apply in the exact same way the homotopy performed in (6$'$). 
\item If $(U',s,U)$ is associated to $(0,1,1)$, as in the case ($7'$):
\begin{itemize}
    \item If $U'\not\subset \textit{Rect}_{\gamma,R_0}(x_{su})$, we are in the case of Figure \ref{f.impossiblecase1} and we have that $\gamma(0)\in u\cap s'$ or $\gamma(0)\in u'\cap s'$. In both cases, we can apply in the exact same way the homotopy performed in (7$'$).   
    \item If $U'\subset \textit{Rect}_{\gamma,R_0}(x_{su})$, then we are in the case of Figure \ref{f.case011}. In this case, $u'=U'$ and  $\gamma(0)\in u'$. Once again, we can push $\gamma$ by a homotopy of type $\mathcal{A}$ along $D$ so that $s$ comes arbitrarily close to $s'$; hence by Lemma \ref{l.reductiontofirstsubcase}, a reparametrization of $\gamma$ has a tangency with strong property $(\star)$. We deduce the induction step by applying cases $(1')-(8')$.
     
\end{itemize}
    \item If $(U',s,U)$ is associated to $(1,0,0)$, we are in the case of case of Figure \ref{f.impossiblecasesab}a, where $\{R_l,...,R_N\}=\textit{Rect}_{\gamma,R_0}(u')$ and $N>l$.  Let $A=\{R_l,...,R_N\}$ and $a,b\in[0,1]$ such that $\gamma([a,b])=u'\cup s \cup u$. Assume first that $\gamma(0)\in U' \cap s' $. 
    \begin{enumerate}
        
        \item As in case ($8'$), if $\gamma_{|[b,1]}$ eventually exits $\underset{i=l}{\overset{N}{\cup}} R_i$ in order to visit a crossing predecessor (or successor) of $R_{k}\in A$. By our argument in  case (8$'$a) we can push $\gamma$ along $D$  by a sequence of homotopies of type $\mathcal{A}$ and $\mathcal{B}$ so that $s$ comes arbitrarily close to $s'$. Hence, by Lemma \ref{l.reductiontofirstsubcase} up to a homotopy,  $\gamma$ has a tangency with strong property $(\star)$. We obtain the induction step by applying the cases $(1')-(8')$.
        
    \item If now  $\gamma([a,1]) \subset \underset{i=l}{\overset{N}{\cup}} R_i$, since $\gamma(0)\in U'\cap s'$, we get that $\gamma([0,1]) \subset \underset{i=l}{\overset{N}{\cup}} R_i$. Therefore, the rectangle path associated to $\gamma$ is a word formed by rectangles in $A$. The previous rectangle path is homotopic to an increasing or decreasing or trivial rectangle path, thanks to Lemma \ref{l.increasingrectanglepaths}. 
    
\item \underline{Assume now that $\gamma(0) \in s' \cap u$}. If $U'\subset D$ then we can apply the exact same argument as in case $(8'b)$. 
    
    \quad{}If not, push $\gamma$ along $D$ so that $s$ comes very close to $s'$. By Lemma \ref{l.reductiontofirstsubcase}, by reparametrizing this new curve we can assume that it has a tangency with strong property $(\star)$. Denote the reparametrized curve by $\gamma'$. By applying our algorithm (cases $(1')-(8')$), $\gamma'$ is homotopic to a simple, good, closed polygonal curve $\gamma''$ of strictly smaller length. By the induction hypothesis the rectangle path associated to $\gamma''$ is homotopic to a monotonous or trivial path. Hence, the same applies to the rectangle path associated to $\gamma'$. By the exact same arguments as in case, $(8'b)$ we can now conclude that the rectangle path associated to $\gamma$ is homotopic to a monotonous or trivial rectangle path. 
\end{enumerate}
\end{enumerate}
This finishes the description of our induction step and the proof of Proposition \ref{p.homotopictotrivialsimplecase}. 
\subsubsection*{Proposition \ref{p.homotopictotrivialsimplecase} implies  Theorem \ref{t.homotopictotrivialpath}} 
Consider $r_0,...,r_n$ a closed rectangle path in $\clos{\mathcal{P}}$ and $\gamma$ a closed and good polygonal curve associated to the previous rectangle path. 

By our definition of good polygonal curve (see Definition \ref{d.polygonalcurve}), we get that $\gamma$ has only a finite number, say $M$, of self-intersections that are also transverse. We will show the desired result by induction on $M$. If $M=0$, then we obtain the desired result from Proposition \ref{p.homotopictotrivialsimplecase}. Suppose now that $M>0$ and that the result stands for all $n\in \llbracket 0, M-1\rrbracket$.

By Remark \ref{r.polygonalcurverectassociation}, there exists $0=c_0<c_1<...c_{n+1}=1$ a $(n+2)$-uple such that $\gamma(c_i,c_{i+1})\subset r_i$ and a function $\textit{Rect}_{\gamma,r_0}:[0,1]\rightarrow \lbrace r_0,...,r_n\rbrace$ associated to $c_0<c_1<...c_{n+1}$ sending points of $\gamma$ to rectangles. We will assume that we have chosen the $c_i$ as in  Remark \ref{r.choiceci}. 

Consider $x_0 \in [0,1]$ the biggest element in $[0,1]$ for which $\gamma_{|[0,x_0)}$ is injective. Take $y\in [0,x_0)$ such that $\gamma(y)=\gamma(x_0)$. Let $r_k=\textit{Rect}_{\gamma,r_0}(y)$, $R_{|[y,x_0]}$ (resp. $R_{|[0,y]}$) be the rectangle path starting from $r_k$ (resp. $r_0$) associated to the simple and 
good polygonal curve $\gamma_{|[y,x_0]}:[y,x_0]\rightarrow \gamma([y,x_0])$ (resp.$\gamma_{|[0,y]}$). By an eventual change of our choice of $c_i$, we may assume that $R_{|[y,x_0]}$ and $R_{|[0,y]}$  coincide with $\textit{Rect}_{\gamma,r_0}([y,x_0])$ and $\textit{Rect}_{\gamma,r_0}([0,y])$ respectfully.  

Since $\gamma_{|[y,x_0]}$ is simple and closed, by Proposition \ref{p.homotopictotrivialsimplecase} its associated rectangle path $\textit{Rect}_{\gamma,r_0}([y,x_0])=\{r_k,...,r_l\}$ is homotopic to an increasing, decreasing or trivial rectangle path, $r_k=R_0,R_{1}...,R_s=r_l$. Similarly, consider $\gamma'$ the good, closed polygonal curve formed by $\gamma([x_0,1])$ followed by $\gamma([0,y])$. Notice that $\gamma'$ has at most $M-1$ self-intersections and therefore by our induction hypothesis its associated rectangle path (starting from $r_l$)  $r_l,...,r_n=r_0,...,r_k$ is homotopic to an increasing, decreasing or trivial rectangle path $r_l=R'_0,R'_1...,R'_m=r_k$.

Consider now the rectangle path associated to $\gamma$: $r_0,...,r_n$. By a sequence of homotopies of type $\mathcal{B}$ the previous rectangle path is homotopic to $$r_0,..r_k,...r_l,..,r_n=r_0,r_1,...,r_{k-1},r_k,r_{k-1},...,r_0$$
By our previous discussion the above rectangle path is homotopic to 
\begin{equation}\label{eq.rectpath}
r_0,..r_k=R_0,R_1...,R_s=r_l=R_0',R'_1...,R'_m=r_k,r_{k-1},...,r_0
\end{equation}

If $r_k=R_0,R_1...,R_s=r_l$ is the trivial rectangle path, then $r_k=r_l$ and $r_l=R'_0,R'_1...,R'_m=r_k$ is also the trivial rectangle path. In this case, $r_0,...,r_n$ is homotopic to the rectangle path $r_0,...,r_{k-1},r_k,r_{k-1}...,r_0$, which is homotopic to a trivial rectangle path by a sequence of homotopies of type $\mathcal{B}$. 

Assume now without any loss of generality that $r_k=R_0,R_1...,R_s=r_l$ is a decreasing rectangle path; hence $r_l$ is a predecessor of some generation of $r_k$. We deduce that $r_l=R_0',R'_1...,R'_m=r_k$ is an increasing rectangle path. Furthermore, since the predecessors (resp. successors) of any rectangle have disjoint interiors, we have that there exists a unique decreasing (resp. increasing) rectangle path from $r_k$ to $r_l$ (resp. from $r_l$ to $r_k$). We deduce that the rectangle paths $r_k=R_0,R_1...,R_s=r_l$ and $r_k=R_m', R_{m-1}'...,R'_1, R'_0=r_l$ are exactly the same. Therefore, by a sequence of homotopies of type $\mathcal{B}$ the rectangle path \ref{eq.rectpath} is homotopic to: 
$$r_0,...r_k,r_{k-1},..,r_0$$
Finally, the above rectangle path is homotopic to the trivial rectangle path by a sequence of homotopies of type $\mathcal{B}$; we thus get 
the desired result. 
\subsection{Proof of Theorem \ref{t.closedrectanglespathscorrespondtoclosedpaths}}
\label{s.proofoftheoremcorrespondancehom}
By following the notations introduced in the beginning of Section \ref{s.homotopiesofpaths}, it suffices to prove that two centered and homotopic rectangle paths in $\clos{\mathcal{P}_1}$ correspond to centered and homotopic rectangle paths in $\clos{\mathcal{P}_2}$. Indeed, let $r^1_0,...,r^1_n$ be a closed and centered rectangle path in $\clos{\mathcal{P}_1}$. By Theorem \ref{t.homotopictotrivialpath}, the previous rectangle path is homotopic to the trivial rectangle path $r_0^1$. If homotopic rectangle paths in $\clos{\mathcal{P}_1}$ correspond to homotopic rectangle paths in $\clos{\mathcal{P}_2}$, then $r^2_0,...,r^2_n$ is homotopic to $r^2_0$. By our discussion, prior to Theorem \ref{t.homotopictotrivialpath}, this implies that  $r^2_0,...,r^2_n$ is closed, which gives us the desired result. Let us now show that homotopic rectangle paths in $\clos{\mathcal{P}_1}$ correspond to homotopic rectangle paths in $\clos{\mathcal{P}_2}$.

Since any homotopy of rectangle paths can be described as a sequence of homotopies of type $\mathcal{A},\mathcal{B}$ or $\mathcal{C}$, it suffices to show that homotopies of type $\mathcal{A}$ (resp.$\mathcal{B}$, $\mathcal{C}$) in $\clos{\mathcal{P}_1}$ correspond to homotopies of type $\mathcal{A}$ (resp. $\mathcal{B}$, $\mathcal{C}$) in $\clos{\mathcal{P}_2}$. Indeed, consider $R_1,R_2$ two centered rectangle paths in $\clos{\mathcal{P}_1}$ that are homotopic by a homotopy of type $\mathcal{A}$. The result is trivially true for homotopies of type $\mathcal{A}$.  

Consider now $R_1,R_2$ two centered rectangle paths in $\clos{\mathcal{P}_1}$ that are homotopic by a homotopy of type $\mathcal{B}$. By Definition \ref{d.homotopyB}, the paths $R_1,R_2$ are of the form  $r^1_0,...,r^1_k,r^1_{k+1},r^1_k,r^1_{k+2},...r^1_n$ and $r^1_0,...,r^1_k,r^1_{k+2},...,r^1_n$. It is not difficult to see that $R_1,R_2$ correspond in $\clos{\mathcal{P}_2}$ to two  rectangle paths of the form $r^2_0,...,r^2_k,r^2_{k+1},r^2_k,r^2_{k+2},...r^2_n$ and  $r^2_0,...,r^2_k,r^2_{k+2},...,r^2_n$. The previous two rectangle paths are homotopic by a homotopy of type $\mathcal{B}$. 

Finally, consider $R_1,R_2$ two centered rectangle paths in $\clos{\mathcal{P}_1}$ that are homotopic by a homotopy of type $C$. Following the notations of Definition \ref{d.homotopyC}, we may assume that $R_1$ and $R_2$ are respectively of the form: $${r^1}_0,...,{r^1}_k,{L^1}_0,{L^1}_{01},...,{L^1}_{0s(0)},{L^1}_1, {L^1}_{11},...,{L^1}_{1s(1)},..., {L^1}_k,{r^1}_l,{r^1}_{l+1},...,{r^1}_n$$  
 $${r^1}_0,...,{r^1}_k,{L^1}_0,{L'^1}_{01},...,{L'^1}_{0s'(0)},{L^1}_3, {L'^1}_{31},...,{L'^1}_{3s'(3)},...,{L^1}_k,{r^1}_l,{r^1}_{l+1},...,{r^1}_n$$
where $({L^1}_0,{L^1}_1,{L^1}_2,{L^1}_3,{L^1}_4)$ and $({L^1}_0,{L^1}_3,{L^1}_2,{L^1}_1,{L^1}_4)$ are the two cycles starting from $L^1_0$ around a boundary arc point $p^1\in L^1_0 - \{\text{corners of $L^1_0$}\} $, $k\in \llbracket 1,4 \rrbracket$,  ${L^1}_0,{L^1}_{01},...$ $,{L^1}_{0s(0)},{L^1}_1, {L^1}_{11},$ $...,{L^1}_{1s(1)},..., {L^1}_k$ is the rectangle path associated to the generalized rectangle path $({L^1}_0,{L^1}_1,...,{L^1}_k)$ and ${L^1}_0,{L'^1}_{01},...,{L'^1}_{0s'(0)},{L^1}_3, {L'^1}_{31},...,{L'^1}_{3s'(3)},...,{L^1}_k$ is the rectangle path associated to the generalized rectangle path $({L^1}_0,{L^1}_3,...,{L^1}_k)$. 

Assume that $r^2_0,...,r^2_k,L^2_0$ is the rectangle path $\clos{\mathcal{P}_2}$ associated to $r^1_0,...,r^1_k,L^1_0$. Let us show now that the cycle $(L^1_0,L^1_1,L^1_2,L^1_3,L^1_4)$ corresponds to a cycle of $\clos{\mathcal{P}_2}$ starting from $L^2_0$. Indeed, assume without any loss of generality that $L_0^1$ contains a germ of the $(-,-)$ and $(+,-)$ quadrants of $p^1$ and that $(L^1_0,L^1_1,L^1_2,L^1_3,L^1_4)$ is a positive cycle (see Figure \ref{r.existenceofcycle}). Let $s^1_0$ be the uppermost stable boundary component of $L_0^1$. Notice that $p^1\in s^1_0$. By our definition of a cycle (see Definition  \ref{d.cyclearcpoint}), $L_1^1$ corresponds to the $s^1_0$-crossing predecessor of $L_0^1$ containing $p$ in its leftmost unstable boundary. Assume now that $L^1_0,L^1_{01},...,L^1_{0s(0)},L^1_1$ corresponds to $L^2_0,L^2_{01},...,L^2_{0s(0)},L^2_1 $ in $\clos{\mathcal{P}_2}$. By the above, $L^2_1$ corresponds to a $s^2_0$-crossing predecessor of $L_0^2$, where $s^2_0$ is the uppermost stable boundary component of $L_0^2$. Furthermore, since $L_1^1$ is not  the leftmost $s^1_0$-crossing predecessor of $L_0^1$ ($L^1_3$ is an $s^1_0$-crossing predecessor of $L_0^1$ at the left of $L_1^1$, see Figure \ref{r.existenceofcycle}), neither is $L_1^2$. We conclude that $p^1$ corresponds to the unique point of intersection, say $p^2\in \clos{\mathcal{P}_2}$, of $s^2_0$ with the leftmost unstable boundary component of $L_1^2$. Notice that $p^2$ is not a corner point of $L_0^2$ and $L_0^2$ contains a germ of the $(-,-)$ and $(+,-)$ quadrants of $p^2$.

Similarly, $L^1_2$ is associated in $\clos{\mathcal{P}_2}$ to a crossing successor of $L_1^2$, say $L_2^2$, such that $p^2\in \partial L_1^2 \cap \partial L_2^2$ and $\int{L_0^2}\cap \int{L_2^2}=\emptyset$. By repeating the previous arguments, we can prove that the positive cycle $(L^1_0,L^1_1,L^1_2,L^1_3,L^1_4)$ is associated in $\clos{\mathcal{P}_2}$  to the positive cycle of $p^2$ starting from $L^2_0$. Similarly, $(L^1_0,L^1_3,L^1_2,L^1_1,L^1_4)$ is associated in $\clos{\mathcal{P}_2}$ to the negative cycle of $p^2$ starting from $L_0^2$.  

We conclude that the rectangle paths $R_1$ and $R_2$ are respectively associated to two rectangle paths in $\clos{\mathcal{P}_2}$ of the form: 

$$r^2_0,...,r^2_k,L^2_0,L^2_{01},...,L^2_{0s(0)},L^2_1, L^2_{11},...,L^2_{1s(1)},..., L^2_k,r^2_l,r^2_{l+1},...,r^2_n$$  
 $$r^2_0,...,r^2_k,L^2_0,L'^2_{01},...,L'^2_{0s'(0)},L^2_3, L'^2_{31},...,L'^2_{3s'(3)},...,L^2_k,r^2_l,r^2_{l+1},...,r^2_n$$
where $(L^2_0,L^2_1,L^2_2,L^2_3,L^2_4)$ and $(L^2_0,L^2_3,L^2_2,L^2_1,L^2_4)$ are the two cycles starting from $L^2_0$ around $p^2\in L^2_0$. Also, $L^2_0,L^2_{01},...,L^2_{0s(0)},L^2_1, L^2_{11},...,L^2_{1s(1)},..., L^2_k$ is the rectangle path associated to the generalized rectangle path $(L^2_0,L^2_1,...,L^2_k)$ and $L^2_0,L'^2_{01},...,L'^2_{0s'(0)},L^2_3, L'^2_{31},$ $...,L'^2_{3s'(3)},...,L^2_k$ is the rectangle path associated to the generalized rectangle path $(L^2_0,L^2_3,...,L^2_k)$. By definition, the previous rectangle paths are homotopic by a homotopy of type $\mathcal{C}$, which gives us the desired result. 

\section{Proof of Theorem B}\label{s.mainresult}

Let $(M_1,\Phi_1)$ and $(M_2,\Phi_2)$ be two transitive Anosov flows, $\mathcal{P}_1$ and $\mathcal{P}_2$ their bifoliated planes, $\mathcal{F}_{1,2}^{s}$,$\mathcal{F}_{1,2}^{u}$ the stable and unstable foliations in $\mathcal{P}_{1,2}$. Assume that $\mathcal{P}_1$ and $\mathcal{P}_2$ carry two Markovian families $\mathcal{R}_1$ and $\mathcal{R}_2$ associated canonically by Theorem \ref{t.associatemarkovfamiliestogeometrictype} to the same class of equivalent geometric types. By choosing in an appropriate way orientations on $\mathcal{F}_{1,2}^{s}$,$\mathcal{F}_{1,2}^{u}$ and also representatives of every rectangle orbit, thanks to Lemma \ref{l.geomtypeinclass}, we may assume that $\mathcal{R}_1$ and $\mathcal{R}_2$ are canonically associated to the same geometric type  $(R_1,...,R_n,(h_i)_{i \in \llbracket 1,n \rrbracket}, (v_i)_{i\in \llbracket 1,n \rrbracket}, \mathcal{H}, \mathcal{V},\phi, u)$. 

Let us denote $\Gamma_{1,2}$ (resp. $(\Gamma_{1,2})_{M_{1,2}}$) the boundary periodic orbits associated to $\mathcal{R}_{1,2}$ in $\mathcal{P}_{1,2}$ (resp. in $(M_{1,2},\Phi_{1,2})$), $\clos{\mathcal{P}_{1,2}}$ the bifoliated planes of $\Phi_1,\Phi_2$ up to surgeries on $\Gamma_{1,2}$, $\clos{\mathcal{F}^{s,u}_{1,2}}$ their stable and unstable singular foliations, $\clos{\mathcal{R}_{1,2}}$ the lifts of $\mathcal{R}_{1,2}$ on $\clos{\mathcal{P}_{1,2}}$,  $\clos{\Gamma_{1,2}}$ the lifts of $\Gamma_{1,2}$ on $\clos{\mathcal{P}_{1,2}}$ and $r_0^{1,2}$ the origin rectangles in $\clos{\mathcal{P}_{1,2}}$.

In Section \ref{s.rectpaths}, we established that rectangle paths allow us to navigate inside the bifoliated plane, thus endowing the bifoliated plane with a ``system of coordinates". According to Theorem \ref{t.associatingrectanglepaths}, any centered rectangle path in $\clos{\mathcal{P}_{1}}$ can be naturally associated to a rectangle path in $\clos{\mathcal{P}_{2}}$. In the previous section, we showed that this correspondence gives birth to a compatible system of coordinates in $\clos{\mathcal{P}_{1,2}}$: closed and centered rectangle paths in $\clos{\mathcal{P}_{1}}$ correspond to closed and centered rectangle paths in $\clos{\mathcal{P}_{2}}$. We would like now to show Theorem B. In order to do so, by Theorem \ref{t.generalbarbot} it suffices to show   
that there exists a homeomorphism $h: \clos{\mathcal{P}_1}\rightarrow \clos{\mathcal{P}_2}$ and an isomorphism $\alpha: \pi_1(M_1-\Gamma_1) \rightarrow \pi_1(M_2-\Gamma_2)$ such that: 

\begin{itemize}
    \item the image by $h$ of any stable/unstable leaf in $\clos{\mathcal{F}_1^{s,u}}$ is a stable/unstable leaf in $\clos{\mathcal{F}_2^{s,u}}$ 
    \item for every $g\in \pi_1(M_1-\Gamma_1)$ and every $x\in \clos{\mathcal{P}_1}$ we have $$h(g(x))= \alpha(g)(h(x))$$ 
\end{itemize}

We will split the proof of the previous statement in two parts: 
\begin{enumerate}
    \item the construction of an isomorphism $\alpha: \pi_1(M_1-\Gamma_1) \rightarrow \pi_1(M_2-\Gamma_2)$
    \item the construction of a homeomoprhism $h: \clos{\mathcal{P}_1}\rightarrow \clos{\mathcal{P}_2}$ with the desired properties 
\end{enumerate}

Let us begin with the construction of $\alpha$: 
\begin{prop}\label{p.existenceiso}
Under the hypotheses of Theorem B, the groups $\pi_1(M_1-\Gamma_1)$ and $ \pi_1(M_2-\Gamma_2)$ are isomorphic. 
\end{prop}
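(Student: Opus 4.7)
The plan is to build $\alpha$ by transporting rectangle paths from $\clos{\mathcal{P}_1}$ to $\clos{\mathcal{P}_2}$. Given $g\in\pi_1(M_1-\Gamma_1)$, the rectangle $g(r_0^1)$ is of the same type as $r_0^1$, so I would pick any centered rectangle path $R^1\colon r_0^1,r_1^1,\dots,r_n^1=g(r_0^1)$ (which exists by Corollary \ref{c.rectpathsstartingending}), apply Theorem \ref{t.associatingrectanglepaths} to obtain an associated centered rectangle path $R^2\colon r_0^2,\dots,r_n^2$ in $\clos{\mathcal{P}_2}$ whose endpoint $r_n^2$ has the same type as $r_0^2$, and define $\alpha(g)$ to be the unique element of $\pi_1(M_2-\Gamma_2)$ that sends $r_0^2$ onto $r_n^2$. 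Uniqueness of such an element amounts to showing that the $\pi_1(M_2-\Gamma_2)$-stabilizer of $r_0^2$ is trivial, which I would establish via the usual hyperbolicity argument: a stabilizing element $h$ has, by Brouwer's theorem, a fixed point $p\in r_0^2$, and the classification of point stabilizers in $\clos{\mathcal{P}_2}$ (Section \ref{s.ptildebifoliated} and Proposition \ref{p.stabilizersz2}) shows that any non-trivial such $h$ carries a hyperbolic component ($s$ in the notation of Proposition \ref{p.stabilizersz2}) that expands or contracts the foliations through $p$, which is incompatible with $h(r_0^2)=r_0^2$. The independence of $\alpha(g)$ from the choice of $R^1$ is precisely the content of Theorem \ref{t.endingbysamerectangle}.

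To verify that $\alpha$ is a group homomorphism, given $g,h$ I would pick centered paths $P_g,P_h$ from $r_0^1$ to $g(r_0^1)$ and $h(r_0^1)$ respectively, and observe that the concatenation $P_g\cdot g(P_h)$ is a centered path from $r_0^1$ to $(gh)(r_0^1)$. The key equivariance observation is that the construction of Theorem \ref{t.associatingrectanglepaths} is entirely canonical: at each step it records only whether the next rectangle is the $k$-th successor or predecessor of the current one for the order induced by the oriented foliations, a datum preserved by the action of the fundamental group on $\clos{\mathcal{R}_1}$ (since this action preserves the Markovian family together with its geometric type). Consequently, the associated path in $\clos{\mathcal{P}_2}$ of $g(P_h)$ starting at $\alpha(g)(r_0^2)$ coincides with $\alpha(g)$ applied to the associated path of $P_h$ starting at $r_0^2$. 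The associated path of the concatenation therefore ends at $\alpha(g)\alpha(h)(r_0^2)$, yielding $\alpha(gh)=\alpha(g)\alpha(h)$ by uniqueness of the stabilizing-free representation.

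Finally, swapping the roles of $1$ and $2$ yields a map $\beta\colon\pi_1(M_2-\Gamma_2)\to\pi_1(M_1-\Gamma_1)$ defined by the same recipe, and the symmetry of the associated-rectangle-path relation (a path in $\clos{\mathcal{P}_1}$ and its associate in $\clos{\mathcal{P}_2}$ play symmetric roles in Theorem \ref{t.associatingrectanglepaths}) immediately gives $\beta\circ\alpha=\mathrm{id}$ and $\alpha\circ\beta=\mathrm{id}$, so $\alpha$ is the desired isomorphism. The main obstacle I anticipate is the equivariance step in the homomorphism verification: one must check carefully that the reordering of horizontal and vertical subrectangles caused by a fundamental group element reversing transverse orientations of $\clos{\mathcal{F}^{s,u}_{1,2}}$ produces compatible effects on both sides of $\alpha$, so that the $u$-signs of the common geometric type cancel consistently along the concatenated path.
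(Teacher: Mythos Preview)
Your proposal is correct and follows essentially the same approach as the paper. The paper phrases it as embedding both fundamental groups into the permutation group of the set of rectangles of a fixed type (identified across $\clos{\mathcal{P}_1}$ and $\clos{\mathcal{P}_2}$ via the rectangle-path correspondence) and showing the images coincide, while you construct $\alpha$ directly; but the two key ingredients---triviality of rectangle stabilizers via the hyperbolic fixed-point argument, and equivariance of the path association under the group actions (your concatenation step, the paper's verification that $g^1(k)=g^2(k)$)---are identical, and both rest on Theorem \ref{t.endingbysamerectangle} in the same way.
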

\begin{proof}
Consider $T(r_0^1)\subset \clos{\mathcal{R}_1}$ the set of all the rectangles in $ \clos{\mathcal{P}_1}$ that have the same type as $r_0^1$. By enumerating the set of these rectangles by $\mathbb{Z}$ we obtain a homomorphism from $\pi_1(M_1-\Gamma_1)$ to the group of automorphisms of $\mathbb{Z}$, $Aut(\mathbb{Z})$. 

The previous homomorphism is injective. Indeed, consider $g$ an element in $\pi_1(M_1-\Gamma_1)$ that fixes one rectangle $R$ in $T(r_0^1)$. This implies that $g$ has a fixed point in $R$ and therefore it acts as an expansion or contraction on its stable leaf, which contradicts the fact that $g(R)=R$. 

Consider now $T(r_0^2)\subset \clos{\mathcal{R}_2}$ the set of all the rectangles in $ \clos{\mathcal{P}_2}$ that have the same type as $r_0^2$. Let us choose an enumeration of the elements of $T(r_0^2)$. Take $R\in T(r_0^2)$ and any centered rectangle path in $ \clos{\mathcal{P}_2}$ ending in $R$ (such a rectangle path exists thanks to Corollary \ref{c.rectpathsstartingending}). This rectangle path corresponds to a centered rectangle path in $ \clos{\mathcal{P}_1}$ ending at $R'\in T(r_0^1)$. Notice that $R'$ does not depend on the choice of rectangle path thanks to Theorem \ref{t.endingbysamerectangle}. We can therefore associate $R$ to the same element of $\mathbb{Z}$ as $R'$. This defines an enumeration of the elements of $T(r_0^2)$ and once again we have an injective homomorphism from $\pi_1(M_2-\Gamma_2)$ to $Aut(\mathbb{Z})$. Therefore, $\pi_1(M_2-\Gamma_2)$ and $\pi_1(M_1-\Gamma_1)$ are subgroups of $Aut(\mathbb{Z})$. It suffices to show that they correspond to the exact same subgroup. 

Consider $g^1\in \pi_1(M_1-\Gamma_1)\leq Aut(\mathbb{Z})$ the element sending $r_0^1$ to $L^1\in T(r_0^1)$ (notice that there is a unique element doing this by our previous arguments) and $g^2 \in \pi_1(M_2-\Gamma_2)\leq Aut(\mathbb{Z})$ the element sending $r_0^2$ to $L^2 \in T(r_0^2)$, where $L^2$ is associated to the same element of $\mathbb{Z}$ as $L^1$. We would like to show that for every $k\in \mathbb{Z}$, $g^1(k)=g^2(k)$. 

Take $k\in \mathbb{Z}$ and its corresponding rectangles in $\clos{\mathcal{R}_1}, \clos{\mathcal{R}_2}$, say $R^1,R^2$ respectively. Consider $r_0^1,r_1^1,...,L^1$ a rectangle path in $ \clos{\mathcal{P}_1}$ starting from $r_0^1$ and ending at $L^1$ and its corresponding rectangle path $r_0^2,r_1^2,...,L^2$ in $ \clos{\mathcal{P}_2}$ (notice that the latter path, by definition of $L^1$ and $L^2$, must end at $L^2$). Consider also $r_0^1,{r'_1}^1,...,R^1$ a rectangle path in $ \clos{\mathcal{P}_1}$ starting from $r_0^1$ and ending at $R^1$ and its corresponding rectangle path $r_0^2,{r'_1}^2,...,R^2$ in $ \clos{\mathcal{P}_2}$ (notice once again that by the definition of $R^1,R^2$ the latter path must end at $R^2$). 

It is now not difficult to see that the rectangle path $r_0^1,r_1^1,...,L^1=g^1(r_0^1),g^1({r'_1}^1),...,g^1(R^1)$ corresponds to $r_0^2,r_1^2,...,L^2=g^2(r_0^2),g^2({r'_1}^2),...,g^2(R^2)$. Therefore, $g^1(R^1)$ and $g^2(R^2)$ are associated to the same element in $\mathbb{Z}$, which gives us the desired result. 
\end{proof}
We are now ready to proceed to the construction of the homemorphism $h: \clos{\mathcal{P}_1}\rightarrow \clos{\mathcal{P}_2}$. We are first going to define $h$ as a map from  $\clos{\mathcal{R}_1}$ to $\clos{\mathcal{R}_2}$ and then we are going to show that this map extends to a homeomorphism from $\clos{\mathcal{P}_1}$ to $ \clos{\mathcal{P}_2}$.

Indeed, let $R\in \clos{\mathcal{R}_1}$. Consider a centered rectangle path in $\clos{\mathcal{P}_1}$ ending at $R$ and its corresponding centered rectangle path in $\clos{\mathcal{P}_2}$ ending at $R'\in \clos{\mathcal{R}_2}$. We define $h(R)=R'$. Notice that $R'$ does not depend on our initial choice of rectangle path, thanks to Theorem \ref{t.endingbysamerectangle}. Also, if we denote by $\alpha$ the isomorphism given by the previous proposition, by our previous proof, we have that for any $g\in \pi_1(M_1-\Gamma_1)$ and $R\in \clos{\mathcal{R}_1}$: \begin{equation}\label{eq.equivarianceh}
    h(g(R))=\alpha(g)(h(R))
\end{equation} Finally, by applying the same arguments for rectangles in $\clos{\mathcal{R}_2}$ we can construct a right and left inverse for $h$, therefore $h: \clos{\mathcal{R}_1}\rightarrow  \clos{\mathcal{R}_2}$ is a bijection that is equivarient with respect to the actions of the fundemental groups. 

Theorem B follows from the next proposition: 
\begin{prop}\label{p.existencehomeo}
The previous map $h$ extends to a unique homeomorphism from $\clos{\mathcal{P}_1}$ to $\clos{\mathcal{P}_2}$ such that: 

\begin{itemize}
    \item the image by $h$ of any stable/unstable leaf in $\clos{\mathcal{F}_1^{s,u}}$ is a stable/unstable leaf in $\clos{\mathcal{F}_2^{s,u}}$ 
    \item for every $g\in \pi_1(M_1-\Gamma_1)$ and every $x\in \clos{\mathcal{P}_1}$ we have $$h(g(x))= \alpha(g)(h(x))$$ 
\end{itemize}
\end{prop}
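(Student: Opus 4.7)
The plan is to construct $h$ rectangle by rectangle, as a bifoliation-preserving homeomorphism $h_R\colon R \to h(R)$ for each $R \in \clos{\mathcal{R}_1}$, and then verify that these local pieces glue to a global map on $\clos{\mathcal{P}_1}$. The first main step is the construction on a single rectangle: by Lemma \ref{l.existenceofpredecessors} (together with its analogue in $\clos{\mathcal{P}}$), the predecessors of $R$ and the successors of $R$ carve $R$ into a finite grid of subrectangles, and by Proposition \ref{p.liftedmarkovfamiliessamegeomtype} combined with our choice of common geometric type, the identical grid structure appears in $h(R)$. I refine this grid by iterating with generation-$k$ predecessors and successors (Definition \ref{d.successor}); Lemmas \ref{l.infiniteintersectionverticalrectangles} and \ref{l.infiniteintersectionhorizontalrectangles} ensure that nested chains collapse onto single stable or unstable segments, so the refinement becomes dense on each stable and unstable boundary of $R$. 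This dense set of marked points then pins down $h_R$ as a unique product homeomorphism sending stable leaves to stable leaves and unstable leaves to unstable leaves.

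Next I verify compatibility on intersections. If $R, R'' \in \clos{\mathcal{R}_1}$ satisfy $\mathring{R} \cap \mathring{R''} \neq \emptyset$, then by the Markovian intersection axiom and Lemma \ref{l.npredecessor}, one is a generation-$k$ predecessor or successor of the other, and $R \cap R''$ is simultaneously a horizontal subrectangle of one and a vertical subrectangle of the other. The key observation is that the iterative refinement of the previous step, once restricted to $R \cap R''$, depends only on the subfamily of $\clos{\mathcal{R}_1}$-rectangles lying in $R \cap R''$, not on the ambient rectangle from which one starts. Consequently $h_R$ and $h_{R''}$ agree on $R \cap R''$, and the $h_R$ assemble into a well-defined map $h\colon \clos{\mathcal{P}_1} \setminus \clos{\Gamma}_1 \to \clos{\mathcal{P}_2} \setminus \clos{\Gamma}_2$.

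The hardest part will be extending $h$ continuously across $\clos{\Gamma}_1$. These are precisely the singularities of $\clos{\mathcal{F}}^{s,u}_1$ and have $\mathbb{Z}^2$ stabilizers (Proposition \ref{p.stabilizersz2}). Using Proposition \ref{p.singularitiesoffoliations}, a point $\clos{\gamma} \in \clos{\Gamma}_1$ is the common corner of a countable family of rectangles of $\clos{\mathcal{R}_1}$ cyclically arranged around it, on which the stable-direction generator of $\mathrm{Stab}(\clos{\gamma})$ acts as a shift. Their images under the rectangle bijection $h\colon \clos{\mathcal{R}_1} \to \clos{\mathcal{R}_2}$ form an analogous cyclic family in $\clos{\mathcal{R}_2}$; equivariance under the isomorphism $\alpha$ given by Proposition \ref{p.existenceiso} forces this image family to be permuted by a single $\mathbb{Z}^2$ stabilizer, so by Proposition \ref{p.boundaryperiodicpoints} its rectangles share a common corner $\clos{\gamma'} \in \clos{\Gamma}_2$, and I set $h(\clos{\gamma}) = \clos{\gamma'}$. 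Continuity of $h$ at $\clos{\gamma}$ follows because rectangular neighborhoods of $\clos{\gamma}$ in $\clos{\mathcal{P}_1}$ are described combinatorially by finite subchains of these rectangles, and the images of such subchains under $h$ describe neighborhoods of $\clos{\gamma'}$ shrinking to a point.

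The remaining verifications are routine. The symmetric construction applied to $h^{-1}\colon \clos{\mathcal{R}_2} \to \clos{\mathcal{R}_1}$ produces a two-sided continuous inverse. Equivariance under $\alpha$ holds on rectangles by \eqref{eq.equivarianceh} and extends by density. Preservation of $\clos{\mathcal{F}}^{s,u}$ is built into each $h_R$. Uniqueness of the extension follows from density of $\bigcup_{R \in \clos{\mathcal{R}_1}} \mathring{R}$ in $\clos{\mathcal{P}_1}$. Combined with Proposition \ref{p.existenceiso}, the resulting $h$ satisfies the hypotheses of Theorem \ref{t.generalbarbot}, which yields Theorem B.
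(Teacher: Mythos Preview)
Your approach differs from the paper's: you build local homeomorphisms $h_R$ rectangle by rectangle and then glue, whereas the paper defines $h(x)$ directly as the intersection $\bigcap_k h(R^Q_k)$ over the bi-infinite predecessor/successor chain attached to a quadrant $Q$ of $x$, and then checks that the answer is independent of $Q$. Both routes rest on the same ingredients (Lemmas \ref{l.infiniteintersectionverticalrectangles}, \ref{l.infiniteintersectionhorizontalrectangles}, and the rectangle bijection coming from Theorem \ref{t.endingbysamerectangle}), and your organization is a legitimate alternative. Two steps, however, are genuinely incomplete.

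First, your compatibility check only treats the case $\mathring{R}\cap\mathring{R''}\neq\emptyset$. Rectangles of $\clos{\mathcal{R}_1}$ can meet along a boundary segment with disjoint interiors (e.g.\ two adjacent predecessors of a common rectangle), and your argument says nothing there. This is repairable --- for any non-singular boundary point the finite-return-time axiom provides a third rectangle containing it in its interior, to which both $h_R$ and $h_{R''}$ can be compared --- but as written the gluing on $\clos{\mathcal{P}_1}\setminus\clos{\Gamma}_1$ is not established.

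Second, and more seriously, the extension across $\clos{\Gamma_1}$ does not go through. Proposition \ref{p.boundaryperiodicpoints} merely counts boundary periodic orbits in $\mathcal{P}$; it says nothing about $\mathbb{Z}^2$ subgroups of $\pi_1(M_2-\Gamma_2)$ having fixed points in $\clos{\mathcal{P}_2}$. Knowing that $\alpha(\mathrm{Stab}(\clos{\gamma}))\cong\mathbb{Z}^2$ permutes the image rectangles does not by itself produce a common corner: a priori nothing prevents these images from failing to accumulate on a single point. The paper sidesteps this by first \emph{producing} the point: for one quadrant $Q$ the intersection $\bigcap_k h(R^Q_k)$ is a single point $y$; for the adjacent quadrant $Q'$ sharing a stable separatrix with $Q$, the predecessor halves of the two chains eventually agree, so $\bigcap h(R^{Q'}_k)$ lies on $\clos{\mathcal{F}^s}(y)$; and then the generator $t\in\mathrm{Stab}(\clos{\gamma})$ fixing that separatrix makes $(t^k R^q_0)_k$ a subsequence of $(R^q_k)_k$, so both intersections are fixed by $\alpha(t)$ on the same stable leaf, hence coincide. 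Your argument needs a comparable device to \emph{locate} $\clos{\gamma'}$ concretely before you can invoke any stabilizer property.
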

\begin{proof}

Let $x\in\clos{\mathcal{P}_1}$. Recall that a quadrant of $x$ is  the closure of a connected component of $\clos{\mathcal{P}_1}$ minus the union of stable and unstable leaves crossing $x$. For any quadrant $Q$ of $x$, there exists a unique bi-infinite sequence $...R^{Q}_{-1}, R^{Q}_0,R^{Q}_1,...$ of rectangles in $\clos{\mathcal{R}_1}$ such that: 
\begin{itemize} 
\item for every $i$, $R^{Q}_{i+1}$ is a predecessor of $R^{Q}_i$ 
\item for every $i$, $R^{Q}_i$ contains a neighbourhood of $x$ in $Q$
\end{itemize}
Indeed, by the definition of a Markovian family there exists one rectangle $R^{Q}_0$ containing a neighbourhood of $x$ in the quadrant $Q$. By Lemma \ref{l.existenceofpredecessors}, there exists a unique predecessor and successor of $R^{Q}_0$ containing a neighbourhood of $x$ in $Q$ and by Lemma \ref{l.npredecessor} any other rectangle with this property is a predecessor or a successor of some generation of $R^{Q}_0$. 

By Lemmas \ref{l.infiniteintersectionverticalrectangles} and \ref{l.infiniteintersectionhorizontalrectangles}, we have that $\{x\}=\overset{+\infty}{\underset{k=0}{\cap}} R^{Q}_{k}$. We therefore define $h(x):=\overset{+\infty}{\underset{k=0}{\cap}} h(R^{Q}_{k})\in \clos{\mathcal{P}_2}$. Notice that for every $k$, $h(R^{Q}_k)$ is a predecessor of $h(R^{Q}_{k-1})$, therefore once again $\overset{+\infty}{\underset{k=0}{\cap}} h(R^{Q}_{k})$ corresponds to a unique point in $\clos{\mathcal{P}_2}$. We would now like to show that this point does not depend on the initial choice of quadrant for $x$. 

\vspace{0.5cm}
\underline{Independence from the choice of quadrant} 

Assume first that $x$ is not a boundary periodic point and that the previous quadrant $Q$ was the $(\epsilon,\epsilon')$ quadrant of $x$. Consider the quadrant $(\epsilon,-\epsilon')$ of $x$ and $$i:= \inf \{k\in\mathbb{Z}| \forall j\geq k ~ R^{(\epsilon, \epsilon')}_j \text{ contains a germ of the $(\epsilon,\epsilon')$ and $(\epsilon,-\epsilon')$ quadrants of $x$ }\}$$ Let us first notice that such an $i$ belongs in $ \mathbb{Z}\cup \{-\infty\}$. Indeed, suppose that $R^{(\epsilon, \epsilon')}_0$ contains a germ of the $(\epsilon,\epsilon')$ quadrant of $x$, but not of the $(\epsilon,-\epsilon')$ quadrant of $x$. This implies that $x$ belongs to a stable boundary component of $R^{(\epsilon, \epsilon')}_0$, say $s$. By Lemmas  \ref{l.crossingrectanglesnoperiodicpoints},  \ref{l.crossingrectangleswithperiodicpoints} and \ref{l.npredecessor} there exists  $k_0$ sufficiently big such that  $R^{(\epsilon, \epsilon')}_{k_0}$ is a $s$-crossing predecessor of $R^{(\epsilon, \epsilon')}_0$, that contains a germ of the $(\epsilon,\epsilon')$ and $(\epsilon,-\epsilon')$ quadrants of $x$. By the Markovian intersection property, for every $k\geq k_0$, $R^{(\epsilon, \epsilon')}_{k}$ also contains a germ of the $(\epsilon,\epsilon')$ and $(\epsilon,-\epsilon')$ quadrants of $x$. We thus get that $i \in \mathbb{Z}\cup \{-\infty\}$. 

If $i=-\infty$, then the sequence associated to the quadrant $(\epsilon,-\epsilon')$ is the exact same sequence as for $(\epsilon,\epsilon')$. This can happen for instance when $x$ does not belong to the stable or unstable leaf of some boundary periodic point. Therefore, in this case we get the desired result. 

If $i\in \mathbb{Z}$, then for every $j\geq i$ the rectangle $R_j^{(\epsilon,\epsilon')}$ is contained in the sequence associated to the quadrant $(\epsilon,-\epsilon')$. Let us denote this sequence $...,R^{(\epsilon, -\epsilon')}_{-1},R^{(\epsilon, -\epsilon')}_0,...,R^{(\epsilon, -\epsilon')}_{i-1},R^{(\epsilon, -\epsilon')}_i=R^{(\epsilon, \epsilon')}_i,R^{(\epsilon, -\epsilon')}_{i+1}=R^{(\epsilon, \epsilon')}_{i+1},...,R^{(\epsilon, -\epsilon')}_n=R^{(\epsilon, \epsilon')}_n,...$. The rectangle $R^{(\epsilon, -\epsilon')}_{i-1}$ is the successor of $R^{(\epsilon, -\epsilon')}_i=R^{(\epsilon, \epsilon')}_i$ that intersects $\clos{\mathcal{F}^s}(x)$ and that is not $R^{(\epsilon, \epsilon')}_{i-1}$. Assume without any loss of generality that this is the successor of $R^{(\epsilon, \epsilon')}_i$ that is right above $R^{(\epsilon, \epsilon')}_{i-1}$ (see Figure \ref{f.hmap}). In this case, $h$ will send $R^{(\epsilon, -\epsilon')}_{i-1}$ to the successor of $h(R^{(\epsilon, -\epsilon')}_i)$ that is right above $h(R^{(\epsilon, \epsilon')}_{i-1})$. Furthermore, since $R^{(\epsilon, -\epsilon')}_{i-2}$ contains $x$ and a germ of its $(\epsilon,-\epsilon')$ quadrant, $R^{(\epsilon, -\epsilon')}_{i-2}$  is the unique successor of $R^{(\epsilon, -\epsilon')}_{i-1}$ that intersects $\clos{\mathcal{F}^s}(x)$, which corresponds to the lowermost successor of $R^{(\epsilon, -\epsilon')}_{i-1}$. Therefore, $h(R^{(\epsilon, -\epsilon')}_{i-2})$ will correspond to the lowermost successor of $h(R^{(\epsilon, -\epsilon')}_{i-1})$, the successor of $h(R^{(\epsilon, -\epsilon')}_{i-1})$ that intersects $\clos{\mathcal{F}^s}(h(x))$. We can thus show thanks to Lemma \ref{l.infiniteintersectionhorizontalrectangles} that $\overset{i-1}{\underset{k=-\infty}{\cap}} h(R^{(\epsilon, -\epsilon')}_{k})$ corresponds to a stable segment in $\clos{\mathcal{F}^s}(h(x))$ crossing $h(R^{(\epsilon, -\epsilon')}_{i-1})$. 
\begin{figure}
    \centering
    \includegraphics[scale=0.5]{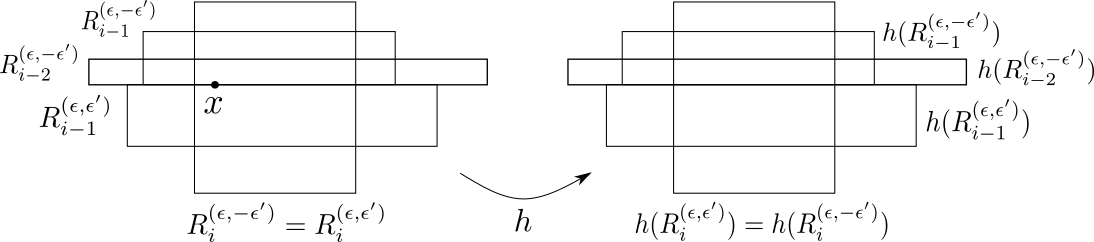}
    \caption{}
    \label{f.hmap}
\end{figure}
We also know thanks to Lemma \ref{l.infiniteintersectionverticalrectangles} that $\overset{+\infty}{\underset{k=i}{\cap}} h(R^{(\epsilon, -\epsilon')}_{k})=\overset{+\infty}{\underset{k=i}{\cap}} h(R^{(\epsilon, \epsilon')}_{k})$ corresponds to an unstable segment in $\clos{\mathcal{F}^u}(h(x))$ crossing $h(R^{(\epsilon, -\epsilon')}_{i})$. We conclude that $\overset{\infty}{\underset{k=-\infty}{\cap}} h(R^{(\epsilon, -\epsilon')}_{k})=\{h(x)\}$. By a similar argument we can show that $\overset{\infty}{\underset{k=-\infty}{\cap}} h(R^{(-\epsilon, -\epsilon')}_{k})=\overset{\infty}{\underset{k=-\infty}{\cap}} h(R^{(-\epsilon, \epsilon')}_{k})= \{h(x)\}$, which gives us the desired result. 

Assume now that $x$ is a boundary periodic point. In this case, we can define $h(x)$ by the same exact argument for some choice of quadrant $Q$ of $x$ ($x$ has infinitely many quadrants in $\clos{\mathcal{P}_1}$). Consider now $Q'\neq Q$ the quadrant of $x$ that intersects $Q$ along a stable leaf of $x$. By a similar argument, we can show that for every $N$, $\overset{N}{\underset{k=-\infty}{\cap}} h(R^{Q'}_{k})$ corresponds to a stable segment in $\clos{\mathcal{F}^s}(h(x))$. Furthermore, if $t$ is the generator of the stabilizer of a stable or unstable leaf of $x$, then for any quadrant $q$ of $x$ we have that  $(t^k(R^{q}_{0}))_{k\in\mathbb{Z}}$ is a subsequence of $ (R^{q}_{k})_{k\in\mathbb{Z}}$. Therefore, $\big(\alpha(t)^k(h(R^{q}_{0}))\big)_{k\in\mathbb{Z}}$ is also a subsequence of $ (h(R^{q}_{k}))_{k\in\mathbb{Z}}$. Our previous arguments imply that both $\overset{+\infty}{\underset{k=-\infty}{\cap}} h(R^{Q'}_{k})$ and $\overset{+\infty}{\underset{k=-\infty}{\cap}} h(R^{Q}_{k})$ are fixed by $\alpha(t)$ and belong to the same stable leaf in $\clos{\mathcal{F}^s}$. They thus correspond to the same point $h(x)$, which gives us the desired result.

\vspace{0.5cm}
\underline{$h(\clos{\Gamma_1})\subset \clos{\Gamma_2}$ and $h(\clos{\mathcal{P}_1}-\clos{\Gamma_1})\subset \clos{\mathcal{P}_2}-\clos{\Gamma_2}$ }

Indeed, assume that  $x\in\clos{\Gamma_1}$ and $h(x)\in \clos{\mathcal{P}_2}-\clos{\Gamma_2}$. There are infinitely many quadrants around $x$, therefore there exists an infinite  family of rectangles $(R_i)_{i\in \mathbb{N}}$ of $\clos{\mathcal{R}_1}$ intersecting $x$ and having 2 by 2 disjoint interiors. For every $i$ the rectangle $h(R_i)$ intersects $h(x)$ that has only finitely many quadrants. Therefore, there exist $i\neq j$ such that $\int{h(R_i)}\cap \int{h(R_j)} \neq \emptyset$. Therefore, $h(R_i)$ is a predecessor or successor of some generation of $h(R_j)$. This contradicts that the fact that $h$ defines a bijection from $\clos{\mathcal{R}_1}$ to $\clos{\mathcal{R}_2}$ that respects the relations of predecessor/successor. 

By a similar argument we can show that $h(\clos{\mathcal{P}_1}-\clos{\Gamma_1})\subset \clos{\mathcal{P}_2}-\clos{\Gamma_2}$
\vspace{0.5cm}

\underline{$h$ is a homeomorphism} 

By applying all the above arguments for $\clos{\mathcal{P}_2}$, we can show that $h$ admits a right and left inverse therefore $h$ is a bijection. 

Consider now $x\in \clos{\mathcal{P}_1}-\clos{\Gamma_1}$ and $y:=h(x)\in \clos{\mathcal{P}_2}-\clos{\Gamma_2}$. Denote by $(R^{(\epsilon, \epsilon')}_{k}(x))_{k\in \mathbb{Z}}$ the sequence of rectangles defined in the previous paragraphs for $x$ and the quadrant $(\epsilon, \epsilon')$. The continuity of $h$  on $x$ follows for the fact that $ (\underset{(\epsilon, \epsilon')\in \{+,-\}^2}{\cup}\overset{N}{\underset{k=-N}{\cap}} R^{(\epsilon, \epsilon')}_{k}(x))_{N\in \mathbb{N}}$ and $ (\underset{(\epsilon, \epsilon')\in \{+,-\}^2}{\cup}\overset{N}{\underset{k=-N}{\cap}} h(R^{(\epsilon, \epsilon')}_{k}(x)))_{N\in \mathbb{N}}$ form respectively a basis of neighbourhoods of $x$ in $\clos{\mathcal{P}_1}$ and of $y$ in $\clos{\mathcal{P}_2}$.

    Consider now $x\in \clos{\Gamma_1}$, $y:=h(x)\in \clos{\Gamma_2}$,  $\{Q_k(y)|k\in \mathbb{Z}\}$ (resp. $\{Q_k(x)|k\in \mathbb{Z}\}$) the set of all quadrants of $y$ (resp. $x$) and $\mathcal{F}$ the set of all functions from $\mathbb{Z}$ to $\mathbb{N}^{*}$. Without any loss of generality we will assume that $h(Q_k(x))=Q_k(y)$. Denote by $(R^{Q_k(x)}_{l}(x))_{l\in \mathbb{Z}}$ the sequence of rectangles defined in the previous paragraphs for $x$ and the quadrant $Q_k(x)$. Once again the continuity of $h$ on $x$ follows from the fact that $ (\underset{k\in \mathbb{Z}}{\cup}\overset{f(k)}{\underset{l=-f(k)}{\cap}} R^{Q_k(x)}_{l}(x))_{f\in \mathcal{F}}$ and $ (\underset{k\in \mathbb{Z}}{\cup}\overset{f(k)}{\underset{l=-f(k)}{\cap}} h(R^{Q_k(x)}_{l}(x)))_{f\in \mathcal{F}}$ form respectively a basis of neighbourhoods of $x$ in $\clos{\mathcal{P}_1}$ and $y$ in $\clos{\mathcal{P}_2}$. 
    
   The above prove that $h$ is continuous. By applying the same arguments to $h^{-1}$, we get that $h$ is a homeomorphism. 

\vspace{0.5cm}
\underline{$h$ sends oriented stable/unstable leaves to oriented stable/unstable leaves}

Consider $x\in \clos{\mathcal{P}_1}$. Notice that for any choice of quadrant of $x$, say $q$, we have that $\clos{\mathcal{F}^u}(x)= \overset{\infty}{\underset{k=0}{\cup}}\overset{\infty}{\underset{l=k}{\cap}} R^{q}_{l}$ Therefore, $$h(\clos{\mathcal{F}^u}(x))=\overset{\infty}{\underset{k=0}{\cup}}\overset{\infty}{\underset{l=k}{\cap}} h(R^{q}_{l})=\clos{\mathcal{F}^u}(h(x))$$
In the same way, we can show that $h$ sends stable leaves in $\clos{\mathcal{P}_1}$ to stable leaves in $\clos{\mathcal{P}_2}$. 

Next, recall that  $\clos{\mathcal{F}_1^{s,u}}$ and  $\clos{\mathcal{F}_2^{s,u}}$ have been endowed with orientations such that the map $h$ sends the predecessors of any rectangle $R\in \clos{\mathcal{R}_1}$ ordered from left to right, to the predecessors of $h(R)$ ordered from left to right. Same for the successors of any rectangle in $\clos{\mathcal{R}_1}$. We deduce that $h$ respects the orientations of $\clos{\mathcal{F}_1^{s,u}}$ and  $\clos{\mathcal{F}_2^{s,u}}$. 
\vspace{0.5cm}

\underline{$h$ is equivariant with respect to the actions of the fundamental groups}

Consider $x\in \clos{\mathcal{P}_1}$ and $g\in \pi_1(M_1-\Gamma_1)$. For any quadrant $q$ of $x$, we have that (see Proposition \ref{eq.equivarianceh}): $$h(g(x))=h(g(\overset{+\infty}{\underset{l=-\infty}{\cap}} R^{q}_{l}))= \overset{+\infty}{\underset{l=-\infty}{\cap}} h(g(R^q_l))=\overset{+\infty}{\underset{l=-\infty}{\cap}} \alpha(g)(h(R^q_l))=\alpha(g)(h(x))$$
\end{proof}

\section{Cycles around boundary periodic points}\label{s.theoremsDE}
Fix $(M,\Phi)$ a transitive Anosov flow, $\mathcal{P}$ its bifoliated plane endowed with an orientation, $\mathcal{F}^{s,u}$ the stable/unstable foliations in $\mathcal{P}$ endowed with an orientation,  $\mathcal{R}$ a Markovian family on $\mathcal{P}$ and $\Gamma \subset \mathcal{P}$ the set of boundary points of $\mathcal{R}$.

In Section \ref{s.homotopiesofpaths} we introduced the notion of cycle around a boundary arc point of $\clos{\mathcal{P}}$. We would like in this section to adapt this notion of cycle for boundary periodic points, in order to obtain information about the neighbourhood of these orbits in $M$. This will allow us to prove Theorems D and E. 

\begin{lemm}\label{l.nocyclearoundperiod}
Let $p$ be a boundary periodic point in $\mathcal{P}$ and $R\in\mathcal{R}$ a rectangle containing $p$. Suppose that $p$ is contained in the stable boundary component $s$ of $R$. There exists no $s$-crossing predecessor of $R$ containing $p$.
\end{lemm}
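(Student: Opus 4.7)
The plan is to proceed by contradiction: assume there exists an $s$-crossing predecessor $R'$ of $R$ containing $p$, and perform a case analysis based on where $p$ sits relative to the boundary of $R'$. The key observation is that the classification of boundary arc versus boundary periodic points established in Remark \ref{r.caracterisationarcpoints} already rules out every possibility.

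First, I would note that since $R'$ is a predecessor of $R$, the intersection $R' \cap R$ is a non-trivial vertical subrectangle of $R$, so $R'$ crosses $R$ from one stable boundary of $R$ to the other. In particular $p \in s \cap R'$, and since $R'$ is \emph{$s$-crossing} we have by definition $\partial^s R' \cap s = \emptyset$, ruling out the possibility that $p \in \partial^s R'$. Hence $p$ lies either in the interior of $R'$ or on the unstable boundary $\partial^u R'$.

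If $p \in \overset{\circ}{R'}$, then $p$ is a periodic point lying in the interior of a rectangle of $\mathcal{R}$. This contradicts the first bullet of Remark \ref{r.caracterisationarcpoints} (or equivalently the proof of Proposition \ref{p.boundaryperiodicpoints}), which states that every boundary periodic point lies on the boundary of every rectangle in $\mathcal{R}$ containing it. If instead $p \in \partial^u R'$, then $p$ lies precisely in the intersection of $s$ with the unstable boundary of an $s$-crossing predecessor of $R$. By the second bullet of Remark \ref{r.caracterisationarcpoints}, this forces $p$ to be a boundary \emph{arc} point, which by definition is non-periodic, contradicting the fact that $p$ is a boundary periodic point.

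There is no genuine obstacle here: the lemma is essentially a direct corollary of the dichotomy between boundary arc and boundary periodic points established in Section \ref{s.boundarypoints}. The only thing to be careful about is invoking the right bullet of Remark \ref{r.caracterisationarcpoints} in each of the two surviving cases, and observing that the $s$-crossing hypothesis eliminates the third a priori possibility $p \in \partial^s R'$.
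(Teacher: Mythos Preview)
Your proof is correct. It proceeds by a clean case analysis on where $p$ sits in $R'$, and each case is dispatched by invoking the right part of Remark~\ref{r.caracterisationarcpoints}. The elimination of the case $p \in \partial^s R'$ via the definition of $s$-crossing is accurate, and the two remaining cases are handled correctly.

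The paper's proof takes a different, more self-contained route: it argues directly by picking $g \in \mathrm{Stab}(p)$ that contracts the stable direction and expands the unstable one, so that $g(R)$ becomes a long thin vertical strip through $p$; the intersection $g(R) \cap L$ (with $L$ the hypothetical $s$-crossing predecessor) then visibly fails the Markovian intersection axiom. This is essentially the same mechanism that underlies the proof of Remark~\ref{r.caracterisationarcpoints} in Proposition~\ref{p.arcpointsexist}, so your argument and the paper's are not far apart in spirit --- you are citing a consequence of that mechanism, while the paper reapplies it from scratch. Your version has the advantage of making explicit that the lemma is a corollary of the boundary-arc/boundary-periodic dichotomy already established; the paper's version has the advantage of being a two-line standalone argument that does not require the reader to recall the content of Section~\ref{s.boundarypoints}.
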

\begin{proof}
Suppose that such a rectangle $L$ exists. Consider $g\in \text{Stab}(p)$ such that the stable boundaries of $g(R)$ become very thin and the unstable ones very long. The intersection of the rectangles $g(R)$ and $L$ is not Markovian. Absurd.
\end{proof}

According to the previous lemma, we can not extend Definition \ref{d.cyclearcpoint} for boundary periodic points.

Take $p\in \mathcal{P}$ a boundary periodic point and $L_0\in \mathcal{R}$ containing $p$. Assume without any loss of generality that $L_0$ contains a germ of the $(+,+)$ quadrant of $p$ (there are only a finite number of rectangles with this property up to the action of $\text{Stab}(p)$). Consider now the set $Q_{(-,+)}$ of rectangles $r$ intersecting a germ of the $(-,+)$ quadrant of $p$ and such that $L_0\cap \mathcal{F}^u_+(p) \subseteq r\cap \mathcal{F}^u_+(p)$. The set $Q_{(-,+)}$ is not stable under the action of $\text{Stab}(p)$, but it is contained in the union of a finite number of $\text{Stab}(p)$-orbits of rectangles. We can therefore define $L_1$ as the unique rectangle in  $Q_{(-,+)}$ for which for all $r\in Q_{(-,+)}$ $$L_1\cap \mathcal{F}^u_+(p)\subseteq r\cap \mathcal{F}^u_+(p)$$ Notice that $L_1$ may be equal to $L_0$, if $L_0$ intersects a germ of the $(+,+)$ and $(-,+)$ quadrants of $p$. 

Next, we define $Q_{(-,-)}$, the set of rectangles $r$ intersecting a germ of the $(-,-)$ quadrant of $p$ and such that $L_1\cap \mathcal{F}^s_-(p) \subset r\cap \mathcal{F}^s_-(p)$. Again, we define $L_2$ as the unique rectangle in $Q_{(-,-)}$ for which all $r\in Q_{(-,-)}$ $$L_2\cap \mathcal{F}^s_-(p)\subseteq r\cap \mathcal{F}^s_-(p)$$ Finally, if the rectangle $L_0$ intersects a germ of the $(+,-)$ quadrant of $p$, we take $L_3=L_0$. Otherwise, we define analogously $Q_{(+,-)}$ and we define $L_3$ as the unique rectangle in $Q_{(+,-)}$ for which all $r\in Q_{(+,-)}$ $$L_3\cap \mathcal{F}^u_-(p)\subseteq r\cap \mathcal{F}^u_-(p)$$ 
\begin{figure}
    \centering
    \includegraphics[scale=0.5]{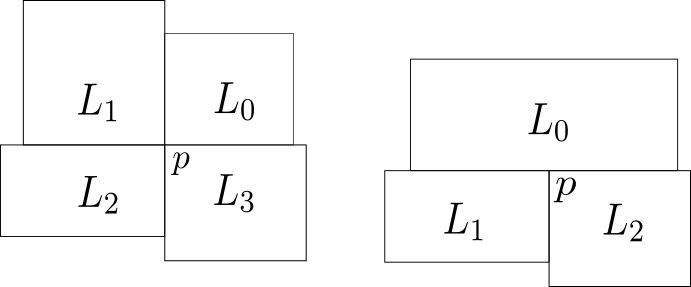}
    \caption{Some examples of positive pre-cycles starting from $L_0$}
    \label{f.precycles}
\end{figure}
 By Lemma \ref{l.nocyclearoundperiod} and our previous discussion, we have that any two rectangles among $L_0,L_1,L_2,L_3$ have either disjoint interiors or are equal (see Figure \ref{f.precycles}). By eventually removing a rectangle appearing twice, we can assume that the rectangles $L_0,L_1,L_2,L_3$ have two by two disjoint interiors. 
We will call $(L_0,L_1,L_2,L_3)$ \emph{the positive pre-cycle around $p$ starting from $L_0$}. By changing the cyclic order in which we visit the quadrants around $p$, we can define similarly \emph{the negative pre-cycle around $p$ starting from $L_0$}. 

\begin{rema}\label{r.precyclesfiniteuptoaction}
To every rectangle intersecting $p$ we associate canonically a positive and a negative pre-cycle around $p$. The image of a pre-cycle around $p$ by an element of $\text{Stab}(p)$ is a pre-cycle around $p$. Therefore, there is a finite number of pre-cycles around $p$ up to the action of $\text{Stab}(p)$. 
\end{rema}

\begin{prop}
To any pre-cycle around $p$ we can associate canonically a closed rectangle path of $\mathcal{P}$.
\end{prop}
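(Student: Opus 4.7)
The plan is to realize the closed rectangle path as the one associated by Proposition \ref{p.curvetorectanglepath} to a small simple closed polygonal curve $\gamma$ that encircles $p$ exactly once in the positive direction through $L_0\cup L_1\cup L_2\cup L_3$. First I will observe that, by construction, each $L_i$ contains $p$ together with a germ of the $i$-th quadrant, so the four rectangles together cover a neighborhood of $p$ in $\mathcal{P}$. I will then choose a simple closed good polygonal curve $\gamma:[0,1]\to\mathcal{P}$ with four alternating stable and unstable sides, whose four corners lie respectively in $\int{L_0}$, $\int{L_1}$, $\int{L_2}$, $\int{L_3}$ (or in $\int{L_i\cap L_{i+1}}$ if some $L_i=L_{i+1}$), with each side crossing the separatrix of $p$ between two consecutive quadrants at a point inside the overlap of the two adjacent $L_i$. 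The minimality conditions built into the definition of the pre-cycle, combined with the Markovian intersection axiom, guarantee that the relevant overlaps contain non-trivial segments, so such a $\gamma$ exists; by taking $\gamma$ sufficiently close to $p$ one may also ensure that it avoids all boundary arc points and all periodic points of $\mathcal{P}$ other than $p$ (and points on $p$'s separatrices).

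Next, applying Proposition \ref{p.curvetorectanglepath} to $\gamma$ with initial rectangle $L_0$ produces a canonical rectangle path $L_0 = R_0, R_1, \ldots, R_n$. Since $\gamma(1)=\gamma(0)\in \int{L_0}$, the continuation algorithm of Proposition \ref{p.curvetorectanglepath} forces the final rectangle to satisfy $R_n = L_0$, hence the rectangle path is closed. By construction, the path visits $L_0, L_1, L_2, L_3$ in cyclic order, with possibly intermediate rectangles inserted as predecessors or successors of the $L_i$, coming from the Markovian geometry along the sides of $\gamma$ (for example, when one side of $\gamma$ must cross the unstable boundary of an intermediate rectangle of $\mathcal{R}$ before reaching $L_{i+1}$).

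The heart of the proof is establishing that the resulting closed rectangle path does not depend on the choice of $\gamma$. Given two admissible curves $\gamma, \gamma'$, I will build a continuous family $(\gamma_t)_{t\in[0,1]}$ of simple closed good polygonal curves, all lying in a fixed small neighborhood of $p$ inside $L_0\cup L_1\cup L_2\cup L_3$ and avoiding boundary arc points and other periodic points. Along such a family, the rectangle chosen at each stage of the algorithm of Proposition \ref{p.curvetorectanglepath} is determined solely by the combinatorial data of which quadrant of $p$ a piece of $\gamma_t$ lies in and which separatrix of $p$ has just been crossed; these combinatorial data are constant in $t$, so the rectangle path is as well. The main obstacle will be to carry this argument through in the degenerate configurations where some of the $L_i$ coincide (so the pre-cycle has fewer than four distinct rectangles), and to verify that the intermediate rectangles between consecutive $L_i, L_{i+1}$ are themselves canonically determined by the pre-cycle rather than by incidental choices in the construction of $\gamma$; handling this will likely require a careful local analysis near $p$ using Lemmas \ref{l.existenceofpredecessors} and \ref{l.npredecessor}.
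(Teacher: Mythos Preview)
Your approach via a small polygonal curve around $p$ has a genuine gap, and it is precisely at the point you flag as ``the heart of the proof.'' The boundaries of the $L_i$ that your curve $\gamma$ must cross lie on the separatrices of the boundary periodic point $p$. By Lemma~\ref{l.crossingrectangleswithperiodicpoints} (and its unstable analogue), along such a boundary the crossing successors/predecessors do not form a finite list: there are infinitely many, related by the action of $\text{Stab}(p)$, accumulating on $p$. The algorithm of Proposition~\ref{p.curvetorectanglepath} selects the crossing successor containing the specific point where $\gamma$ meets the boundary; as you slide $\gamma$ closer to or farther from $p$, this crossing point moves along the separatrix, passes through boundary arc points, and the selected rectangle jumps. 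Thus different admissible curves $\gamma,\gamma'$ produce genuinely different rectangle paths (they lie in the same $\text{Stab}(p)$-orbit, but they are not equal), and your continuity argument cannot repair this: any continuous family interpolating between $\gamma$ and $\gamma'$ must hit a boundary arc point on the separatrix. A second, smaller issue is that $\gamma(1)=\gamma(0)\in\int{L_0}$ does not by itself force the terminal rectangle of the algorithm to be $L_0$; compare the proof of Corollary~\ref{c.rectpathsstartingending}.

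The paper avoids these problems by not using a curve at all. It fixes, for each consecutive pair $L_i,L_{i+1}$, a single explicit intermediate rectangle $r_i$: namely the crossing successor (or predecessor) of $L_i$ that contains the endpoint of $L_i\cap\mathcal{F}^{s,u}_\pm(p)$ \emph{away from} $p$. The minimality in the definition of the pre-cycle guarantees that this $r_i$ is simultaneously a successor/predecessor of some generation of $L_{i+1}$, so $(L_0,r_0,L_1,r_1,L_2,r_2,L_3,r_3,L_0)$ is a generalized rectangle path in the sense of Definition~\ref{d.generalisedrectpaths}, which unfolds canonically to a closed rectangle path. No auxiliary choices are made, so canonicity is immediate. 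If you want to rescue your approach, you would have to pin down $\gamma$ much more rigidly---essentially by forcing each side of $\gamma$ to cross the separatrix exactly at the far corner of the relevant $L_i$---at which point you are reproducing the paper's explicit construction in disguise.
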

\begin{proof}
Since a generalized rectangle path can canonically be associated to a rectangle path (see Definition \ref{d.generalisedrectpaths}), it suffices to show that we can canonically associate any pre-cycle to a generalized rectangle path in $\mathcal{P}$ beginning and ending by the same rectangle. Let us consider $L_0$ intersecting $p$ and $L_0,...,L_k$ its associated positive pre-cycle. The case of negative pre-cycles follows from the same arguments. Assume without any loss of generality that the previous cycle is of the from $L_0,L_1,L_2,L_3$, where the $L_i$ have two by two disjoint interiors and that $L_0$ contains a germ of the $(+,+)$ quadrant of $p$. All the other cases, follow by the same arguments.  

We begin our generalized rectangle path by $L_0$. Let $s_0=L_0\cap \mathcal{F}^u_+(p)$ and $r_0$ be the $s_0$-crossing successor of $L_0$ containing the endpoint of $s_0$ that is not $p$ (this rectangle exists by Lemma \ref{l.crossingrectangleswithperiodicpoints}). By construction of $L_1$, $r_0$ is a successor of $L_1$ of some generation. Similarly, we define $s_1:=L_1\cap \mathcal{F}^s_-(p)$, $s_2:=L_2\cap \mathcal{F}^u_-(p)$, $r_1$ as the $s_1$-crossing predecessor of $L_1$ containing the endpoint of $s_1$ that is not $p$ and $r_2$ as the $s_2$-crossing successor of $L_2$ containing the endpoint of $s_2$ that is not $p$. Again, $r_1$ (resp. $r_2$) is a predecessor (resp. successor) of $L_2$ (resp. $L_3$) of some generation. 

Finally, if $L_0\cap \mathcal{F}^s_+(p)\subset L_3\cap \mathcal{F}^s_+(p)$ we define $s_3:=L_0\cap \mathcal{F}^s_+(p)$ and $r_3$ as the $s_3$-crossing predecessor of $L_0$ intersecting the endpoint of $s_3$ that is not $p$. If instead, $L_3\cap \mathcal{F}^s_+(p)\subset L_0\cap \mathcal{F}^s_+(p)$ we define $s_3:=L_3\cap \mathcal{F}^s_+(p)$ and $r_3$ as the $s_3$-crossing predecessor of $L_3$ intersecting the endpoint of $s_3$ that is not $p$. In any case, $r_3$ is a predecessor of some generation of $L_0$ and also $L_3$.

We deduce that $(L_0,r_0,L_1,r_1,L_2,r_2,L_3,r_3,L_0)$ is a generalized rectangle path in $\mathcal{P}$ that is canonically associated to the pre-cycle $(L_0,L_1,L_2,L_3)$, which gives us the desired result.
\end{proof}
\begin{defi}
Take $p$ a boundary periodic point. The set of rectangle paths associated to all pre-cycles around $p$ will be called the set of \emph{cycles} of $p$ and will be denoted by $\text{Cycl}(p)$.
\end{defi}
\begin{rema}\label{r.cyclesfiniteuptoaction}
If $c$ is a pre-cycle around $p$ and $g\in \text{Stab}(p)$. The image by $g$ of the cycle  associated to $c$, is the cycle  associated to the pre-cycle $g(c)$. Therefore, by Remark  \ref{r.precyclesfiniteuptoaction}, the set of cycles around any boundary periodic orbit $p$  consists of a finite number of orbits by the action of $\text{Stab}(p)$.
\end{rema}

More generally, 
\begin{rema}\label{r.orbitofcycles}
For any $g\in \pi_1(M)$ and any boundary periodic point $p$, $g(\text{Cycl}(p))=\text{Cycl}(g(p))$. Therefore, the set of all cycles around any boundary periodic point in the $\pi_1(M)$-orbit of $p$, consists of a finite number of rectangle paths up to the action of $\pi_1(M)$.
\end{rema}
  We will now prove that each of the previous orbits of rectangle paths can be described as a closed rectangle path in any geometric type associated to $\mathcal{R}$ by Theorem \ref{t.associatemarkovfamiliestogeometrictype}. 

\begin{defi}
Consider a geometric type $G=(n,(h_i)_{i \in \llbracket 1,n \rrbracket}, (v_i)_{i\in \llbracket 1,n \rrbracket}, \mathcal{H}, \mathcal{V},\phi, u)$, where $\mathcal{H}=\{H_i^j, i\in \llbracket 1,n \rrbracket, j\in \llbracket 1, h_i \rrbracket\}$ and $  \mathcal{V}=\{V_i^j, i\in \llbracket 1,n \rrbracket, j\in \llbracket 1, v_i \rrbracket\}$. We define a \emph{rectangle path} in $G$ as a finite sequence of the form $i_0,s_0,i_1,s_1,i_2,...,s_m, i_{m+1}$ where \begin{itemize}     
    \item for all $k\in \llbracket 0, m+1\rrbracket$, $i_k\in \llbracket 1,n\rrbracket$ 
\item for all $k\in \llbracket 0, m \rrbracket$,  $s_k$ is of the form $V_{i_k}^{\bullet}$ or $H_{i_k}^{\bullet}$. 
     \item for all $k\in \llbracket 0, m\rrbracket$, if $s_k\in \mathcal{V}$ (resp. $s_k\in \mathcal{H}$)  then it is the image by $\phi$ (resp. $\phi^{-1}$) of an element of $\mathcal{H}$ (resp. $\mathcal{V}$) of the form $H_{i_{k+1}}^{\bullet}$ (resp. $V_{i_{k+1}}^{\bullet}$). 
\end{itemize}
Furthermore, the rectangle path $i_0,s_0,i_1,s_1,i_2,...,s_m, i_{m+1}$ will be called \emph{closed} when 
$i_0=i_{m+1}$
\end{defi}
Recall that we may think of a geometric type $G$ as a set of rectangles $R_1,...,R_n$, endowed each with a collection of horizontal and vertical subrectangles identified by the map $(\phi,u)$. In these terms, the reader may think of a rectangle path in $G$ as a sequence of the form $R_0,s_0,...,R_m,s_m,R_{m+1}$, where for every $k\in \llbracket 0,m\rrbracket $, $s_k$ corresponds to a vertical (resp. or horizontal) subrectangle of $R_k$ and a horizontal (resp. or vertical) subrectangle of $R_{k+1}$. 
\begin{prop}\label{p.projectionofrectanglepathsingeomtype}
Let $G=(n,(h_i)_{i \in \llbracket 1,n \rrbracket}, (v_i)_{i\in \llbracket 1,n \rrbracket}, \mathcal{H}, \mathcal{V},\phi, u)$ be one of the geometric types associated to $\mathcal{R}$ by Theorem \ref{t.associatemarkovfamiliestogeometrictype}. Endow $\mathcal{F}^{s,u}$ with orientations and choose  $r_1,...,r_n$ representatives of every rectangle orbit in $\mathcal{R}$, so that $\mathcal{R}$ is canonically associated to $G$, thanks to Lemma \ref{l.geomtypeinclass}. We have that: 
\begin{itemize}
    \item any rectangle path in $\mathcal{P}$ corresponds to a unique rectangle path in $G$
    \item conversely, any rectangle path in $G$ corresponds to a unique $\pi_1(M)$-orbit of rectangle paths in $\mathcal{P}$ 
\end{itemize}  
\end{prop}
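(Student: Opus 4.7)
The plan is to unfold the correspondence between the geometric realization $\mathcal{R}$ and the combinatorial gadget $G$ that is already built into the proof of Theorem \ref{t.associatemarkovfamiliestogeometrictype}. Under the chosen representatives $r_1, \ldots, r_n$ and orientations of $\mathcal{F}^{s,u}$, every rectangle $R \in \mathcal{R}$ lies in a unique orbit $\pi_1(M) \cdot r_{i(R)}$, and there is a unique $g_R \in \pi_1(M)$ with $g_R(R) = r_{i(R)}$: uniqueness comes from the rigidity argument already used in the proof of Theorem \ref{t.associatemarkovfamiliestogeometrictype}, since any non-trivial element fixing $r_{i(R)}$ would have a fixed point in it and act as an expansion on one of its leaves. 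Moreover, by the very construction of $G$, the successors (resp. predecessors) of $r_i$, ordered bottom-to-top (resp. left-to-right), are in canonical bijection with $\{H_i^1, \ldots, H_i^{h_i}\}$ (resp. $\{V_i^1, \ldots, V_i^{v_i}\}$), and $\phi$ encodes the way a successor of $r_i$, once translated by the appropriate element of $\pi_1(M)$, becomes a predecessor of some $r_j$.

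For the first direction, given a rectangle path $\rho_0, \ldots, \rho_m$ in $\mathcal{P}$, I would set $i_k := i(\rho_k)$ and read off $s_k$ as follows. The rectangle $\rho_{k+1}$ is either a successor or a predecessor of $\rho_k$, so the Markovian intersection axiom forces $\rho_k \cap \rho_{k+1}$ to be a non-trivial horizontal or vertical subrectangle of $\rho_k$; applying $g_{\rho_k}$ transports this to a subrectangle of $r_{i_k}$, which under the canonical bijection above is some $H_{i_k}^{j_k}$ or $V_{i_k}^{j_k}$, and this is the $s_k$ I insert. The compatibility condition $\phi(s_k) \in \{V_{i_{k+1}}^{\bullet}\}$ (or $\phi^{-1}(s_k) \in \{H_{i_{k+1}}^{\bullet}\}$) is exactly what the definition of $\phi$ records when one compares $g_{\rho_k}$ and $g_{\rho_{k+1}}$, so the resulting sequence is a rectangle path in $G$.

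Conversely, given a rectangle path $i_0, s_0, \ldots, s_m, i_{m+1}$ in $G$, I would fix any initial $\rho_0 \in \pi_1(M) \cdot r_{i_0}$ and construct $\rho_{k+1}$ inductively: writing $g_k := g_{\rho_k}$, the element $s_k$ singles out a unique horizontal or vertical subrectangle of $r_{i_k}$, hence (by the canonical bijection) a unique successor or predecessor $\sigma_k$ of $r_{i_k}$, and I set $\rho_{k+1} := g_k^{-1}(\sigma_k)$. The path axiom on $s_k$ forces $\sigma_k \in \pi_1(M) \cdot r_{i_{k+1}}$, so $\rho_{k+1}$ has the correct type. Replacing $\rho_0$ by $g \cdot \rho_0$ translates the entire path by $g$, so the construction is canonical up to $\pi_1(M)$, and one checks directly that the two constructions are mutual inverses modulo this action.

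The main obstacle is not a single hard step but the careful bookkeeping: one must verify uniqueness of $g_R$, check that the successor/predecessor bijections used to define $\mathcal{H}, \mathcal{V}, \phi$ are compatible with the Markovian intersection at each step, and confirm that the orientation conventions chosen on $\mathcal{F}^{s,u}$ make the identification of $\rho_k \cap \rho_{k+1}$ with the indexed subrectangle $H_{i_k}^{j_k}$ or $V_{i_k}^{j_k}$ unambiguous. The function $u$ plays no direct role, since rectangle paths as defined involve only the incidence data, not orientation information on the subrectangles.
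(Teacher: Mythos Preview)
Your proposal is correct and follows essentially the same approach as the paper: both directions use the orbit-type map $R\mapsto i(R)$, the canonical bijection between successors/predecessors of $r_i$ and the subrectangles $H_i^\bullet,V_i^\bullet$, an inductive lift from a chosen initial rectangle, and the observation that changing the initial lift by $g\in\pi_1(M)$ translates the whole path. Your version is slightly more explicit in tracking the unique elements $g_R$ with $g_R(R)=r_{i(R)}$, but this is only a difference in bookkeeping, not in strategy.
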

\begin{proof}
Consider $l_0,l_1,...,l_k$ a rectangle path in $\mathcal{P}$. Once again we will think of $G$ as a set of rectangles $R_1,...,R_n$. By our proof of Theorem \ref{t.associatemarkovfamiliestogeometrictype}, every $l_i$ corresponds to a unique rectangle $R_{l_i}$ of the geometric type $G$. Furthermore, for every $i\in \llbracket 1,k\rrbracket $ $l_{i}$ is a successor or predecessor of $l_{i-1}$, therefore $l_i$ corresponds to a unique horizontal or vertical subrectangle $s_{l_i}$ of $R_{l_{i-1}}$. We therefore associate the rectangle path $l_0,...,l_k$ to the sequence $R_{l_0},s_{l_1},....,R_{l_{k-1}},s_{l_k}, R_{l_{k}}$. Using our proof of Theorem \ref{t.associatemarkovfamiliestogeometrictype}, is easy to check that the above sequence is a rectangle path in $G$. 

Consider now  $R_{0},s_{1},....,R_{k-1},s_{k}, R_{k}$ a rectangle path in $G$. By our proof of Theorem \ref{t.associatemarkovfamiliestogeometrictype}, the rectangle $R_{0}$ corresponds to a unique $\pi_1(M)$ orbit of rectangles in $\mathcal{P}$. Let $l_0$ be a representative of this orbit. Next, $s_{1}$ corresponds to a unique successor or predecessor of $l_0$, say $l_1$ and $R_2$ to a unique predecessor or successor of $l_1$, say $l_2$. By repeating the previous procedure, we can associate $R_{0},s_{1},....,R_{k-1},s_{k}, R_{k}$ to the rectangle path $l_0,...,l_k$ in $\mathcal{P}$. The previous rectangle path  depends on our choice of $l_0$. If we replace $l_0$ by $g(l_0)$, where $g\in \pi_1(M)$, it is easy to check that the resulting rectangle path will be the image by $g$ of $l_0,...,l_k$.  We conclude that the rectangle path  $R_{0},s_{1},....,R_{k-1},s_{k}, R_{k}$ is associated to a unique $\pi_1(M)$ orbit of rectangle paths in $\mathcal{P}$. 
\end{proof}

According to Theorem B, an equivalence class of geometric types describes an Anosov flow up to surgeries along boundary periodic orbits. We would like at this point to define a combinatorial object describing an Anosov flow up to just orbital equivalence.  

\begin{defi}
Let $G$ be a geometric type and $\mathcal{A}$ a finite set of closed rectangle paths in $G$. We will call $(G,\mathcal{A})$ a \emph{geometric type with cycles}.

Two geometric types with cycles $(G,\mathcal{A})$ and $(G',\mathcal{A}')$ will be called \emph{equivalent} if there exists an equivalence $h$ between $G$ and $G'$ (see Definition \ref{d.equivalentgeomtypes}) that bijectively associates closed rectangle paths in $\mathcal{A}$ to closed rectangle paths in $\mathcal{A}'$. 
\end{defi}
\begin{rema}
Using Remark \ref{r.finitenumbergeometrictypes}, we can show that any equivalence class of geometric types with cycles contains only a finite number of geometric types with cycles. 
\end{rema}
By Remark \ref{r.canonicalassociationgeometrictype}, for each choice of orientations on $\mathcal{F}^{s,u}$ and each choice of representatives for every rectangle orbit in $\mathcal{R}$, we can associate $\mathcal{R}$ to a unique geometric type $G$. By Remark \ref{r.finitenumbergeometrictypes}, the set of geometric types obtained in this way is finite. 

Fix one of the previous geometric types $G$ (together with the choice of representatives and orientations associated to it). Associate now every boundary periodic point $p$ of $\mathcal{R}$ to the set of closed rectangle paths $\text{Cycl}(p)$ (this association is canonical). By Proposition \ref{p.boundaryperiodicpoints} and Remark \ref{r.orbitofcycles},  the union of all the above cycles corresponds to finitely many closed rectangle paths up to the action of $\pi_1(M)$. Hence, by Proposition \ref{p.projectionofrectanglepathsingeomtype}, we get that we can canonically associate to $\mathcal{R}$ a finite set $\mathcal{A}$ of closed rectangle paths in $G$. We will call $(G,\mathcal{A})$ the geometric type with cycles associated to $\mathcal{R}$ (for this choice of orientations and representatives). 

By repeating our proof of Theorem \ref{t.associatemarkovfamiliestogeometrictype}, it is now not hard to show 

\textbf{Theorem D}\textit{
Let $M$ be an orientable, closed and connected 3-manifold and $\Phi$ a transitive
Anosov flow on $M$. To any Markovian family $\mathcal{R}$ of $\Phi$ we can associate canonically a unique and finite class of equivalent geometric types with cycles.}  
\vspace{0.22cm}

We would like now to prove Theorem E according to which an equivalence clas of geometric types with cycles describes the flow up to orbital equivalence. More precisely,

\textbf{Theorem E}\textit{
Let $(M_1,\Phi_1),(M_2,\Phi_2)$ be two transitive Anosov flows, $\mathcal{R}_1,\mathcal{R}_2$ two Markovian families in $\mathcal{P}_1,\mathcal{P}_2$ associated to the same equivalence class of geometric types with cycles. Then $\Phi_1$ is orbitally equivalent to $\Phi_2$.}
\begin{proof}
Let $\Gamma_1,\Gamma_2$ be the boundary periodic points of $\mathcal{R}_1,\mathcal{R}_2$ and  $\clos{\mathcal{P}_1}$,  $\clos{\mathcal{P}_2}$ the bifoliated planes of $\Phi_1$, $\Phi_2$ up to surgeries along $\Gamma_1$,$\Gamma_2$. Using Lemma \ref{l.geomtypeinclass},  it is not hard to show that by choosing properly orientations on $\mathcal{F}^{s,u}_1,\mathcal{F}^{s,u}_2$ and representatives for every rectangle orbit in $\mathcal{R}_1$ and $\mathcal{R}_2$, we may assume that the Markovian families $\mathcal{R}_1$ and $\mathcal{R}_2$ are associated to the same geometric type with cycles. By Theorem B and Theorem \ref{t.generalbarbot}, there exist an isomorphism $\alpha: \pi_1(M_1-\Gamma_1)\rightarrow \pi_1(M_2-\Gamma_2)$ and a homeomorphism $h:\clos{\mathcal{P}_1}\rightarrow \clos{\mathcal{P}_2}$ such that: 

\begin{itemize}
    \item the image by $h$ of any oriented stable/unstable leaf in $\clos{\mathcal{F}_1^{s,u}}$ is an oriented stable/unstable leaf in $\clos{\mathcal{F}_2^{s,u}}$ 
    \item for every $g\in \pi_1(M_1-\Gamma_1)$ and every $x\in \clos{\mathcal{P}_1}$ we have $$h(g(x))= \alpha(g)(h(x))$$ 
\end{itemize}

Let us now go back to our proof of Theorem \ref{t.generalbarbot} and show that under the additional hypothesis on the cycles of $\mathcal{R}_1$ and $\mathcal{R}_2$, the flows $\Phi_1$ and $\Phi_2$ are orbitally equivalent. 

Indeed, in the proof of Theorem \ref{t.generalbarbot}, for every periodic point $\clos{\gamma}$ in $\clos{\Gamma_1}$ (resp. $\clos{\Gamma_2}$) we defined a canonical basis $s^{M_1}_{\clos{\gamma}},t^{M_1}_{\clos{\gamma}}$ (resp.  $s^{M_2}_{\clos{\gamma}},t^{M_2}_{\clos{\gamma}}$) of its stabilizer group, where $s^{M_1}_{\clos{\gamma}}$ (resp.$s^{M_2}_{\clos{\gamma}}$) fixes all the stable (or unstable) leaves intersecting $\clos{\gamma}$ and $t^{M_1}_{\clos{\gamma}}$ (resp.$t^{M_2}_{\clos{\gamma}}$) acts as a translation by either $+1$ or $+ 2$ on the previous set depending on whether $\clos{\gamma}$ has respectfully negative or positive eigenvalues. By convention, we will assume that $s^{M_1}_{\clos{\gamma}}$ and $s^{M_2}_{\clos{\gamma}}$ act as expansions on every unstable leaf crossing $\clos{\gamma}$. We also proved that for every boundary periodic orbit $\clos{\gamma}\in \clos{\mathcal{P}_1}$ $$\alpha(s^{M_1}_{\clos{\gamma}})= s^{M_2}_{h(\clos{\gamma})}$$
 $$\alpha(t^{M_1}_{\clos{\gamma}})= t^{M_2}_{h(\clos{\gamma})}+ k \cdot s^{M_2}_{h(\clos{\gamma})}$$
where $k$ belongs in $\mathbb{Z}$ and only depends on the projection of the orbit  $\clos{\gamma}$ on $M_1$. We thus associated to every $\gamma\in \Gamma_1$ an integer $k(\gamma)$. Finally, we showed that $\Phi_2$ is orbitally equivalent to the flow obtained by performing a surgery on $\Phi_1$ of coefficient $-k(\gamma)$ along every $\gamma \in \Gamma_1$. Therefore, it suffices to show that for every $\gamma\in \Gamma_1$ we have $k(\gamma)=0$.

Fix $\clos{\gamma}\in \clos{\Gamma_1}$ and $\gamma$ its projection on $\mathcal{P}_1$. Consider a positive cycle around $\gamma$, say  $r_0,...,r_k=r_0$. Lift the previous cycle to a rectangle path $\clos{r_0},...,\clos{r_k}$ in $\clos{\mathcal{P}_1}$ intersecting $\clos{\gamma}$. By the construction of the canonical geometric type with cycles associated to $\mathcal{R}_1$,$\mathcal{R}_2$ (for our choice of orientations and rectangle representatives), every cycle in $\mathcal{P}_1$ corresponds to a cycle of the geometric type associated to $\mathcal{R}_1$ and $\mathcal{R}_2$; therefore it also corresponds to a cycle in $\mathcal{P}_2$. Therefore, $h(\clos{r_0}),...,h(\clos{r_k})$ projects to a closed rectangle path in $\mathcal{P}_2$. 

Since $r_0,...,r_k=r_0$ goes once around $\gamma$, if $\gamma$ has positive (resp. negative) eigenvalues, $t^{M_1}_{\clos{\gamma}}(\clos{r_0})=\clos{r_k}$ (resp. $2\cdot t^{M_1}_{\clos{\gamma}}(\clos{r_0})=\clos{r_k}$). Thus, by applying $h$, if $\gamma$ has positive (resp. negative) eigenvalues, $[t^{M_2}_{h(\clos{\gamma})}+ k(\gamma) \cdot s^{M_2}_{h(\clos{\gamma})}](h(\clos{r_0}))=h(\clos{r_k})$ \big(resp. $[2\cdot t^{M_2}_{h(\clos{\gamma})}+ 2k(\gamma) \cdot s^{M_2}_{h(\clos{\gamma})}](h(\clos{r_0}))=h(\clos{r_k})$\big). By our previous arguments, $h(\clos{r_0})$ and $h(\clos{r_k})$ project to the same rectangles in $\mathcal{P}_2$, which is only possible when $k(\gamma)=0$. 
\end{proof}

\end{document}